\documentclass[reqno,a4paper,11pt]{amsart}

\usepackage[english]{babel}
\usepackage{latexsym,amsmath,amsthm, mathtools}
\usepackage{amsfonts,xcolor}
\usepackage{graphicx}
\usepackage[colorlinks=true, allcolors=blue]{hyperref}
\usepackage{comment}
\usepackage{fullpage,hyperref,cleveref,amssymb,mathtools,ifthen,thmtools,tikz,bbm}
\usepackage{thm-restate}
\usepackage[shortlabels]{enumitem}

\usepackage[backend=biber,giveninits=true,doi=false,url=false,isbn=false,style=trad-plain,hyperref,date=year]{biblatex}
\bibliography{references}

\newtheorem{theorem}{Theorem}[section]
\newtheorem{definition}[theorem]{Definition}

\newtheorem{corollary}[theorem]{Corollary}
\newtheorem{lemma}[theorem]{Lemma}
\newtheorem{conjecture}[theorem]{Conjecture}
\newtheorem{claim}[theorem]{Claim}
\newtheorem{question}[theorem]{Question}

\newtheorem{remark}[theorem]{Remark}

\newcommand{\cA}{\mathcal{A}}
\newcommand{\cB}{\mathcal{B}}
\newcommand{\cC}{\mathcal{C}}
\newcommand{\cD}{\mathcal{D}}
\newcommand{\cE}{\mathcal{E}}
\newcommand{\cF}{\mathcal{F}}
\newcommand{\cG}{\mathcal{G}}

\newcommand{\cI}{\mathcal{I}}
\newcommand{\cJ}{\mathcal{J}}

\newcommand{\cL}{\mathcal{L}}
\newcommand{\cM}{\mathcal{M}}

\newcommand{\cO}{\mathcal{O}}
\newcommand{\cP}{\mathcal{P}}

\newcommand{\cS}{\mathcal{S}}

\newcommand{\ta}{\tilde{\alpha}}

\newcommand{\lam}{\lambda}

\newcommand{\om}{\omega}
\newcommand{\lambexp}{C_{\ast}}

\newcommand{\Gam}{\Gamma}
\newcommand{\Om}{\Omega}

\renewcommand{\Pr}{\mathbb{P}}
\newcommand{\E}{\mathbb{E}}
\newcommand{\Bin}{\operatorname{Bin}}

\providecommand\given{\nonscript\:\ifthenelse{\equal{\delimsize}{}}{\big\vert}{\delimsize\vert}\nonscript\:\mathopen{}}
\let\Pr\undefined
\DeclarePairedDelimiterXPP\Pr[1]{\mathbb{P}}(){}{#1}
\DeclarePairedDelimiterXPP\Ex[1]{\mathbb{E}}{[}{]}{}{#1}
\DeclarePairedDelimiter{\norm}{\lVert}{\rVert}

\title{Counting independent sets in percolated graphs via the Ising model}
\author{Anna Geisler, Mihyun Kang, Michail Sarantis, Ronen Wdowinski}

\address{Institute of Discrete Mathematics, Graz University of Technology, Steyrergasse 30, 8010 Graz, Austria}

\email{\{geisler,kang,sarantis,wdowinski\}@math.tugraz.at}

\begin{document}

\begin{abstract}
Given a graph $G$, we form a random subgraph $G_p$ by including each edge of $G$ independently with probability $p$. We provide an asymptotic expansion of the expected number of independent sets in random subgraphs of regular bipartite graphs satisfying certain vertex-isoperimetric properties, extending the work of Kronenberg and Spinka on the percolated hypercube. Combining graph containers with the cluster expansion from statistical physics, we give an expansion of the partition function of the Ising model in a certain range of the parameters. Among other applications, we obtain results for even tori of growing side-length. As a tool, we prove a refined container lemma for the Ising model, which mildly improves recent bounds of Jenssen, Malekshahian, and Park.

\vspace{0.3cm}
\noindent \textbf{Keywords.} independent sets, Ising model, container method, cluster expansion
\end{abstract}

\maketitle

\section{Introduction}
\subsection{Motivation and background}
An independent set in a graph $G=(V(G), E(G))$ is a vertex subset $I\subseteq V(G)$ such that no two vertices in $I$ are connected by an edge. We denote by $\cI(G)$ the collection of independent sets of $G$ and set $i(G)\coloneqq|\cI(G)|$. 
Independent sets arise naturally in various combinatorial and algorithmic problems, and thus their study has been a long-standing topic inspiring a variety of methods and results.
This includes a wealth of results on problems concerning their enumeration, sampling, algorithmic hardness, as well as structural and extremal properties. Since the problem of exact enumeration is known to be hard \cite{bezakova2018inapproximability, sly2014counting}, it is natural to pursue asymptotic results. One of the main techniques in this direction is via a probabilistic analysis of the hard-core model.

Let $G$ be a finite graph. The hard-core model on $G$ with \textit{fugacity} $\lam> 0$ samples independent sets $I\in\cI(G)$ with probability
$$\mathbb{P}(\boldsymbol{I}=I)=\frac{\lam^{|I|}}{Z_G(\lam)},$$
where
$$Z_G(\lam)=\sum_{I\in\cI(G)}\lam^{|I|}$$
is  the \textit{partition function} of the model, also known as the \textit{independence polynomial} of $G$. Since $Z_G(1)=i(G)$, studying the partition function in a range including $\lam=1$ implies a corresponding enumerative result. 

A graph that has drawn particular interest is the $d$-dimensional hypercube $Q^d$, whose vertex set is $\{0,1\}^d$ and two vertices are adjacent if they differ in exactly one coordinate. A celebrated and influential result of Korshunov and Sapozhenko \cite{korshunov1983number} states that
$$i(Q^d)=(1+o(1))2\sqrt{e}2^{2^{d-1}}.$$
Galvin \cite{galvin2011threshold} extended their technique to the hard-core model on $Q^d$ and described the typical structure of independent sets, as well as approximations of the partition function, for various values of the fugacity $\lam$. Later, Jenssen and Perkins \cite{jenssen2020independent} proved that
\begin{equation*}\label{eqn:hypercube}
    i(Q^d)=2\sqrt{e} 2^{2^{d-1}} \left(1+ \frac{3d^2-3d-2}{8 \cdot 2^d}+O\left(d^4 2^{-2d}\right)\right),
\end{equation*}
and furthermore they provide an algorithm to compute any number of terms of the cluster expansion.

Questions about independent sets are also an active area of research on various random graphs models \cite{bezakova2024fast, CoEf15, frieze1990independence, krivelevich2024greedy, krivelevich2003probability, nikoletseas2008large}. 
The percolated graph $G_p$ is a random graph obtained from the base graph $G$ after independently retaining each edge with probability $p$.
Percolation is a broad and deep subject on its own (see e.g., \cite{bollobas2006percolation, grimmett}), and in particular the percolated hypercube $Q^d_p$ has been extensively studied for its various structural and combinatorial properties \cite{ajtai1982largest, bollobas1983evolution, BKL92, erde2023expansion, erdos1979evolution, hulshof2020slightly, van2017hypercube}. 
It is natural to extend the result of Korshunov and Sapozhenko \cite{korshunov1983number} to its percolated setting, and
Kronenberg and Spinka \cite{kronenberg2022independent} initiated this line of work and studied the random variable $i(Q^d_p)$ (see also \cite{chowdhury2024gaussian} for a more recent work on the distribution of $i(Q^d_p)$). The first key observation of Kronenberg and Spinka was the equivalence, in expectation, of the hard-core model on percolated graphs to the antiferromagnetic Ising model in the unpercolated setting, thus eliminating the randomization of the graph. 

The antiferromagnetic Ising model on a graph $G$ has two parameters, the fugacity $\lam> 0$ and the \textit{inverse temperature} $\beta\in (0,\infty)$, and it samples any subset $I \subseteq V(G)$ of vertices with probability
$$\mathbb{P}(\boldsymbol{I}=I)=\frac{\lam^{|I|}e^{-\beta|E(I)|}}{Z_G(\lam,\beta)},$$
where 
$$Z_G(\lam,\beta)=\sum_{I\subseteq V(G)}\lam^{|I|}e^{-\beta |E(I)|}$$
is the partition function. Note that configurations $I$ with neighboring sites occupied are allowed, but are exponentially penalized by $\beta |E(I)|$ according to the number of such instances. The hard-core model may informally be viewed as the special case when $\beta=\infty$.
An elementary, yet important observation of Kronenberg and Spinka \cite[Proposition 1.7]{kronenberg2022independent} is that for $p=1-e^{-\beta}$,
\begin{equation}\label{eqn:hc_to_ising}
    \E[Z_{G_p}(\lam)]=Z_G(\lam,\beta).
\end{equation}
 All the theorems in \cite{kronenberg2022independent}, as well as in our current work, are then derived from the study of the Ising model on the base graph $G$. The common strategy for deriving the asymptotics of the partition functions above is verifying the convergence of the cluster expansion of the partition function's logarithm, and this also involves finding an appropriate graph container lemma.

\subsection{Main themes and results.}

The goals of this paper are two-fold. Our first goal concerns asymptotic enumeration of independent sets and explores how far beyond we can go from the hypercube $Q^d$, so that we can derive similar asymptotic results for more general regular bipartite graphs with similar mild vertex-expansion properties. We describe a set of technical conditions for a graph $G$ under which a container lemma and the convergence of the cluster expansion for the Ising and the hard-core model are guaranteed, generalizing the work of Kronenberg and Spinka \cite{kronenberg2022independent} on the hypercube. The parameterized isoperimetric conditions we impose allow us to apply our results to graphs with weaker vertex-expansion than previously considered examples.

Our second goal is to improve the graph container lemma applicable to the Ising model. Such a graph container lemma is also of interest for other purposes, but here it allows us to show that our asymptotic results hold for values of the fugacity $\lambda$ beyond previous works. Previous results in the hard-core model had to assume that $\lambda = \tilde{\Omega}(d^{-1/3})$ in order to apply a container lemma of Galvin \cite{galvin2011threshold} (where $\tilde{\Omega}$ hides a polylogarithmic factor). Jenssen, Malekshahian, and Park \cite{jenssen2024refined} recently proved a refined container lemma allowing them to improve the range of validity to $\lambda = \tilde{\Omega}(d^{-1/2})$. We give an alternative proof of this result that also works in the Ising model while also slightly improving the polylogarithmic factor. The main ingredients of our proof are entropy tools by Peled and Spinka \cite{peled2020long}.

\subsubsection{Vertex-isoperimetry}\label{sec:isoperimetry}
As noted in the past, e.g., in \cite{galvin2019independent, jenssen2020independent}, the key properties of the hypercube relevant to the container lemma and the cluster expansion are its isoperimetric inequalities and the fact that it is regular bipartite. Apart from the hypercube, isoperimetric inequalities were used in similar settings for the middle layers of the Boolean lattice \cite{balogh2021independent}, even tori \cite[Lemma 6.1]{jenssen2023homomorphisms}, and, more generally, Cartesian product graphs \cite{CEGK25}.
The similarities among these results are evident, and it is therefore natural to expect the method of \cite{jenssen2020independent,kronenberg2022independent} to work for a broad class of regular bipartite graphs with vertex-isoperimetric properties \textit{akin} to the hypercube. As stated before, one of our goals in this paper is to precisely pin down and quantify this statement, as well as relate it to the strength of the derived results. 
Furthermore, we uncover a hidden constraint on the size of the base graph $G$, which can be at most exponential in terms of its regularity.

\begin{definition}[Property I]\label{def:property-i}
Fix positive real constants $C_1, C_2, C_3, C_4, C_5$ satisfying $C_5< 2$ and $C_3>C_5+2$. Let $G$ be  a  $d$-regular $n$-vertex bipartite graph with bipartition $(\cO, \cE)$. We say $G$ satisfies {\bf Property I} if the following hold.
\begin{enumerate}
\item[\upshape{\textbf{Ia.}}]\label{expansionPI} For every subset $X \subseteq \cO$ or $X \subseteq \cE$,
        \begin{enumerate}
        \item[\upshape{(1)}]\label{iso-gen} $|N(X)| \geq (d-C_1|X|)|X|$,
          \item[\upshape{(2)}]\label{iso-mid} $|N(X)| \geq \frac{d}{C_2}|X|$ if $|X| \leq d^{C_3}$,
          \item[\upshape{(3)}]\label{iso-large} $|N(X)| \geq (1+\frac{C_4}{d^{C_5}})|X|$ if $|X| \leq \frac{3}{8}n$.
      \end{enumerate}
\item[\upshape{\textbf{Ib.}}]\label{sizePI} The number of vertices satisfies $n=\om(d^{C_5+5})$ and $\log n=O(d)$.
\end{enumerate}
\end{definition}

We note that, although the size constraint $\log n=O(d)$ was  embedded in the above mentioned examples and has not been a barrier for previous applications, it does pose an obstacle in the study of even tori $\mathbb{Z}_m^t$ for growing (even) $m$. The motivation behind studying $\mathbb{Z}_m^t$ for growing $m$ is a conjecture of Jenssen and Keevash \cite[Conjecture 19.1]{jenssen2023homomorphisms},  essentially stating that, for a fixed $t$ but $m\rightarrow\infty$, the first term of the cluster expansion should give a good approximation for the partition function of (weighted) \textit{homomorphisms} from the torus to any fixed graph. This conjecture was indeed proved by Peled and Spinka \cite{peled2020long}, for any graph with fixed vertex and edge weightings under certain symmetry conditions. On the other hand, we will prove that we may allow the fugacity and temperature to diminish as $t$ grows, while $m=\Omega_t(1)$. In this direction, we use a modified isoperimetric condition from a recent work \cite{CEGK25} that allows us to dispose of the size constraint, and leaves as an open direction the discovery of more families of graphs which fall in this category -- see \Cref{sec:tori} for further discussions. %For the sake of clarity, we keep the presentation of the paper for graphs with Property I, as it more closely resembles the classical statements of previously studied graphs. All the required modifications are discussed at \Cref{sec:tori}.

\subsubsection{Main results}

Our results will differ depending on whether $0<C_5 \leq 1$ or $C_5>1$, so it will be useful to set
  \[
  \lambexp \coloneqq \min\left\{\frac{1}{2}, 1-\frac{C_5}{2}\right\}.
   \]

\noindent Our first result gives an estimate of the expectation $\E[Z_{G_p}(\lam)]$ of the hard-core partition function of the percolated graph $G_p$.
\begin{theorem} \label{thm:expansion}
    Let $G$ be a $d$-regular $n$-vertex bipartite graph with bipartition $(\cO, \cE)$ satisfying \hyperref[def:property-i]{Property I}.
    For any fixed constant $\lambda_0>0$ there exists a constant $C_0>0$ such that the following holds. Suppose $\lam \le \lam_0$ and $\lam p \geq \frac{C_0 \log^{3/2} d}{d^{\lambexp}}$. Then for every $j \in \mathbb{N}$ and $\mathcal{D} \in \{\mathcal{O}, \mathcal{E}\}$, there exist functions $L_{\mathcal{D}, j}=L_{\mathcal{D}, j}(n,d,\lambda)$ and $\varepsilon_{\mathcal{D}, j}=\varepsilon_{\mathcal{D}, j}(n,d,\lambda)$ 
such that for every $k \in \mathbb{N}$, as $d \to \infty$,
  \[
    \Ex{Z_{G_p}(\lam)}  =  (1+\lam)^{n/2} \sum_{\cD \in \{\cO, \cE\}} \exp\left(\frac{n \lam}{2} \left(1-\frac{\lam p}{1+\lam}\right)^d + \mathbbm{1}_{k\geq 2} \cdot \sum_{j=2}^{k} L_{\cD, j} + \varepsilon_{\cD, k} \right),
    \]
where   
    \[
    |L_{\cD, j}|=O\left(n d^{2(j-1)} \lam^{j} \left(1-\frac{\lam p}{1+\lam}\right)^{d j} \right) \hspace{10pt} \textit{and} \hspace{10pt}
    |\varepsilon_{\cD, j}| =O\left(nd^{2j} \lam^{j+1} \left(1-\frac{\lam p}{1+\lam}\right)^{d(j+1)}\right).
  \]
  
\end{theorem}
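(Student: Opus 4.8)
The plan is to follow the strategy of Kronenberg and Spinka. By \eqref{eqn:hc_to_ising} with $e^{-\beta}=1-p=:\mu$ it suffices to produce the claimed expansion for the antiferromagnetic Ising partition function $Z_G(\lam,\beta)=\sum_{A\subseteq\cO,\,B\subseteq\cE}\lam^{|A|+|B|}\mu^{e(A,B)}$, where we used that $G$ is bipartite. For each $\cD\in\{\cO,\cE\}$, summing out the spins on one side, say $\cD$, and writing $\cD^c$ for the other, yields the exact identity
\[
Z_G(\lam,\beta)=(1+\lam)^{n/2}\,\Xi^{\cD},\qquad
\Xi^{\cD}=\sum_{B\subseteq\cD^c}\ \prod_i w(B_i),
\]
where $B_i$ are the connected components of $B$ in the ``square graph'' on $\cD^c$ (join two vertices when they have a common $G$-neighbour) and $w(\gamma)=\lam^{|\gamma|}\prod_{v\in N(\gamma)}\tfrac{1+\lam\mu^{|N(v)\cap\gamma|}}{1+\lam}$. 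Two such components have disjoint $G$-neighbourhoods, so $\Xi^{\cD}$ is a genuine polymer partition function whose polymers are the square-connected subsets of $\cD^c$, two polymers being compatible exactly when their supports are non-adjacent; moreover $0<w(\gamma)\le\lam^{|\gamma|}(1-\tfrac{\lam p}{1+\lam})^{|N(\gamma)|}$ since $\mu^t\le\mu$ for $t\ge 1$.

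I would then truncate both models to the admissible polymers $\gamma$ with $|\gamma|\le d^{C_3}$ — so that \hyperref[def:property-i]{Ia(2)} applies to every admissible polymer — obtaining $\hat\Xi^{\cO},\hat\Xi^{\cE}$, and prove
\[
\frac{Z_G(\lam,\beta)}{(1+\lam)^{n/2}}=\hat\Xi^{\cO}+\hat\Xi^{\cE}+E,
\]
with $|E|$ smaller than the error bound claimed for $\varepsilon_{\cD,k}$ for every fixed $k$. This is the heart of the argument: one splits the configurations into an ``$\cO$-phase'' (all square-components of $B$ admissible), an ``$\cE$-phase'' (all square-components of $A$ admissible), and a remainder; the first two blocks contribute $(1+\lam)^{n/2}\hat\Xi^{\cO}$ and $(1+\lam)^{n/2}\hat\Xi^{\cE}$ up to the configurations admissible on both sides, while every remaining configuration has a square-component of size $>d^{C_3}$ on both sides. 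The weights of the remainder and of the overlap are controlled by the refined Ising container lemma proved earlier in the paper, together with \hyperref[def:property-i]{Ia(3)} and \hyperref[def:property-i]{Ib}: a large defect either incurs a large $\mu$-penalty through $e(A,B)$ or, when $e(A,B)$ is small, is an expanding configuration that lies in one of few containers, and it is precisely to make this bound beat the entropy of the containers that one needs $\lam p\ge C_0\log^{3/2}d/d^{\lambexp}$, the exponent $\lambexp=\min\{1/2,\,1-C_5/2\}$ matching the weakest expansion rate Ia(3) can supply. Pinning down the admissibility window so that $\hat\Xi^{\cD}$ converges \emph{and} $E$ stays negligible across the full claimed range of $\lam p$ is, I expect, the main obstacle.

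Granting the truncation, I would verify a Koteck\'y--Preiss-type convergence criterion for $\hat\Xi^{\cD}$: combining $w(\gamma)\le\lam^{|\gamma|}(1-\tfrac{\lam p}{1+\lam})^{|N(\gamma)|}$ with \hyperref[def:property-i]{Ia(1)} for $|\gamma|=O(1)$ and \hyperref[def:property-i]{Ia(2)} for $O(1)\le|\gamma|\le d^{C_3}$ gives $w(\gamma)\le\big(\lam(1-\tfrac{\lam p}{1+\lam})^{cd}\big)^{|\gamma|}$ for a suitable $c>0$, and the number of square-connected $m$-vertex sets through a fixed vertex is at most $(ed^2)^{m-1}$ because the square graph has maximum degree below $d^2$; the resulting geometric sum against $e^{|\gamma|}$ is summably small since $\lam pd\ge C_0 d^{1-\lambexp}\log^{3/2}d\gg\log d$. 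Hence $\log\hat\Xi^{\cD}=\sum_{j\ge1}L_{\cD,j}$, where $L_{\cD,j}$ collects the cluster terms whose polymers carry $j$ vertices of $\cD^c$ in total. The only clusters with $j=1$ are singletons, so by $d$-regularity $L_{\cD,1}=\sum_{v\in\cD^c}w(\{v\})=\tfrac{n\lam}{2}\big(\tfrac{1+\lam\mu}{1+\lam}\big)^d=\tfrac{n\lam}{2}\big(1-\tfrac{\lam p}{1+\lam}\big)^d$, the same for both $\cD$; pulling out this common factor and setting $\varepsilon_{\cD,k}=\sum_{j>k}L_{\cD,j}+\log(1+\eta_\cD)$, where $\eta_\cD$ absorbs $E$ (legitimate since $\hat\Xi^{\cD}\ge1$, all $w(\gamma)$ being positive), gives the stated identity. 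The size bounds follow from the standard cluster-expansion tail estimate: a cluster of total size $j$ through a fixed root contributes $O_j(1)\,\lam^j(1-\tfrac{\lam p}{1+\lam})^{dj}$ (using $|N(\gamma)|\ge d|\gamma|-C_1|\gamma|^2$ from Ia(1)), there are $O(d^{2(j-1)})$ of them per root and $n/2$ roots, and the tails form a convergent geometric series of ratio $O\big(\lam d^2(1-\tfrac{\lam p}{1+\lam})^d\big)=o(1)$, yielding $|L_{\cD,j}|=O(nd^{2(j-1)}\lam^j(1-\tfrac{\lam p}{1+\lam})^{dj})$ and $|\varepsilon_{\cD,k}|=O(nd^{2k}\lam^{k+1}(1-\tfrac{\lam p}{1+\lam})^{d(k+1)})$.
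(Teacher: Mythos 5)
Your overall architecture is the paper's: pass to the Ising partition function, sum out one side to get a polymer representation for each choice of defect side, truncate to small polymers, verify a Koteck\'y--Preiss criterion, identify $L_{\cD,1}=\tfrac{n\lam}{2}(1-\tfrac{\lam p}{1+\lam})^d$ as the common first-order term, and absorb the truncation error and the tail of the cluster expansion into $\varepsilon_{\cD,k}$. The weight bound $w(\gamma)\le\lam^{|\gamma|}(1-\tfrac{\lam p}{1+\lam})^{|N(\gamma)|}$, the computation of $L_{\cD,1}$, the counting of $2$-linked sets, and the convergence argument for polymers of size at most $d^{C_3}$ (via Ia(1) and Ia(2)) all match the paper. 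The bookkeeping for $|L_{\cD,j}|$ and $|\varepsilon_{\cD,k}|$ is also essentially the paper's, though note that the implicit constants in your per-cluster bound grow with $j$ (Ursell function, the factor $\ta^{C_1 j^2}$), so the tail $\sum_{j>k+1}L_{\cD,j}$ should be controlled by the Koteck\'y--Preiss tail inequality \eqref{eq:KP_tail} rather than by summing the individual fixed-$j$ bounds as a geometric series; this is what \Cref{l:Kotecky} does.

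The genuine gap is in the step you yourself flag as the main obstacle: controlling $E=\text{remainder}-\text{overlap}$. You propose that the remainder is ``controlled by the refined Ising container lemma together with Ia(3) and Ib,'' but the container lemma (\Cref{lem:container_main}) requires $b\ge(1+C_4 d^{-C_5})a$, which \hyperref[def:property-i]{Property I} only supplies for sets of size at most $\tfrac38 n$. Your remainder necessarily contains configurations in which \emph{both} sides have closure exceeding $\tfrac38 n$ (e.g.\ $I$ close to $V(G)$), and for these no vertex-expansion is available, so no container bound applies; ``few containers'' is exactly what fails there. The paper handles precisely this set $\cJ$ by a separate entropy argument (\Cref{lem:nonpolymer}, built on \Cref{lemma:entropy-3} and \Cref{lemma:more-general-bound-on-Z}): either many vertices of one side have few occupied neighbours, which costs entropy against $(1+\lam)^{n/2}$, or $|E(I)|$ is genuinely large, which costs $e^{-\beta|E(I)|}$. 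Relatedly, by truncating at $|\gamma|\le d^{C_3}$ instead of the paper's $|[A]|\le\tfrac34|\cD|$, you also move all polymers of intermediate size $d^{C_3}<|[A]|\le\tfrac38 n$ out of $\hat\Xi^{\cD}$ and into the remainder; the paper instead keeps them in the polymer model and uses the container lemma inside Case~3 of the Koteck\'y--Preiss verification (\Cref{c:KoteckyPreiss}), which is where the size constraint $\log n=O(d)$ of Ib is actually consumed. Your overlap bound likewise needs an argument (the paper's \Cref{l:minoritydefect} uses a Chernoff bound on the non-defect side); it is plausible along the lines you indicate, but the doubly-large configurations cannot be dispatched by containers and require the entropy machinery.
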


By taking $G=Q^d$ in \Cref{thm:expansion}, we recover the results on the Ising partition function from \cite{kronenberg2022independent} and, by taking $p=1$, the results on the hard-core partition function for {\em unpercolated} graphs from \cite{CEGK25}.  The expansion of the Ising partition function can be translated to a counting result {regarding the expected number of independent sets in a percolated graph $G_p$} by taking $\lam=1$ and $k=1$ in \Cref{thm:expansion}, as follows.

\begin{theorem} \label{thm:numberindsets}
  Let $G$ be a $d$-regular $n$-vertex bipartite graph with bipartition $(\cO, \cE)$ satisfying \hyperref[def:property-i]{Property I}.
  There exists a constant $C_0>0$ such that if $p \geq \frac{C_0 \log^{3/2} d}{d^{\lambexp}}$, then as $d \to \infty$,
  \[
    \Ex{i(G_p)} = 2 \cdot 2^{n/2} \exp\left(\frac{n}{2^{d+1}}\left(2-p\right)^d + O\left(\frac{nd^2}{2^{2d}}(2-p)^{2d}\right)\right).
    \]
\end{theorem}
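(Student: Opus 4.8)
\emph{Proof sketch (proposal).} The plan is to read off \Cref{thm:numberindsets} from \Cref{thm:expansion} by specialising to $\lambda=1$ and $k=1$. First note that, by definition of the independence polynomial, $Z_{G}(1)=\sum_{I\in\cI(G)}1^{|I|}=i(G)$, so $\Ex{Z_{G_p}(1)}=\Ex{i(G_p)}$. Apply \Cref{thm:expansion} with the fixed constant $\lambda_0=1$; this yields a constant $C_0>0$. Under the hypothesis $p\ge C_0\log^{3/2}d/d^{\lambexp}$ of \Cref{thm:numberindsets} we then have $\lambda p=p\ge C_0\log^{3/2}d/d^{\lambexp}$ and $\lambda=1\le\lambda_0$, so the conclusion of \Cref{thm:expansion} is available for this $G$.

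Next I would substitute $\lambda=1$ and $k=1$ into the displayed identity. The prefactor $(1+\lambda)^{n/2}$ becomes $2^{n/2}$, and the leading exponent simplifies as
\[
\frac{n\lambda}{2}\left(1-\frac{\lambda p}{1+\lambda}\right)^{d}
=\frac{n}{2}\left(1-\frac{p}{2}\right)^{d}
=\frac{n(2-p)^{d}}{2^{d+1}}.
\]
Since $k=1$, the indicator $\mathbbm{1}_{k\ge2}$ vanishes and no $L_{\cD,j}$ terms appear; we are left with $\varepsilon_{\cD,1}$, and the error estimate of \Cref{thm:expansion} (at $j=1$, $\lambda=1$) gives $|\varepsilon_{\cD,1}|=O\!\left(nd^{2}(1-\tfrac{p}{2})^{2d}\right)=O\!\left(\tfrac{nd^{2}}{2^{2d}}(2-p)^{2d}\right)$ for both $\cD\in\{\cO,\cE\}$.

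It remains to collapse the two-term sum over $\cD$. Factoring out $\exp\!\left(\tfrac{n(2-p)^{d}}{2^{d+1}}\right)$,
\[
\Ex{i(G_p)}=2^{n/2}\exp\!\left(\frac{n(2-p)^{d}}{2^{d+1}}\right)\left(e^{\varepsilon_{\cO,1}}+e^{\varepsilon_{\cE,1}}\right),
\]
and writing $M\coloneqq\max_{\cD\in\{\cO,\cE\}}|\varepsilon_{\cD,1}|=O\!\left(\tfrac{nd^{2}}{2^{2d}}(2-p)^{2d}\right)$ we have $2e^{-M}\le e^{\varepsilon_{\cO,1}}+e^{\varepsilon_{\cE,1}}\le 2e^{M}$, so the parenthesised factor equals $2e^{\theta}$ for some $\theta$ with $|\theta|\le M$. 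Absorbing $\theta$ into the exponent produces exactly the claimed formula.

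I expect no real obstacle: the substance — the graph container lemma, the verification of convergence of the cluster expansion, and the control of the functions $\varepsilon_{\cD,j}$ — is entirely inherited from \Cref{thm:expansion}. The only points that need a moment's care are that $\lambda_0=1$ is an admissible choice in \Cref{thm:expansion} (it is, since $\lambda_0$ there ranges over all positive constants) and that the implied $O$-constant stays uniform in $n$ and $d$ when passing from the two $\varepsilon_{\cD,1}$ to the single error $\theta$ in the final exponent, which is immediate from the elementary bound above.
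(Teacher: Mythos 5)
Your proposal is correct and is exactly the paper's route: the paper derives \Cref{thm:numberindsets} by specialising \Cref{thm:expansion} to $\lambda=1$, $k=1$, and your computation of the leading term, the bound on $\varepsilon_{\cD,1}$, and the collapse of the two-term sum into the factor $2e^{\theta}$ are all as intended. No gaps.
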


Note that \Cref{thm:numberindsets} provides the asymptotics for the expected number of independent sets on a logarithmic scale. As in many instances, if a $(1+o(1))$-factor approximation is desired, a finite number of terms is sufficient in order to get such an approximation for $\Ex{i(G_p)}$ for any $d$-regular $n$-vertex bipartite $G$ graph satisfying \hyperref[def:property-i]{Property I} and any fixed $p$.
\begin{remark}\label{rem:sharp}
    Let $k\geq 1$ be a fixed integer and let $\ell\geq 0$ a constant such that $\log n\leq \ell d$. Then, for any fixed $p$ satisfying $1\geq p> 2-2^{1-\frac{\ell}{k+1}}$, it suffices to consider the first $k$ terms of the expansion. In particular, as $d \to \infty$,
    \[
    \Ex{i(G_p)}=(1+o(1)) 2^{n/2} \sum_{\cD \in \{\cO, \cE\}} \exp\left(\frac{n}{2^{d+1}}\left(2-p\right)^d + \mathbbm{1}_{k\geq 2}\cdot \sum_{j=2}^{k} L_{\cD, j}\right),
    \]
    where \[|L_{\cD, j}|=O\left(n d^{2(j-1)} \left(1-\frac{p}{2}\right)^{d j} \right).\]
\end{remark}

\subsubsection{Graph containers and the cluster expansion}

From a structural and formal points of view, the proof of our results can be traced back to the vertex-isoperimetric properties of a (regular bipartite) graph as discussed in \Cref{sec:isoperimetry}. Another essential ingredient, indeed the heart of the proof, is \textit{Sapozhenko's container method}, which has been widely applied in enumerative problems on independent sets in graphs \cite{balogh2025sharp, galvin2006slow, kahn2022number, park2022note, potukuchi2021enumerating}. The main idea is that bounds on the number of independent sets can be obtained via bounds on the number of sets of a given size with a neighborhood of a given size. This idea was initiated by Sapozhenko, who introduced the method for the purpose of asymptotically counting the number of antichains of the Boolean lattice \cite{sapozhenko1991number}, and the number of independent sets in the hypercube together with Korshunov \cite{korshunov1983number}. For an excellent exposition of the latter result, and the container method in general, see the exposition by Galvin \cite{galvin2019independent}. We note that container methods have been extended to hypergraphs independently by Balogh, Morris, and Samotij \cite{BALOGH_2019} and by Saxton and Thomason \cite{Saxton_2015} and have since found a wealth of applications \cite{Balogh_2023, Balogh_2019, Mattheus_2025, mubayi2023random}, while improvements, variations and extensions of hypergraph containers remain an active area of research \cite{campos2024towards, Nenadov_2023, nenadov2024short, nenadov2024containers}.

Galvin \cite{galvin2011threshold} generalized Sapozhenko's approach to the hard-core model and obtained asymptotics for the partition function of the hypercube $Q^d$ with fugacity $\lam=\Om\left(\frac{\log d}{d^{1/3}}\right)$: Among others, he proved that as $d \to \infty$,
\begin{equation}\label{eqn:approx_galvin}
    Z_{Q^d}(\lam)=2(1+\lam)^{2^{d-1}}\exp\left(\frac{\lam}{2}\left(\frac{2}{1+\lam}\right)^d(1+o(1))\right).
\end{equation}
Jenssen and Perkins \cite{jenssen2020independent} pioneered a new, fruitful line of work, where the framework of graph containers is used in conjunction with \textit{abstract polymer models} and the \textit{cluster expansion} from statistical physics.
It has since been used to answer several enumerative and algorithmic questions in combinatorics, see, e.g., \cite{jenssen2023homomorphisms, jenssen2024dedekind, jenssen2022independent, li2023number}, or \cite{davies2021proof, jenssen2023evolution, jenssen2024sampling} for more applications without the use of the container method.
In their work, Jenssen and Perkins  extended the results of Galvin \cite{galvin2011threshold} to obtain asymptotics for the hard-core partition function on $Q^d$ in the range $\lam=\Om\left(\frac{\log d}{d^{1/3}}\right)$.
Subsequent applications of the method have yielded analogous results \cite{balogh2021independent, jenssen2023approximately}. In all cases, the bound $\lam=\Om\left(\frac{\log d}{d^{1/3}}\right)$ appears as the barrier of the graph container approach following Galvin \cite{galvin2011threshold}, which stems from the optimization of the container parameters in the resulting upper bound of the partition function.
In the same work, Galvin conjectures the derived asymptotics for $Z_G(\lam)$ should hold for the extended range $\lam=\tilde{\Om}\left(\frac{1}{d}\right)$, which would be optimal for the hypercube  $Q^d$ (see \cite{galvin2011threshold}, specifically the paragraph after Theorem 1.6). The first result which broke the barrier of $d^{-1/3}$ was recently obtained by Jenssen, Malekshahian, and Park \cite{jenssen2024refined}, who showed that $\lam=\tilde{\Om}(d^{-1/2})$ is enough for so-called \textit{approximately biregular} graphs that satisfy certain vertex-expansion conditions akin to our \hyperref[def:property-i]{Property I}. Their argument is based on a new, iterative $\psi$-approximation to produce containers \cite[Lemma 2.4]{jenssen2024refined}. 

In light of these exciting recent developments, on one hand we similarly improve the range of the validity for the Ising and hard-core container lemmas in previously considered examples, such as the hypercube or the middle layer graph (see \Cref{lem:container_main} and Corollaries \ref{cor:hard-core-corollary-1}, \ref{cor:hard-core-corollary-2}). On the other hand, we clarify the dependence between the parameters of \hyperref[def:property-i]{Property I} and the valid range of the fugacity $\lambda>0$ and inverse temperature $\beta>0$.
For example, our main container lemma (\Cref{lem:container_main}) implies that if $\lam$ is bounded and $\lam(1-e^{-\beta})=\Om\left(\frac{\log^{3/2}d}{d^{1/2}}\right)$, then as $d \to \infty$,
$$Z_G(\lam,\beta)=2(1+\lam)^{n/2}\exp\left(\frac{n\lam}{2}\left(\frac{1+\lam e^{-\beta}}{1+\lam}\right)^d(1+o_d(1))\right)$$
for any $d$-regular $n$-vertex graph $G$ satisfying \hyperref[def:property-i]{Property I} with $0<C_5\leq 1$. If $1 < C_5 < 2$, then the same estimate on $Z_G(\lam,\beta)$ holds when $\lam(1 - e^{-\beta}) = \Omega\left(\frac{\log^{3/2}d}{d^{1-C_5/2}}\right)$. Both of our Ising and hard-core container results provide the best-known lower bound on $\lam$, namely $$\Om\left(\frac{\log^{3/2}d}{d^{1/2}}\right),$$  for $d$-regular bipartite graphs in the range of interest of previous work, i.e., $0 < C_5 \le 1$.
In particular, this improves the current best bound of \cite{jenssen2024refined} by a $\log^{1/2}d$-factor. The detailed statements of our container lemma can be found in \Cref{sec:containers}. Our approach follows the same lines as Kronenberg and Spinka \cite{kronenberg2022independent}, but more carefully optimizes their proof. This approach is different from that of \cite{jenssen2024refined}, in that we still use the classical $\psi$-approximation for containers, but combine it with entropy-derived lemmas by Peled and Spinka \cite{peled2020long} (see \Cref{sec:entropy_tools}).

\subsection{Organization}
After a preliminary discussion and definitions in \Cref{sec:prelim}, \Cref{sec:proof_main} sets up the polymer model for our purposes and states the main properties relating its partition function and its derived measure to the originals from the hard-core model. We then present a self-contained proof of our main theorem using these properties. \Cref{sec:containers} is dedicated to containers, and includes our main container lemma for the Ising model and the corollaries for the hard-core model (for unpercolated graphs). In \Cref{sec:entropy_tools} we present the entropy-derived bounds, which will be used to prove our key lemma (\Cref{lemma:total-weight-given-approximation}) in \Cref{sec:bound_given_approx}. \Cref{sec:nonpolymer} deals with the contribution of configurations not captured by the polymer model. All these bounds are then used in \Cref{sec:kotecky} to prove the convergence of the cluster expansion by verifying the Koteck\'y--Preiss condition, while the main properties of the derived measure are proved in \Cref{sec:propertiesmu}. \Cref{sec:examples} presents some concrete examples and, finally, \Cref{sec:tori} discusses the case of {even tori of growing side-length} and the modifications required to treat them. We conclude with a short discussion of the results and possible avenues for future research.

\section{Preliminaries}\label{sec:prelim}

\subsection{Notation and basic facts}

Throughout the paper, whenever we consider a $d$-regular $n$-vertex graph satisfying \hyperref[def:property-i]{Property I}, we in fact consider a sequence $(d_{\ell}, n_{\ell})_{\ell \in\mathbb{N}}$ of pairs $(d_{\ell},n_{\ell})\in \mathbb{N}^2$ giving rise to a sequence $(G_{\ell})_{\ell \in\mathbb{N}}$ of $d_{\ell}$-regular $n_{\ell}$-vertex graphs such that there exists a $\ell_0 \in\mathbb{N}$ such that for every $\ell \geq \ell_0$ the graphs $G_{\ell}$ satisfy \hyperref[def:property-i]{Property I}. All asymptotics are with respect to $d_{\ell} \to \infty$ and we use standard Landau-notation for asymptotics.

For ease of presentation, we will omit floor/ceiling signs and assume that $n$ is even whenever necessary throughout the paper. Unless explicitly stated otherwise, all logarithms have the natural base.  A tilde over any asymptotic notation signifies the presence of polylogarithmic factors.

Let $G=(V(G),E(G))$ be a graph. Given a set $A \subseteq V(G)$ we denote the \emph{(external) neighborhood} of $A$ by $N_G(A) \coloneqq \{v \in V(G) \setminus A : \text{ there exists } w \in A \text{ with } vw \in E(G)\}$, and we write $N_G(v)$ for $N_G(\{v\})$. For a vertex $v \in V(G)$, we denote by $d_G(v)$ the \emph{degree} of $v$, that is, $d_G(v)\coloneqq|N_G(v)|$. Given two distinct vertices $u,v\in V(G)$, we denote by $\operatorname{codeg}(u,v)$ their codegree, i.e., $\operatorname{codeg}(u,v)\coloneqq |N_G(u)\cap N_G(v)|$.  
The \textit{maximum codegree} of $G$ is defined as $\max\{\operatorname{codeg}(u,v):\ u,v\in V(G), u\neq v\}$. For two disjoint subsets $A, B\subseteq V(G)$ we denote by $E_G(A, B)$ the set of edges with one endpoint in $A$ and one endpoint in $B$, while $E_G(A)$ is the sets of edges with both endpoints in $A$.
Whenever the graph $G$ is clear from context we may suppress the index $G$ in this notation.  
A vertex subset $A \subseteq V(G)$ is \textit{$2$-linked} if $A$ induces a connected subgraph of $G^2$, where the graph $G^2$ has the same vertex set as $G$ and $uv$ is an edge if the distance between $u$ and $v$ in $G$ is at most $2$. The following lemma is well known and can be found, e.g., in \cite{GaKa2004}.

\begin{lemma}\label{l:counting2linked}
  Let $\ell \in \mathbb{N}$ and $G$ be a $d$-regular graph. Fix a vertex $v \in V(G)$. The number of $2$-linked subsets of size $\ell$ which contain $v$ is at most $(ed^2)^{\ell-1}$.
\end{lemma}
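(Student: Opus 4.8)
The plan is to pass to the square graph $G^2$ and reduce to a standard tree-counting estimate. A $2$-linked subset of size $\ell$ containing $v$ is exactly a set $A$ with $v \in A$, $|A| = \ell$, inducing a connected subgraph of $G^2$. Since $G$ is $d$-regular, every vertex has at most $d + d(d-1) = d^2$ vertices at distance $1$ or $2$ from it, so $\Delta(G^2) \le d^2$. It therefore suffices to prove the following general fact: in any graph $H$ with $\Delta(H) \le D$, the number of $\ell$-vertex connected subsets of $V(H)$ containing a fixed vertex is at most $(eD)^{\ell - 1}$; applying this to $H = G^2$ with $D = d^2$ gives the lemma.

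For this, first reduce from connected subsets to subtrees: to each connected $A \ni v$ associate a canonical spanning tree of $H[A]$ — for instance its breadth-first-search tree from $v$ with ties broken using a fixed total order on $V(H)$. Distinct sets $A$ have distinct vertex sets, hence distinct canonical trees, so it suffices to bound the number of subtrees (subgraphs that happen to be trees) of $H$ with $\ell$ vertices containing $v$.

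To count these subtrees, root each one at $v$ and order the children of every vertex by the fixed total order on $V(H)$; this turns a subtree $T$ into a plane rooted tree $\hat{T}$ on $\ell$ vertices together with a vertex-labelling, and the map $T \mapsto (\hat{T}, \text{labels})$ is injective. For a fixed plane rooted tree $\tau$, the number of subtrees $T$ of $H$ with $\hat{T} = \tau$ is at most $\prod_{x \in \tau} \binom{D}{c(x)}$, where $c(x)$ is the number of children of $x$ in $\tau$: once a vertex is embedded, its children must map to an increasing tuple of $c(x)$ of its (at most $D$) neighbours. Summing over all plane trees on $\ell$ vertices, $\sum_{\tau} \prod_{x} \binom{D}{c(x)}$ is the coefficient of $x^\ell$ in the solution $W = W(x)$ of $W = x(1+W)^D$, which by Lagrange inversion equals $\frac{1}{\ell}\binom{D\ell}{\ell-1}$ (equivalently, the number of $\ell$-vertex subtrees of the complete $D$-ary tree containing its root, a Fuss--Catalan number). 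Finally, the elementary bounds $\binom{m}{k} \le (em/k)^k$ and $(1 + \tfrac{1}{\ell-1})^{\ell-1} < e$ yield $\frac{1}{\ell}\binom{D\ell}{\ell-1} \le (eD)^{\ell-1}$ for all $\ell \ge 1$, the cases $\ell \in \{1,2\}$ being checked directly.

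The one point requiring care is obtaining the constant $e$ rather than the weaker $4$ that a naive depth-first-search walk encoding (with its Catalan factor $\binom{2\ell-2}{\ell-1}$) would produce; this is precisely what the plane-tree/Fuss--Catalan count provides, and one must only note that the root may have up to $D$ children while at other vertices one neighbour is the parent (which only helps). The remaining ingredients — the estimate $\Delta(G^2) \le d^2$, the spanning-tree reduction, and the final binomial manipulation — are routine.
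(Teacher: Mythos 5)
Your proof is correct. The paper does not prove \Cref{l:counting2linked} at all --- it cites it as well known from Galvin--Kahn --- and your argument is precisely the standard one behind that citation: pass to $G^2$ (maximum degree $\le d^2$), inject connected sets into canonical spanning subtrees, count rooted subtrees of $H$ by embedding plane trees (equivalently, subtrees of the complete $D$-ary tree), and bound the resulting Fuss--Catalan number $\frac{1}{\ell}\binom{D\ell}{\ell-1}$ by $(eD)^{\ell-1}$. All the individual steps check out, including the Lagrange-inversion identity and the handling of $\ell\in\{1,2\}$, where the factor $\frac{e}{\ell}\bigl(1+\frac{1}{\ell-1}\bigr)^{\ell-1}\cdot\frac{1}{e}$ from the crude binomial estimate would otherwise exceed $1$.
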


We will need some basic tools from probability. Given a non-negative real random variable $X$ and $r \geq 0$, the \emph{cumulant generating function} $K_X(r)$ is defined as
\[
  K_X(r)\coloneqq \log \mathbb{E}\left[e^{rX}\right].
\]
For such a random variable, a simple application of Markov's inequality yields that for any $a>0$ and $r \geq 0$,
\begin{equation}\label{eq:Markov}
  \mathbb{P}(X \geq a) = \mathbb{P}\left(e^{rX} \geq e^{ra}\right) \leq e^{-ra+K_X(r)}.
\end{equation}

Given two probability measures $\mu$ and $\hat{\mu}$ on a sample space $\Omega$, the \emph{total variation distance} between them is defined by
\begin{equation}\label{eq:totalvariationdist}
  \norm{\hat{\mu}-\mu}_{TV} \coloneqq \frac{1}{2} \sum_{\om \in \Om} \left|\hat{\mu}(\om) - \mu(\om)\right|.
\end{equation}

We will also use a standard version of the Chernoff bound for binomial distributions, see \cite[Appendix A]{alon2016probabilistic}.
\begin{lemma}\label{l:Chernoff}
  Let $N \in \mathbb{N}$, $0 < p < 1$, and $X \sim \Bin(N,p)$. Then for every $r \geq 0$,
  \[
    \Pr[\big]{|X - Np| \geq r} \; \le \; 2\exp\left( -\frac{2r^2}{N} \right).
  \]
\end{lemma}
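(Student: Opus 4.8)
The plan is to derive this as a standard Chernoff--Hoeffding bound, using the exponential moment method already recorded in \eqref{eq:Markov}. Write $X = \sum_{i=1}^{N} Y_i$ where $Y_1,\dots,Y_N$ are independent Bernoulli random variables with parameter $p$, and set $Z_i \coloneqq Y_i - p$, so that $X - Np = \sum_{i=1}^N Z_i$ is a sum of independent, mean-zero random variables, each supported in the interval $[-p,\,1-p]$, which has length $1$. Note that although \eqref{eq:Markov} was stated for non-negative $X$, the underlying estimate $\Pr{Y \ge a} \le e^{-ra}\E[e^{rY}]$ only uses Markov's inequality applied to the non-negative variable $e^{rY}$, so it applies verbatim to $Y = X - Np$.

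For the upper tail, I would apply this to $X - Np$: for any $t \ge 0$,
\[
  \Pr{X - Np \ge r} \;\le\; e^{-tr}\,\E\!\left[e^{t(X-Np)}\right] \;=\; e^{-tr}\prod_{i=1}^N \E\!\left[e^{tZ_i}\right].
\]
The key step is \emph{Hoeffding's lemma}: for a mean-zero random variable $Z$ supported in an interval of length $1$, one has $\E[e^{tZ}] \le e^{t^2/8}$. This follows by setting $\psi(t) \coloneqq \log \E[e^{tZ}]$, checking $\psi(0) = \psi'(0) = 0$, and observing that $\psi''(t)$ equals the variance of the tilted distribution obtained by reweighting $Z$ by $e^{tZ}$; since that distribution is also supported in an interval of length $1$, its variance is at most $1/4$, and a second-order Taylor expansion gives $\psi(t) \le t^2/8$. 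Substituting this bound yields $\Pr{X - Np \ge r} \le e^{-tr + Nt^2/8}$, and optimizing over $t \ge 0$ with the choice $t = 4r/N$ gives $\Pr{X - Np \ge r} \le e^{-2r^2/N}$.

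For the lower tail, I would run the same argument with each $Z_i$ replaced by $-Z_i$ (equivalently, apply the upper-tail bound to $N - X \sim \Bin(N, 1-p)$), obtaining $\Pr{X - Np \le -r} \le e^{-2r^2/N}$. A union bound over the two one-sided events then yields $\Pr[\big]{|X - Np| \ge r} \le 2e^{-2r^2/N}$, as claimed. The only non-routine ingredient is Hoeffding's lemma; the rest is bookkeeping with the moment generating function and a one-line optimization. Since this is entirely standard (and the paper already points to \cite[Appendix A]{alon2016probabilistic}), in the write-up I would most likely just cite it rather than reproduce the convexity computation.
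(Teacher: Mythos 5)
Your proof is correct and is the standard Hoeffding argument (exponential moment method plus Hoeffding's lemma and optimization of the tilting parameter), which is exactly the proof given in the reference the paper cites; the paper itself offers no proof and simply cites \cite[Appendix A]{alon2016probabilistic}. Citing the result, as you suggest at the end, is entirely appropriate here.
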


\subsection{Abstract polymer models and the cluster expansion}
Let $\cP$ be a finite set of objects, which we call \textit{polymers}, equipped with a \textit{weight function} $w:\cP\rightarrow [0,\infty)$. We also equip $\cP$ with a symmetric, anti-reflexive relation $\sim$, and we say that two polymers $A,A'\in \cP$ are \textit{compatible} if $A\sim A'$ and \textit{incompatible} otherwise, denoting the latter by $A\nsim A'$. We call a set of pairwise compatible polymers a \textit{polymer configuration} and let $\Om$ denote the collection of polymer configurations. The triple $(\cP,w,\sim)$ is called a \textit{polymer model} and we define its \textit{partition function} as
$$\Xi_\cP\coloneqq\sum_{\Theta\in\Om}\prod_{A\in \Theta} \om(A).$$

For an ordered  tuple $\Gamma$ of polymers, the \emph{incompatibility graph} $H(\Gamma)$ is the graph with vertex set $\Gamma$, in which there is an edge between any two elements of $\Gamma$ that are incompatible.
A \textit{cluster} is an ordered tuple of polymers for which the incompatibility graph is connected. We denote by $\cC$ the collection of all clusters. We extend the weight function to clusters by setting
$$\om(\Gam)\coloneqq\phi(H(\Gam))\prod_{A\in \Gam}\om(A),$$
where $\phi$ is the Ursell function of graphs, defined as
$$\phi(G)\coloneqq\frac{1}{|V(G)|!}\sum_{\substack{E\subseteq E(G)\\ \text{spanning, connected}}}(-1)^{|E|}.$$
The \textit{cluster expansion} is the formal power series for $\log\Xi(\cP)$ given by
\begin{equation}\label{eq:cluster_exp}
    \log\Xi_\cP=\sum_{\Gam\in\cC}\om(\Gam).
\end{equation}
Note that this is indeed an infinite formal series: Although the set of polymers is finite, the set of clusters is infinite as there exist arbitrary long sequences of incompatible polymers (for example, by repeating the same polymer any number of times). In fact, this is the multivariate Taylor expansion of $\log \Xi_\cP$. The equality \eqref{eq:cluster_exp} was first observed by Dobrushin \cite{dobrushin1996estimates}.

Our goal is to use the cluster expansion to get quantitative bounds on $\Xi_\cP$, which in turns requires the convergence of (\ref{eq:cluster_exp}), as well as bounds on its tails. A very useful theorem, which provides both, is the verification of the so-called \textit{Koteck\'y--Preiss condition} \cite{kotecky1986cluster}. For any function $g:\cP\rightarrow[0,\infty)$ and cluster $\Gam\in \cC$, define $g(\Gam) \coloneqq \sum_{A\in\Gam}g(A)$. We say that a polymer $A\in \cP$ is incompatible with a cluster $\Gam\in \cC$ if there exists some polymer $A'\in \Gam$ such that $A\nsim A'$, and we denote this by $\Gam\nsim A$.

\begin{lemma}[Koteck\'y--Preiss condition \cite{kotecky1986cluster}]\label{lem:KP}
    Let $(\cP,\om,\sim)$ be a polymer model and $f,g:\cP\rightarrow[0,\infty)$ be two functions such that for every polymer $A\in \cP$,
    \begin{equation}\label{eq:KP_cond}
        \sum_{A'\nsim A}|\om(A')|e^{f(A')+g(A')}\leq f(A).
    \end{equation}
    Then the cluster expansion (\ref{eq:cluster_exp}) converges absolutely. Furthermore, 
    \begin{equation}\label{eq:KP_tail}
        \sum_{\substack{\Gam\in \cC,\\ \Gam\nsim A}}\om(\Gam)e^{g(\Gam)}\leq f(A).
    \end{equation}
\end{lemma}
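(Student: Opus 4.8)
The plan is to deduce both assertions of \Cref{lem:KP} from a single quantitative bound, namely the absolute-value strengthening of the tail estimate \eqref{eq:KP_tail}, and to prove that bound by combining Penrose's tree-graph inequality with an inductive evaluation of a weighted sum over rooted labelled trees. First I would reduce absolute convergence of \eqref{eq:cluster_exp} to \eqref{eq:KP_tail}. Every cluster contains at least one polymer, and since $\sim$ is anti-reflexive we have $A\nsim A$, so $\{\Gam\in\cC : A\in\Gam\}\subseteq\{\Gam\in\cC:\Gam\nsim A\}$; counting each cluster once through each polymer occurring in it and using $g\ge 0$ (so $e^{g(\Gam)}\ge 1$),
\[
  \sum_{\Gam\in\cC}|\om(\Gam)| \;\le\; \sum_{A\in\cP}\ \sum_{\substack{\Gam\in\cC\\ A\in\Gam}}|\om(\Gam)| \;\le\; \sum_{A\in\cP}\ \sum_{\substack{\Gam\in\cC\\ \Gam\nsim A}}|\om(\Gam)|\,e^{g(\Gam)}.
\]
If the absolute-value form of \eqref{eq:KP_tail} holds, the right-hand side is at most $\sum_{A\in\cP}f(A)<\infty$ since $\cP$ is finite; this yields the absolute convergence, and \eqref{eq:KP_tail} as stated (with $\om$, not $|\om|$) then follows a fortiori from the triangle inequality. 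So it remains to show $\sum_{\Gam\nsim A}|\om(\Gam)|e^{g(\Gam)}\le f(A)$ for every polymer $A$.

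For this, write $b(A)\coloneqq|\om(A)|\,e^{g(A)}$ and recall Penrose's tree-graph inequality: for a connected graph $H$ on $m$ labelled vertices one has $|\phi(H)|\le t(H)/m!$, where $t(H)$ is the number of spanning trees of $H$. Applied to a cluster $\Gam=(A_1,\dots,A_m)$ this gives
\[
  |\om(\Gam)|\,e^{g(\Gam)} \;\le\; \frac{1}{m!}\Big(\prod_{i=1}^m b(A_i)\Big)\sum_{T}\ \prod_{ij\in E(T)}\mathbbm{1}[A_i\nsim A_j],
\]
the sum ranging over labelled trees $T$ on $\{1,\dots,m\}$. Summing over all clusters $\Gam\nsim A$, exchanging the order of summation (fixing the spanning tree first), and using that $\Gam\nsim A$ together with connectedness of $H(\Gam)$ forces $H(\{A\}\cup\Gam)$ to be connected, I would attach $A$ as an extra weight-$1$ vertex joined to a single vertex of $\Gam$ and thereby dominate $\sum_{\Gam\nsim A}|\om(\Gam)|e^{g(\Gam)}$ by $\sum_{A'\nsim A}W(A')$, where $W(B)$ denotes the total weight of finite rooted labelled trees with root label $B$, each vertex $v$ (root included) contributing a factor $b(\ell(v))$ and each edge imposing incompatibility on the labels of its endpoints; the $1/m!$ factors from Penrose get absorbed into the standard symmetry normalization of rooted labelled trees.

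Finally I would evaluate the tree sum by induction. Peeling off the root's children as independent subtrees gives the recursion $W(B)=b(B)\exp\!\big(\sum_{B'\nsim B}W(B')\big)$, and I would prove, by induction on the size of the trees (bounding the partial sums over trees with at most $n$ vertices and then letting $n\to\infty$), that $W(B)\le b(B)\,e^{f(B)}=|\om(B)|\,e^{f(B)+g(B)}$ for every $B$. The one-vertex tree gives $W(B)\ge b(B)$ and the base case $b(B)\le b(B)e^{f(B)}$ holds since $f\ge 0$; the inductive step is exactly the hypothesis \eqref{eq:KP_cond}, since then $\sum_{B'\nsim B}W(B')\le\sum_{B'\nsim B}|\om(B')|e^{f(B')+g(B')}\le f(B)$, whence $W(B)=b(B)e^{\sum_{B'\nsim B}W(B')}\le b(B)e^{f(B)}$. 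Plugging this in and using once more that $A$ itself enters as a weight-$1$ root (it is not a polymer of $\Gam$), we conclude
\[
  \sum_{\substack{\Gam\in\cC\\ \Gam\nsim A}}|\om(\Gam)|\,e^{g(\Gam)} \;\le\; \sum_{A'\nsim A}W(A') \;\le\; \sum_{A'\nsim A}|\om(A')|\,e^{f(A')+g(A')} \;\le\; f(A),
\]
which is \eqref{eq:KP_tail}.

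The analytic content is thus supplied entirely by the hypothesis \eqref{eq:KP_cond}; the step I expect to be the main obstacle is the combinatorial bookkeeping in the tree-graph reduction — turning the Penrose-bounded cluster sum (over ordered tuples with repetitions, each weighted by $1/m!$ and a sum over labelled spanning trees) into the recursively structured sum over rooted labelled trees with correctly matching symmetry factors, avoiding any over- or under-counting when $A$ is attached and when re-rooting. An alternative that sidesteps Penrose's inequality is to run Koteck\'y and Preiss's original induction directly on clusters \cite{kotecky1986cluster}, peeling off a leaf of a chosen spanning tree of the incompatibility graph at each step; this replaces the tree-graph inequality by a somewhat more intricate inductive bookkeeping of the same flavour, with \eqref{eq:KP_cond} again closing the recursion.
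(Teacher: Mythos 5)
The paper does not actually prove this lemma: it is quoted from Koteck\'y and Preiss \cite{kotecky1986cluster} and used as a black box, so there is no in-paper argument to compare against. Your proposal is a correct, standard proof of the criterion in the form stated, following the tree-graph route rather than the original Koteck\'y--Preiss induction (which you mention as an alternative). The reduction of absolute convergence to the tail bound via anti-reflexivity of $\sim$ and finiteness of $\cP$ is right; the Penrose bound $|\phi(H)|\le t(H)/m!$ together with $\mathbbm{1}[\Gam\nsim A]\le\sum_i\mathbbm{1}[A_i\nsim A]$ and symmetrization over the distinguished index converts the cluster sum into the rooted-tree generating function $W$ carrying the factor $1/(m-1)!$, and the truncated induction $W_n(B)\le b(B)\exp\bigl(\sum_{B'\nsim B}W_{n-1}(B')\bigr)\le b(B)e^{f(B)}$ closes via \eqref{eq:KP_cond} exactly as you say, yielding the tail bound and hence \eqref{eq:KP_tail} (the stated version with $\om$ in place of $|\om|$ follows a fortiori). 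Two points deserve care in a full write-up, neither of which is a gap: (i) the exact recursion $W(B)=b(B)\exp\bigl(\sum_{B'\nsim B}W(B')\bigr)$ should only be invoked in its truncated, one-sided form until finiteness of $W$ is established --- your parenthetical about partial sums over trees with at most $n$ vertices already addresses this; and (ii) the exchange of summation is legitimate because $t(H(\Gam))$ equals the number of trees on $[m]$ all of whose edges join incompatible entries of the tuple, so tuples with disconnected incompatibility graph contribute zero and one may sum over all tuples rather than only clusters. You have correctly anticipated that this bookkeeping, together with the appeal to Penrose's inequality as a known external tool, is the only real work in the argument.
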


Note that if (\ref{eq:KP_cond}) holds for some function $g$, then it also holds for $g=0$. The benefit of a larger $g$ is the improved bounds via (\ref{eq:KP_tail}), which will allow us to get a better control of the tails of the cluster expansion and hence better approximations of the partition function.

\section{Our polymer model: Proof of \Cref{thm:expansion}}\label{sec:proof_main}

We let $G$ be a $d$-regular $n$-vertex bipartite graph with bipartition $(\cO, \cE)$ satisfying \hyperref[def:property-i]{Property I}, unless stated otherwise. For any statement that refers to a partition function or that contains parameters and objects related to a graph without the graph being specified, it will always refer to such a graph $G$. In the statements of our lemmas, we will always implicitly assume that $d$ is sufficiently large.

First, we set up the polymer models specific to our setting, following Kronenberg and Spinka \cite{kronenberg2022independent}. 
We refer to the bipartition classes of $G$ as the \textit{odd}  and \textit{even} sides of $G$, denoted by $\cO, \cE$. We will use $\cD$ to denote one of the sides of $G$, i.e., $\cD\in\{\cE,\cO\}$, and we use $\overline{\cD} \coloneqq V(G) \setminus \cD$ to denote its complement. The (bipartite) \textit{closure} of a set $A\subset \cD$ is defined as
$$[A]\coloneqq\{v\in \cD:N(v)\subseteq N(A)\}.$$ 
Fix parameters $\lam,\beta > 0$. We define the \textit{even} polymer model as 
\begin{align*}
   \cP_\cE\coloneqq \left\{A\subseteq\cE: \quad A\ \text{is $2$-linked\quad and} \quad |[A]|\leq \frac{3}{4}\cdot |\cE|\right\}.
\end{align*}
Each element $A \in \cP_\cE$ is called an \textit{even polymer}. Two even polymers are \textit{compatible} if their union is not $2$-linked. The weight of an even polymer $A\in\cP_\cE$ is defined as
$$\om(A)\coloneqq\sum_{B\subseteq N(A)}\frac{\lam^{|A|+|B|}}{(1+\lam)^{|N(A)|}}e^{-\beta|E(A,B)|}.$$
The \textit{odd} polymer model $\cP_\cO$ is defined analogously to $\cP_\cE$. Note that in the definition of the polymer models the constant $\frac{3}{4}$ in the upper bound of the size of $[A]$ is not special, and any value in $(\frac{1}{2},1)$ would suffice. Unlike the deterministic setting, not every set under consideration will be captured by either the even or odd polymer model, but the choice of a constant in $(\frac{1}{2},1)$ allows us to show that the fraction of these uncaptured sets is negligible (\Cref{lem:nonpolymer}).
The following upper bound on the weight function follows from a simple calculation (see \Cref{proof:boundingweight}).

\begin{lemma} \label{l:boundingweight}
    For any even polymer $A \in \cP_\cE$, we have
\begin{align*}
    \om(A) \le \frac{\lam^{|A|}}{(1+\lam)^{|N(A)|}} (1+\lam e^{-\beta})^{|N(A)|}.
\end{align*}
\end{lemma}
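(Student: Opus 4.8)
The plan is to start from the definition of the weight function,
\[
\om(A)=\sum_{B\subseteq N(A)}\frac{\lam^{|A|+|B|}}{(1+\lam)^{|N(A)|}}e^{-\beta|E(A,B)|},
\]
pull out the factor $\lam^{|A|}/(1+\lam)^{|N(A)|}$, which does not depend on $B$, and then bound the remaining sum $\sum_{B\subseteq N(A)}\lam^{|B|}e^{-\beta|E(A,B)|}$ from above. The key observation is that $|E(A,B)|=\sum_{v\in B}|N(v)\cap A|$ decomposes as a sum of contributions over the vertices $v\in B$, so the weighted sum over subsets $B$ factorizes as a product over the vertices of $N(A)$.

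Concretely, I would write
\[
\sum_{B\subseteq N(A)}\lam^{|B|}e^{-\beta|E(A,B)|}
=\sum_{B\subseteq N(A)}\prod_{v\in B}\lam\, e^{-\beta|N(v)\cap A|}
=\prod_{v\in N(A)}\bigl(1+\lam e^{-\beta|N(v)\cap A|}\bigr),
\]
where the last equality is the standard expansion of a product over a choice of whether each $v$ is in $B$ or not. Now for every $v\in N(A)$ we have $|N(v)\cap A|\geq 1$ (since $v$ is a neighbour of $A$), so $e^{-\beta|N(v)\cap A|}\leq e^{-\beta}$ because $\beta>0$, giving
\[
\prod_{v\in N(A)}\bigl(1+\lam e^{-\beta|N(v)\cap A|}\bigr)\leq \prod_{v\in N(A)}\bigl(1+\lam e^{-\beta}\bigr)=(1+\lam e^{-\beta})^{|N(A)|}.
\]
Multiplying back the pulled-out prefactor yields exactly the claimed bound
\[
\om(A)\leq \frac{\lam^{|A|}}{(1+\lam)^{|N(A)|}}(1+\lam e^{-\beta})^{|N(A)|}.
\]

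This is entirely routine; there is no real obstacle. The only point requiring a word of care is the factorization step, i.e.\ justifying that summing $\prod_{v\in B}x_v$ over all subsets $B$ of a finite set equals $\prod_v(1+x_v)$, and that $|E(A,B)|=\sum_{v\in B}|N(v)\cap A|$ — both are immediate but should be stated. Note also that bipartiteness is not needed for this particular lemma (it is used elsewhere), and the same proof gives the analogous bound for odd polymers verbatim.
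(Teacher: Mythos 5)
Your proposal is correct and uses essentially the same idea as the paper: both reduce to the fact that each $v\in N(A)$ placed in $B$ contributes at least one edge to $E(A,B)$, the paper phrasing this as $|E(A,B)|\ge|B|$ before summing, and you phrasing it as $|N(v)\cap A|\ge 1$ after first factorizing the sum exactly. The factorization step and the identity $|E(A,B)|=\sum_{v\in B}|N(v)\cap A|$ are both valid, so the argument goes through.
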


A set of pairwise compatible even polymers is called an \textit{even polymer configuration}.
Let $\Om_\cE$ denote the collection of even polymer configurations and let $\Xi_\cE$ denote its corresponding partition function. Also let $\Om_{\cO}$ and $\Xi_\cO$ denote their analogues on the odd side $\cO$. The goal is to use these even and odd polymer models to define a measure on subsets $I \subseteq V(G)$ that resembles the Ising model, the latter given by
\begin{align} \label{Ising-mu}
\mathbb{P}_{\mu}(\boldsymbol{I}=I)=\frac{\lam^{|I|}e^{-\beta|E(I)|}}{Z_G(\lam,\beta)}.
\end{align}
However, there is an obstacle here, as usually even and odd polymer models only choose independent sets, whereas here we also want to consider subsets of vertices that are not necessarily independent.
The sum over subsets of the neighborhood merely changes the weight of the polymers, while we want to take into account the possibility of these vertices being in $I$. Thus,  for $\cD \in \{\cO, \cE\}$, we consider the auxiliary polymer models defined as
\begin{align*}
\hat{\cP}_\cD \coloneqq \left\{(A,B): \quad A\subseteq \cD\ \text{is $2$-linked},\quad |[A]|\leq \frac{3}{4}\cdot |\cD|, \quad \text{and} \quad B \subseteq N(A)\right\}.
\end{align*}
Two polymers  are {\em compatible} if $A\cup A'$ is not $2$-linked, denoted by $(A,B)\sim(A',B')$, and {\em incompatible} otherwise, denoted by $(A,B)\nsim(A',B')$. To make clear the distinction from even and odd polymers,  we call $(A,B) \in \hat{\cP}_{\cD}$\ a \textit{decorated polymer} and its weight is defined as
$$\om(A,B)\coloneqq\frac{\lam^{|A|+|B|}}{(1+\lam)^{|N(A)|}}e^{-\beta|E(A,B)|}.$$
A set of pairwise compatible decorated polymers is called a \textit{decorated polymer configuration}. We denote by $\hat{\Om}_{\cD}$ the collection of decorated polymer configurations. The weight of a polymer configuration $\hat{\Theta} \in \hat{\Om}_{\cD}$ is given by
$$\om(\hat{\Theta}) \coloneqq \prod_{(A,B)\in \hat{\Theta}} \om(A,B).$$

Now we may use the new decorated polymer model to decompose $\om(A)$ to its summands $\om(A,B)$ and distinguish $B\subseteq N(A)$. That is,
\begin{equation}\label{eq:weight_decomp}
    \om(A)=\sum_{B\subseteq N(A)}\om(A,B).
\end{equation}
It follows that the partition functions of $\hat{\cP}_\cD$ and $\cP_\cD$ are identical:
$$\Xi_\cD \coloneqq \Xi_{\hat{\cP}_\cD}=\Xi_{\cP_\cD}=\sum_{\Theta \in \Om_\cD}\prod_{A\in\Theta}\om(A).$$ 
We now define the measure $\hat{\mu}^*$ on pairs $(I, \cD)$ as follows:
\begin{enumerate}
    \item Choose a \textit{defect side} $\cD\in\{\cO,\cE\}$ with probability proportional to $\Xi_{\cD}$.
    \item Sample a decorated polymer configuration $\hat{\Theta}$ from $\hat{\Om}_{\cD}$ according to the distribution
    $$\mathbb{P}(\boldsymbol{\hat{\Theta}}=\hat{\Theta})=\frac{\om(\hat{\Theta})}{\Xi_\cD}.$$
    \item If $\hat{\Theta}=\{(A_1,B_1),\dots, (A_s,B_s)\}$, then let $D \coloneqq \bigcup_i(A_i\cup B_i)$, and put 
    \begin{itemize}
        \item every vertex of $D$, and
        \item every vertex $v\in\overline{\cD} \setminus N(D)$ independently with probability $\frac{\lam}{1+\lam}$
    \end{itemize}
    into vertex set $I$.
\end{enumerate} 
Then $\hat{\mu}^*$ is a measure on the set of pairs $(I, \cD)$ with $\cD \in \{\cO, \cE\}$ a defect side and $I \subseteq V(G)$ a vertex subset. Given $(I, \cD)$, we may recover the decorated polymer configuration $\hat{\Theta} \in \hat{\Om}_{\cD}$ by taking the maximal $2$-linked components of $\cD$ to form the $A_i$ and the vertices in $I \cap N(A_i)$ to form $B_i$. Let $\hat{\Theta}(I)$ denote this decorated polymer configuration. In practice, we will identify the vertex set $D$ with the polymer configuration $\hat\Theta$. 

Setting $q \coloneqq \frac{\lam}{1+\lam}$, we may explicitly write out the measure $\hat{\mu}^*$ as
\begin{align} \label{eq:defhatmu}
    \mathbb{P}_{\hat{\mu}^*}\left((\boldsymbol{I}, \boldsymbol{\cD})= (I, \cD)\right)= \frac{\Xi_{\cD}}{\Xi_{\cO}+\Xi_{\cE}} \mathbbm{1}_{\hat{\Theta}(I) \in \hat{\Om}_{\cD}} \frac{\om(\hat{\Theta}(I))}{\Xi_{\cD}} q^{|I \setminus \hat{\Theta}(I)|} (1-q)^{\frac{n}{2} - |N(\bigcup_{(A, B) \in \hat{\Theta}(I)} A)|- |I\setminus \hat{\Theta}(I)|}.
\end{align}
Note that the term $\frac{\Xi_{\cD}}{\Xi_{\cO}+\Xi_{\cE}}$ is the probability to pick $\cD$ as the defect side. The term $\mathbbm{1}_{\hat{\Theta}(I) \in \hat{\Om}_{\cD}} \frac{\om(\hat{\Theta}(I))}{\Xi_{\cD}}$ corresponds to the probability that $\hat{\Theta}(I)$ is chosen in step (2), and the term $q^{|I \setminus \hat{\Theta}(I)|} (1-q)^{n/2 - |N(\bigcup_{(A, B) \in \hat{\Theta}(I)} A)|- |I\setminus \hat{\Theta}(I)|}$ is the probability of picking exactly the vertices in $I \setminus (\hat{\Theta}(I) \cup N(\hat{\Theta}(I)))$
independently with probability $q$. Now, plugging $q = \frac{\lambda}{1+\lambda}$ and
\[
\om(\hat{\Theta}(I))=\prod_{(A, B) \in \hat{\Theta}(I)} \frac{\lam^{|A|+|B|}}{(1+\lam)^{|N(A)|}} e^{-\beta |E(A, B)|}
\]
into \eqref{eq:defhatmu} yields that the measure $\hat{\mu}^*$ is given by
\begin{align}
        \mathbb{P}_{\hat{\mu}^*}((\boldsymbol{I}, \boldsymbol{\cD})= (I, \cD))&= \frac{\mathbbm{1}_{\hat{\Theta}(I) \in \hat{\Om}_{\cD}}}{\Xi_{\cO}+\Xi_{\cE}} \left( \frac{\lam}{1+\lam}\right)^{|I \setminus \hat{\Theta}(I)|} \left( \frac{1}{1+\lam} \right)^{{n}/{2} - |N(\bigcup_{(A, B) \in \hat{\Theta}(I)} A)|- |I\setminus \hat{\Theta}(I)|}\nonumber\\
    &\hspace{145pt} \cdot\prod_{(A, B) \in \hat{\Theta}(I)} \frac{\lam^{|A|+|B|}}{(1+\lam)^{|N(A)|}} e^{-\beta |E(A, B)|}\nonumber\\
    &= \frac{\mathbbm{1}_{\hat{\Theta}(I) \in \Om_{\cD}}}{\Xi_{\cO}+\Xi_{\cE}} (1+\lam)^{-n/2} \lam^{|I|} e^{-\beta |E(I)|}.\label{eq:gettinghatZ}
    \end{align}

We also define
\begin{align} \label{eq:hatweight}
    \hat{\om}^*(I, \cD) \coloneqq \mathbbm{1}_{\hat{\Theta}(I) \in \Om_\cD} \lam^{|I|} e^{- \beta |E(I)|},
\end{align}
and
\begin{align*}
\hat{\om}(I) \coloneqq \hat{\om}^*(I, \cO)+\hat{\om}^*(I, \cE).
\end{align*}
To obtain the measure $\hat{\mu}$, we sample sets $I \subseteq V(G)$ according to their weight $\hat{\om}(I)$
\begin{align}\label{mu-hat}
\mathbb{P}_{\hat{\mu}}(\mathbf{I}=I)=\frac{\hat{\om}(I)}{\hat{Z}},
\end{align}
where $\hat{Z}=\sum_{I \subseteq V(G)} \hat{\om}(I)$.
By \eqref{eq:gettinghatZ}, we conclude that the partition function $\hat{Z}$ of $\hat{\mu}$ satisfies 
$$\hat{Z}=(1+\lam)^{n/2}(\Xi_{\cO}+\Xi_{\cE}).$$

Now, we aim to compare the measure $\hat{\mu}$ (defined in \eqref{mu-hat}) with the measure $\mu$ for the Ising model (defined in \eqref{Ising-mu}). Although not exactly the same as in the Ising model, the measure $\hat{\mu}$ allows us to utilize the cluster expansion and obtain precise information on the partition functions $\Xi_\cE$ and $\Xi_\cO$. By proving the measure $\hat{\mu}$ is close in total variation distance to the measure $\mu$ of the Ising model, we are able to pass these results to the original Ising partition function $Z_G(\lam,\beta)$. This approach is standard in the hard-core model, and was initiated in the work of Jenssen and Perkins \cite{jenssen2020independent}. As we mentioned, the reason why the decorated polymer model $\hat{\cP}_\cD$ is defined is to derive the measure $\hat{\mu}$. However, in practice, we will be working mainly with the (odd or even) polymer models $\cP_\cD$ for  $\cD \in \{\cO, \cE\}$, as the grouping of weights according to \eqref{eq:weight_decomp} is more convenient for bounding their sum over sets of (odd or even) polymer configurations.

Fix $\cD \in \{\cO, \cE\}$ and define the {\em size} of a cluster $\Gamma\in \cC_\cD$ to be $\norm{\Gamma} \coloneqq \sum_{A \in \Gamma} |A|$. For each $k \in \mathbb{N}$, define 
$\cC_{\cD,k}\coloneqq \{\Gamma\in \cC_\cD: \norm{\Gamma} =k\}$, $\cC_{\cD, <k}\coloneqq \bigcup_{j<k}\cC_{\cD,j}$, and  $\cC_{\cD, \geq k} \coloneqq \bigcup_{j\geq k}\cC_{\cD,j}$. Also we define  
\[
L_{\cD, k} \coloneqq \sum_{\Gamma \in \cC_{\cD,k}} \omega(\Gamma) \qquad \text{and}\qquad L_{\cD, <k} \coloneqq \sum_{\Gamma \in \cC_{\cD,<k}} \omega(\Gamma) 
\]
and define $L_{\cD, \geq k}$ analogously. The convergence of the cluster expansion will allow us to bound the difference $L_{\cD, \geq k}$ between the cluster expansion $$\log \Xi_\cD=\sum_{\Gam\in\cC_\cD}\om(\Gam)$$ and its truncation $L_{\cD, <k}$. The following lemma is proved in \Cref{sec:kotecky}.
\begin{restatable}{lemma}{lKotecky} \label{l:Kotecky}
    For any $\lambda_0 > 0$, there exists $C_0 > 0$ such that if $\lam \le \lam_0$ and $\lam p \geq \frac{C_0 \log^{3/2} d}{d^{\lambexp}}$, then for every $k \in \mathbb{N}$, 
    \[
    |L_{\cD, \geq k}| \leq nd^{(C_5+7)k-C_5-1} \left(\frac{1+\lam}{1+\lam e^{-\beta}}\right)^{-kd + C_1k^2}.
    \]
\end{restatable}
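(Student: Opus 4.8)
\emph{Plan.} I follow the standard two-step scheme: first verify the Koteck\'y--Preiss condition \eqref{eq:KP_cond} for the polymer model $(\cP_\cD,\om,\sim)$, which gives absolute convergence of the cluster expansion together with the tail estimate \eqref{eq:KP_tail}, and then bound $|L_{\cD,\geq k}|\leq\sum_{m\geq k}\sum_{\Gamma:\|\Gamma\|=m}|\om(\Gamma)|$ by estimating clusters built from small polymers directly and the remaining ones through the container machinery. Throughout write $x\coloneqq\frac{1+\lam e^{-\beta}}{1+\lam}=1-\frac{\lam p}{1+\lam}\in(0,1)$, so that \Cref{l:boundingweight} reads $\om(A)\leq\lam^{|A|}x^{|N(A)|}$; the quantitative input is that the hypothesis $\lam p\geq\frac{C_0\log^{3/2}d}{d^{\lambexp}}$ makes $d^{O(1)}x^{\Theta(d)}$ super-polynomially small, since $x^{d}\leq\exp(-d\cdot\tfrac{\lam p}{1+\lam})\leq\exp(-\tfrac{C_0}{1+\lam_0}\log^{3/2}d\cdot d^{1-\lambexp})$.

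\emph{Verifying \eqref{eq:KP_cond}.} I would use $f(A)=c_1|A|$ and $g(A)=c_2|A|$ for constants $c_1,c_2>0$ (with $c_2$ pinned down in the last step) and split $\sum_{A'\nsim A}|\om(A')|e^{f(A')+g(A')}$ according to the size of $[A']$. When $|[A']|$ is below a threshold polynomial in $d$, the relation $A'\nsim A$ forces $A'$ to meet $A\cup N_{G^2}(A)$, a set of size $\leq|A|d^2$, so by \Cref{l:counting2linked} there are at most $|A|d^2(ed^2)^{|A'|-1}$ such $A'$ of each size; combined with $\om(A')\leq\lam^{|A'|}x^{|N(A')|}$ and the isoperimetric bounds \hyperref[iso-gen]{Ia(1)}--\hyperref[iso-mid]{Ia(2)} (giving $|N(A')|\geq\tfrac d2|A'|$ in this range) the resulting series is geometric with ratio $O(d^2\lam\,x^{\Theta(d)})\to0$, so this part is at most $\tfrac12 f(A)$ once $C_0$ is large enough; since $x^{\Theta(d)}$ is super-polynomially small, $c_1$ and $c_2$ may be taken to be arbitrary fixed constants here. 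When $|[A']|$ exceeds the threshold the $2$-linked count is far too weak, and here the container machinery enters: \Cref{lem:container_main} assigns to each such $A'$ an approximation $\psi(A')$ from a family of controlled size, and the key \Cref{lemma:total-weight-given-approximation} bounds $\sum_{A':\psi(A')=\psi}\om(A')$ for each fixed $\psi$; summing over $\psi$ shows that the total weight of these large polymers, even after the bounded factor $e^{f(A')+g(A')}$, is negligible against $f(A)$. This gives \eqref{eq:KP_cond}, hence convergence and \eqref{eq:KP_tail}.

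\emph{Bounding $L_{\cD,\geq k}$.} The claimed estimate is vacuous unless $kd-C_1k^2>0$, that is, $k<d/C_1$, so assume this. For $m<\ell_0$ (a suitable threshold, comparable to the one above) every polymer in a cluster $\Gamma$ with $\|\Gamma\|=m$ is small, so \hyperref[iso-gen]{Ia(1)} gives $\sum_{A\in\Gamma}|N(A)|\geq\sum_{A\in\Gamma}(d-C_1|A|)|A|\geq dm-C_1m^2$; by \Cref{l:boundingweight} this yields $|\om(\Gamma)|\leq|\phi(H(\Gamma))|\,\lam^m x^{dm-C_1m^2}$, and combining \Cref{l:counting2linked} with the classical tree-graph bound on the Ursell function gives $\sum_{\Gamma:\|\Gamma\|=m}|\phi(H(\Gamma))|\leq\tfrac n2(Cd^2)^m$ for an absolute constant $C$, so $\sum_{\Gamma:\|\Gamma\|=m}|\om(\Gamma)|\leq\tfrac n2(Cd^2\lam)^m x^{dm-C_1m^2}$. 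The ratio of consecutive terms is $O(d^2\lam\,x^d)\to0$, so summing over $k\leq m<\ell_0$ costs only a constant factor; absorbing $C^md^{2m}$ into $d^{(C_5+7)m}$ (possible since $C_5>0$ and $d$ is large) bounds this part by $nd^{(C_5+7)k-C_5-1}x^{dk-C_1k^2}$. For $m\geq\ell_0$ one applies \eqref{eq:KP_tail} to single-vertex polymers $A=\{v\}$: since every cluster is incompatible with some such $A$, summing over $v$ (each cluster counted at most $m$ times) gives $\sum_{\Gamma:\|\Gamma\|=m}|\om(\Gamma)|\leq\tfrac n2 c_1 e^{-c_2 m}$, whence $\sum_{m\geq\ell_0}\sum_{\Gamma:\|\Gamma\|=m}|\om(\Gamma)|=O(ne^{-c_2\ell_0})$, which is below the claimed bound once $c_2$ is chosen large enough in terms of $C_1$ and $\lam_0$ --- permissible by the previous step. (In the intermediate range $\ell_0\leq m<d/C_1$ one must again invoke the container control on polymers with large closure to beat the crudeness of \Cref{l:counting2linked}; and if $k\geq d/C_1$ the right-hand side is at least $n$, so \eqref{eq:KP_tail} alone suffices.)

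\emph{Main obstacle.} The heart of the argument is the large-polymer regime, appearing both in \eqref{eq:KP_cond} and in the tail of $L_{\cD,\geq k}$: when $\lam$ is bounded but not small, the naive $2$-linked enumeration \Cref{l:counting2linked} overcounts catastrophically, since there are too many polymers with large closure and each carries too much weight $\lam^{|A|}x^{|N(A)|}$, so one must control their \emph{total} weight via the refined container lemma \Cref{lem:container_main} and the entropy-based estimate \Cref{lemma:total-weight-given-approximation}. It is exactly the optimization of the container parameters there --- balancing the number of approximations against the weight saved per unit of boundary --- that determines the admissible range $\lam p=\tilde{\Omega}(d^{-\lambexp})$ (that is, $\tilde{\Omega}(d^{-1/2})$ when $C_5\leq1$), and a sharper such balance yields the $\log^{1/2}d$ improvement over \cite{jenssen2024refined}.
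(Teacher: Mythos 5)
Your overall architecture (verify Koteck\'y--Preiss, use the container lemma for polymers with large closure, count small polymers via \Cref{l:counting2linked}) matches the paper's, but there is a genuine quantitative gap in how you extract the decay $\ta^{-kd+C_1k^2}$, and it is fatal as written. You take $g(A)=c_2|A|$ with $c_2$ a \emph{constant}, so the tail estimate \eqref{eq:KP_tail} only yields $\sum_{\|\Gam\|=m}|\om(\Gam)|\le nc_1e^{-c_2m}$, i.e.\ decay at a constant rate per unit of cluster size. The target bound decays at rate roughly $e^{-d\log\ta}$ per unit of $k$, and $d\log\ta\ge \frac{d\,\lam p}{1+\lam}\ge \frac{C_0\log^{3/2}d}{1+\lam_0}\,d^{1-\lambexp}\to\infty$. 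Consequently your Part 2 bound $O(ne^{-c_2\ell_0})$ cannot beat $n\,\ta^{-kd+C_1k^2}$ for \emph{any} constant $c_2$ and any placement of the threshold $\ell_0$ --- already for $k=1$ with $\ell_0=\sqrt d$ you would need $c_2\gtrsim \sqrt d\log\ta$, and for $k$ near $\ell_0$ you would need $c_2\gtrsim d\log\ta$; "large enough in terms of $C_1$ and $\lam_0$" does not suffice. You partly sense this, since you flag the intermediate range $\ell_0\le m<d/C_1$ as needing "container control" without supplying it.

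The paper's resolution is exactly to build the required decay into $g$ itself: it sets $g(A)=f(A)+\tilde g(|A|)$ with $f(A)=d^{-(C_5+1)}|A|$ and $\tilde g(\ell)=(d\ell-C_1\ell^2)\log\ta-(C_5+7)\ell\log d$ for $\ell\le\sqrt d$, degrading to $\frac{d\ell}{2C_2}\log\ta$ for $\sqrt d\le\ell\le d^{C_3}$ and to $\ell d^{-(C_5+1)}$ beyond (this last regime is where \Cref{lem:container_main} and the size constraint $\log n=O(d)$ enter). Note $\tilde g$ cannot be taken as large as $d\log\ta\cdot\ell$ outright, since $e^{g(A')}$ must still be dominated by $\om(A')\le\lam^{|A'|}\ta^{-|N(A')|}$ and $|N(A')|\le d|A'|$; the three branches of $\tilde g$ track exactly how much of $|N(A')|\log\ta$ the isoperimetric inequalities Ia(1)--(3) let one spend. \Cref{c:KoteckyPreiss} verifies \eqref{eq:KP_cond} with this $g$, and then the single inequality $\sum_{\Gam}|\om(\Gam)|e^{g(\Gam)}\le nd^{-(C_5+1)}$, combined with the monotonicity of $\ell\mapsto\tilde g(\ell)/\ell$ (so that $g(\Gam)\ge g(\norm{\Gam})\ge g(k)$ on $\cC_{\cD,\ge k}$), delivers the claimed bound for all $k$ in one stroke --- no separate direct summation over clusters of each size $m$ is needed. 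This also sidesteps your unproven counting claim $\sum_{\Gamma:\norm{\Gamma}=m}|\phi(H(\Gamma))|\le\frac n2(Cd^2)^m$, which is plausible via tree-graph bounds but is itself a nontrivial estimate that you assert in one clause. To repair your argument you would essentially have to replace $c_2|A|$ by a size-dependent $g$ of the paper's type, at which point the two proofs coincide.
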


For a set $I \subseteq V(G)$, we denote by $\cM(I)$ the minority side of $I$, i.e., one of the sides $\cM(I) \in \{\cO, \cE\}$ such that $|\cM(I) \cap I| \leq |I\setminus \cM(I)|$.\footnote{In the case of equality, we may choose either side to be the minority side.} The key property of $\hat{\mu}$ is that it is close to $\mu$ in terms of total variation distance, as formalized in the next lemma. The proof, along with other properties of $\hat{\mu}$, is given in \Cref{sec:propertiesmu}.

\begin{restatable}{lemma}{lemapproximateZ} \label{lem:approximateZ}
For any $\lambda_0 > 0$, there exists $C_0 > 0$ such that if $\lam \le \lam_0$ and $\lam p \geq \frac{C_0 \log^{3/2} d}{d^{\lambexp}}$, then
    \[
    \left| \log Z_G(\lam, \beta) - \log\left((1+ \lam)^{n/2} (\Xi_\cO + \Xi_\cE)\right) \right| = O\left( \exp\left(-\frac{n}{d^{C_5+4}}\right)\right),
    \]
and this implies that
\[
\norm{\hat{\mu} - \mu}_{TV} = O\left( \exp\left(-\frac{n}{d^{C_5+4}}\right)\right).
\]
\end{restatable}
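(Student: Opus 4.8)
The plan is to reduce both assertions to the statement that two explicit sums of Ising weights are exponentially small relative to $(1+\lam)^{n/2}$, and then to read everything off from the identity $\hat Z=(1+\lam)^{n/2}(\Xi_\cO+\Xi_\cE)$. Put $w(I):=\lam^{|I|}e^{-\beta|E(I)|}$, so that $Z_G(\lam,\beta)=\sum_{I\subseteq V(G)}w(I)$; for $\cD\in\{\cO,\cE\}$ let $\cG_\cD$ be the collection of $I\subseteq V(G)$ every one of whose $2$-linked components $A\subseteq I\cap\cD$ satisfies $|[A]|\le\tfrac34|\cD|$; and set $c(I):=\mathbbm{1}_{I\in\cG_\cO}+\mathbbm{1}_{I\in\cG_\cE}\in\{0,1,2\}$. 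By the construction of $\hat\mu$ and $\hat Z$ -- see in particular \eqref{eq:gettinghatZ} and \eqref{eq:hatweight} -- $c(I)$ is exactly the number of sides on which $I$ is captured by the polymer model, $\hat\om(I)=c(I)\,w(I)$, and $\hat Z=(1+\lam)^{n/2}(\Xi_\cO+\Xi_\cE)=\sum_I c(I)\,w(I)$. Consequently
\[
\hat Z-Z_G(\lam,\beta)=\sum_I\bigl(c(I)-1\bigr)w(I)=W_2-W_0,\qquad\text{where }\ W_j:=\sum_{I:\,c(I)=j}w(I),
\]
so it suffices to prove
\begin{equation}\label{eq:plan-W0W2}
W_0,\ W_2\ \le\ (1+\lam)^{n/2}\exp\!\left(-\Omega\!\left(n/d^{C_5+4}\right)\right).
\end{equation}

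The estimate on $W_0$ is exactly \Cref{lem:nonpolymer}: if $c(I)=0$ then on \emph{each} side $\cD$ the set $I\cap\cD$ contains a $2$-linked component whose closure exceeds $\tfrac34|\cD|$, and feeding the isoperimetric hypotheses of \hyperref[def:property-i]{Property I} into the container method of \Cref{sec:containers} shows the $w$-weight of such configurations to be exponentially below $(1+\lam)^{n/2}$. The estimate on $W_2$ is, I expect, the main obstacle. When $c(I)=2$, both $I\cap\cO$ and $I\cap\cE$ are forced to be ``clustered'': the $2$-linked components of $I\cap\cD$ have pairwise disjoint neighbourhoods in $\overline{\cD}$, the weak vertex-expansion in \hyperref[def:property-i]{Property I} applied to $\overline{\cD}\setminus N(A)$ gives $|N(A)|\le\bigl(\tfrac34+O(d^{-C_5})\bigr)|\cD|$ for each component $A$, and summing that same inequality over the components yields $|I\cap\cD|\le\bigl(1-\Omega(d^{-C_5})\bigr)|\cD|$; moreover, by the refined container lemma \Cref{lem:container_main} and its corollaries, the number of sets that can arise as such a ``trace'' $I\cap\cD$ of a given size is tightly controlled. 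Summing $\lam^{|X|}$ over admissible traces $X$ on each of the two sides -- invoking $\lam\gg1/d$ (true in our range) to preclude $X$ from being $2$-independent, and, for configurations large on both sides, the bound $|E(I)|\ge d\bigl(|I\cap\cO|+|I\cap\cE|-|\cO|\bigr)^{+}$ together with $\lam e^{-\beta d}=o(1)$ (valid since $\lam\le\lam_0$ and $\beta d\to\infty$) to absorb the factor $e^{-\beta|E(I)|}$ -- gives \eqref{eq:plan-W0W2} for $W_2$. Having to retain the penalty $e^{-\beta|E(I)|}$ throughout, for \emph{all} $\lam\le\lam_0$ and the full (possibly small) range of $\beta$, is what separates the Ising setting here from the classical hard-core one, and is where the strength of the container lemma and the exponent $d^{C_5+4}$ of \hyperref[def:property-i]{Property I} come in.

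Granting \eqref{eq:plan-W0W2}, the remainder is routine bookkeeping. Since every $I\subseteq\cO$ lies in $\cG_\cE$ vacuously, $\hat Z\ge\sum_{I\subseteq\cO}w(I)=(1+\lam)^{n/2}$, and the same lower bound holds for $Z_G(\lam,\beta)$. Thus $|\hat Z-Z_G|\le W_0+W_2\le 2\hat Z\exp(-\Omega(n/d^{C_5+4}))=o(\hat Z)$, and $|\log(1+x)|\le2|x|$ for small $|x|$ gives
\[
\Bigl|\log Z_G(\lam,\beta)-\log\bigl((1+\lam)^{n/2}(\Xi_\cO+\Xi_\cE)\bigr)\Bigr|=\Bigl|\log\tfrac{Z_G(\lam,\beta)}{\hat Z}\Bigr|=O\!\left(\exp\!\left(-n/d^{C_5+4}\right)\right),
\]
which is the first assertion. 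For the total variation bound, $\hat\mu(I)-\mu(I)=w(I)\bigl(c(I)/\hat Z-1/Z_G\bigr)$, so $2\norm{\hat\mu-\mu}_{TV}=\sum_I w(I)\,\bigl|c(I)/\hat Z-1/Z_G\bigr|$; splitting according to the value of $c(I)$, the $c(I)=1$ terms contribute at most $\tfrac{|Z_G-\hat Z|}{\hat Z Z_G}\sum_I w(I)=|Z_G-\hat Z|/\hat Z$, the $c(I)=0$ terms contribute $W_0/Z_G$, and the $c(I)=2$ terms contribute at most $\bigl(2/\hat Z+1/Z_G\bigr)W_2$. By \eqref{eq:plan-W0W2} and $Z_G,\hat Z\ge(1+\lam)^{n/2}$, each of these is $O(\exp(-n/d^{C_5+4}))$, whence $\norm{\hat\mu-\mu}_{TV}=O(\exp(-n/d^{C_5+4}))$.
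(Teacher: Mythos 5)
Your reduction is the same as the paper's: writing $\hat Z-Z_G=W_2-W_0$ with $W_0$ the weight of sets captured by neither polymer model (the family $\cJ$ of \Cref{sec:nonpolymer}) and $W_2$ the weight of sets captured by both (the family $\cB$ in the paper's proof), delegating $W_0$ to \Cref{lem:nonpolymer}, and then doing the logarithm and total-variation bookkeeping, which you carry out correctly. The gap is in the bound on $W_2$, which you yourself flag as ``the main obstacle'' and then only sketch. The paper does not bound $W_2$ by a direct container count; it observes the identity $\sum_{I\in\cB}\tilde\om(I)=\hat Z_G\cdot\mathbb{P}_{\hat\mu^*}(\cD\neq\cM(I))$ and proves in \Cref{l:minoritydefect} that the defect side is the minority side with probability $1-O(\exp(-n/d^{C_5+4}))$: first, $|I\cap\cD|\le n/d^2$ with this probability, via the cumulant-generating-function bound of \Cref{l:Gammasmall} powered by the Koteck\'y--Preiss verification with the tilted weights $\om^*$; second, conditionally, the non-defect side receives $\Om(n/d)$ vertices by a Chernoff bound, hence exceeds $n/d^2$. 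This probabilistic route is what actually controls the weight of configurations valid for both models.

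Your proposed substitute does not close this. Even granting your structural claims, knowing that $|I\cap\cD|\le(1-\Om(d^{-C_5}))|\cD|$ on both sides says nothing about the \emph{total weight} of such $I$; the reason $\cB$ is light is not a size constraint but the fact that, once the defect side is small, the other side is essentially a $\frac{\lam}{1+\lam}$-random subset whose $2$-linked structure percolates, so it fails to be a polymer configuration --- and quantifying that requires exactly the measure-theoretic argument above. Moreover, several intermediate steps in your sketch do not hold up: the per-component constraint in $\cP_\cD$ is $|[A]|\le\frac34|\cD|$ for each component separately, and summing bounds of the form $|N(A)|\le(\frac34+O(d^{-C_5}))|\cD|$ over components does not yield $|I\cap\cD|\le(1-\Om(d^{-C_5}))|\cD|$ (the disjointness of the $N(A_i)$ only gives $\sum_i|N(A_i)|\le|\overline{\cD}|$); the assertion that $\lam\gg1/d$ ``precludes $X$ from being $2$-independent'' is not a deduction that can be made about a deterministic trace $X$; and no weighted enumeration of ``admissible traces'' is actually performed, so the claimed conclusion $W_2\le(1+\lam)^{n/2}\exp(-\Om(n/d^{C_5+4}))$ is asserted rather than proved. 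To repair the proof, replace this part with the paper's argument: establish \Cref{l:Gammasmall} and \Cref{l:minoritydefect} first, and then use the displayed identity relating $\sum_{I\in\cB}\tilde\om(I)$ to $\mathbb{P}_{\hat\mu^*}(\cD\neq\cM(I))$.
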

Having Lemmas \ref{l:Kotecky} and \ref{lem:approximateZ}, we now proceed to the proof of \Cref{thm:expansion}. For this, we define the parameters
\begin{align*}
    \alpha \coloneqq \lambda(1 - e^{-\beta}) = \lambda p, \hspace{40pt} \ta \coloneqq \left( 1 - \frac{\alpha}{1+\lambda} \right)^{-1} = \frac{1+\lam}{1+\lam e^{-\beta}},
\end{align*}
and we note that $\ta \le 1+\lambda$ and $\log \ta \ge \frac{\alpha}{1+\lambda}$. These parameters will also appear in later sections.

\begin{proof}[Proof of \Cref{thm:expansion}]
Recall that  for $p=1-e^{-\beta}$, we have
    \[
    \Ex{Z_{G_p}(\lam)} = Z_G(\lam, \beta).
    \]
 Thus, let us compute an expansion of $Z_G=Z_G(\lam, \beta)$.
    From the cluster expansion of $\Xi_{\cD}$ and the definition of $L_{\cD, \geq k+2}$ we have that
    \[
    \Xi_{\cD}=\exp\left( L_{\cD, \leq k} + L_{\cD, k+1} + L_{\cD, \geq k+2} \right).
    \]
    By \Cref{lem:approximateZ}, we may approximate $Z_G$ using $\Xi_{\cO}, \Xi_{\cE}$ to get that
    \[
    Z_G=(1+\lambda)^{n/2} \sum_{\cD \in \{\cO, \cE\}} \exp\left( L_{\cD, \leq k} + L_{\cD, k+1} + L_{\cD, \geq k+2} + \varepsilon' \right),
    \]
    where
    \[
    \varepsilon' \coloneqq \log Z_G - \log((1+\lambda)^{n/2} (\Xi_{\cO} + \Xi_{\cE})) = O\left( \exp\left(- \frac{n}{d^{C_5+4}} \right)\right).
    \]

To start, we compute the first term in this expansion.
Since $G$ is regular bipartite, there are $n/2$ polymers $A$ of size $1$ in $\cD$, each with weight 
\begin{align*}
    \om(A) &= \sum_{B \subseteq N(A)} \frac{\lam^{1+|B|}}{(1+\lam)^d} e^{-\beta |E(A, B)|} = \frac{\lam}{(1+\lam)^d} \left(1+\lam e^{-\beta}\right)^d = \lam \left(1-\frac{\lam p}{1+\lam}\right)^d.
\end{align*}
Since the Ursell function of a single vertex graph is $1$, we get that
\[
L_{\cD, 1} = \sum_{\Gam \in \cC_{\cD,1}} \om(\Gam) = \frac{n \lam}{2} \left(1-\frac{\lam p}{1+\lam}\right)^d. 
\]

Next, we claim that for any fixed $k\in \mathbb N$, we have
    \[
    |L_{\cD, k}|=O\left(n d^{2(k-1)} \lam^k \tilde{\alpha}^{-kd} \right).
    \]
    To prove this, let $\Gam \in \cC_{\cD, k}$ be a cluster of size $k$, that is, $\norm{\Gamma} \coloneqq \sum_{A \in \Gamma} |A| =k$.
    The vertex set of the cluster $\Gamma$, i.e., $V(\Gam) = \bigcup_{A \in \Gam} A$, is a $2$-linked set of size at most $k$. There are at most \[
    \sum_{\ell=1}^k n \cdot (ed^2)^{\ell-1} \leq k \cdot n \cdot (ed^2)^{k-1}\] possible choices for $V(\Gam)$, because the number of $2$-linked sets of size $\ell$ containing a given vertex $v$ is at most $(ed^2)^{\ell-1}$ by \Cref{l:counting2linked}, and there are $n$ choices for the vertex $v$.
    On the other hand, for every subset $X \subseteq V(G)$ of size at most $k$, there are only a constant number of clusters $\Gam$ such that $X=V(\Gam)$.
    Thus, in total there are $O(nd^{2(k-1)})$ clusters of size $k$.
    Now we bound the weight of each such cluster $\Gam$ using \Cref{l:boundingweight}. This gives us that
    \[
    |\om(\Gam)| = |\phi (\Gam)| \prod_{A \in \Gam} \om(A) \leq |\phi(\Gam)| \prod_{A \in \Gam} \lam^{|A|} \tilde{\alpha}^{-|N(A)|},
    \]
    where $\tilde{\alpha}$ was defined above.
    By \hyperref[def:property-i]{Property I}, we have $|N(A)| \geq |A|(d-C_1|A|)$, which implies that $\sum_{A\in \Gam}|N(A)|\geq k(d-C_1k)$. Hence,
    \begin{align*}
        |\om(\Gam)| = O\left(\lam^k \tilde{\alpha}^{-k(d-C_1k)}\right),
    \end{align*}
    as the Ursell function of a cluster of fixed size can be bounded from above by a constant.
    Since $C_1$ is constant, $k$ is fixed, and $\ta \le 1+\lambda_0$, we have that $\ta^{C_1k^2}$ is also bounded from above by a constant. Combining this upper bound on $|\omega(\Gamma)|$ with the upper bound on the total number of clusters yields the claimed upper bound on $|L_{\cD, k}|$. 
    
    Our next claim is that the error when approximating $\Xi_{\cD}$ by $L_{\cD, <k+2}$ is much smaller than the bound on $|L_{\cD,k+1}|$ given by the previous claim:
    \[
    L_{\cD, \geq k+2} = o\left(n d^{2k} \lam^{k+1} \tilde{\alpha}^{-(k+1)d}\right).
    \]
    To prove this, from \Cref{l:Kotecky}, we may bound $L_{\cD, \geq k+2}$ as
    \begin{align*}
        |L_{\cD, \geq k+2}| &\le n d^{(C_5+7)(k+2)-C_5-1} \tilde{\alpha}^{-(k+2)d+C_1(k+2)^2} \\
        &= n d^{2k} \lam^{k+1} \tilde{\alpha}^{-(k+1)d} \cdot \left( d^{(C_5+5)k+C_5+13} \lambda^{-(k+1)} \tilde{\alpha}^{-d+C_1(k+2)^2} \right).
    \end{align*}
    We observe that the last parenthetical term is indeed $o(1)$: from the assumptions $\lambda \le \lambda_0$ and $\alpha = \lambda p \ge \frac{C_0 \log^{3/2} d}{d}$, we have that
    \begin{align*}
        \ta^{-d} \le \exp\left(-d \cdot \frac{\alpha}{1+\lambda} \right) \le \exp \left( -\frac{C_0 \log^{3/2} d}{1+\lambda_0} \right),
    \end{align*}
    which shrinks faster than any polynomial in $d$. On the other hand, $\lambda^{-(k+1)} \le d^{k+2}$ is at most polynomial in $d$, and like before $\ta^{C_1(k+2)^2}$ is bounded from above by a constant. This proves the claim.
    
    We stress again that in general $\Xi_{\cO}$ and $\Xi_{\cE}$ are different. But we have shown above that $L_{\cD, 1}$ is the same for both models, and the bounds we obtained for $L_{\cD, k}$ when $k>1$ hold in both cases as well. Setting $\varepsilon_{\cD, k} \coloneqq L_{\cD, k+1} + L_{\cD, \geq k+2} + \varepsilon'$, we get that
    \[
    Z_G = \sum_{\cD \in \{\cO, \cE\}} (1+\lam)^{n/2} \exp\left(\sum_{j=1}^k L_{\cD, j}+ \varepsilon_{\cD, k}\right).
    \]
    Using $n = \om(d^{C_5+5})$, we moreover get that
    \[
    |\varepsilon_{\cD, k}| = O\left(nd^{2k} \lam^{k+1}{ \tilde{\alpha}^{-d(k+1)}}\right).
    \]
    This finishes the proof.
\end{proof}

\section{Graph containers}\label{sec:containers}

In this section we will state our main container lemma (\Cref{lem:container_main}) that we promised in the Introduction, but we will prove it in slightly more general settings than graphs satisfying \hyperref[def:property-i]{Property I}. 

Throughout this section we let $G$ be a $d$-regular $n$-vertex bipartite graph with bipartition $(\cO, \cE)$ (which does not necessarily satisfy \hyperref[def:property-i]{Property I}) and fix $\cD \in \{\cO, \cE\}$. A central concept in graph containers is the family of 2-linked sets whose closure and neighborhood have fixed sizes. Given $a, b \in \mathbb{N}$ we define
\begin{align*}
    \cG_\cD(a,b)\coloneqq \Big\{A\subseteq\cD:\ A\ \text{is $2$-linked}, \quad |[A]|=a, \quad \text{and}\quad |N(A)|=b\Big\}.
\end{align*}
Note that since $G$ is regular and bipartite, $G$ contains a perfect matching (e.g., due to Hall's condition) and therefore
$\cG_\cD(a,b)$ is empty whenever $b < a$. So we assume throughout this section that $b \ge a$. It is important to note that here we do not impose a \lq global' upper bound on $|[A]|$ for any set $A\in \cG_\cD(a,b)$, in contrast to our polymer model $\cP_\cD$ in which we require $|[A]| \leq  \frac{3}{4} \cdot |\cD|$ for every $A\in \cP_\cD$.

For each set $A \in \cG_\cD(a,b)$, we define its weight as
\begin{align}
    \om(A) \coloneqq \sum_{B \subseteq N(A)} \frac{\lam^{|A|+|B|}}{(1+\lam)^{|N(A)|}} e^{-\beta |E(A,B)|}.\label{weight:AinG}
\end{align}
In the results below, we will always implicitly assume that $d$ is sufficiently large compared to the fixed parameters. The fugacity $\lambda > 0$ and inverse temperature $\beta > 0$ may depend on $d$, and we again set
\begin{align*}
    \alpha \coloneqq \lambda(1 - e^{-\beta}) \qquad \text{and} \qquad \lambexp = \min \left\{\frac{1}{2}, 1 - \frac{C_5}{2}\right\}.
\end{align*}

The following definition is central for graph containers.
\begin{definition}\label{def:psi_approx}
Let $1 \le \psi \le d - 1$. A $\psi$-approximation $(F,H)$ is a pair of vertex sets $F \subseteq \overline{\cD}, H \subseteq \cD$ such that for some set $A \subseteq \cD$, 
    \begin{equation}\label{eqn:cont_1}
        F\subseteq N(A)  \ \ \text{and} \ \ H\supseteq [A],
    \end{equation}
    \begin{equation}\label{eqn:cont_2}
        d_F(u)\geq d(u)-\psi, \quad \forall u\in H,
    \end{equation}
    \begin{equation}\label{eqn:cont_3}
        d_{\cO\setminus H}(v)\geq d(v)-\psi, \quad \forall v\in \cE\setminus F.
    \end{equation}
    In this case, we say that $(F, H)$ approximates $A$, and we denote this relation by 
    \[A\approx (F,H).\]
\end{definition}

Our main container lemma, whose proof can be found at the end of this section, can now be stated.

\begin{lemma} \label{lem:container_main}
For any fixed constants $C_2, C_4, C_5, \lam_0 > 0$ with $C_5 < 2$, there exist constants $C_0, C > 0$ such that the following holds. Suppose that $\lam \le \lam_0$ and $\alpha \coloneqq \lam (1 - e^{-\beta}) \ge \frac{C_0 \log^{3/2} d}{d^{\lambexp}}$, that for every set $X \subseteq N(y)$ for some $y \in \overline{\cD}$ with $|X| > d/2$ it holds that $|N(X)| \ge \frac{d}{C_2}|X|$, and that $a, b \in \mathbb{N}$ satisfy $b \ge \left( 1 + \frac{C_4}{d^{C_5}} \right)a$. Then
\begin{equation}\label{eq:container}
    \sum_{A \in \cG_\cD(a,b)} \om(A) \le |\cD| \exp \left( -\frac{C(b - a)\alpha^2}{\log d} \right).
\end{equation}
\end{lemma}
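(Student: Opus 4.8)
The plan is to follow the classical $\psi$-approximation strategy as refined by Kronenberg and Spinka, but optimize the parameter choices using the entropy-derived lemma (\Cref{lemma:total-weight-given-approximation}) that will be stated in \Cref{sec:bound_given_approx}. The argument proceeds in two stages: first, bound the number of distinct $\psi$-approximations $(F,H)$ that can arise from sets $A \in \cG_\cD(a,b)$; second, for a fixed $\psi$-approximation $(F,H)$, bound the total weight $\sum_{A \approx (F,H)} \om(A)$. Multiplying these two bounds and optimizing over $\psi$ yields \eqref{eq:container}.

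\emph{Stage 1: counting approximations.} I would show that every $A \in \cG_\cD(a,b)$ admits a $\psi$-approximation $(F,H)$ with $F \subseteq N(A)$, $H \supseteq [A]$, and that the number of such approximations (over all $A \in \cG_\cD(a,b)$) is at most $|\cD| \cdot \exp(O((b-a) \tfrac{\log^2 d}{\psi}))$ or a comparable bound. The standard route (see Galvin's exposition \cite{galvin2019independent} and \cite{kronenberg2022independent}) is to build $F$ and $H$ greedily: starting from a single vertex, repeatedly add vertices of $\overline{\cD}$ to $F$ and vertices of $\cD$ to $H$ to ``cover'' the boundary, where the number of choices at each step is polynomial in $d$, and the number of steps needed is controlled by $(b-a)/\psi$ because each step fixes roughly $\psi$ units of the discrepancy between $|N(A)|$ and $|[A]|$. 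The isoperimetric hypothesis that $|N(X)| \ge \tfrac{d}{C_2}|X|$ for $X \subseteq N(y)$ with $|X| > d/2$ (a local consequence of \hyperref[def:property-i]{Property Ia(2)}) is what makes the small components of the boundary cheap to encode. The condition $b \ge (1 + C_4 d^{-C_5})a$ guarantees $b - a = \Omega(a/d^{C_5})$, which will matter in the optimization.

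\emph{Stage 2: weight given an approximation.} For a fixed $(F,H)$, I invoke \Cref{lemma:total-weight-given-approximation}, which should give $\sum_{A \approx (F,H)} \om(A) \le \exp(-\Omega((b-a)\alpha^2 \cdot \text{something}))$ or $\exp(- (b-a) \cdot g(\alpha,\psi))$ for an explicit function $g$; the entropy input of Peled and Spinka \cite{peled2020long} is exactly what lets one extract a factor quadratic in $\alpha$ rather than linear, which is the source of the improved $d^{-1/2}$ threshold. Intuitively, each vertex counted in $b - a$ sits in $N(A) \setminus [A]$, meaning it has a neighbor outside $A$; summing the Ising weights over all configurations $B \subseteq N(A)$ and over the $q = \lam/(1+\lam)$-random choices on $\overline{\cD} \setminus N(A)$, each such vertex contributes a multiplicative saving of the form $1 - \Omega(\alpha^2/\psi)$ or similar. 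Then I combine: the product of the Stage-1 count $\exp(O((b-a)\log^2 d / \psi))$ and the Stage-2 weight $\exp(-\Omega((b-a)\alpha^2/\psi))$ — wait, the dependence on $\psi$ must be arranged so that one term decreases and the other increases; more precisely the count grows like $\exp(O((b-a)\psi \log d))$ while the weight decays like $\exp(-\Omega((b-a)\psi \alpha^2 / \log d))$ or the roles are reversed, and one picks $\psi \asymp \log d$ (or $\psi$ near its allowed maximum) to balance, landing on the bound $|\cD|\exp(-C(b-a)\alpha^2/\log d)$. The hypothesis $\alpha \ge C_0 \log^{3/2} d / d^{\lambexp}$ with $C_0$ large is what ensures the negative exponent dominates.

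\emph{Main obstacle.} The hard part is Stage 2 — getting the \emph{quadratic}-in-$\alpha$ savings per excess boundary vertex uniformly over all $A$ with $A \approx (F,H)$, which is precisely where \Cref{lemma:total-weight-given-approximation} and the entropy machinery do the heavy lifting; the combinatorial bookkeeping in Stage 1 and the final optimization over $\psi$ are comparatively routine. A secondary subtlety is handling the regime split $0 < C_5 \le 1$ versus $1 < C_5 < 2$ (hence the two cases inside $\lambexp$): when $C_5 > 1$ the gap $b - a$ can be as small as $\Theta(a/d^{C_5})$, so the approximation must be fine enough (small $\psi$) to still separate the weight, and one must check the constants $C_0, C$ can be chosen uniformly. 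I would organize the proof so that the $\psi$-optimization is written once with $\lambexp$ as a parameter, and verify the two endpoint cases only at the end.
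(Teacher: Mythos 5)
Your proposal follows essentially the same route as the paper: combine Galvin's $\psi$-approximation family (\Cref{lem:galvin_approx_family}) with the entropy-based bound on the total weight of sets sharing a fixed approximation (\Cref{lemma:total-weight-given-approximation}), multiply, and check that the $-C''(b-a)\alpha^2/\log d$ term dominates using $\alpha \ge C_0\log^{3/2}d/d^{\lambexp}$ and $b-a \ge C_4 a/d^{C_5}$. The only imprecision is your hesitation about optimizing $\psi$: the Stage-2 bound is uniform in $\psi$, so there is no balancing — one simply takes $\psi = d/2$ to make the approximation family as small as possible.
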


In \cite{kronenberg2022independent}, Kronenberg and Spinka proved the same result for the hypercube in the range $\alpha= \lam (1-e^{-\beta})=\tilde{\Om}(1/d^{1/3})$, which can be viewed as a generalization to the Ising model of the corresponding bound by Galvin in \cite{galvin2011threshold}. Their proof uses two main tools. On one hand are graph containers, which provide control on $|\cG_\cD(a,b)|$ and are necessary ingredients present in all previous works. On the other hand, having to deal with the new weights in the summation arising from the Ising model, they required a novel use of entropy-derived lemmas by Peled and Spinka \cite[Lemma 7.3]{peled2020long}.

Specializing our result to the hard-core model, we recover the result of Jenssen, Malekshahian, and Park \cite[Lemma 1.2]{jenssen2024refined} for regular bipartite graphs, with a slight improvement on the lower bound on $\lam$ by a factor of $\log^{1/2}d$ (see \Cref{cor:hard-core-corollary-1}). Furthermore, we derive a similar bound for a wider class of regular graphs, allowing the exponent $C_5$ in the isoperimetric statement to be greater than $1$ (see \Cref{cor:hard-core-corollary-2}).

\begin{corollary} \label{cor:hard-core-corollary-1}
    Under the conditions of Lemma \ref{lem:container_main} and $C_5\leq 1$, for any $\lam \ge \frac{C_0\log^{3/2}d}{d^{1/2}}$ (not necessarily bounded) we have that
    \begin{align*}
    \sum_{A \in \cG_\cD(a,b)} \lam^{|A|} \le |\cD|(1+\lam)^b \exp \left( -\frac{C(b - a)\log^2d}{d} \right).
    \end{align*}
\end{corollary}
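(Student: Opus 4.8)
The corollary is the hard-core ($\beta \to \infty$) specialization of \Cref{lem:container_main}, so the plan is to reduce it to that lemma by a change of parameters. First I would recall that in the hard-core limit we have $e^{-\beta} \to 0$, so $\alpha = \lambda(1 - e^{-\beta}) \to \lambda$, and the weight \eqref{weight:AinG} of a set $A \in \cG_\cD(a,b)$ degenerates: only $B = \emptyset$ survives among sets with an edge penalty, or rather, all $B \subseteq N(A)$ contribute but with $e^{-\beta |E(A,B)|} \to \mathbbm{1}_{E(A,B) = \emptyset}$. Actually the cleanest route is: for finite $\beta$, $\om(A) \le \frac{\lambda^{|A|}}{(1+\lambda)^{|N(A)|}}(1 + \lambda e^{-\beta})^{|N(A)|}$ by the same computation as in \Cref{l:boundingweight}, and as $\beta \to \infty$ this upper bound tends to $\frac{\lambda^{|A|}}{(1+\lambda)^{|N(A)|}} = \frac{\lambda^{|A|}}{(1+\lambda)^{b}}$. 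So summing over $A \in \cG_\cD(a,b)$, the left side of \eqref{eq:container}, which in the limit reads $\sum_A \frac{\lambda^{|A|}}{(1+\lambda)^b}$, is at most $|\cD| \exp(-C(b-a)\alpha^2/\log d)$ with $\alpha = \lambda$; multiplying through by $(1+\lambda)^b$ gives exactly the claimed bound $\sum_A \lambda^{|A|} \le |\cD|(1+\lambda)^b \exp(-C(b-a)\log^2 d / d)$, once we substitute $\alpha = \lambda$ and use $C_5 \le 1 \Rightarrow \lambexp = 1/2 \Rightarrow \alpha^2 = \lambda^2 \ge C_0^2 \log^3 d / d$, so $\alpha^2/\log d \ge C_0^2 \log^2 d / d$, absorbing the constant into $C$.

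\textbf{Key steps, in order.} (1) Observe that the hypotheses of \Cref{lem:container_main} are met: the isoperimetric condition ``$|N(X)| \ge \frac{d}{C_2}|X|$ for $X \subseteq N(y)$, $|X| > d/2$'' and the separation condition $b \ge (1 + C_4/d^{C_5})a$ are assumed verbatim, and the fugacity hypothesis $\lambda \ge C_0 \log^{3/2} d / d^{1/2}$ together with $e^{-\beta} \to 0$ (i.e. working in the limit, or taking $\beta$ large enough) gives $\alpha \ge \frac{1}{2}\lambda \ge \frac{C_0}{2}\log^{3/2}d/d^{1/2} \ge \frac{C_0'}{2}\log^{3/2}d/d^{\lambexp}$. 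One subtlety: \Cref{lem:container_main} as stated requires $\lambda \le \lambda_0$ bounded, whereas here $\lambda$ may be unbounded — so one cannot literally invoke the lemma as a black box, but must check its proof goes through for unbounded $\lambda$ in the hard-core case. In the hard-core regime this is standard: the only place boundedness of $\lambda$ was used in analogous arguments is to control factors like $\ta^{C_1 k^2}$ or $(1+\lambda)$-type quantities that in the pure hard-core weight $\lambda^{|A|}/(1+\lambda)^{b}$ cancel against the $(1+\lambda)^b$ we multiply back in. (2) Take $\beta \to \infty$ in the conclusion \eqref{eq:container}, using monotone/dominated convergence on the finite sum $\sum_{A \in \cG_\cD(a,b)} \om(A)$ (a finite sum, so no convergence issues), to get $\sum_A \lambda^{|A|}/(1+\lambda)^b \le |\cD|\exp(-C(b-a)\lambda^2/\log d)$. (3) Multiply by $(1+\lambda)^b$ and replace $\lambda^2/\log d$ by $\log^2 d / d$ using $\lambda \ge C_0 \log^{3/2}d / d^{1/2}$, adjusting $C$.

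\textbf{Main obstacle.} The genuine work is not in the limiting argument — that is bookkeeping — but in confirming that \Cref{lem:container_main}'s proof survives dropping the boundedness assumption $\lambda \le \lambda_0$ when specialized to $\beta = \infty$. The authors presumably want a clean corollary, so the cleanest presentation is: either (a) note that the entire container machinery (the $\psi$-approximation lemmas, the entropy bounds of Peled--Spinka, \Cref{lemma:total-weight-given-approximation}) is applied to the \emph{normalized} weights $\om(A)(1+\lambda)^b = \lambda^{|A|}(1+\lambda e^{-\beta})^{|N(A)|}/(\text{stuff})$, which in the hard-core limit is just $\lambda^{|A|}$, and every estimate there is uniform in $\lambda \ge $ the stated threshold; or (b) re-run the short chain of inequalities in the proof of \Cref{lem:container_main} tracking where $\lambda_0$ enters and verifying each such term is harmless for hard-core. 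I would go with route (a): state that the proof of \Cref{lem:container_main} shows more precisely that $\sum_{A \in \cG_\cD(a,b)} \om(A)(1+\lambda)^{|N(A)|}(1+\lambda e^{-\beta})^{-|N(A)|} \le |\cD|\exp(-C(b-a)\alpha^2/\log d)$ without needing $\lambda$ bounded once $\alpha$ itself is the driving parameter, and that letting $\beta \to \infty$ with $\alpha \to \lambda$ yields the corollary. This is the only place a careful reader will want reassurance, so that is where the paragraph of the actual proof should be spent.
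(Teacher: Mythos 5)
You correctly identify the central subtlety -- that \Cref{lem:container_main} cannot be invoked as a black box because it assumes $\lambda \le \lambda_0$, while the corollary allows unbounded $\lambda$ -- but your proposed resolution does not work as stated. You claim that the proof of \Cref{lem:container_main} yields, uniformly in $\lambda$, the bound $\sum_{A} \om(A) \le |\cD|\exp(-C(b-a)\alpha^2/\log d)$ with $\alpha = \lambda$ in the hard-core limit. This is false for large $\lambda$. The quantity actually produced by the container/entropy machinery is governed by $\overline{\alpha} = \log(1+\lambda)$, not by $\lambda$ itself; these are comparable only when $\lambda$ is bounded, which is precisely the hypothesis being dropped. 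Concretely, in the case $b - |F| \le \frac{c(b-a)\overline{\alpha}}{\log d}$ of the proof of \Cref{lemma:total-weight-given-approximation}, the bound obtained is $\exp(-c'(b-a)\overline{\alpha})$, and converting this into $\exp(-C''(b-a)\overline{\alpha}^2/\log d)$ requires $\overline{\alpha} \lesssim \log d$, i.e.\ boundedness of $\lambda$ (up to polynomial growth). For, say, $\lambda = d^{100}$, your claimed intermediate inequality would assert decay $\exp(-\Theta((b-a)d^{200}/\log d))$, which the argument cannot deliver (it gives only $\exp(-\Theta((b-a)\log d))$ from that case). So the ``uniform in $\lambda$'' assertion in your route (a) is exactly the false step.

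The paper's actual proof (\Cref{sec:container-hard-core}) re-runs the argument in the hard-core model from the start: it replaces \Cref{lemma:bound-on-Z} by the trivial bound $Z(\Psi) \le (1+\lambda)^{d - \ell_\Psi}$ (valid for all $\lambda > 0$), proves modified versions of \Cref{lemma:b-f-small} and \Cref{lemma:b-f-large}, and -- crucially -- restates the total-weight-given-approximation lemma with the exponent $-C''(b-a)\overline{\alpha} - C''(b-a)\overline{\alpha}^2/\log d$, retaining \emph{both} the linear and the quadratic terms in $\overline{\alpha}$. The linear term is what dominates (and saves the argument) when $\lambda$ is large, while the quadratic term is the relevant one when $\lambda$ is small; the final corollary follows because each term separately dominates $\frac{(b-a)\log^2 d}{d}$ in its respective regime under the hypothesis $\lambda \ge \frac{C_0 \log^{3/2} d}{d^{1/2}}$. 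Your write-up would need to carry out this two-term bookkeeping (or an equivalent case split on the size of $\overline{\alpha}$) rather than asserting uniformity of the single quadratic bound.
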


\begin{corollary} \label{cor:hard-core-corollary-2}
    Under the conditions of Lemma \ref{lem:container_main} and $1 \le C_5 < 2$, for any $\lam \ge \frac{C_0\log^{3/2}d}{d^{1-C_5/2}}$ (not necessarily bounded) we have that
    \begin{align*}
    \sum_{A \in \cG_\cD(a,b)} \lam^{|A|} \le |\cD|(1+\lam)^b \exp \left( -\frac{C(b - a)\log^2d}{d^{2-C_5}} \right).
    \end{align*}
\end{corollary}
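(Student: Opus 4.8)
\textbf{Proof proposal for Corollary~\ref{cor:hard-core-corollary-2}.}
The plan is to deduce this corollary from the master container lemma (\Cref{lem:container_main}) by specializing the Ising parameters to the hard-core limit, exactly as Corollary~\ref{cor:hard-core-corollary-1} is obtained, only with the isoperimetric exponent $C_5$ now allowed to lie in $[1,2)$. First I would take $\beta \to \infty$, so that $e^{-\beta} \to 0$, $p = 1 - e^{-\beta} \to 1$, and $\alpha = \lambda(1-e^{-\beta}) \to \lambda$. In this limit the weight \eqref{weight:AinG} of a set $A \in \cG_\cD(a,b)$ collapses to $\om(A) = \sum_{B \subseteq N(A)} \lambda^{|A|+|B|}(1+\lambda)^{-|N(A)|} = \lambda^{|A|}(1+\lambda)^{|N(A)|}(1+\lambda)^{-|N(A)|}\cdot$ — wait, more precisely $\sum_{B\subseteq N(A)}\lambda^{|B|} = (1+\lambda)^{|N(A)|}$, so $\om(A) = \lambda^{|A|}$. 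Hence $\sum_{A\in\cG_\cD(a,b)}\om(A) = \sum_{A\in\cG_\cD(a,b)}\lambda^{|A|}$, which is the left-hand side of the desired inequality (the factor $(1+\lambda)^b$ on the right-hand side is what \Cref{lem:container_main} will absorb; see below).

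The technical point to check is that the hypotheses of \Cref{lem:container_main} survive the limit. The condition $\lambda \le \lambda_0$ is \emph{not} needed here: as in Corollary~\ref{cor:hard-core-corollary-1}, for $\lambda$ large we may simply apply \Cref{lem:container_main} with $\lambda$ replaced by $\min\{\lambda,\lambda_0\}$ inside the exponent bound and note that $\lambda^{|A|} \le (1+\lambda)^{|A|} \le (1+\lambda)^{|N(A)|} = (1+\lambda)^b$ for every $A\in\cG_\cD(a,b)$ (using $|A|\le |[A]|\le |N(A)|=b$ since $G$ is regular bipartite and contains a perfect matching), which accounts for the $(1+\lambda)^b$ prefactor; the remaining sum over $A$ of the \emph{normalized} weights $\lambda^{|A|}(1+\lambda)^{-b}$ is then bounded by the lemma. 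The hypothesis $\alpha \ge C_0 \log^{3/2}d / d^{\lambexp}$ becomes $\lambda \ge C_0\log^{3/2}d / d^{1-C_5/2}$ since $\lambexp = \min\{1/2,\,1-C_5/2\} = 1-C_5/2$ when $C_5 \ge 1$; this is exactly the stated fugacity lower bound. The vertex-expansion hypothesis ``$|N(X)| \ge \frac{d}{C_2}|X|$ for $X \subseteq N(y)$ with $|X|>d/2$'' and the requirement $b \ge (1+C_4 d^{-C_5})a$ are carried over verbatim from \Cref{lem:container_main}'s hypotheses. Finally, the conclusion \eqref{eq:container} gives $\sum_{A}\lambda^{|A|}(1+\lambda)^{-b} \le |\cD|\exp(-C(b-a)\alpha^2/\log d)$, and substituting $\alpha = \lambda \ge C_0\log^{3/2}d/d^{1-C_5/2}$ yields $\alpha^2/\log d \ge C_0^2 \log^2 d / d^{2-C_5}$, so that the exponent is at most $-C'(b-a)\log^2 d / d^{2-C_5}$ for a new constant $C' = C C_0^2$ (relabelled $C$). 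Multiplying through by $(1+\lambda)^b$ gives precisely the claimed bound.

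The main obstacle — really the only nontrivial point — is handling unbounded $\lambda$ cleanly, since \Cref{lem:container_main} as stated assumes $\lambda \le \lambda_0$. The fix is the standard monotonicity observation used for Corollary~\ref{cor:hard-core-corollary-1}: one separates the weight as $\lambda^{|A|} = (1+\lambda)^b \cdot \bigl(\tfrac{\lambda}{1+\lambda}\bigr)^{|A|}(1+\lambda)^{|A|-b}$ and checks that the normalized container sum $\sum_A \lambda^{|A|}(1+\lambda)^{-b}$ is controlled by \Cref{lem:container_main} applied at a capped fugacity, using that both the number of sets in $\cG_\cD(a,b)$ and their contribution behave monotonically in the relevant range; since Corollary~\ref{cor:hard-core-corollary-1} already records exactly this reduction (differing only in which branch of $\lambexp$ is active and hence in the exponent $d$ versus $d^{2-C_5}$), the argument here is verbatim the same with $1-C_5/2$ in place of $1/2$. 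Everything else is bookkeeping with the definition of $\lambexp$ and the $\beta\to\infty$ specialization of the weights.
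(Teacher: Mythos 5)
There is a genuine gap at the single point you yourself identify as "the only nontrivial point": handling unbounded $\lam$. The "standard monotonicity observation used for Corollary~\ref{cor:hard-core-corollary-1}" does not exist — the paper explicitly warns that both corollaries are \emph{not} immediate from \Cref{lem:container_main} precisely because the $\beta\to\infty$ limit only yields them for bounded $\lam$, and it devotes \Cref{sec:container-hard-core} to removing that assumption. Moreover, the capping step you propose is not valid as stated: the normalized weight $\lam^{|A|}(1+\lam)^{-b}$ is \emph{increasing} in $\lam$ up to $\lam = |A|/(b-|A|)$, and since the hypothesis only guarantees $b-a \ge C_4 d^{-C_5}a$, this turning point can be as large as $d^{C_5}/C_4$. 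So for $\lam$ between $\lam_0$ and polynomially large in $d$, each term of the normalized sum is strictly larger at the true fugacity than at the capped fugacity $\min\{\lam,\lam_0\}$, and applying \Cref{lem:container_main} at the capped value does not bound the sum you need. Your term-by-term estimate $\lam^{|A|}\le(1+\lam)^b$ only reduces the problem to bounding $|\cG_\cD(a,b)|$, which is not what the container machinery provides without re-engaging the weights.

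What the paper actually does is rework the proof in the hard-core model from the start: it reproves the per-approximation bound (\Cref{lemma:total-weight-given-approximation-modified}) via hard-core versions of Lemmas \ref{lemma:b-f-small} and \ref{lemma:b-f-large}, replacing the entropy estimate of \Cref{lemma:bound-on-Z} (which needs $\lam\le\lam_0$) by the elementary bound $Z(\Psi)\le(1+\lam)^{d-\ell_\Psi}$, valid for all $\lam>0$. Crucially, the resulting exponent contains \emph{two} terms, $-C''(b-a)\overline{\alpha}-C''(b-a)\overline{\alpha}^2/\log d$ with $\overline{\alpha}=\log(1+\lam)$: the linear term is what defeats the container-counting cost when $\lam$ is large, and the quadratic term handles bounded $\lam$. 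Your proposal, which keeps only the $\alpha^2/\log d$ term and substitutes $\alpha=\lam$ directly into the conclusion of \Cref{lem:container_main}, silently assumes the lemma applies at the true (possibly unbounded) fugacity, which is exactly the hypothesis being circumvented. To repair the argument you would need to carry out (or cite) the appendix's two-regime analysis rather than appeal to monotonicity.
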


We remark that in \cite{jenssen2024refined}, the authors also noted their container lemma can be extended to hold for slightly worse expansion. Rephrasing this to our notation their proof works for $C_5 \in (1, 3/2)$, under the revised assumption that $\lambda \ge \frac{C_0 \log^2 d}{d^{3/2 - C_5}}$. In addition, Corollaries \ref{cor:hard-core-corollary-1} and \ref{cor:hard-core-corollary-2} are not exactly immediate from \Cref{lem:container_main}, as taking the limit as $\beta\rightarrow\infty$ only yields the statements under the assumption that $\lam$ is bounded. However, if we work on the hard-core model from the start and revisit the steps of the proof of \Cref{lem:container_main}, this boundedness assumption can be removed. This is outlined in \Cref{sec:container-hard-core}.

Returning to the container method, it typically consists of two parts to derive inequalities of the form of \eqref{eq:container}. Instead of working with the sum directly, we first use a family of sets, called  \textit{containers}, in which every set $A\in \cG_\cD(a,b)$ has a $\psi$-approximation, and then the sum of weights of elements with a given approximation is estimated. Multiplying by the size of the family of containers yields a bound in the spirit of our main container lemma (Lemma \ref{lem:container_main}). The strength of the result depends on the optimization of the two parts of the computation. In \cite{jenssen2024refined}, the bound on the weighted sum was improved by considering a new container family with desirable properties. In this paper, we use the same containers as Galvin \cite{galvin2011threshold} and instead improve on the computation via entropy. The following is the container lemma of Galvin.

\begin{lemma}[\cite{galvin2011threshold}, Lemmas 5.1 and 5.2] \label{lem:galvin_approx_family}
For any fixed constants $C_2, C_4, C_5 > 0$, there exists a constant $C' > 0$ such that the following holds. Suppose that for every set $X \subseteq N(y)$ for some $y \in \overline{\cD}$ with $|X| > d/2$ it holds that $|N(X)| \ge \frac{d}{C_2}|X|$, that $a, b \in \mathbb{N}$ satisfy $b \ge \left( 1 + \frac{C_4}{d^{C_5}} \right)a$, and that $1 \le \psi \le d-1$. Then there exists a family $\cA \subseteq 2^{\cD} \times 2^{\overline{\cD}}$ of pairs $(F, H)$ with
\begin{equation}\label{eq:approx_family}
    |\cA| \le |\cD| \exp \left(  \frac{C' b \log^2 d}{d^2} + \frac{C' (b - a)\log^2 d}{d} + \frac{C' (b - a)\log d}{\psi} \right),
\end{equation}
such that every $A\in \cG_{\cD}(a,b)$ has a $\psi$-approximation in $\cA$.
\end{lemma}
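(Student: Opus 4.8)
The plan is to prove this by the Sapozhenko--Galvin \emph{container (fingerprint) method}. The key point is that a $\psi$-approximation $(F,H)$ is determined once we know a small ``fingerprint'' set $S$ together with $a,b,\psi$: given $S$ we recover $H$ and then $F$ by a fixed deterministic rule of the usual Sapozhenko type --- roughly, put a vertex into $H$ precisely when almost all of its neighbourhood is consistent with $S$, and then take $F$ to consist of the vertices of $\overline{\cD}$ almost all of whose neighbours already lie in $H$ --- so that the two degree conditions of \Cref{def:psi_approx} hold essentially by construction. Consequently $|\cA|$ is at most the number of admissible fingerprints, and the whole task reduces to: attach to each $A\in\cG_\cD(a,b)$ a fingerprint $S(A)$ lying in an explicitly reconstructible region, so that the rule applied to $S(A)$ returns a pair with $[A]\subseteq H$ and $F\subseteq N(A)$ while $|S(A)|$ is small, and then count the possible fingerprints.

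I would produce $S(A)$ in two stages, matching the three terms of \eqref{eq:approx_family}. \textbf{Stage 1 (coarse approximation, Galvin's Lemma~5.1).} Fix a root vertex of $A$; this accounts for the factor $|\cD|$. Since $A$ is $2$-linked, greedily grow a fingerprint $S_1$ (connected in $G^2$, started at the root) that ``spreads out'' enough that $S_1$ together with the reconstruction rule recovers $N(A)$ and $[A]$ up to a small ``fuzzy boundary''. A packing/double-counting argument using the hypothesis $|N(X)|\ge\frac d{C_2}|X|$ for $X\subseteq N(y)$ with $|X|>d/2$ bounds $|S_1|$ and the number of ways to generate it, giving the first term $\exp\!\big(O(b\log^2 d/d^2)\big)$; the fuzzy boundary has size $O(b-a)$ by the defect hypothesis $b\ge(1+C_4 d^{-C_5})a$, and recording it as a list of (localised, hence $O(\log d)$-bit) vertices gives the second term $\exp\!\big(O((b-a)\log^2 d/d)\big)$. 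The output is a $\psi_1$-approximation for a coarse tolerance $\psi_1=\Theta(d/\log d)$. \textbf{Stage 2 (refinement, Galvin's Lemma~5.2).} From a $\psi'$-approximation one shows the true pair differs from it only within a set of size $O((b-a)\log d/\psi')$, which can be recorded to produce a $(\psi'/2)$-approximation; iterating from $\psi_1$ down to $\psi$ and summing the resulting geometric series contributes the last term $O((b-a)\log d/\psi)$. Combining the two stages yields \eqref{eq:approx_family}.

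The main obstacle lies in Stage~1: one must simultaneously show that the greedy fingerprint $S_1$ is small --- this is where the local expansion hypothesis $|N(X)|\ge\frac d{C_2}|X|$ is indispensable, as it prevents many vertices of $\cD$ from being ``squashed together'' under a common neighbour, which would both inflate $S_1$ and corrupt the reconstruction --- and that the part of $[A]\cup N(A)$ not pinned down by $S_1$ has size only $O(b-a)$ rather than $O(b)$, which is exactly the role of the mild vertex-expansion hypothesis $b\ge(1+C_4 d^{-C_5})a$; without the latter the second term of \eqref{eq:approx_family} would degrade to $O(b\log^2 d/d)$. The remaining work --- checking that the reconstruction rule genuinely returns a pair meeting every clause of \Cref{def:psi_approx}, and the routine bookkeeping of the refinement loop --- is standard and is precisely the content of \cite[Lemmas~5.1 and~5.2]{galvin2011threshold}, which one may simply invoke; the above is the shape of a self-contained argument.
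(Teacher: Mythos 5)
Your outline is a faithful sketch of the standard Sapozhenko--Galvin two-stage fingerprint argument (coarse approximation via a small connected fingerprint, then iterative refinement of the tolerance $\psi$), which is exactly the content of \cite[Lemmas~5.1 and~5.2]{galvin2011threshold} that the paper invokes verbatim for \Cref{lem:galvin_approx_family} without reproving it. Since you both identify the correct source of each of the three terms in \eqref{eq:approx_family} and ultimately defer the details to the same citation the paper uses, the proposal matches the paper's approach.
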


We will combine Lemma \ref{lem:galvin_approx_family} with the following upper bound on the total weight of sets from $\cG_{\cD}(a,b)$ with a given approximation, improving Kronenberg and Spinka's result \cite[Lemma 4.8]{kronenberg2022independent}. Its proof is deferred to \Cref{sec:bound_given_approx}, after we introduce the necessary entropy tools in \Cref{sec:entropy_tools}.

\begin{lemma} \label{lemma:total-weight-given-approximation}
For any fixed constants $C_4, C_5, \lam_0 > 0$, there exist constants $C_0, C'' > 0$ such that the following holds. Suppose that $\lam \le \lam_0$ and $\lam (1 - e^{-\beta})^2 \ge \frac{C_0 \log d}{d}$, that $a,b \in \mathbb{N}$ with $b \ge \left(1 + \frac{C_4}{d^{C_5}}\right)a$, and that $1 \le \psi \le d/2$. Then for any $\psi$-approximation $(F,H)$, we have
\begin{align*}
    \sum_{A \in \cG_\cD(a,b) :\ A \approx (F,H)} \om(A) \le \exp\left( -\frac{C''(b-a)\alpha^2}{\log d} \right).
\end{align*}
\end{lemma}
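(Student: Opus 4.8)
The plan is to bound the weighted sum $\sum_{A \approx (F,H)} \om(A)$ by isolating the ``entropic cost'' paid on the set $N(A) \setminus F$, whose size is at least $\psi$-comparable to the expansion deficit $b-a$. The starting point is the crude bound $\om(A) \le \lam^{|A|} (1+\lam)^{-|N(A)|} (1+\lam e^{-\beta})^{|N(A)|} = \lam^{|A|} \ta^{-|N(A)|} = \lam^{|A|} \ta^{-b}$ from \Cref{l:boundingweight}; but this alone only yields a bound in terms of $\lam$ and $b$, not in terms of the \emph{deficit} $b-a$, so one must instead keep the full weight $\om(A)$ and exploit the structure imposed by the approximation. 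The key observation is that for $A \approx (F,H)$, the vertices of $N(A)\setminus F$ are all ``almost fully covered'': each $u \in H \supseteq [A]$ has $d_F(u) \ge d(u)-\psi$, so a vertex $v\in N(A)\setminus F$ is joined to $A$ by an edge even though it was not forced by $F$. First I would set up, for a fixed $\psi$-approximation $(F,H)$, the quantity $g \coloneqq |N(A)\setminus F| \ge b - |F|$, and observe that standard container bookkeeping (as in Kronenberg--Spinka \cite[Lemma 4.8]{kronenberg2022independent}) gives $|F| \le a + O((b-a)\psi/d)$ or similar, so that $g = \Omega(b-a)$ once $\psi$ is chosen $\le d/2$; more carefully, one wants $g$ to absorb a constant fraction of $b-a$.

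Next I would bring in the entropy-derived lemmas of Peled and Spinka promised in \Cref{sec:entropy_tools} (the analogue of \cite[Lemma 7.3]{peled2020long} used by Kronenberg and Spinka). The point of those lemmas is to bound a sum of the form $\sum_{A} \lam^{|A|} \prod_{v \in N(A)} (\text{local weight}_v)$ by an \emph{entropy-optimised} product, turning the combinatorial sum over $2$-linked sets $A$ with $|[A]|=a$, $|N(A)|=b$ into a per-vertex optimisation. Concretely, the sum over $B \subseteq N(A)$ in the definition \eqref{weight:AinG} of $\om(A)$ factorises over the vertices of $N(A)$: each $v\in N(A)$ contributes $(1+\lam e^{-\beta c_v})/(1+\lam)$ where $c_v = |N(v)\cap A|$ is its ``coverage'', and vertices in $N(A)\setminus F$ have $c_v \ge 1$. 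The entropy lemma then lets me trade the freedom in choosing $A$ against the product $\prod_{v\in N(A)\setminus F} \tfrac{1+\lam e^{-\beta c_v}}{1+\lam}$, and the crucial numeric inequality is that each such factor is at most $\ta^{-1}(1 + \lam e^{-\beta}) / \cdots$, yielding a gain of roughly $\exp(-c\,\alpha^2)$ per vertex of $N(A)\setminus F$ after optimising over the coverage distribution (here one uses $\alpha = \lam(1-e^{-\beta})$ and the hypothesis $\lam(1-e^{-\beta})^2 \ge C_0 \log d / d$, which guarantees the per-vertex gain beats the $\log d$ entropy price of locating $A$). Summing the gain over the $\Omega(b-a)$ vertices of $N(A)\setminus F$ gives $\exp(-C''(b-a)\alpha^2/\log d)$.

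The steps in order: (1) fix $(F,H)$ and reduce to summing $\om(A)$ over $A \approx (F,H)$; (2) estimate $|N(A)\setminus F| \ge \kappa (b-a)$ for an absolute $\kappa>0$, using $b \ge (1+C_4 d^{-C_5})a$, $\psi \le d/2$, and the defining inequalities of \Cref{def:psi_approx}; (3) expand $\om(A) = \lam^{|A|}\prod_{v\in N(A)} \tfrac{1+\lam e^{-\beta |N(v)\cap A|}}{1+\lam}$ and split the product over $F$ versus $N(A)\setminus F$, bounding the $F$-part trivially; (4) apply the Peled--Spinka-type entropy lemma from \Cref{sec:entropy_tools} to the remaining sum, converting it to a per-vertex optimisation with an additive $O(\log d)$ entropy term per vertex of $A$; (5) carry out the per-vertex optimisation, using $\alpha^2 \lesssim$ per-vertex gain and the hypothesis $\lam(1-e^{-\beta})^2 \ge C_0\log d/d$ to ensure the gain dominates; (6) combine with (2) to get the claimed $\exp(-C''(b-a)\alpha^2/\log d)$.

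The main obstacle I expect is step (4)--(5): making the entropy argument \emph{quantitatively} sharp enough to extract $\alpha^2$ (rather than $\alpha^3$, which would only give the $d^{-1/3}$ barrier of Galvin). This is precisely where the refinement over \cite{kronenberg2022independent} lies, and it hinges on correctly accounting for the number of vertices that need to be ``paid for'' (those in $N(A)\setminus F$, governed by $\psi$) versus the entropy cost of specifying which $2$-linked set $A$ we have (governed by $\log d$ via \Cref{l:counting2linked}), and then optimising the choice of $\psi$ — the $\log^{3/2} d$ rather than $\log^2 d$ improvement in $C_0$ comes from a tighter balancing here, and from using the entropy inequality in place of a union bound over containers. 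A secondary subtlety is that the statement does \emph{not} assume $|[A]|$ is globally bounded (unlike in $\cP_\cD$), so the argument must be purely local to $(F,H)$ and cannot appeal to the polymer-model constraints.
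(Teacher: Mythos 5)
Your step (2) contains a false claim that the rest of the argument leans on: you assert $|N(A)\setminus F| \ge \kappa(b-a)$ for an absolute $\kappa>0$. There is no such lower bound. The definition of a $\psi$-approximation only requires $F \subseteq N(A)$ and $H \supseteq [A]$ together with the two degree conditions, and the pair $(N(A),[A])$ itself is always a valid $\psi$-approximation (every $u\in[A]$ has $N(u)\subseteq N(A)=F$, and every $v\notin N(A)=N([A])$ has no neighbour in $[A]=H$). In that case $|N(A)\setminus F|=0$ no matter how large $b-a$ is, so no gain can be extracted from the vertices of $N(A)\setminus F$ alone. The inequality you cite from the container bookkeeping, $|H|\le |F|+\frac{(b-a)\psi}{d-\psi}$, bounds $|H|$ from above in terms of $|F|$; it gives no lower bound on $b-|F|$. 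Relatedly, your mechanism for producing the $\log d$ in the denominator is not accounted for: ``$\Omega(b-a)$ vertices times a per-vertex gain of $\exp(-c\alpha^2)$'' would yield $\exp(-c(b-a)\alpha^2)$, not $\exp(-C''(b-a)\alpha^2/\log d)$.

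The paper's proof resolves exactly this issue with a dichotomy on the size of $b-|F|$ relative to the threshold $\frac{c(b-a)\overline{\alpha}}{\log d}$. When $b-|F|$ exceeds the threshold, an entropy bound (Lemma \ref{lemma:b-f-large}) gives a gain of $\overline{\alpha}/4$ per vertex of $N(A)\setminus F$, i.e.\ $\exp(-\frac14(b-|F|)\overline{\alpha})$, and substituting the threshold produces the $(b-a)\overline{\alpha}^2/\log d$ exponent --- so the $\alpha^2/\log d$ comes from the case boundary, not from a per-vertex gain of order $\alpha^2$. When $b-|F|$ is below the threshold, one instead takes a union bound over the at most $\binom{2bd}{b-|F|}$ possible closures $[A]$ (cheap precisely because $b-|F|$ is small) and uses a different entropy bound (Lemma \ref{lemma:b-f-small}) for the sets with a \emph{fixed} closure $A'$, whose gain $\exp(-\frac12(b-a)\overline{\alpha})$ comes from the global identity $\sum_{v\in N(A')}|N(v)\setminus A'|=d(b-a)$ rather than from counting vertices outside $F$. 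Your proposal captures the second regime's intuition but has no mechanism for the first, so as written the argument fails whenever $F$ is close to all of $N(A)$.
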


Now we verify that Lemma \ref{lem:container_main} follows from Lemmas \ref{lem:galvin_approx_family} and \ref{lemma:total-weight-given-approximation}. Note that if $\lam \le \lam_0$ and $\alpha= \lam(1 - e^{-\beta}) \ge \frac{C_0 \log^{3/2} d}{d^{1/2}}$, then we have $\lam (1 - e^{-\beta})^2 \ge \frac{C_0^2 \log^3 d}{\lam_0 d}$, so that Lemma \ref{lemma:total-weight-given-approximation} applies.

\begin{proof}[Proof of Lemma \ref{lem:container_main}]
Choose $\psi = d/2$. By Lemmas \ref{lem:galvin_approx_family} and \ref{lemma:total-weight-given-approximation}, we have
\begin{align*}
    \sum_{A \in \cG_{\cD}(a,b)} \om(A) \le |\cD| \exp \left( \frac{C'b\log^2 d}{d^2} + \frac{C'(b - a)\log^2 d}{d} - \frac{C''(b-a)\alpha^2}{\log d} \right).
\end{align*}
The desired inequality will follow once we show that $\frac{C''(b-a)\alpha^2}{\log d}$ is larger than both $\frac{C'b\log^2 d}{d^2}$ and $\frac{C'(b - a)\log^2 d}{d}$. The latter inequality holds because $\alpha \ge \frac{C_0 \log^{3/2} d}{d^{1/2}}$. The former inequality rearranges to $\alpha > \frac{c \log^{3/2} d}{d} \left( \frac{b}{b - a} \right)^{1/2}$, and this holds because $\frac{b}{b - a} \le 1 + \frac{1}{C_4}d^{C_5}$ and $\alpha \ge \frac{C_0 \log^{3/2} d}{d^{1 - C_5/2}}$.
\end{proof}

\section{Bounds from entropy}\label{sec:entropy_tools}

The goal for this section is to give a brief overview of the tools/lemmas we will use to prove Lemma \ref{lemma:total-weight-given-approximation}, which bounds the total weights of sets from $\cG_{\cD}(a,b)$ with a given approximation. These tools are entropy bounds due to Peled and Spinka \cite[Lemma 7.3]{peled2020long}. The original bounds in \cite{peled2020long} are quite general and apply to a broad class of discrete spin systems on regular bipartite graphs, but we will only require special cases, as outlined by Kronenberg and Spinka \cite[Lemmas 4.11, 4.12, 4.13]{kronenberg2022independent}. We dedicate \Cref{sec:appendix} for more background on this topic.

Assume the general setup as in \Cref{sec:containers}.  
We say that a vertex subset $T \subseteq V(G)$ is \textit{boundary-odd} if $N(T \cap \cE) \subseteq T$. For a vertex subset $I \subseteq V(G)$, we define its weight as
\begin{align*}
    \tilde{\om}(I) \coloneqq \lam^{|I|} e^{-\beta |E(I)|}.
\end{align*}
This is not the same as the weight $\om(A)$ that we defined in \eqref{weight:AinG} for $A \in \cG_{\cD}(a,b)$, but the two weights are related, and this connection will be used in the proofs. For a family $\cF$ of vertex subsets $I \subseteq V(G)$, we set $\tilde{\om}(\cF) \coloneqq \sum_{I \in \cF} \tilde{\om}(I)$. For a collection $\Psi$ of subsets $\psi \subseteq [d] \coloneqq \{1, \ldots, d\}$, we define
\begin{align*}
    Z(\Psi) \coloneqq \sum_{\psi \in \Psi} \lam^{|\psi|} \left( 1 + \lam e^{-\beta |\psi|} \right)^d,
\end{align*}
which is a type of partition function on the collection $\Psi$.

The following lemmas from \cite[Lemma 4.14]{kronenberg2022independent} provide upper bounds on $\tilde{\om}(\cF)$ in terms of products of the functions $Z(\Psi)$ for associated collections $\Psi$ of subsets of neighborhoods. More details on how these statements are derived from \cite{peled2020long} can be found in \Cref{sec:appendix}. Similar to entropy conventions, we use the convention that $(1/p)^p$ equals $1$ whenever $p = 0$.

\begin{lemma} \label{lemma:entropy-1}
Let $T \subseteq V(G)$ be boundary-odd and let $\cF$ be a family of subsets of $T$. Then
\begin{align*}
    \tilde{\om}(\cF) \le \prod_{v \in T \cap \cO} Z(\Psi_v)^{\frac{1}{d}},
\end{align*}
where $\Psi_v = \{I \cap N(v) : I \in \cF\}$.
\end{lemma}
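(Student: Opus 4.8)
\textbf{Proof proposal for Lemma \ref{lemma:entropy-1}.}

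The plan is to reduce this statement to the general entropy bound of Peled and Spinka, so the work is mostly in setting up the right correspondence and checking that the local counting at each odd vertex is captured by $Z(\Psi_v)$. First I would encode each $I \in \cF$ as a spin configuration on $T$: since $T$ is boundary-odd, $N(T \cap \cE) \subseteq T$, so the restriction of $G$ to $T$ "sees" all edges incident to $T \cap \cE$. I would think of the pair $(I \cap \cE, I \cap \cO)$ together with the edge-penalty $e^{-\beta |E(I)|}$ as a weighted spin system on the bipartite graph $G[T]$, where the odd vertices are the ``summed-out'' side. The key reformulation is that for a \emph{fixed} choice of $I \cap \cO$ (equivalently, fixed restrictions $I \cap N(v)$ to each odd neighborhood, which overlap consistently), summing $\tilde{\om}(I)$ over all compatible even parts should factor, because each even vertex $u \in T \cap \cE$ contributes independently a factor $1 + \lam e^{-\beta |I \cap N(u)|}$ (either $u \notin I$, contributing $1$, or $u \in I$, contributing $\lam e^{-\beta \cdot(\text{number of occupied odd neighbors of }u)}$). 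Collecting these even-vertex factors and regrouping the product over edges by their odd endpoint is exactly what turns the sum into a product over $v \in T \cap \cO$ of local partition functions; the exponent $\tfrac1d$ appears because each such even-vertex factor gets distributed across the $d$ odd neighbors of $u$.

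Concretely, I would invoke the entropy inequality of Peled--Spinka (as specialized in \cite[Lemma 4.14]{kronenberg2022independent}, see also \Cref{sec:appendix}) applied to the family $\cF$ viewed as a subset of a product space indexed by $T$: the inequality bounds $\tilde\om(\cF)$ by a product, over the odd vertices $v \in T \cap \cO$, of the total weight of the marginal configurations seen on the star around $v$, each raised to the power $1/d$. That marginal weight, after absorbing the even-vertex Ising factors as above, is precisely $Z(\Psi_v) = \sum_{\psi \in \Psi_v} \lam^{|\psi|}(1 + \lam e^{-\beta|\psi|})^d$: the term $\lam^{|\psi|}$ accounts for the occupied odd neighbors of $v$ recorded in $\psi = I \cap N(v)$, and $(1+\lam e^{-\beta|\psi|})^d$ is the product of the $d$ even-vertex factors attached to $v$ (one per even neighbor, each seeing $|\psi|$ occupied odd vertices in the worst case), with the convention $(1/0)^0 = 1$ matching the entropy convention flagged before the statement.

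The main obstacle I expect is bookkeeping the edge weights correctly under the regrouping: the quantity $e^{-\beta|E(I)|}$ is a product over edges, and each edge $uv$ with $u \in \cE$, $v \in \cO$ must be assigned consistently to the ``$v$-side'' local factor without double-counting, while simultaneously the occupancy indicator of each even vertex $u$ must be distributed among its $d$ odd neighbors so that each $Z(\Psi_v)$ only involves data visible on $N(v)$. This is exactly the kind of careful product rearrangement that the Peled--Spinka framework is designed to handle via entropy/subadditivity, so once the correspondence between $\cF$ and the spin system is pinned down, the inequality follows from their lemma applied verbatim; the remaining check is just that the boundary-oddness of $T$ guarantees no edge incident to $T \cap \cE$ is lost, which is where that hypothesis is used. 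I would also double-check the degenerate cases (an odd vertex $v$ with $\Psi_v = \{\emptyset\}$, or even vertices of $T$ with neighbors outside $T$ — impossible here by boundary-oddness) to make sure the bound is not vacuous or off by boundary terms.
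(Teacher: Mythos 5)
Your proposal takes the same route as the paper: \Cref{lemma:entropy-1} is proved there by a direct specialization of the Peled--Spinka entropy lemma (\Cref{lemma:Peled-Spinka}), taking $S = T \cup N(T)$ with the zero boundary condition $\mathbb{S}_u=\{0\}$ on $\partial_{\bullet}S = N(T)\setminus T \subseteq \cE$ and the auxiliary random variable $X_v$ constant, so that $\Psi_{v,0}=\Psi_v$, $\mathbb{P}(X_v=0)=1$, and the stated bound drops out verbatim. One caveat on your heuristic (which does not affect the black-box application): in $Z(\Psi_v)$ the factor $1+\lam e^{-\beta|\psi|}$ is the sum over the spin of the odd vertex $v$ itself given its neighborhood configuration $\psi = I\cap N(v)\subseteq \cE$, with the $d$-th power cancelling against the outer exponent $\frac{1}{d}$; it is not ``the product of the $d$ even-vertex factors attached to $v$,'' and an even neighbor $u$ of $v$ sees $|I\cap N(u)|$ occupied odd vertices, which is unrelated to $|\psi|$.
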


\begin{lemma} \label{lemma:entropy-2}
Let $T \subseteq V(G)$ be boundary-odd and let $\cF$ be a family of subsets of $T$ such that every $I \in \cF$ contains no isolated vertex in $I \cap \cO$. Then
\begin{align*}
    \tilde{\om}(\cF) \le \prod_{v \in T \cap \cO} Z(\Psi_v)^{\frac{p_v}{d}} \left( \frac{1}{p_v} \right)^{\frac{p_v}{d}} \left( \frac{1}{1 - p_v} \right)^{\frac{1-p_v}{d}},
\end{align*}
where $\Psi_v \coloneqq \{I \cap N(v) : I \in \cF, |I \cap N(v)| > 0\}$ and $p_v \coloneqq \mathbb{P}(|I \cap N(v)| > 0)$, when $I$ is randomly chosen from $\cF$ according to $\tilde{\om}$.
\end{lemma}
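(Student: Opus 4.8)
\textbf{Proof plan for Lemma~\ref{lemma:entropy-2}.}
The plan is to mimic the proof of Lemma~\ref{lemma:entropy-1}, but to first peel off the entropy cost of deciding, for each odd vertex $v \in T \cap \cO$, whether $v$ has an occupied neighbor at all. Concretely, I would think of choosing $I \in \cF$ according to the probability measure proportional to $\tilde\om$, and for each $v \in T \cap \cO$ let $\xi_v \coloneqq \mathbbm{1}[|I \cap N(v)| > 0]$. The hypothesis that no $I \in \cF$ has an isolated vertex in $I \cap \cO$ means that whenever $v \in I$ we automatically have $\xi_v = 1$ (since $v$'s neighbors lie in $T$ by boundary-oddness, so they are ``visible'' to the configuration), hence on the event $\xi_v = 0$ the vertex $v$ is unoccupied \emph{and} has no occupied neighbor, i.e. $v$ contributes a factor $1$ to $\tilde\om(I)$ and can be ignored. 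On the event $\xi_v = 1$, the relevant local configuration $I \cap N(v)$ ranges over $\Psi_v$.

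The key steps, in order: (i) Restate Lemma~\ref{lemma:entropy-1} (or rather its proof, which goes through the Peled--Spinka entropy inequality recalled in \Cref{sec:appendix}) in the form that gives, for any boundary-odd $T$ and any family $\cG$ of subsets of $T$, the bound $\tilde\om(\cG) \le \prod_{v \in T \cap \cO} Z(\Psi_v(\cG))^{1/d}$ where $\Psi_v(\cG) = \{I \cap N(v) : I \in \cG\}$. (ii) For a binary string $\eta \in \{0,1\}^{T \cap \cO}$, let $\cF_\eta \coloneqq \{I \in \cF : \xi_v(I) = \eta_v \ \forall v\}$, and apply step~(i) to each $\cF_\eta$ separately; on $\cF_\eta$ one has $\Psi_v(\cF_\eta) \subseteq \Psi_v$ if $\eta_v = 1$, while if $\eta_v = 0$ the local configuration at $v$ is the empty set, so $Z(\Psi_v(\cF_\eta)) = \lam^0(1+\lam)^d = (1+\lam)^d$; but in fact the vertices with $\eta_v = 0$ are unoccupied and un-neighbored, so they can be removed from $T$ altogether, replacing $T$ by the still-boundary-odd set $T_\eta \coloneqq T \setminus \{v : \eta_v = 0\}$ — wait, removing odd vertices keeps boundary-oddness trivially since the condition only constrains $N(T \cap \cE)$, and the even side is unchanged, so this is fine. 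This yields $\tilde\om(\cF_\eta) \le \prod_{v : \eta_v = 1} Z(\Psi_v)^{1/d}$. (iii) Sum over $\eta$: $\tilde\om(\cF) = \sum_\eta \tilde\om(\cF_\eta)$, and here I would introduce the probabilities. Writing $p_v = \Pr_{I \sim \tilde\om/\tilde\om(\cF)}(\xi_v = 1)$, the natural move is to bound $\tilde\om(\cF) / \tilde\om(\cF) \cdot \ldots$; more precisely, dividing through by $\tilde\om(\cF)$, one wants $1 = \sum_\eta \Pr(\xi = \eta) \le \sum_\eta \Pr(\xi=\eta) \cdot [\text{something}]$, so instead I would argue directly: by step~(ii) and the inequality $\tilde\om(\cF_\eta) \le \tilde\om(\cF) \cdot \Pr(\xi = \eta)$... no. The clean route is the standard ``entropy of a mixture'' / Shearer-type bookkeeping: from $\tilde\om(\cF) = \sum_\eta \tilde\om(\cF_\eta)$ and $\tilde\om(\cF_\eta) \le \prod_{v:\eta_v=1} Z(\Psi_v)^{1/d}$, take logarithms after normalizing — set $\mu(\eta) = \tilde\om(\cF_\eta)/\tilde\om(\cF)$, so that $\log \tilde\om(\cF) = \log \tilde\om(\cF_\eta) - \log \mu(\eta)$ for each $\eta$ with $\mu(\eta) > 0$, hence $\log \tilde\om(\cF) = \sum_\eta \mu(\eta) \log \tilde\om(\cF_\eta) + H(\mu)$ where $H(\mu) = -\sum_\eta \mu(\eta)\log\mu(\eta)$. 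Bounding $\log \tilde\om(\cF_\eta) \le \frac1d \sum_{v:\eta_v=1} \log Z(\Psi_v)$ gives $\sum_\eta \mu(\eta)\log\tilde\om(\cF_\eta) \le \frac1d \sum_v p_v \log Z(\Psi_v)$, and bounding $H(\mu) \le \sum_v H(p_v)$ by subadditivity of entropy (the coordinates $\xi_v$ need not be independent, but entropy is subadditive) gives $H(\mu) \le \sum_v \big( p_v \log\frac1{p_v} + (1-p_v)\log\frac1{1-p_v}\big)$. Since $d \ge 1$ we may weaken the entropy bound to $H(\mu) \le \frac1d \sum_v H(p_v)$... actually no, we need the $1/d$; but $H(p_v) \le \log 2$ and the statement divides $H(p_v)$ by $d$, so I should instead carry the $1/d$ from the start by noting the Peled--Spinka inequality of step~(i) is genuinely an inequality between $\tilde\om$ and a $1/d$-th power product, and re-examine whether the entropy term also comes with $1/d$. (iv) Exponentiate to conclude $\tilde\om(\cF) \le \prod_v Z(\Psi_v)^{p_v/d} (1/p_v)^{p_v/d}(1/(1-p_v))^{(1-p_v)/d}$, matching the claimed bound, with the convention $(1/p)^p = 1$ at $p = 0$ handling the vertices that are never neighbored.

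The main obstacle, as the sketch above reveals, is getting the powers of $1/d$ to come out exactly right. In Lemma~\ref{lemma:entropy-1} the factor $1/d$ on $\log Z(\Psi_v)$ comes from the Peled--Spinka entropy inequality, which (after summing the per-edge contributions and using $d$-regularity) attributes weight $1/d$ to each odd vertex. When I split off the indicators $\xi_v$, I must make sure the extra binary entropy is charged against the same $1/d$ budget rather than appearing at order $1$. The honest way to do this is not the a~posteriori mixture argument of step~(iii), but to redo the Peled--Spinka computation with the augmented configuration $(I, \xi)$ — or, equivalently, to invoke the version of their lemma (the one Kronenberg--Spinka use to derive exactly this statement) that already allows an auxiliary indicator with a per-vertex cost. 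I would therefore structure the proof as a direct appeal to the general entropy inequality in \Cref{sec:appendix} applied to the family $\cF$ together with the random variable $(\xi_v)_{v \in T \cap \cO}$, so that the $1/d$ is never lost; the combinatorial input (``no isolated odd vertex $\Rightarrow$ vertices with $\xi_v=0$ are inert'') is what lets us replace the ``always present'' $Z(\Psi_v)$ of Lemma~\ref{lemma:entropy-1} by its $p_v$-weighted geometric interpolation toward the neutral value, and the deficit $(1-p_v)$ shows up as the $(1/(1-p_v))^{(1-p_v)/d}$ factor.
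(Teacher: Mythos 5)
Your final plan --- a direct appeal to the Peled--Spinka entropy inequality (Lemma~\ref{lemma:Peled-Spinka}) with the per-vertex indicator $X_v=\mathbbm{1}[|I\cap N(v)|>0]$, using the no-isolated-vertex hypothesis to see that on the event $|I\cap N(v)|=0$ the vertex $v$ is forced to be unoccupied, so the local partition function collapses to $Z(\{\emptyset\},\{0\})=1$ --- is exactly the paper's proof. You were also right to abandon the a posteriori mixture argument of step~(iii): there the binary entropies $H(p_v)$ enter at order $1$ rather than $1/d$, and only the direct application of the entropy lemma, whose conclusion already carries the exponent $\tfrac{1}{d}\mathbb{P}(X_v=x)$ on both the $Z$-factor and the $1/\mathbb{P}(X_v=x)$-factor, charges them against the correct $1/d$ budget.
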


We will use Lemmas \ref{lemma:entropy-1} and \ref{lemma:entropy-2} to prove Lemma \ref{lemma:total-weight-given-approximation} in \Cref{sec:bound_given_approx}. We will also need the following result, in order to bound the total weight of {\em non-configurations} in \Cref{sec:nonpolymer}.
\begin{lemma} \label{lemma:entropy-3}
Let $T \subseteq V(G)$ be boundary-odd, let $\cF$ be a family of subsets of $T$ and let $s > 0$. Then
\begin{align*}
    \tilde{\om}(\cF) \le \prod_{v \in T \cap \cO} Z(\Psi_v)^{\frac{p_v}{d}} Z(\Psi_v')^{\frac{p_v'}{d}} (1 + \lam)^{1 - p_v - p_v'} \left( \frac{1}{p_v} \right)^{\frac{p_v}{d}} \left( \frac{1}{p_v'} \right)^{\frac{p_v'}{d}} \left( \frac{1}{1 - p_v - p_v'} \right)^{\frac{1-p_v-p_v'}{d}},
\end{align*}
where $\Psi_v \coloneqq \{I \cap N(v) : I \in \cF, 1 \le |I \cap N(v)| \le s\}$, $\Psi_v' \coloneqq \{I \cap N(v) : I \in \cF, |I \cap N(v)| > s\}$, $p_v \coloneqq \mathbb{P}(1 \le |I \cap N(v)| \le s)$, and $p_v' = \mathbb{P}(|I \cap N(v)| > s)$, when $I$ is randomly chosen from $\cF$ according to $\tilde{\om}$.
\end{lemma}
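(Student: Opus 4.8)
The plan is to obtain \Cref{lemma:entropy-3} as one more instance of the Peled--Spinka entropy inequality \cite[Lemma 7.3]{peled2020long} that underlies \Cref{lemma:entropy-1,lemma:entropy-2}, the only new ingredient being a three-way split of the local view at each odd vertex (refining the two-way split behind \Cref{lemma:entropy-2}). Concretely, I would let $\bI$ be a random element of $\cF$ with $\Pr{\bI=I}$ proportional to $\tilde{\om}(I)$ and start from the identity
\[
\log\tilde{\om}(\cF)=H(\bI)+\E\!\left[\log\tilde{\om}(\bI)\right], \qquad \log\tilde{\om}(I)=|I|\log\lam-\beta|E(I)|.
\]
Since $T$ is boundary-odd, every $I\in\cF$ satisfies $I\cap\cE\subseteq\bigcup_{v\in T\cap\cO}N(v)$ --- each such even vertex lying in $N(v)$ for exactly $d$ odd $v\in T\cap\cO$ --- and $I\cap\cO\subseteq T\cap\cO$, so $\bI$ is recovered from the local views $\{\bI\cap N(v)\}_{v\in T\cap\cO}$ together with the occupations $\{\mathbbm{1}[v\in\bI]\}_{v\in T\cap\cO}$. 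A Shearer-type sub-additivity bound, exactly as in the proof of \Cref{lemma:entropy-1}, then dominates $\log\tilde{\om}(\cF)$ by a sum over $v\in T\cap\cO$ of a per-vertex \emph{local free energy}, in which the even-vertex weights enter with a factor $\tfrac1d$ (being shared over the $d$ odd neighbours of each even vertex) while the occupation of $v$ itself enters with weight $1$; for a fixed local view $\bI\cap N(v)=\psi$, the contribution of $\mathbbm{1}[v\in\bI]$ is the free energy of a Bernoulli variable with field $\log(\lam e^{-\beta|\psi|})$ and hence is at most $\log(1+\lam e^{-\beta|\psi|})$.

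For each $v\in T\cap\cO$ I would then partition according to whether $|\bI\cap N(v)|$ equals $0$, lies in $\{1,\dots,s\}$, or exceeds $s$; call these events $E_0,E_1,E_2$, with probabilities $1-p_v-p_v'$, $p_v$, and $p_v'$. Since which event occurs is a function of $\bI\cap N(v)$, the grouping rule for entropy gives $H(\bI\cap N(v))=H(p_v,p_v',1-p_v-p_v')+\sum_{i}\Pr{E_i}\,H(\bI\cap N(v)\mid E_i)$, and $H(p_v,p_v',1-p_v-p_v')=\log\big((1/p_v)^{p_v}(1/p_v')^{p_v'}(1/(1-p_v-p_v'))^{1-p_v-p_v'}\big)$, which after the $\tfrac1d$ Shearer weighting produces exactly the reciprocal-power factors in the statement (with the convention $(1/p)^{p}=1$ handling $p_v=0$ or $p_v'=0$). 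Conditioned on $E_1$, the combined contribution of the local view and of $\mathbbm{1}[v\in\bI]$ is, by the same manipulation as in \Cref{lemma:entropy-1} (using $H(X)+\E[\log w(X)]\le\log\sum_{x}w(x)$ with $w(\psi)=\lam^{|\psi|}(1+\lam e^{-\beta|\psi|})^{d}$ and $X=\bI\cap N(v)$ conditioned on $E_1$, whose support is $\Psi_v$), at most $\tfrac1d\log Z(\Psi_v)$; conditioned on $E_2$ it is at most $\tfrac1d\log Z(\Psi_v')$; and conditioned on $E_0$, where $\bI\cap N(v)=\emptyset$ and only $\mathbbm{1}[v\in\bI]$ is free, it is at most $\log(1+\lam)=\tfrac1d\log Z(\{\emptyset\})$, which gives the factor $(1+\lam)^{1-p_v-p_v'}$. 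Multiplying the per-vertex bounds over $v\in T\cap\cO$ and exponentiating yields the asserted inequality.

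The Shearer reduction and the three local partition-function estimates are routine and essentially the same as in the proofs of \Cref{lemma:entropy-1,lemma:entropy-2}, so in a write-up I would simply appeal to the development in \Cref{sec:appendix}. The step that genuinely needs care is the weight bookkeeping in the three-way split: one must check that the $(1+\lam)$ arising on $E_0$ enters with exponent $1-p_v-p_v'$ --- because the occupation of $v$ is \emph{not} subject to the $\tfrac1d$ discount, whereas the even-vertex weights packaged inside $Z(\Psi_v)$ and $Z(\Psi_v')$ are --- rather than with exponent $d(1-p_v-p_v')$, and that the $\tfrac1d$-weighted ternary entropy term matches the stated product of reciprocal powers. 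A secondary nuisance is the degenerate regime $p_v=0$ or $p_v'=0$, where the corresponding $Z(\cdot)^{0}$ and $(1/0)^{0}$ factors must be verified to disappear consistently under the stated conventions. Keeping these three conditional free energies cleanly separated is the place where I expect the proof to demand the most attention.
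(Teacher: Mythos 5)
Your proposal is correct and matches the paper's proof in essence: the paper establishes this lemma in \Cref{sec:appendix} by applying the Peled--Spinka entropy lemma (\Cref{lemma:Peled-Spinka}) with the three-valued random variable $X_v=(\mathbbm{1}_{|F\cap N(v)|=0},\mathbbm{1}_{|F\cap N(v)|\le s})$, which is exactly your three-way event split $E_0,E_1,E_2$, including the identification $Z(\Psi_{v,(1,1)},I_{v,(1,1)})\le Z(\{\emptyset\},\{0,1\})=(1+\lam)^d$ that yields the factor $(1+\lam)^{1-p_v-p_v'}$ with the correct (undiscounted) exponent. The only difference is presentational: you unfold the Shearer and variational-principle internals of that lemma, whereas the paper invokes it as a black box.
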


To effectively use the upper bounds on $\tilde{\om}(\cF)$ given by the above lemmas, we need good upper bounds on $Z(\Psi)$ for general families $\Psi \subseteq 2^{[d]} \setminus \{\emptyset\}$. Define
\begin{align*}
    \ell_{\Psi} \coloneqq \left| [d] \setminus \bigcup \Psi \right| = \Big|\{ i \in [d] : i \notin \psi \text{ for all } \psi \in \Psi \}\Big|
\end{align*}
and
\begin{align*}
    \overline{\alpha} \coloneqq -\log \left( 1 - \frac{\alpha}{1 + \lam} \right) = \log \left( \frac{1 + \lam}{1 + \lam e^{-\beta}} \right).
\end{align*}
Note that when $0 < \lam \le \lam_0$, we have $\frac{1}{1+\lam_0} \alpha \le \overline{\alpha} \le \alpha$. The following lemma provides us an effective upper bound on $Z(\Psi)$.

\begin{lemma} \label{lemma:bound-on-Z}
Fix $C, \lam_0 > 0$, and take $C_0$ to be sufficiently large. Suppose that $\lam \le \lam_0$ and $\lam(1 - e^{-\beta})^2 \ge \frac{C_0 \log d}{d}$. Then for any family $\Psi$ of subsets of $[d]$ with $\emptyset \notin \Psi$, we have
\begin{align*}
    Z(\Psi) \le (1 + \lam)^d \exp \left( -\frac{1}{2} \overline{\alpha} \ell_{\Psi} + d^{-C} \right).
\end{align*}
\end{lemma}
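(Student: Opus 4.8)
\textbf{Proof proposal for Lemma \ref{lemma:bound-on-Z}.}

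The plan is to bound $Z(\Psi) = \sum_{\psi \in \Psi} \lam^{|\psi|} (1 + \lam e^{-\beta|\psi|})^d$ by comparing each summand to $(1+\lam)^d$ and extracting a gain that is governed by the number of ``missing'' coordinates $\ell_\Psi$. First I would normalize: write $(1 + \lam e^{-\beta |\psi|})^d = (1+\lam)^d \left( \frac{1 + \lam e^{-\beta|\psi|}}{1+\lam} \right)^d = (1+\lam)^d \left(1 - \frac{\lam(1 - e^{-\beta|\psi|})}{1+\lam}\right)^d$. Since each $\psi \in \Psi$ is nonempty, $|\psi| \ge 1$, so $1 - e^{-\beta|\psi|} \ge 1 - e^{-\beta} = p$ and hence $\frac{\lam(1-e^{-\beta|\psi|})}{1+\lam} \ge \frac{\alpha}{1+\lam}$; therefore each factor $\left(1 - \frac{\lam(1-e^{-\beta|\psi|})}{1+\lam}\right)^d \le \left(1 - \frac{\alpha}{1+\lam}\right)^d = e^{-\overline{\alpha} d}$. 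This already gives $Z(\Psi) \le (1+\lam)^d e^{-\overline{\alpha} d} \sum_{\psi \in \Psi} \lam^{|\psi|}$, but that loses too much, since we only want a gain proportional to $\ell_\Psi$, not to $d$.

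The better route is to split the loss: use only \emph{half} the exponent against each nonempty $\psi$ and keep the other half as a reservoir. Writing $g(\psi) := \left(1 - \frac{\lam(1-e^{-\beta|\psi|})}{1+\lam}\right)^d \le e^{-\overline{\alpha} d}$, I would bound, for every $\psi\in\Psi$,
\[
\lam^{|\psi|} g(\psi) \le \lam^{|\psi|} e^{-\overline{\alpha} d} = \lam^{|\psi|} e^{-\frac12 \overline{\alpha} d} \cdot e^{-\frac12 \overline{\alpha} d}.
\]
Now $e^{-\frac12\overline\alpha d}\le e^{-\frac12\overline\alpha(\ell_\Psi+1)}$ trivially when $d\ge \ell_\Psi+1$ — which always holds since $\ell_\Psi \le d$ and, if $\ell_\Psi = d$ then $\Psi \subseteq \{\emptyset\}$, forcing $\Psi = \emptyset$ and $Z(\Psi)=0$, a trivial case. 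So it remains to show $\sum_{\psi\in\Psi}\lam^{|\psi|} e^{-\frac12\overline\alpha d}\le e^{d^{-C}}$, i.e., that $e^{-\frac12\overline\alpha d}\sum_{\psi\in\Psi}\lam^{|\psi|}$ is subpolynomially small. Bounding the sum crudely by $\sum_{\psi\subseteq[d]}\lam^{|\psi|} = (1+\lam)^d \le (1+\lam_0)^d$, it suffices that $e^{-\frac12\overline\alpha d}(1+\lam_0)^d \le e^{d^{-C}}$, i.e. that $\frac12\overline\alpha d - d\log(1+\lam_0) \ge -d^{-C}$. This is where the hypothesis $\lam(1-e^{-\beta})^2 \ge \frac{C_0\log d}{d}$ enters, but \emph{not} in an obviously sufficient way: it only gives a lower bound on $\overline\alpha$, while here I would need $\overline\alpha \gtrsim \log(1+\lam_0)$, a \emph{constant}, which need not hold. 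So the crude bound on $\sum_{\psi}\lam^{|\psi|}$ is too lossy and the argument must be more delicate.

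The fix — and the step I expect to be the main obstacle — is to not peel off a full factor $e^{-\frac12\overline\alpha d}$ uniformly, but instead to control $\sum_{\psi\in\Psi}\lam^{|\psi|} g(\psi)$ directly by a more careful ``replace $\psi$ by $[d]$'' comparison that pays $\overline\alpha$ per missing coordinate. Concretely, I would show that for each $\psi\in\Psi$ and each coordinate $i\notin\psi$, inserting $i$ into $\psi$ multiplies $\lam^{|\psi|}(1+\lam e^{-\beta|\psi|})^d$ by a factor of order $\lam \cdot \frac{1+\lam e^{-\beta(|\psi|+1)}}{1+\lam e^{-\beta|\psi|}}$, and iterating this down to the full set $[d]$ (for which the weight is $\lam^d(1+\lam e^{-\beta d})^d$, comfortably small) telescopes to give $\lam^{|\psi|}(1+\lam e^{-\beta|\psi|})^d \le C' \cdot \lam^d (1+\lam)^d \prod (\text{per-coordinate gains})$; the key point is that on the coordinates \emph{outside} $\bigcup\Psi$ every element of $\Psi$ is simultaneously missing them, so the per-coordinate comparison can be applied with the $e^{-\overline\alpha}$ gain coherently, yielding a single factor $e^{-\frac12\overline\alpha \ell_\Psi}$ after summing over $\psi$, plus a residual bounded by $d^{-C}$ coming from the far tail $|\psi|$ close to $d$, which is where $\lam(1-e^{-\beta})^2 \ge C_0\log d/d$ and $C_0$ large are actually used (to make $\overline\alpha^2 d \gg \log d$, hence $d^2 e^{-\overline\alpha d/2}$ or similar subpolynomial). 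I would organize the final computation as: (i) discard the trivial cases $\Psi=\emptyset$ and $\ell_\Psi=0$; (ii) establish $g(\psi)\le e^{-\overline\alpha d}$ and the refinement that only $\ell_\Psi$ coordinates are ``shared''; (iii) sum over $\psi\in\Psi$, absorbing the contribution of large $|\psi|$ into the $d^{-C}$ error via the Chernoff/partition-function-type tail bound that $C_0$ large provides; (iv) conclude $Z(\Psi)\le(1+\lam)^d\exp(-\tfrac12\overline\alpha\ell_\Psi + d^{-C})$.
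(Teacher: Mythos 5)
There is a genuine gap. You correctly diagnose why the two naive bounds fail, but the proposed repair does not work as described. The telescoping "insert coordinates until you reach $[d]$" comparison is a tautology: the ratios $\bigl(\tfrac{1+\lam e^{-\beta j}}{1+\lam e^{-\beta(j+1)}}\bigr)^d$ telescope exactly back to $\lam^{|\psi|}(1+\lam e^{-\beta|\psi|})^d$, so nothing is gained, and in any case the chain from $\psi$ to $[d]$ has no relation to $\ell_{\Psi}$ (a family of singletons can have $\ell_{\Psi}=0$). The mechanism that actually produces the factor $e^{-\overline{\alpha}\ell_{\Psi}}$ in the paper is much simpler and is missing from your sketch: every $\psi\in\Psi$ is a subset of $\bigcup\Psi$, which has at most $d-\ell_{\Psi}$ elements, so
\begin{align*}
\sum_{\psi\in\Psi}\lam^{|\psi|}\le(1+\lam)^{d-\ell_{\Psi}}=(1+\lam)^d e^{-\ell_{\Psi}\log(1+\lam)}\le(1+\lam)^d e^{-\overline{\alpha}\ell_{\Psi}},
\end{align*}
using $\overline{\alpha}\le\log(1+\lam)$. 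This single observation, not a per-coordinate comparison, is what converts "missing coordinates" into an exponential gain.

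The second missing piece is the case analysis over $|\psi|$, which you leave as a placeholder and whose logic you have partly reversed. With $s=\Theta(d\lam/(1+\lam))$, the \emph{large} sets $|\psi|>s$ are the harmless ones: there $(1+\lam e^{-\beta|\psi|})^d\le(1+\lam e^{-\beta s})^d\le e^{d^{-C}}$, and combined with the union-support bound above this part already gives $(1+\lam)^d e^{-\overline{\alpha}\ell_{\Psi}+d^{-C}}$. The delicate regime is \emph{small} $|\psi|$, where $(1+\lam e^{-\beta|\psi|})^d$ can be as large as $(1+\lam)^d e^{-\overline{\alpha}d}$ and one must show that there are few enough such sets: the paper splits further at $s'=\Theta(\log(2+\lam)/\beta)$, uses a Chernoff bound on $\sum_{|\psi|\le s}(\lam/(1+\lam))^{|\psi|}$ for $s'<|\psi|\le s$, and the crude count $(1+\lam d)^{s'}$ against $e^{-\overline{\alpha}d/2}$ for $|\psi|\le s'$. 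That last comparison, with $s'\propto 1/\beta$, is precisely where the squared hypothesis $\lam(1-e^{-\beta})^2\ge C_0\log d/d$ is needed (via $\beta\ge 1-e^{-\beta}$), not to make $\overline{\alpha}^2 d\gg\log d$ as you suggest. Finally, the statement follows by applying these bounds with $\ell=\ell_{\Psi}/2$, which is why only $\tfrac12\overline{\alpha}\ell_{\Psi}$ appears; your trivial step $e^{-\overline{\alpha}d/2}\le e^{-\overline{\alpha}\ell_{\Psi}/2}$ is not where the halving comes from.
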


Lemma \ref{lemma:bound-on-Z} will follow from Lemma \ref{lemma:more-general-bound-on-Z} below by taking $\ell=\ell_{\Psi}/2$. The latter more general lemma will also be used when we bound the total weight of non-configurations  in \Cref{sec:nonpolymer}.

The following inequalities will be used in the proof of Lemma \ref{lemma:more-general-bound-on-Z}. Suppose that $\lam \le \lam_0$ and $\lam (1 - e^{-\beta})^2 \ge \frac{C_0 \log d}{d}$. Then for some constants $C, C' > 0$ (with $C \rightarrow \infty$ as $C_0 \rightarrow \infty$) we have
\begin{align} \label{convenient-inequalities}
    \frac{\lam}{1+\lam} \ge \frac{C'\overline{\alpha}}{32}+\frac{4 \log (\lam d^{C+1})}{\beta d}, \hspace{20pt} \overline{\alpha} \ge \frac{4C \log d}{d} + \frac{20 \log (2+\lam) \log (1 + \lam d)}{\beta d}.
\end{align}
The $\beta$ factors in the denominators come from the inequality $\beta \ge 1 - e^{-\beta}$. Indeed, the last fraction is the main reason that we require the stronger condition $\lam(1 - e^{-\beta})^2 \ge \frac{C_0 \log d}{d}$.

\begin{lemma} \label{lemma:more-general-bound-on-Z}
Suppose that the inequalities in (\ref{convenient-inequalities}) hold with $C > 0$ and $0 < C' < 1/4$.
Let $0 \le \ell \le \min \{ \ell_{\Psi}, d/2\}$ and $s = \frac{d - \ell}{2} \frac{\lam}{1+\lam}$. Then for any family $\Psi$ of subsets of $[d]$, we have
\begin{align*}
    Z(\Psi^{\le s}) \le (1 + \lam)^d \exp\left( -\overline{\alpha} \ell - C' \overline{\alpha} d\right)\qquad \textit{and} \qquad Z(\Psi^{>s}) \le (1 + \lam)^d \exp\left( -\overline{\alpha} \ell + d^{-C}\right),
\end{align*}
where $\Psi^{\le s} \coloneqq \{ \psi \in \Psi : 1 \le |\psi| \le s\}$ and $\Psi^{>s} \coloneqq \{ \psi \in \Psi : |\psi| > s\}$. 
\end{lemma}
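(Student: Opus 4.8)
The plan is to bound $Z(\Psi^{\le s})$ and $Z(\Psi^{>s})$ by the corresponding partial sums of $\sum_{j\ge 0}\binom{d-\ell}{j}\lam^j(1+\lam e^{-\beta j})^d$ and then treat the two regimes separately; the regime $|\psi|>s$ is short, while $1\le|\psi|\le s$ needs a Chernoff estimate together with the decay of $1+\lam e^{-\beta j}$ in $j$. First I would record what \eqref{convenient-inequalities} buys us. Writing $q:=\tfrac{\lam}{1+\lam}$ and $\overline{\alpha}_j:=\log\tfrac{1+\lam}{1+\lam e^{-\beta j}}$ (so $\overline{\alpha}_1=\overline{\alpha}$, the map $j\mapsto\overline{\alpha}_j$ is nondecreasing, and $\overline{\alpha}\le\overline{\alpha}_j\le\log(1+\lam)$ for $j\ge 1$), the bound $\ell\le d/2$ gives $s=\tfrac{(d-\ell)q}{2}\ge\tfrac14 dq$, so the first inequality of \eqref{convenient-inequalities} yields $\beta s\ge\log(\lam d^{C+1})$, hence $\lam e^{-\beta s}\le d^{-C-1}$, and also $dq\ge 64C\log d$, so $mq\ge 32C\log d$ with $m:=d-\ell$. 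Since every $\psi\in\Psi$ is disjoint from a fixed set of $\ell_\Psi\ge\ell$ coordinates we may use $\binom{d-\ell_\Psi}{j}\le\binom{m}{j}$, and throughout I will use $(1+\lam)^{-\ell}\le e^{-\overline{\alpha}\ell}$, which is just $\overline{\alpha}\le\log(1+\lam)$.

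For $Z(\Psi^{>s})$ the argument is direct: when $|\psi|>s$ we have $(1+\lam e^{-\beta|\psi|})^d\le(1+\lam e^{-\beta s})^d\le e^{d\lam e^{-\beta s}}\le e^{d^{-C}}$, while $\sum_{|\psi|>s}\lam^{|\psi|}\le(1+\lam)^{d-\ell_\Psi}\le(1+\lam)^d e^{-\overline{\alpha}\ell}$, and multiplying gives $Z(\Psi^{>s})\le(1+\lam)^d\exp(-\overline{\alpha}\ell+d^{-C})$. For $Z(\Psi^{\le s})$ I would first peel off the factor $e^{-\overline{\alpha}\ell}$: for $|\psi|=j\ge 1$,
\[
(1+\lam e^{-\beta j})^d\le(1+\lam e^{-\beta})^{\ell}(1+\lam e^{-\beta j})^{m}=(1+\lam)^{\ell}e^{-\overline{\alpha}\ell}(1+\lam e^{-\beta j})^{m},
\]
which reduces the claim to the \emph{balanced} estimate $\sum_{j=1}^{\lfloor s\rfloor}\binom{m}{j}\lam^j(1+\lam e^{-\beta j})^{m}\le(1+\lam)^{m}d^{-C}$. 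Expanding $(1+\lam e^{-\beta j})^m$ binomially and interchanging summations rewrites the left side as $\sum_{k\ge 0}\binom{m}{k}\lam^k\sum_{j=1}^{\lfloor s\rfloor}\binom{m}{j}(\lam e^{-\beta k})^j$. The $k=0$ term is $\sum_{j=1}^{\lfloor s\rfloor}\binom{m}{j}\lam^j=(1+\lam)^m\,\mathbb{P}(1\le\Bin(m,q)\le s)$, and since $s=\tfrac12 mq$ is exactly half the mean with $mq\ge 32C\log d$, a Chernoff bound gives $(1+\lam)^m e^{-mq/8}\le(1+\lam)^m d^{-4C}$. For $k\ge 1$ I would bound the inner sum by $(1+\lam e^{-\beta k})^m-1$ and split at $i_1:=\lceil\beta^{-1}\log(m\lam d^{C+1})\rceil$, which lies below $qm$ by the first inequality of \eqref{convenient-inequalities}: for $k\ge i_1$ one has $m\lam e^{-\beta k}\le d^{-C-1}$, hence $(1+\lam e^{-\beta k})^m-1\le 2m\lam e^{-\beta k}$, producing the geometric-type bound $2m\lam\sum_{k\ge1}\binom{m}{k}(\lam e^{-\beta})^k\le 2m\lam(1+\lam)^m e^{-\overline{\alpha}m}$; for $1\le k<i_1$ one writes $(1+\lam e^{-\beta k})^m-1\le(1+\lam)^m e^{-\overline{\alpha}_k m}$ and exploits that $\overline{\alpha}_k$ genuinely grows ($\overline{\alpha}_k\gtrsim k\overline{\alpha}$ for $k\lesssim\beta^{-1}$ and $\overline{\alpha}_k\to\log(1+\lam)$ as $k\to i_1$), combining this with the Chernoff tail of $\Bin(m,q)$ on dyadic blocks of $k$ so that each block contributes at most $(1+\lam)^m e^{-\overline{\alpha}m}\cdot\mathrm{poly}(d,\lam)$.

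In every one of these pieces the polynomial-in-$\lam$ losses must be absorbed by $e^{-\overline{\alpha}m}$ with $d^{-C}$ to spare, i.e.\ one needs $\overline{\alpha}m\gtrsim\log\lam+(C+1)\log d$. This is exactly where the second inequality of \eqref{convenient-inequalities} is used, through a dichotomy: either $\beta\le 5\log(2+\lam)$, in which case the term $\tfrac{10\log(2+\lam)\log(1+\lam d)}{\beta d}$ forces $\overline{\alpha}d\gtrsim\log(1+\lam d)\ge\log\lam$; or $\beta$ is much larger than $\log(2+\lam)$, in which case $1-e^{-\beta}$ is bounded away from $0$, so $\overline{\alpha}$ is bounded below either by an absolute positive constant (when $\lam=O(e^{cd})$) or by $\log(1+\lam)$ itself, which is then enormous. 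It is precisely this dichotomy that necessitates the hypothesis $\lam(1-e^{-\beta})^2\ge C_0\log d/d$ rather than the weaker $\lam(1-e^{-\beta})\ge\dots$, as noted just before the statement. Collecting the three contributions and using $mq\ge 32C\log d$ and the lower bounds on $\overline{\alpha}m$ finishes the balanced estimate, hence $Z(\Psi^{\le s})\le(1+\lam)^d\exp(-\overline{\alpha}\ell-C\log d)$.

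The step I expect to be the main obstacle is the $k\ge 1$ part of the balanced sum --- the contribution of configurations that occupy vertices on both sides. The naive bound, estimating $\sum_k\binom{m}{k}\lam^k$ and $(1+\lam e^{-\beta k})^m$ separately, loses an entire factor of order $(1+\lam)^m$ and is useless; one must keep the two pieces coupled, exploiting that $e^{-\beta ik}$ is far smaller than $e^{-\beta(i+k)}$ for large $i,k$ (equivalently, the growth of $\overline{\alpha}_k$). Choosing the cut-off $i_1$ and organising the case analysis on $\beta$ so that these polynomial losses are always swallowed by $e^{-\overline{\alpha}m}$, uniformly over the whole admissible range of $\lam,\beta$, is the delicate part; by contrast the $\Psi^{>s}$ half and the $k=0$ term are routine.
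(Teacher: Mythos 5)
Your treatment of $Z(\Psi^{>s})$ is correct and is essentially identical to the paper's (bound $(1+\lam e^{-\beta|\psi|})^d\le(1+\lam e^{-\beta s})^d\le e^{d^{-C}}$ using $\beta s\ge\log(\lam d^{C+1})$, and $\sum_\psi\lam^{|\psi|}\le(1+\lam)^{d-\ell_\Psi}$). Your reduction of $Z(\Psi^{\le s})$ to the ``balanced'' estimate, the exact identity $\sum_{j\le s}\binom{m}{j}\lam^j=(1+\lam)^m\Pr{1\le\Bin(m,q)\le s}$ for the $k=0$ term, and the tail $k\ge i_1$ (where the inner sum is at most $2m\lam e^{-\beta k}\le 2d^{-C-1}$, so one can simply multiply by $\sum_k\binom{m}{k}\lam^k\le(1+\lam)^m$ without ever needing a lower bound on $\overline{\alpha}m$) are all fine. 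The genuine gap is the regime $1\le k<i_1$, which you yourself identify as the crux and then dispatch in one sentence. The assertion that ``each dyadic block contributes at most $(1+\lam)^m e^{-\overline{\alpha}m}\cdot\mathrm{poly}(d,\lam)$'' is not justified and is not obviously available from \eqref{convenient-inequalities}: after bounding the inner sum by $(1+\lam)^m e^{-\overline{\alpha}_k m}$ you must beat $\binom{m}{k}\lam^k\le(m\lam)^k$ term by term, i.e.\ show $k\log(m\lam)+(C+2)\log d\le\overline{\alpha}_k m$ uniformly for $1\le k<i_1$. Already at $k=1$ this needs $\overline{\alpha}m\gtrsim\log(m\lam)+(C+2)\log d$, and the $4C\log d/d$ term in \eqref{convenient-inequalities} supplies only $2C\log d$ of the roughly $\tfrac{10}{3}(C+2)\log d$ you need after the constant losses in $\overline{\alpha}_k\gtrsim k\overline{\alpha}$; the rest must come from the $\beta$-dependent term or from the case $\beta\gg\log(2+\lam)$, and for $1/\beta\lesssim k<i_1$ one must additionally check that $i_1\log(m\lam)\approx\beta^{-1}\log(m\lam d^{C+1})\log(m\lam)$ (which can be as large as $\sqrt{d}\log^{3/2}d$ when $\lam\asymp 1$ and $\beta$ is near its minimum) is dominated by $\overline{\alpha}_k m\gtrsim\frac{\lam m}{1+\lam}$. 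My spot checks suggest this can be closed, but only via a multi-case analysis over $(\lam,\beta,k)$ that you have not carried out, and whose constants are not guaranteed by \eqref{convenient-inequalities} without further argument. As written, the proof is incomplete exactly where all the difficulty lives.

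For comparison, the paper avoids the binomial expansion and swap entirely: it splits $Z(\Psi^{\le s})$ directly on $|\psi|$ at the threshold $s'=\frac{5\log(2+\lam)}{\beta}$. For $s'<|\psi|\le s$ the factor $(1+\lam e^{-\beta|\psi|})^d\le\bigl(1+\frac{\lam}{(2+\lam)^5}\bigr)^d\le e^{dq/32}$ is swallowed by the Chernoff gain $e^{-s/4}\le e^{-dq/16}$ from the restriction $|\psi|\le s=\frac{(d-\ell)q}{2}$, and the first inequality of \eqref{convenient-inequalities} turns the surplus $e^{-dq/32}$ into $e^{-2C\log d}$. For $1\le|\psi|\le s'$ the crude count $(1+\lam d)^{s'}$ is paid for by $(1+\lam e^{-\beta})^d=(1+\lam)^de^{-\overline{\alpha}d}$, and the second inequality of \eqref{convenient-inequalities} is engineered precisely so that $e^{-\overline{\alpha}d/2}(1+\lam d)^{s'}\le e^{-2C\log d}$. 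This single split replaces your entire $k$-analysis; if you want to keep your decomposition, you should expect to re-derive something equivalent to this choice of $s'$ when handling $1\le k<i_1$.
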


\begin{proof}
First, an upper bound for $Z(\Psi^{> s})$ is given by
\begin{align*}
    \sum_{\psi \in \Psi : |\psi| > s} \lam^{|\psi|} (1 + \lam e^{-\beta |\psi|})^d &\le (1 + \lam e^{-\beta s})^d \sum_{\psi \in \Psi} \lam^{|\psi|} \\
    &\le (1 + \lam e^{-\beta s})^d (1 + \lam)^{d - \ell_{\Psi}} 
    \le (1 + \lam)^d e^{-\overline{\alpha} \ell} (1 + \lam e^{-\beta s})^d,
\end{align*}
where the last inequality uses that $\ell \le \ell_{\Psi}$ and $\overline{\alpha} \le \log(1+\lam)$. 
Then from $s \ge \frac{d}{4} \frac{\lam}{1+\lam} \ge \frac{\log(\lam d^{C+1})}{\beta}$, we have that $(1 + \lam e^{-\beta s})^d \le \exp \left( \lam d e^{-\beta s}\right) \le \exp(d^{-C})$, which gives the stated bound on $Z(\Psi^{>s})$.

Next, for $Z(\Psi^{\le s})$, we define $s'= \frac{5 \log(2+\lam)}{\beta}$ and split into two cases. For $s' < |\psi| \le s$, we have
\begin{align*}
    \sum_{\psi \in \Psi : s' < |\psi| \le s} \lam^{|\psi|} \left( 1 + \lam e^{-\beta |\psi|} \right)^d &\le (1 + \lam e^{-\beta s'})^d \sum_{\psi \in \Psi : |\psi| \le s} \lam^{|\psi|} \\ 
    &\le (1 + \lam e^{-\beta s'})^d (1 + \lam)^{d-\ell} \sum_{\psi \in 2^{[d-\ell]} : |\psi| \le s} \left( \frac{\lam}{1+\lam} \right)^{|\psi|} \\
    &\le (1 + \lam e^{-\beta s'})^d (1 + \lam)^{d-\ell} e^{-s/4} \\
    &\le (1 + \lam)^d e^{-\overline{\alpha} \ell} e^{-s/4} (1 + \lam e^{-\beta s'})^d,
\end{align*} 
where the second inequality uses that $s \le d - \ell$ and $\ell \le \ell_{\Psi}$, the third inequality uses a Chernoff bound (\Cref{l:Chernoff}), and the last inequality uses that $\overline{\alpha} \le \log(1+\lam)$. On the other hand, for $1 \le |\psi| \le s'$, we have
\begin{align*}
    \sum_{\psi \in \Psi : 1 \le |\psi| \le s'} \lam^{|\psi|} \left( 1 + \lam e^{-\beta |\psi|} \right)^d &\le (1+\lam e^{-\beta})^d \sum_{\psi \in \Psi : 1 \le |\psi| \le s'} \lam^{|\psi|} \\
    &\le (1+\lam)^d \left( 1 - \frac{\lam (1 - e^{-\beta})}{1+\lam} \right)^d (1 + \lam d)^{s'} \\
    &\le (1+\lam)^d e^{-\overline{\alpha}\ell} e^{-\frac{1}{2}\overline{\alpha}d} (1 + \lam d)^{s'},
\end{align*}
where the third inequality uses the definition of $\overline{\alpha}$ and $d \ge \ell + \frac{d}{2}$. Therefore,
\begin{align*}
    Z(\Psi^{\le s}) \le (1 + \lam)^d e^{-\overline{\alpha} \ell} \left[ e^{-s/4}(1 + \lam e^{-\beta s'})^d + e^{-\frac{1}{2} \overline{\alpha} d} (1 + \lam d)^{s'} \right].
\end{align*}
We now show that both terms in the brackets of the right-hand side are at most $e^{-C' \overline{\alpha} d}$, and then the stated upper bound on $Z(\Psi^{\le s})$ will follow. For the first term, from $s \ge \frac{d}{4}\frac{\lam}{1+\lam}$, the definition of $s'$, and $\frac{\lam}{1+\lam} \ge \frac{C'\overline{\alpha}}{32}$, we have
\begin{align*}
    e^{-s/4} (1 + \lam e^{-\beta s'})^d &\le \exp \left( -\frac{d}{16} \frac{\lam}{1+\lam} \right) \cdot \left(1 + \frac{\lam}{(2 + \lam)^{5}}\right)^d \\ 
    &\le \exp \left( -\frac{d}{16} \frac{\lam}{1+\lam} \right) \cdot \exp\left(d \cdot \frac{\lam}{32(1 + \lam)} \right) \\
    &\le \exp\left( -C'\overline{\alpha}d \right).
\end{align*}

For the second term, from the definition of $s'$ and the lower bound on $\overline{\alpha}$ in \eqref{convenient-inequalities}, we also have
\begin{align*} 
    e^{-\frac{1}{2} \overline{\alpha} d} (1 + \lam d)^{s'} = \exp \left( -\frac{1}{2} \overline{\alpha} d + \frac{5 \log(2+\lam) \log(1 + \lam d)}{\beta} \right) \le e^{-\overline{\alpha}d/4} \le e^{-C'\overline{\alpha}d}.
\end{align*}
This finishes the proof.
\end{proof}

\section{Bounding weights of polymers with a given approximation}\label{sec:bound_given_approx}

In this section, we prove Lemma \ref{lemma:total-weight-given-approximation}, assuming the general setup as in \Cref{sec:containers}.   To do that, we prove two auxiliary lemmas giving different upper bounds on the total weights of polymers (in fact, more generally  sets in $\cG_\cD(a,b)$) with a given approximation, each effective in different regimes. Recall from the previous section that we defined
\begin{align*}
    \overline{\alpha} \coloneqq -\log \left( 1 - \frac{\alpha}{1 + \lam} \right) = \log \left( \frac{1 + \lam}{1 + \lam e^{-\beta}} \right),
\end{align*}
and we observed that $\frac{1}{1+\lam_0} \alpha \le \overline{\alpha} \le \alpha$ whenever $0 < \lam \le \lam_0$.

The first lemma and its proof are essentially the same as Kronenberg and Spinka's \cite[Lemma 4.9]{kronenberg2022independent} with more explicit constants, and for completeness we include the relatively short proof below.

\begin{lemma} \label{lemma:b-f-small}
For any fixed constants $C, \lam_0 > 0$, there exists a constant $C_0 > 0$ such that the following holds. Suppose that $\lam \le \lam_0$ and $\lam(1 - e^{-\beta})^2 \ge \frac{C_0 \log d}{d}$, that $a, b \in \mathbb{N}$ with $b \ge a$, and that $1 \le \psi \le d/2$. Then for any $\psi$-approximation $(F,H)$,
\begin{align*}
    \sum_{A \in \cG_\cD(a,b) :\ A \approx (F,H)} \om(A) \le \binom{2bd}{b - |F|} \exp{\left( -\frac{1}{2}(b - a) \overline{\alpha} + \frac{b}{d^{C}} \right)}.
\end{align*}
\end{lemma}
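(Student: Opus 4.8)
The plan is to reduce the weighted sum over $\cG_\cD(a,b)$ with a fixed $\psi$-approximation $(F,H)$ to an application of the entropy-based bound of Lemma \ref{lemma:entropy-1} (or rather Lemma \ref{lemma:bound-on-Z}), after first accounting combinatorially for the freedom in choosing $A$ given $(F,H)$. First I would recall the weight $\om(A) = \sum_{B \subseteq N(A)} \frac{\lam^{|A|+|B|}}{(1+\lam)^{|N(A)|}} e^{-\beta|E(A,B)|}$, and regroup the double sum over pairs $(A,B)$. For a set $A \in \cG_\cD(a,b)$ with $A \approx (F,H)$, we have $F \subseteq N(A)$ with $|N(A)| = b$, so $N(A) \setminus F$ has size $b - |F|$, and each vertex of $N(A) \setminus F$ lies in $N(A) \subseteq N(H)$ — but more usefully, since $A$ is $2$-linked and each vertex of $N(A)$ has a neighbor in $A \subseteq [A] \subseteq H$, one can bound the number of choices for $N(A) \setminus F$ by $\binom{|\text{(candidate set)}|}{b - |F|}$. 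The candidate set for these extra neighbors has size at most roughly $|H| \cdot d$ or, cruder, $2bd$: since $|H| \le $ something like $O(b)$ (as $H \supseteq [A]$ and $[A]$ has size $a \le b$, while the $\psi$-approximation controls how much larger $H$ can be — with $\psi \le d/2$, $|H|$ is within a constant factor of $b$), and each vertex in $H$ has $d$ neighbors. This yields the binomial prefactor $\binom{2bd}{b-|F|}$ in the statement.

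Next, for each fixed choice of $N(A) = F \cup F'$ with $|F'| = b - |F|$, I would bound the remaining contribution $\sum_{A : N(A) = F\cup F'} \sum_{B \subseteq F \cup F'} \frac{\lam^{|A|+|B|}}{(1+\lam)^{b}} e^{-\beta|E(A,B)|}$. The idea is to view the pairs $(A, B)$ as vertex subsets $I = A \cup B$ of a suitable boundary-odd set $T$ (take $T$ to be $H \cup N(H)$ or, more precisely, the union of $F \cup F'$ with the set of $\cD$-vertices whose entire neighborhood lies in $F \cup F'$, namely $[F \cup F']$), and apply Lemma \ref{lemma:entropy-1}: $\tilde\om(\cF) \le \prod_{v \in T \cap \cD} Z(\Psi_v)^{1/d}$ where $\Psi_v = \{I \cap N(v) : I \in \cF\}$. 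Then I would feed in Lemma \ref{lemma:bound-on-Z}: for each relevant $v$, $Z(\Psi_v) \le (1+\lam)^d \exp(-\tfrac12 \overline\alpha \ell_{\Psi_v} + d^{-C})$, where $\ell_{\Psi_v}$ counts the coordinates of $N(v)$ not covered by $\bigcup \Psi_v$. Taking the product over $v$, the $(1+\lam)^d$ factors combine to $(1+\lam)^{|T \cap \cD|}$ and, after normalizing by $(1+\lam)^{-|N(A)|}$ hidden in the weight, one is left with a factor involving $\sum_v \ell_{\Psi_v}$. The key counting point is that $\sum_v \ell_{\Psi_v} \ge $ (number of "uncovered" incidences) $\gtrsim d(b - a)$: roughly, the $b - a$ vertices in $N(A) \setminus [A]$... wait — more carefully, the discrepancy $b - a = |N(A)| - |[A]|$ measures vertices in $N(A)$ not fully "used," and via the regular bipartite structure and a double-counting of edges between $N(A)$ and $[A]$ versus $A$, each such vertex contributes $\Theta(d)$ to $\sum_v \ell_{\Psi_v}$. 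This produces the $\exp(-\tfrac12(b-a)\overline\alpha)$ factor, and the error terms $d^{-C}$ accumulated over $\le b$ vertices $v$ give the $b/d^C$ term (after relabeling the constant $C$).

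The main obstacle I expect is the careful bookkeeping in the third step: establishing the lower bound $\sum_{v \in T \cap \cD} \ell_{\Psi_v} \gtrsim d(b - a)$ and verifying that the correct "boundary-odd" set $T$ and family $\cF$ can be chosen so that (i) every pair $(A,B)$ with $A \approx (F,H)$, $A \in \cG_\cD(a,b)$, and $B \subseteq N(A)$ gives rise to a valid $I \in \cF$, and (ii) the resulting product bound cleanly telescopes into the claimed form. One has to be attentive that the entropy bound is applied over the vertex side that makes $\ell_{\Psi_v}$ large — i.e., over vertices $v$ that are "on the boundary" between $[A]$ and $N(A)$ — and that the binomial prefactor $\binom{2bd}{b-|F|}$ genuinely covers all the combinatorial choices of $N(A)$ relative to $F$. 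The constants must be tracked so that, when $\psi \le d/2$ controls $|H|$ and hence $|T|$ within a constant multiple of $b$, the final exponent is exactly $-\tfrac12(b-a)\overline\alpha + b/d^C$ as stated; relabeling $C_0$ at the end absorbs any constant losses. I also expect a short subtlety in handling the case $|F| > b$ or $F \not\subseteq F \cup F'$ trivially (the sum is empty), and in using $\overline\alpha \le \log(1+\lam)$ and $\overline\alpha \ge \alpha/(1+\lam_0)$ to pass between $\overline\alpha$ and $\alpha$ where needed.
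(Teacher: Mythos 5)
Your proposal is correct and follows essentially the same route as the paper's proof: a union bound over the $\binom{2bd}{b-|F|}$ choices of $N(A)\setminus F$ inside $N(H)$ (using $|N(H)|\le 2bd$ from the $\psi$-approximation with $\psi\le d/2$), followed by an application of Lemma \ref{lemma:entropy-1} together with Lemma \ref{lemma:bound-on-Z} to the family $\{A\cup B\}$ with a fixed closure, where the exact identity $\sum_{v\in N(A')}\ell_{\Psi_v}=d b-d a$ comes from $d$-regularity. The only point you left slightly heuristic — that each of the $b-a$ "excess" neighbors contributes $\Theta(d)$ — is in fact the clean double count $\sum_{v\in N(A')}(d-|N(v)\cap A'|)=d(b-a)$, which you already anticipated.
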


The second lemma is a stronger version of Kronenberg and Spinka's \cite[Lemma 4.10]{kronenberg2022independent} that gives a better upper bound in the exponential, and it comes from more carefully optimizing their proof.

\begin{lemma} \label{lemma:b-f-large}
For any fixed constants $C, \lam_0 > 0$, there exists a constant $C_0 > 0$ such that the following holds. Suppose that $\lam \le \lam_0$ and $\lam(1 - e^{-\beta})^2 \ge \frac{C_0 \log d}{d}$, that $a, b \in \mathbb{N}$ with $b \ge a$, and that $1 \le \psi \le d/2$. Then for any $\psi$-approximation $(F,H)$,
\begin{align*}
    \sum_{A \in \cG_\cD(a,b) :\ A \approx (F,H)} \om(A) \le \exp{\left(-\frac{1}{8} (b - |F|) \overline{\alpha} + \frac{b}{d^{C}} \right)}.
\end{align*}
\end{lemma}

In addition to the entropy-derived results from the previous section, the proofs of Lemmas \ref{lemma:b-f-small} and \ref{lemma:b-f-large} will involve the following facts: If $(F,H)$ is a $\psi$-approximation of a set $A \in \cG_\cD(a,b)$, then
\begin{align} \label{approximation-facts}
    |H| \le |F| + \frac{(b - a)\psi}{d - \psi} \hspace{20pt} \text{and} \hspace{20pt} |E(H, N(A) \setminus F)| \le (b - |F|)\psi.
\end{align}
For a proof of the first inequality, see \cite[Lemma 5.3]{galvin2019independent}, while the second one follows from the fact that each vertex $N(A)\setminus F$ can have at most $\psi$ neighbors in $H$ by \eqref{eqn:cont_3}. Note that the second inequality in \eqref{approximation-facts}, with the appropriate choice of $\psi$, is what leads us to our improved container lemma. The following upper bound on $|N(H)|$ will also be useful in the proofs below (for $\psi \le d/2$):
\begin{align} \label{NH-upper-bound}
    |N(H)| \le d|H| \le d\left(|F| + \frac{(b-a)\psi}{d - \psi}\right) \le d(b + b - a) \le 2bd.
\end{align}

\begin{proof}[Proof of Lemma \ref{lemma:b-f-small}]
We bound the sum of weights of $2$-linked sets with a given closure, and then combine it with a union bound to obtain an upper bound on the total weight of sets in $\cG_\cD(a,b)$ with a given approximation. We assume that $\cD = \cE$ without loss of generality. Fix a $2$-linked set $A' \subseteq \cE$ with $|A'| = a$ and $|N(A')| = b$. Let $T = A' \cup N(A')$ and let
\begin{align*}
    \cF \coloneqq \{A \cup B :\ [A] = A'\quad \text{and} \quad  B \subseteq N(A) \},
\end{align*}
which is a family of subsets of $T$. Observe that
\begin{align*}
    \tilde{\om}(\cF) = \sum_{A :\ [A] = A'}\ \sum_{B \subseteq N(A)} \lam^{|A|+|B|} e^{-\beta |E(A,B)|} = (1+\lam)^b \sum_{A \in \cG_\cE(a,b) : [A]=A'} \om(A).
\end{align*}
By Lemmas \ref{lemma:entropy-1} and \ref{lemma:bound-on-Z}, we have
\begin{align*}
    \tilde{\om}(\cF) \le \prod_{v \in N(A')} Z(\Psi_v)^{\frac{1}{d}} \le (1 + \lam)^b \exp{\left( \frac{b}{d^{C}} - \frac{\overline{\alpha}}{2d} \sum_{v \in N(A')} \ell_{\Psi_v} \right)},
\end{align*}
where $\Psi_v = \{A \cap N(v) :\ [A] = A'\}$ and $\ell_{\Psi_v} = |N(v) \setminus A'|$. (Note that Lemma \ref{lemma:bound-on-Z} applies because any $A$ with $[A] = A'$ satisfies $A \cap N(v) \neq \emptyset$ for all $v \in N(A')$, so that $\emptyset \notin \Psi_v$.) We calculate that
\begin{align*}
    \sum_{v \in N(A')} \ell_{\Psi_v} = \sum_{v \in N(A')} |N(v) \setminus A'| = \sum_{v \in N(A')} (d - |N(v) \cap A'|) = d|N(A')| - d|A'| = d(b - a).
\end{align*}
Therefore,
\begin{align*}
    \sum_{A \in \cG_\cE(a,b) :\ [A]=A'} \om(A) \le \exp{\left( \frac{b}{d^{C}} - \frac{1}{2}(b-a) \overline{\alpha} \right)}.
\end{align*}
The lemma will follow by combining this inequality with the following upper bound on the number of possible closures of sets under consideration:
\begin{align*}
    \biggl|\left\{[A] :\ A \in \cG_{\cE}(a,b) \text{ and } A \approx (F,H)\right\}\biggr| \le \binom{2bd}{b - |F|}.
\end{align*}
To derive this last inequality, note that for any $A \approx (F,H)$, we have $F \subseteq N(A) \subseteq N(H)$, and since $N(A)$ determines $[A]$, so does $N(A) \setminus F$. Thus, we can determine $[A]$ by choosing a subset of size $b - |F|$ from $N(H)$, and by (\ref{NH-upper-bound}) we have $|N(H)| \le 2bd$. The claimed inequality follows.
\end{proof}

\begin{proof}[Proof of Lemma \ref{lemma:b-f-large}]
Again assume that $\cD = \cE$ without loss of generality. This time, let $T = H \cup N(H)$ and let
\begin{align*}
    \cF \coloneqq \{A \cup B : A \in \cG_\cE(a,b),\quad B \subseteq N(A), \quad \text{and}\quad A \approx (F,H)\}.
\end{align*}
Observe that
\begin{align*}
    \tilde{\om}(\cF) = (1 + \lam)^b \sum_{A \in \cG_\cE(a,b) :\ A \approx (F,H)} \om(A).
\end{align*}
Note that for any $A \cup B \in \cF$, no vertex in $B$ is isolated in $A \cup B$ because $B \subseteq N(A)$. Hence, Lemma \ref{lemma:entropy-2} applies and states that
\begin{align*}
    \tilde{\om}(\cF) \le \prod_{v \in N(H)} Z(\Psi_v)^{\frac{p_v}{d}} \left( \frac{1}{p_v} \right)^{\frac{p_v}{d}} \left( \frac{1}{1 - p_v} \right)^{\frac{1 - p_v}{d}},
\end{align*}
where $\Psi_v \coloneqq \{A \cap N(v) : A \in \cG_\cE(a,b), A \approx (F,H), |A \cap N(v)| > 0\}$ and $p_v \coloneqq \mathbb{P}(|A \cap N(v)| > 0)$ when $A \cup B \in \cF$ is randomly chosen from $\cF$ according to $\tilde{\om}$. By Lemma \ref{lemma:bound-on-Z},
\begin{align*}
    Z(\Psi_v)^{\frac{1}{d}} \le (1 + \lam) \exp{\left( \frac{1}{d^{C+1}} - \frac{\overline{\alpha}}{2d} \ell_{\Psi_v} \right)},
\end{align*}
where $\ell_{\Psi_v} \coloneqq |N(v) \setminus \bigcup \Psi_v|$. We split $\exp{\left(-\frac{\overline{\alpha}}{2d} \ell_{\Psi_v} \right)}$ into two copies of $\exp{\left(-\frac{\overline{\alpha}}{4d} \ell_{\Psi_v} \right)}$ and write
\begin{align*} 
    \hspace{-50pt} \tilde{\om}(\cF) \le \left[ \prod_{v \in N(H)} \left( (1 + \lam) \exp{\left(\frac{1}{d^{C+1}}\right)} \right)^{p_v} \exp{\left( -\frac{\overline{\alpha}}{4d} p_v \ell_{\Psi_v} \right)}  \right] \\
    \cdot \left[\prod_{v \in N(H)} \exp{\left( -\frac{\overline{\alpha}}{4d} p_v \ell_{\Psi_v} \right)} \left( \frac{1}{p_v} \right)^{\frac{p_v}{d}} \left( \frac{1}{1 - p_v} \right)^{\frac{1 - p_v}{d}} \right]. \hspace{-50pt}
\end{align*}
We show that, for the right-hand side, the first term is at most $(1 + \lam)^b \exp{\left( \frac{b}{d^{C+1}} - \frac{1}{8}(b - |F|) \overline{\alpha} \right)}$ and the second term is at most $\exp\left(\frac{1}{4}bd\overline{\alpha} \cdot \exp\left(-\frac{1}{16}\overline{\alpha}d \right)\right)$. Then we will observe that this second term is negligible, and the lemma will follow.

For the first term, we notice that 
\begin{align*}
    \sum_{v \in N(H)} p_v = \sum_{v \in N(H)} \mathbb{P}(v \in N(A)) = \mathbb{E} |N(A)| = b,
\end{align*}
where the last equality holds because any $A \in \cG_\cE(a,b)$ satisfies $|N(A)|=b$. Next, we notice that
\begin{align*}
    \sum_{v \in N(H)} p_v \ell_{\Psi_v} = \sum_{v \in N(H)} \mathbb{P}(v \in N(A)) \ell_{\Psi_v} = \sum_{v \in N(H)} \mathbb{E}[\mathbbm{1}_{\{v \in N(A)\}} \ell_{\Psi_v}] = \mathbb{E} \sum_{v \in N(A)} \ell_{\Psi_v}.
\end{align*}
Since $\bigcup \Psi_v \subseteq H$, we have $\ell_{\Psi_v} \ge |N(v) \setminus H|$, so that 
\begin{align*}
    \sum_{v \in N(A)} \ell_{\Psi_v} &\ge |E(N(A), \cE \setminus H)| = |E(N(A), \cE)| - |E(F,H)| - |E(N(A) \setminus F, H)| \\
    &\ge bd - |F|d - (b - |F|)\psi = (b - |F|)(d - \psi) \ge (b - |F|)d/2,
\end{align*}
where the second inequality uses that $|E(N(A) \setminus F, H)| \le (b - |F|)\psi$ from (\ref{approximation-facts}). Hence, we have $\sum_{v \in N(H)} p_v \ell_{\Psi_v} \ge (b - |F|)d/2$. This proves the claimed upper bound on the first term.

For the second product, since $|N(H)| \le 2bd$ by (\ref{NH-upper-bound}), it suffices to show that each term in the product is at most $\exp\left(\frac{1}{8}\overline{\alpha} \cdot \exp(-\frac{1}{16}\overline{\alpha} d)\right)$. Taking logarithms, this is the same as showing that
\begin{align*}
    -\frac{1}{4d} \overline{\alpha} p_v \ell_{\Psi_v} + \frac{1}{d} H(p_v) \le \frac{1}{8}\overline{\alpha} e^{-\overline{\alpha}d /16},
\end{align*}
where $H(x) := x \log \left( \frac{1}{x} \right) + (1-x) \log \left( \frac{1}{1 - x} \right)$ and $H(0) = H(1) = 0$. This inequality holds when $p_v = 1$, so we now assume that $p_v < 1$. This means that $v \notin N(A)$ for some $A \in \cG_\cE(a,b)$ with $A \approx (F,H)$, so in particular $v \notin F$. By the definition of $\psi$-approximation, at most $\psi$ neighbors of $v$ belong to $H$, so that $\ell_{\Psi_v} \ge |N(v) \setminus H| \ge d - \psi \ge d/2$. Thus, we wish to show that
\begin{align*}
    -\frac{1}{8}\overline{\alpha}p_v + \frac{1}{d} H(p_v) \le \frac{1}{8}\overline{\alpha} e^{-\overline{\alpha}d /16}.
\end{align*}
If $0 \le p_v \le e^{-\overline{\alpha} d/16}$, then using that $H(p_v) \le 2p_v \log \left(\frac{1}{p_v}\right)$, that $x \log \left(\frac{1}{x}\right)$ is increasing on $(0,1/e)$, and that $\overline{\alpha} \ge \frac{C_0}{(1+\lam_0)d}$ (so that $p_v \in (0, 1/e)$), we have that
\begin{align*}
    -\frac{1}{8}\overline{\alpha} p_v + \frac{1}{d} H(p_v) \le \frac{1}{d}H(p_v) \le \frac{2}{d} p_v \log \left(\frac{1}{p_v}\right) \le \frac{1}{8} \overline{\alpha} e^{- \overline{\alpha} d/16}.
\end{align*}
If $e^{-\overline{\alpha} d/16} \le p_v \le \frac{1}{2}$, then again using that $H(p_v) \le 2p_v \log \left(\frac{1}{p_v}\right)$, we have that
\begin{align*}
    -\frac{1}{8} \overline{\alpha} p_v + \frac{1}{d} H(p_v) \le -\frac{1}{8}\overline{\alpha}p_v + \frac{2}{d} p_v \log \left(\frac{1}{p_v}\right) \le p_v \left( -\frac{1}{8}\overline{\alpha} + \frac{2}{d} \cdot \frac{\overline{\alpha} d}{16} \right) = 0.
\end{align*}
Finally, if $\frac{1}{2} \le p_v < 1$, then using that $H(p_v) \le H(\frac{1}{2}) = \log 2$ and that $\overline{\alpha} \ge \frac{C_0}{(1+\lam_0)d}$, we have that
\begin{align*}
    -\frac{1}{8}\overline{\alpha}p_v + \frac{1}{d} H(p_v) \le -\frac{1}{16}\overline{\alpha} + \frac{\log 2}{d} \le 0.
\end{align*}
This proves the claimed upper bound on the second term.

In summary, we have shown that
\begin{align*}
    \tilde{\om}(\cF) \le (1 + \lam)^b \exp{\left(-\frac{1}{4}(b - |F|)\overline{\alpha} + \frac{b}{d^{C+1}} + \frac{1}{8}bd \overline{\alpha} \cdot \exp\left(-\frac{1}{16}\overline{\alpha} d\right) \right)}.
\end{align*}
Using that $\overline{\alpha} d \ge \frac{C_0}{1+\lam_0} \log d$ and that $x \cdot \exp(-x)$ is decreasing for $x \ge 1$, we find that
\begin{align*}
    \frac{1}{8}b \cdot \overline{\alpha} d \cdot \exp\left(-\frac{1}{16}\overline{\alpha} d\right) \le 2b \cdot \frac{C_0}{1+\lam_0} \log d  \cdot d^{-C_0/(16(1+\lam_0))} \le \frac{b}{d^{C+1}}.
\end{align*}
Therefore,
\begin{align*}
    \sum_{A \in \cG_\cE(a,b) : A \approx (F,H)} \om(A) = (1+\lam)^{-b} \cdot \tilde{\om}(\cF) \le \exp{\left(-\frac{1}{4}(b - |F|) \overline{\alpha} + \frac{b}{d^C} \right)}.
\end{align*}
\end{proof}

Finally, we prove Lemma \ref{lemma:total-weight-given-approximation} by combining Lemmas \ref{lemma:b-f-small} and \ref{lemma:b-f-large}.

\begin{proof}[Proof of Lemma \ref{lemma:total-weight-given-approximation}]
We combine our previous upper bounds on total weights of sets in $\cG_\cD(a,b)$ with a given approximation, as in Lemmas \ref{lemma:b-f-small} and \ref{lemma:b-f-large}, into a single upper bound when we assume that $b \ge \left(1 + \frac{C_4}{d^{C_5}}\right)a$, and recall that $\frac{\alpha}{1+\lam_0} \le \overline{\alpha} \le \alpha$. Let $c \coloneqq \frac{1}{3(C_5+1)}$. We split into two cases. 

If $b - |F| \le \frac{c(b - a)\overline{\alpha}}{\log d}$, then we apply Lemma \ref{lemma:b-f-small} to get that
\begin{align*}
    \sum_{A \in \cG_\cD(a,b) :\ A \approx (F,H)} \om(A) &\le \binom{2bd}{\frac{c(b - a)\overline{\alpha}}{\log d}} \exp\left( -\frac{1}{2}(b - a)\overline{\alpha} + \frac{b}{d^{C_5+1}} \right). 
\end{align*}
By the bound $\binom{n}{m} \le \left(\frac{en}{m}\right)^m$, we have
\begin{align*}
    \binom{2bd}{\frac{c(b - a)\overline{\alpha}}{\log d}} \le \exp \left( \frac{c(b - a)\overline{\alpha}}{\log d} \log \left( \frac{2bd \log d}{c(b - a)\overline{\alpha}} \right) + \frac{(b - a)\overline{\alpha}}{\log d} \right).
\end{align*}
Since $\frac{b}{b-a} \le 1 + \frac{1}{C_4}d^{C_5}$, the leading term in the above exponential is $\frac{c(b - a)\overline{\alpha}}{\log d} \cdot (C_5 + 1)\log d = \frac{1}{3}(b-a)\overline{\alpha}$. In addition, we have that $\frac{b}{d^{C_5+1}} \le \frac{2(b - a)}{C_4 d}$, which is negligible compared to $(b - a)\overline{\alpha}$ because $\overline{\alpha} \ge \frac{C_0 \log d}{(1+\lam_0)d}$. We deduce that for $c' = \frac{1}{6(C_5+1)}$, in this case we have
\begin{align} \label{eq:unbounded-case-1}
    \sum_{A \in \cG_\cD(a,b) :\ A \approx (F,H)} \om(A) \le \exp \left( -c'(b - a)\overline{\alpha} \right),
\end{align}
and this is at most the claimed upper bound since $\overline{\alpha}$ is bounded (in particular, it is less than $\frac{c'}{C''} \log d$).

Otherwise, if $b - |F| > \frac{c(b - a)\overline{\alpha}}{\log d}$, then we apply Lemma \ref{lemma:b-f-large} to get that
\begin{align} \label{eq:unbounded-case-2}
    \sum_{A \in \cG_\cD(a,b) :\ A \approx (F,H)} \om(A) &\le \exp\left( -\frac{c(b - a)\overline{\alpha}^2}{8\log d} + \frac{b}{d^{C_5+2}} \right).
\end{align}
Since $\frac{b}{b-a} \le 1 + \frac{1}{C_4}d^{C_5}$ and $\overline{\alpha} \ge \frac{C_0 \log d}{(1+\lam_0)d}$, we see that $\frac{b}{d^{C_5+2}} \le \frac{2(b-a)}{C_4 d^2}$ is negligible compared to $\frac{(b - a)\overline{\alpha}^2}{\log d}$. This establishes Lemma \ref{lemma:total-weight-given-approximation}.
\end{proof}

\section{Bounding weights of non-configurations} \label{sec:nonpolymer}

In this section, we will show that the total weight of vertex sets that do not arise from the measure $\hat{\mu}$ is negligible. We let $G$ be a $d$-regular $n$-vertex bipartite graph with bipartition $(\cO, \cE)$. Note that $|\cO|=|\cE|=n/2$. Certain sets $I \subseteq V(G)$ do not arise via the measure $\hat{\mu}$ because both sides contain $2$-linked components that are too large, and thus they are neither captured by polymer configurations from $\hat{\Theta}_\cO$ nor from $\hat{\Theta}_\cE$. 

We define the family of {\em non-configurations} as consisting of all subsets $I$ of $V(G)$ such that $I \cap \cO$ and $I \cap \cE$ both contain $2$-linked components whose closure is strictly larger than $\frac{3}{8}n$.
Note that any non-configuration $I$ in particular satisfies $|[I \cap \cO]| > \frac{3}{8}n$ and $|[I \cap \cE]| > \frac{3}{8}n$. Thus the family of non-configurations is a subset of the following set:
\begin{align} \label{eq:nonpolymer}
    \cJ \coloneqq \left\{I \subseteq V(G) : \quad |[I \cap \cO]| >\frac{3}{8}n \quad \text{and} \quad |[I \cap \cE]| >  \frac{3}{8}n\right\}. 
\end{align}
Recall that the weight of a set $I \subseteq V(G)$ is defined as
\begin{align*}
    \tilde{\om}(I) \coloneqq \lam^{|I|} e^{-\beta |E(I)|}.
\end{align*}
Using the entropy tools from \Cref{sec:entropy_tools} (Lemmas \ref{lemma:entropy-3} and \ref{lemma:more-general-bound-on-Z}), we show that the contribution of  the total weight of sets in $\cJ$ is negligible compared to $Z_G=Z_G(\lam, \beta)$. This readily implies that the total contribution of non-configurations is negligible.

\begin{lemma} \label{lem:nonpolymer}
For any fixed $\lam_0 > 0$, there exists a constant $C_0 > 0$ sufficiently large such that the following holds. If $\lam \le \lam_0$ and $\lam (1 - e^{-\beta}) \ge \frac{C_0 \log d}{d^{1/2}}$, then
\begin{align*}
    \sum_{I \in \cJ} \lam^{|I|} e^{-\beta |E(I)|} \leq Z_G \cdot\exp(-\Om(n/d)).
\end{align*}
\end{lemma}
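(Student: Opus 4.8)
The plan is to bound $\sum_{I \in \cJ} \tilde{\om}(I)$ via the entropy machinery of \Cref{sec:entropy_tools}, and to compare it against a lower bound for $Z_G$. For the lower bound, note that $Z_G(\lam,\beta) \ge \sum_{I \subseteq \cO} \lam^{|I|} = (1+\lam)^{n/2}$, since any $I \subseteq \cO$ is independent and contributes $\lam^{|I|}$; symmetrically $Z_G \ge (1+\lam)^{n/2}$ using $\cE$. So it suffices to show $\sum_{I \in \cJ} \tilde{\om}(I) \le (1+\lam)^{n/2} e^{-\Om(n/d)}$. The key structural fact is that every $I \in \cJ$ has both $|[I \cap \cO]|$ and $|[I \cap \cE]|$ exceeding $\tfrac38 n$; in particular, using \hyperref[def:property-i]{Property I}(3) (applied in the regime $|X| \le \tfrac38 n$, or more precisely its contrapositive beyond that threshold) the closures are forced to have large neighborhoods, which translates into a large number of "flexible" coordinates at many vertices. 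I want to capture this as: for $I \in \cJ$, either $\cO$ or $\cE$ receives, in the entropy bookkeeping of \Cref{lemma:entropy-3}, a total deficiency $\sum_{v} \ell_{\Psi_v}$ that is $\Om(nd)$, which after dividing by $d$ and multiplying by $\overline{\alpha} = \Theta(\lam(1-e^{-\beta})) = \Om(\log d / d^{1/2})$ gives the claimed $e^{-\Om(n/d)}$ (in fact $e^{-\Om(n \log d / d^{1/2})}$, comfortably stronger).

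The steps I would carry out, in order: (i) Reduce to bounding $\tilde\om(\cJ)$, where I split $\cJ$ according to which side's closure is "responsible"; by symmetry it is enough to bound the contribution of sets where $[I \cap \cE]$ is large, treating $T = V(G)$ (which is trivially boundary-odd) and $\cF$ the relevant subfamily of $\cJ$. (ii) Apply \Cref{lemma:entropy-3} with an appropriate threshold $s$ (e.g. the $s$ from \Cref{lemma:more-general-bound-on-Z} with a value of $\ell$ to be chosen per vertex), writing $\tilde\om(\cF) \le \prod_{v \in \cO} Z(\Psi_v)^{p_v/d} Z(\Psi_v')^{p_v'/d}(1+\lam)^{1-p_v-p_v'} (\ldots)$, where the entropy-type factors $(1/p_v)^{p_v/d}$ etc.\ are each at most $2^{1/d}$ and so contribute at most $2^{n/2}$ overall — a harmless constant-exponential factor, to be absorbed. (iii) Apply \Cref{lemma:more-general-bound-on-Z} to each $Z(\Psi_v^{\le s})$ and $Z(\Psi_v^{>s})$ with $\ell = \ell_v := \min\{\ell_{\Psi_v}, d/2\}$, obtaining $Z(\Psi_v)^{1/d} \le (1+\lam)\exp(-\tfrac{1}{d}\overline{\alpha}\ell_v + d^{-C-1})$ up to the logarithmic correction on the $\le s$ part (which is even smaller). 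Combining, $\tilde\om(\cF) \le (1+\lam)^{n/2} \exp\left(-\tfrac{\overline{\alpha}}{d}\sum_{v \in \cO} \ell_v + O(n/d^{C})\right) \cdot 2^{n/2}$. (iv) The arithmetic heart: show $\sum_{v \in \cO} \ell_v = \Om(nd)$ for every $I \in \cF$. Here I use that $I \cap \cO \supseteq$ nothing is not enough — rather $\bigcup_v \Psi_v$ is constrained by $I \cap \cE$: a coordinate $i$ at $v \in \cO$ is "covered" (not counted in $\ell_{\Psi_v}$) only if the $i$-th neighbor of $v$ lies in $[I \cap \cE]$ or in $I \cap \cE$ for some member of $\cF$. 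Since $[I \cap \cE]$ is large, $N([I\cap\cE])$ is large by isoperimetry, but I actually need the \emph{complement} to be large from the $\cO$ side; I would instead argue via the bipartite structure that the number of $(v,i)$ pairs with the $i$-th neighbor of $v$ outside a fixed large-closure set is $\Om(nd)$, using that $|[I\cap\cE]| > \tfrac38 n$ does not force $[I\cap\cE]$ to be everything, so its neighborhood complement on the $\cO$ side retains $\Om(n)$ vertices each contributing $\Om(d)$ — actually, more carefully, I should define things so that the deficiency is counted relative to $[I \cap \cE]$, whose neighborhood has size $< \tfrac38 n + $ (something), leaving $\ge \tfrac18 n$ vertices of $\cO$ entirely outside $N([I\cap\cE])$, each with all $d$ coordinates deficient, hence $\sum_v \ell_v \ge \tfrac18 n \cdot \tfrac{d}{2} = \tfrac{nd}{16}$. (v) Conclude $\tilde\om(\cF) \le (1+\lam)^{n/2} \exp(-\tfrac{1}{16}\overline{\alpha} n + O(n/d^C)) \cdot 2^{n/2}$; since $\overline{\alpha} = \Om(\lam(1-e^{-\beta})) = \Om(\log d / d^{1/2}) = \om(1/d)$ and $2^{n/2} = \exp(O(n))$...

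Here is the genuine obstacle, and where the proof requires care rather than routine work: the factor $2^{n/2}$ from the entropy terms, and the factor $(1+\lam)^{n/2}$, must both be beaten by $\exp(-\Om(\overline{\alpha} n))$ — but $\overline{\alpha}$ is small (only $\Om(\log d/d^{1/2})$), so $\exp(-\Om(\overline{\alpha} n)) = \exp(-\om(n/d))$ does beat $\exp(-\Theta(n/d))$ targets but does \emph{not} beat a stray $2^{n/2}$. This forces a more economical application of the entropy lemma: I must not apply \Cref{lemma:entropy-3} with the crude $(1/p_v)^{p_v/d} \le 2^{1/d}$ bound at \emph{every} vertex, but only at the $\Om(n)$ vertices where $\ell_v$ is actually large, and at the remaining vertices I should use \Cref{lemma:entropy-1} (no entropy penalty, factor exactly $Z(\Psi_v)^{1/d} \le (1+\lam)$). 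Equivalently, I restructure: fix $I \in \cF$, let $W \subseteq \cO$ be the $\ge \tfrac18 n$ vertices outside $N([I\cap\cE])$; on $W$ every coordinate is free so $Z(\Psi_v) = (1+\lam)^d$ trivially but the \emph{set itself} is constrained — no, the saving must come from $[I\cap\cE]$ being a genuine closure. I think the correct route mirrors the proof of \Cref{lemma:b-f-large}: take $T = [I\cap\cE] \cup N([I\cap\cE])$ is \emph{not} boundary-odd in general, but $V(G)$ is, and then the deficiency $\ell_{\Psi_v}$ for $v \in \cO$ counts coordinates whose $\cE$-endpoint is never in any $I \cap \cE$ over $\cF$ — and since $\cF$ only contains sets with $[I \cap \cE]$ large, one shows the union $\bigcup_{I \in \cF}(I \cap \cE)$ can still be taken to avoid a linear-sized set, giving $\sum_v \ell_v = \Om(nd)$ with the entropy penalties confined to $o(n)$ or absorbed because $p_v$ is tiny there. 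Sorting out exactly which subfamily decomposition of $\cJ$ makes the bookkeeping work — so that the entropy factors contribute $e^{o(\overline\alpha n)}$ rather than $e^{\Theta(n)}$ — is the main thing the proof must get right; everything else (the isoperimetric counting, the application of \Cref{lemma:more-general-bound-on-Z}, the final comparison with $Z_G \ge (1+\lam)^{n/2}$) is mechanical.
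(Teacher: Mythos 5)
There is a genuine gap, and it sits exactly at what you call the ``arithmetic heart'' (step (iv)). You claim that since $|[I \cap \cE]| > \tfrac38 n$, at least $\tfrac18 n$ vertices of $\cO$ lie entirely outside $N([I\cap\cE])$ and hence have all $d$ coordinates deficient. This has the isoperimetry backwards: a \emph{large} closure has a \emph{large} neighborhood ($|N([I\cap\cE])| \ge |[I\cap\cE]| > \tfrac34|\cO|$ already from the perfect matching, and for $I = V(G)$ the neighborhood is all of $\cO$). So there is no supply of $\Omega(n)$ vertices with $\Omega(d)$ deficient coordinates, and $\sum_v \ell_{\Psi_v} = \Omega(nd)$ fails. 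A second, independent problem is that $\ell_{\Psi_v}$ is a property of the whole family $\cF$, not of a single configuration: to make it large you need \emph{every} $I \in \cF$ to avoid the same coordinates of $N(v)$, which would force a decomposition of $\cJ$ into (exponentially many) subfamilies indexed by the avoided set --- you acknowledge not having such a decomposition, and a union bound over it would destroy the estimate. Separately, the ``obstacle'' you spend the last paragraph fighting is a miscalculation: each entropy factor is at most $3^{1/d}$ (not $2$), so the product over the $n/2$ vertices of $\cD$ is $e^{O(n/d)}$, not $2^{n/2}$; that part is harmless and needs no restructuring.

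The mechanism your proposal never invokes is the one the paper's proof actually rests on: the antiferromagnetic penalty $e^{-\beta|E(I)|}$. The paper splits $\cJ$ by a threshold $s = \tfrac{d}{2}\tfrac{\lam}{1+\lam}$: if at least $m = nd^{-1/2}$ vertices of some side have a \emph{small nonzero} intersection with $I$, then \Cref{lemma:entropy-3} together with \Cref{lemma:more-general-bound-on-Z} applied with $\ell = 0$ (no deficiency at all) already gives a factor $e^{-C\log d}$ per such vertex, because $Z(\Psi^{\le s})$ is exponentially small for structural reasons (Chernoff on the number of small subsets plus the $(1+\lam e^{-\beta|\psi|})^d$ decay), yielding $e^{-Cm\log d} \ll e^{-\Omega(n/d)}$. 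In the complementary case almost every vertex sees either $0$ or more than $s$ neighbors of $I$; combined with both closures exceeding $\tfrac38 n$, a short edge-counting chain forces $|E(I)| \ge ms$, so $\tilde\om(I) \le (1+\lam)^n e^{-\beta m s} = e^{-\Omega(n\log d)}$, beating even the crude $(1+\lam)^n$ prefactor. Without using $\beta$ in this way, sets like $I$ close to $V(G)$ (where every vertex has full intersection with $I$ and no deficiency is available) cannot be controlled, so the gap is not repairable within the route you sketch.
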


\begin{proof}
Let $m \coloneqq n d^{-1/2}$ and $s \coloneqq \frac{d}{2} \cdot \frac{\lam}{1+\lam}$. For $\cD \in \{\cE, \cO\}$, let
\[
    \cI_{\cD} \coloneqq\{I \subseteq V(G) : \exists \text{ at least } m \text{ vertices } v \in \cD \text{ such that } 1 \leq |N(v) \cap I| \leq s\},
\]
and let
\[
    \cI' \coloneqq \cJ \setminus (\cI_{\cE} \cup \cI_{\cO}). 
\]
Then $\cJ \subseteq \cI_{\cO} \cup \cI_{\cE} \cup \cI'$. We prove that $\tilde{\om}(\cI_{\cD}) \le Z_G \exp(-\Om(n/d))$ for $\cD \in \{\cE, \cO\}$ and that $\tilde{\om}(\cI') \le \exp(-\Om(n \log d))$, from which the lemma follows. 

First we prove the claimed upper bound on $\tilde{\om}(\cI_{\cD})$ for $\cD \in \{\cE, \cO\}$. To do this, we apply Lemma \ref{lemma:entropy-3} with the boundary-odd vertex set $T = V(G)$, the family $\cF = \cI_{\cD}$, and the above chosen $s$. The lemma yields that
\begin{align*}
    \tilde{\om}(\cI_{\cO}) \le \prod_{v \in \cD} Z(\Psi_v)^{\frac{p_v}{d}} Z(\Psi_v')^{\frac{p_v'}{d}} (1 + \lam)^{1 - p_v - p_v'} \left( \frac{1}{p_v} \right)^{\frac{p_v}{d}} \left( \frac{1}{p_v'} \right)^{\frac{p_v'}{d}} \left( \frac{1}{1 - p_v - p_v'} \right)^{\frac{1-p_v-p_v'}{d}},
\end{align*}
where $\Psi_v \coloneqq \{I \cap N(v) : I \in \cF, 1 \le |I \cap N(v)| \le s\}$, $\Psi_v' \coloneqq \{I \cap N(v) : I \in \cF, |I \cap N(v)| > s\}$, $p_v \coloneqq \mathbb{P}(1 \le |I \cap N(v)| \le s)$, and $p_v' \coloneqq \mathbb{P}(|I \cap N(v)| > s)$, when $I$ is randomly chosen from $\cI_{\cD}$ according to $\tilde{\om}$. We apply Lemma \ref{lemma:more-general-bound-on-Z} with $\ell = 0$ to get that 
\begin{align*}
    Z(\Psi_v) \le (1 + \lam)^d e^{-C' \alpha d} \hspace{30pt} \text{and} \hspace{30pt} Z(\Psi_v') \le (1 + \lam)^d e^{d^{-C}}
\end{align*}
where $C, C' > 0$ are suitable constants (recalling that $\frac{1}{1+\lam_0} \alpha \le \overline{\alpha} \le \alpha$).
From these upper bounds on $Z(\Psi_v)$ and $Z(\Psi_v')$, we get that
\begin{align*}
    \tilde{\om}(\cI_{\cO}) \le (1 + \lam)^{n/2} \prod_{v \in \cD} e^{-C' \alpha p_v} e^{p_v'd^{-C-1}} \left( \frac{1}{p_v} \right)^{\frac{p_v}{d}} \left( \frac{1}{p_v'} \right)^{\frac{p_v'}{d}} \left( \frac{1}{1 - p_v - p_v'} \right)^{\frac{1-p_v-p_v'}{d}}.
\end{align*}
We observe that $Z_G \ge (1 + \lam)^{n/2}$, that $\sum_{v \in \cD} p_v \ge m$ (by definition of $\cI_{\cD}$), and that $\sum_{v \in \cD} p_v' \le n/2$ (since $p_v' \le 1$ for all $v$). Thus, the bound $\tilde{\om}(\cI_{\cO}) \le Z_G \exp(-\Om(n/d))$ will follow once we show that
\begin{align*}
    e^{-C' \alpha m} e^{n/(2d^{C+1})} \left[ \prod_{v \in \cD} \left( \frac{1}{p_v} \right)^{p_v} \left( \frac{1}{p_v'} \right)^{p_v'} \left( \frac{1}{1 - p_v - p_v'} \right)^{1-p_v-p_v'} \right]^{\frac{1}{d}} \le \exp( -\Om(n/d)).
\end{align*}
This holds because $m = nd^{-1/2}$, $\alpha \geq \frac{C_0 \log d}{d^{1/2}}$, and each term in the product is at most $3$ (being the exponential of the entropy of a random variable that takes at most $3$ values). 

To prove the claimed upper bound on $\tilde{\om}(\cI')$, we show that every $I \in \cI'$ satisfies $|E(I)| \ge ms$, which would then imply that
\begin{align*}
    \tilde{\om}(\cI') &\le (1 + \lam)^n e^{-\beta m s} \le \exp\left( n \log (1 + \lam_0) - \frac{1}{2} \beta n d^{1/2} \frac{\lam}{1+\lam_0} \right) \\
    &\le \exp\left( n \log (1 + \lam_0) - n \frac{C_0}{2(1+\lam_0)} \log d \right) = \exp \left( -\Om(n \log d) \right),
\end{align*}
as required. Thus, fix $I \in \cI'$. Since $|[I \cap \cO]| > \frac{3}{8}n$ and $|[I \cap \cE]| > \frac{3}{8}n$, we deduce that $|E([I])| \ge \frac{1}{2}dn - 2 \cdot \frac{1}{8}n \cdot d = \frac{1}{4}dn$. Since the subgraph spanned by $E([I])$ has maximum degree at most $d$, has its vertex set contained in $[I] \cap N([I])$, and $N([I]) = N(I)$, we have
\begin{align*}
    |[I] \cap N(I) \cap \cO| \ge \frac{1}{d} |E([I])| \geq \frac{n}{4}.
\end{align*}
Since $I \notin \cI_{\cO}$, all but at most $m$ vertices in $\cO$ have at least $s$ neighbors in $I$, so that
\begin{align*}
    |I \cap N(I) \cap \cE| \ge (|[I] \cap N(I) \cap \cO| - m)\frac{s}{d} \ge n\left( \frac{1}{4} - \frac{1}{d^{1/2}} \right) \frac{\lam}{2(1+\lam_0)} \ge \frac{\lam n}{9(1+\lam_0)} \ge 2m,
\end{align*}
where the last inequality uses that $\lam \ge C_0 d^{-1/2}$. Because $I \notin \cI_{\cE}$, all but at most $m$ vertices in $\cE$ have at least $s$ neighbors in $I$, so that $|E(I)| \ge (|I \cap N(I) \cap \cE| - m)s \ge ms$. This finishes the proof.
\end{proof}

\section{Verifying the Koteck\'y--Preiss condition} \label{sec:kotecky}
In this section we let $G$ be a $d$-regular $n$-vertex bipartite graph with bipartition $(\cO, \cE)$ satisfying \hyperref[def:property-i]{Property I}, and we fix $\cD \in \{\cO, \cE\}$.  We will show that the cluster expansion converges and will prove \Cref{l:Kotecky}, stating that for any fixed $\lambda_0>0$ there exists $C_0>0$ such that whenever $\lambda \le \lambda_0$ and $\lambda p \geq \frac{C_0 \log^{3/2} d}{d^{\lambexp}}$ and $k \in \mathbb{N}$, we have
\[
|L_{\cD, \geq k}| \leq nd^{(C_5+7)k-C_5-1} \left( \frac{1+\lambda}{1+\lambda e^{-\beta}}\right)^{-kd+C_1 k^2}.
\]

Recall that we defined 
\begin{align*}
  \alpha \coloneqq \lambda p = \lambda(1 - e^{-\beta}) \qquad \text{and} \qquad  \ta \coloneqq \left( 1 - \frac{\alpha}{1+\lambda} \right)^{-1} = \frac{1+\lam}{1+\lam e^{-\beta}},
\end{align*}
and we have the inequalities $\ta \le 1+\lambda$ and $\log \ta \ge \frac{\alpha}{1+\lambda}$. Now we define the following functions. For each $ \ell \in \mathbb{N}$,  define
\begin{align*}
\tilde{g}(\ell) &\coloneqq     \left\{\begin{array} {c@{\quad \textup{if} \quad}l}
       (d \ell -C_1\ell^2)\log\ta - (C_5+7) \ell \log d & \ell < \sqrt{d}, \\[+1ex]
      \frac{d \ell}{2C_2} \log\ta & \sqrt{d} \leq \ell \le d^{C_3}, \\[+1ex]
      \frac{\ell}{d^{C_5+1}} & \ell > d^{C_3}.
    \end{array}\right.
\end{align*}
Recall that we defined our polymer model as 
\begin{align*}
   \cP_\cD\coloneqq \left\{A\subseteq\cD: \quad A\ \text{is $2$-linked\quad and} \quad |[A]|\leq \frac{3}{4} \cdot |\cD|=  \frac{3}{8}n\right\}.
\end{align*}
For each polymer $A \in \cP_{\cD}$, we also define \[f(A) \coloneqq d^{-(C_5+1)} |A| \hspace{20pt} \;\text{ and } \; \hspace{20pt} g(A) \coloneqq f(A) + \tilde{g}(|A|).\]
Since $f(A)$ and $g(A)$ only depend on $|A|$, we will slightly abuse notation and write $f(\ell), g(\ell)$ when the corresponding polymer has size $\ell$.
For a vertex $v \in V(G)$, we let $\cP^v_{\cD}$ denote the set of all polymers $A\in \cP_{\cD}$ that contain $v$, that is,
\[
  \cP^v_{\cD}\coloneqq\{A \in \cP_{\cD} : v \in A\}.
\]

\begin{claim}\label{c:KoteckyPreiss}
    For every $v \in \cD$,
    \[
    \sum_{A \in \cP^v_{\cD}} \om(A) \exp(f(A) + g(A)) \leq \frac{1}{d^{C_5+3}}.
    \]
\end{claim}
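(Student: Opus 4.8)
The plan is to split the sum over $A \in \cP_\cD^v$ according to the size $\ell = |A|$, and for each $\ell$ bound the number of such polymers and the weight of each. The number of $2$-linked sets of size $\ell$ containing $v$ is at most $(ed^2)^{\ell-1} \le d^{3\ell}$ by \Cref{l:counting2linked}. For the weight, \Cref{l:boundingweight} gives $\om(A) \le \lam^{|A|}(1 + \lam e^{-\beta})^{|N(A)|}/(1+\lam)^{|N(A)|} = \lam^{|A|}\ta^{-|N(A)|}$, so I need good lower bounds on $|N(A)|$. This is exactly where the three isoperimetric regimes of \hyperref[def:property-i]{Property I} (\textbf{Ia}(1), (2), (3)) enter, and it is why $\tilde g$ is defined piecewise with the same three thresholds $\ell \le \sqrt d$, $\sqrt d \le \ell \le d^{C_3}$, $\ell > d^{C_3}$. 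In each regime I would combine the three contributions $\om(A)$, $e^{f(A)}$, $e^{g(A)} = e^{f(A)}e^{\tilde g(|A|)}$ and check that the resulting per-$\ell$ term decays geometrically in $\ell$, so the whole sum is dominated by its $\ell = 1$ term (up to a constant factor), which must come out below $d^{-(C_5+3)}$.

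Concretely, for the first regime ($1 \le \ell \le \sqrt d$): using $|N(A)| \ge (d - C_1\ell)\ell$ from \textbf{Ia}(1), each polymer contributes at most $\lam^\ell \ta^{-(d - C_1\ell)\ell}$; multiplying by the count $d^{3\ell}$, by $e^{2f(A)} \le e^{2\ell/d^{C_5+1}} = e^{o(\ell)}$, and by $e^{\tilde g(\ell)} = \ta^{(d\ell - C_1\ell^2)}d^{-(C_5+7)\ell}$, the two factors $\ta^{-(d-C_1\ell)\ell}$ and $\ta^{(d\ell - C_1\ell^2)}$ cancel exactly, leaving roughly $\lam^\ell d^{3\ell} d^{-(C_5+7)\ell} e^{o(\ell)} = \big(\lam\, d^{-(C_5+4)+o(1)}\big)^\ell$. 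Since $\lam \le \lam_0$ is bounded, this is $\le d^{-(C_5+3)\ell}\cdot(\text{something}\le 1)$ for $\ell\ge1$ and large $d$, and summing the geometric series over $\ell \ge 1$ gives at most $\frac{2}{d^{C_5+4}} \le \frac{1}{d^{C_5+3}}$; in fact the $\ell=1$ term alone is $\lam d^{3}d^{-(C_5+7)}e^{o(1)} = o(d^{-(C_5+3)})$, leaving room for the tail. For the middle regime I would use \textbf{Ia}(2), $|N(A)| \ge \frac{d}{C_2}\ell$, against $e^{\tilde g(\ell)} = \ta^{d\ell/(2C_2)}$, so that $\om(A)e^{\tilde g(\ell)} \le \lam^\ell \ta^{-d\ell/(2C_2)}$; since the lower bound $\lam p = \alpha \ge \frac{C_0\log^{3/2}d}{d^{\lambexp}}$ forces $\log\ta \ge \frac{\alpha}{1+\lam} \ge \frac{C_0\log^{3/2}d}{(1+\lam_0)d^{1+\lambexp}}$ — wait, more carefully, $\ta^{-d/(2C_2)} = \exp(-\frac{d}{2C_2}\log\ta) \le \exp(-\frac{\alpha d}{2C_2(1+\lam)}) \le \exp(-\frac{C_0 d^{1-\lambexp}\log^{3/2}d}{2C_2(1+\lam_0)})$, which is super-polynomially small — this utterly dominates the count $d^{3\ell}$ and the harmless $e^{2f}$, so each term in this range is minuscule and the whole block sums to something negligible. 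For the tail regime $\ell > d^{C_3}$, the key point is $|[A]| \le \frac38 n$ (the polymer constraint), so \textbf{Ia}(3) applies to $[A]$ and gives $|N(A)| = |N([A])| \ge (1 + C_4 d^{-C_5})|[A]| \ge (1+C_4 d^{-C_5})\ell$; actually I also want a linear-in-$d\ell$ bound here, which I get by noting $|N(A)| \ge \frac{d}{C_2}\ell$ still holds from \textbf{Ia}(2) isn't valid for $\ell$ this large — instead I rely on \textbf{Ia}(1): for $\ell$ up to $\frac{3}{8}n$ one has $|N(A)| \ge (d - C_1 \ell)\ell$, but $C_1\ell$ could exceed $d$; so in this regime the right tool is that $\ta^{-|N(A)|}$ with $|N(A)|\ge \ell$ already gives $\ta^{-\ell} \le \exp(-\frac{\alpha \ell}{1+\lam_0})$, and $e^{\tilde g(\ell)} = e^{\ell/d^{C_5+1}}$ is negligible against it since $\alpha \gg d^{-(C_5+1)}$ — wait, we need $\alpha \ge$ roughly $d^{-\lambexp} \gg d^{-(C_5+1)}$, yes. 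So each term is at most $\lam^\ell d^{3\ell}\exp(-\frac{\alpha\ell}{2(1+\lam_0)})$, and since $d^{C_3}$ is enormous ($C_3 > C_5 + 2 > 2$) this block sums to an amount dwarfed by $d^{-(C_5+3)}$.

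The main obstacle I anticipate is the \textbf{bookkeeping in the boundary cases between regimes} — making sure the piecewise definition of $\tilde g$ is consistent (i.e. $\tilde g$ doesn't jump in a way that breaks the inequality) at $\ell \approx \sqrt d$ and $\ell \approx d^{C_3}$, and verifying that the $\ell = 1$ term genuinely lands strictly below $d^{-(C_5+3)}$ with enough slack to absorb the geometric tail. A secondary subtlety is the precise form of the polylogarithmic factor $\log^{3/2}d$ and the exponent $\lambexp = \min\{1/2,\,1 - C_5/2\}$: in the first regime, what one actually needs from the hypothesis is only boundedness of $\lam$ (the isoperimetric cancellation does the work), but in the middle and tail regimes one needs $\alpha = \lam p$ large enough that $\exp(-\Omega(\alpha \cdot d^{c}))$ beats all the polynomial $d^{3\ell}$ factors; tracking where exactly the $\log^{3/2}d$ (as opposed to $\log d$) is required — it is needed downstream in \Cref{lemma:total-weight-given-approximation} and the optimization of $\psi$, but for this particular claim a weaker bound on $\alpha$ suffices — is the kind of thing one must state carefully rather than wave at. Once these regime-by-regime estimates are in place, summing the three geometric/super-geometric series and comparing to $d^{-(C_5+3)}$ completes the proof; \Cref{l:Kotecky} then follows from \Cref{lem:KP} by taking $g$ as above and reading off the tail bound \eqref{eq:KP_tail}.
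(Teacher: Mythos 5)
Your treatment of the first two regimes ($\ell \le \sqrt d$ and $\sqrt d \le \ell \le d^{C_3}$) matches the paper's proof essentially line for line: the exact cancellation of $\ta^{\pm(d\ell - C_1\ell^2)}$ in the first regime, and the super-polynomial decay of $\ta^{-d\ell/(2C_2)}$ against all polynomial factors in the second, are both correct and are what the paper does.

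The tail regime $\ell > d^{C_3}$, however, contains a genuine gap. You propose to bound each polymer's weight by $\lam^{\ell}\ta^{-|N(A)|}$ with only the trivial expansion $|N(A)| \ge \ell$ (or even $(1+C_4 d^{-C_5})\ell$), multiply by the $2$-linked count $(ed^2)^{\ell-1}$, and conclude the block is negligible. This cannot work: since $\alpha = \lam p \le \lam_0$ is bounded, the decay $\ta^{-|N(A)|} \le \exp(-\tfrac{\alpha}{1+\lam_0}|N(A)|)$ with $|N(A)| = O(\ell)$ is only $e^{-O(\ell)}$, while the count contributes $e^{3\ell\log d}$; the product of the two \emph{diverges}. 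To beat the union bound over $2$-linked sets one would need expansion by a factor $\gtrsim \log d$, which is exactly what fails for sets larger than $d^{C_3}$ (where only \textbf{Ia}(3), expansion by $1+C_4 d^{-C_5}$, is available). This is why the paper's Case~3 abandons the crude count entirely and instead invokes the container lemma (\Cref{lem:container_main}): it sums $\om(A)$ over $A \in \cG_\cD(a,b)$ for each pair $(a,b)$ with $b \ge (1+C_4 d^{-C_5})a$, getting $n\exp(-C(b-a)\alpha^2/\log d)$ per pair, and then uses $b - a \ge C_4 d^{-C_5}a$ together with $a \ge d^{C_3}$, $C_3 > C_5+2$, and crucially the size constraint $\log n = O(d)$ from \textbf{Ib} to absorb the $n^2$ prefactor. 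Your proposal never uses $\log n = O(d)$, which the paper explicitly flags as being needed precisely (and only) at this step; its absence from your argument is a symptom of the missing container input. The remainder of your plan (deducing \Cref{l:Kotecky} from the claim via \Cref{lem:KP}) is fine.
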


Before proving \Cref{c:KoteckyPreiss} we first show it implies \Cref{l:Kotecky}.

\begin{proof}[Proof of \Cref{l:Kotecky}]
    Given two polymers $A,A' \in \cP_{\cD}$, recall that $A' \nsim A$ (i.e., they are incompatible) if and only if there exists $v \in N^2(A)$ with $v \in A'$.
Thus for a fixed polymer $A \in \cP_{\cD}$, using that $|N^2(A)| \le d^2|A|$, we have that
\begin{align*}
    \sum_{A' \nsim A} \om(A') \exp(f(A') + g(A')) & \leq \sum_{v \in N^2(A)}\  \sum_{A' \in \cP^v_{\cD}} \om(A') \exp(f(A') + g(A')) \\
    &\le d^2 |A| \sum_{A' \in \cP^v_{\cD}} \om(A') \exp(f(A') + g(A')) \\
    &\le d^2 |A| \frac{1}{d^{C_5+3}} = d^{-(C_5+1)} |A| = f(A),
\end{align*}
where the last inequality is from \Cref{c:KoteckyPreiss}.
By the Koteck\'y--Preiss condition (\Cref{lem:KP}) this implies that
  \begin{equation}\label{eq:Kotecky}
    \sum_{\substack{\Gam \in \cC_{\cD}\\ \Gam \nsim A}} |\om(\Gam)| \exp(g(\Gam)) \leq f(A),
  \end{equation}
  where $g(\Gam)\coloneqq \sum_{A' \in \Gam} g(A')$.
  For each $v \in V(G)$, if we take $A=\{v\}$ then \eqref{eq:Kotecky} becomes
  \[
    \sum_{\substack{\Gam \in \cC_{\cD}\\ \Gam \nsim v}} |\om(\Gam)| \exp(g(\Gam)) \leq d^{-(C_5+1)}.
  \]
  Summing over all $v \in V(G)$, we get that
  \[
    \sum_{\Gam \in \cC_{\cD}} |\om(\Gam)| \exp(g(\Gam)) \leq n d^{-(C_5+1)}.
  \]
  In particular, omitting the clusters with $\norm{\Gam} < k$, it follows that
  \begin{equation} \label{eq:boundbigclusters}
    \sum_{\Gam \in \cC_{\cD, \geq k}} |\om(\Gam)| \exp(g(\Gam)) \leq n d^{-(C_5+1)}.
  \end{equation}

  Next, one may check that the function $\ell \mapsto \tilde{g}(\ell)/\ell$ is non-increasing (since $C_2\geq 1$, as a set in a $d$-regular graph expands by at most a factor of $d$), so that $\tilde{g}(\ell) \geq \frac{\ell}{x} \tilde{g}(x)$ for all $x \geq \ell$. Hence,
  \begin{equation} \label{eq:gandgamma}
  \begin{split}
      g(\Gam) &= \sum_{A' \in \Gam} (f(A')+\tilde{g}(|A'|)) \geq \sum_{A' \in \Gam} |A'|d^{-(C_5+1)} + \sum_{A' \in \Gam} \frac{|A'|}{\norm{\Gam}} \tilde{g} (\norm{\Gam}) \\
      &= f(\norm{\Gam}) + \tilde{g}(\norm{\Gam}) = g(\norm{\Gam}).
  \end{split}
  \end{equation}
  Combining \eqref{eq:boundbigclusters}, \eqref{eq:gandgamma} and the fact that $g(\|\Gamma\|)\geq g(k)$ when $\Gamma \in \cC_{\cD, \geq k}$, yields
  \begin{equation}\label{eq:boundrearranged}
    \big| \log \Xi_{\cD} - L_{\cD, <k} \big|  = |L_{\cD, \geq k}| \leq \sum_{\Gam \in \cC_{\cD, \geq k}} |\om(\Gam)| \leq n d^{-(C_5+1)} \exp(-g(k)),
  \end{equation}
Using \eqref{eq:boundrearranged} and the definition of $g(k)=(d k -C_1 k^2)\log\ta - (C_5+7) k \log d$, we deduce that
  \begin{align*}
      |L_{\cD, \geq k}| & \leq n d^{-(C_5+1)} \exp(-g(k))\\
      & \leq n d^{-(C_5+1)} \exp(- ((dk-C_1 k^2)\log\ta  + (C_5+7)k \log d))\\
        & = nd^{(C_5+7)k-C_5-1} \ta^{-dk+C_1k^2},
  \end{align*}
  as desired.
\end{proof}

Now we prove \Cref{l:boundingweight}, which gives a general bound on the weight of a polymer that we will use throughout our calculations. We will prove that for any polymer $A \in \cP_{\cD}$, we have
\begin{equation} \label{eq:boundingweight}
    \om(A) \leq \frac{\lambda^{|A|}}{(1+\lambda)^{|N(A)|}} (1+\lambda e^{-\beta})^{|N(A)|}.
\end{equation}

\begin{proof}[Proof of \Cref{l:boundingweight}] \label{proof:boundingweight}
Let $A \subseteq \cD$.
We may rewrite the claimed inequality \eqref{eq:boundingweight} as
\begin{align*}
    \sum_{B \subseteq N(A)} \lam^{|B|} e^{-\beta |E(A,B)|} \le (1 + \lam e^{-\beta})^{|N(A)|}.
\end{align*}
Since every vertex in $B$ contributes at least one edge to $E(A, B)$, we have that $|E(A,B)| \ge |B|$, so that $e^{-\beta |E(A,B)|} \le e^{-\beta |B|}$. Thus,
\begin{align*}
    \sum_{B \subseteq N(A)} \lam^{|B|} e^{-\beta |E(A,B)|} \le \sum_{B \subseteq N(A)} \lam^{|B|} e^{-\beta |B|} = (1 + \lam e^{-\beta})^{|N(A)|},
\end{align*}
as required.
\end{proof}

We now turn to the proof of \Cref{c:KoteckyPreiss}, where we will need the vertex-expansion properties of $G$ as given by \hyperref[def:property-i]{Property I}.

\begin{proof}[Proof of \Cref{c:KoteckyPreiss}]
    Recall that for $A \in \cP_{\cD}$, by definition of a polymer we have $|A| \leq |[A]| \leq 3n/8$. For each $v \in \cD$ and $1 \leq \ell \leq 3n/8$, we denote the set of polymers of size $\ell$ containing $v$ by \[\cP^v_{\cD, \ell} \coloneqq \{A \in \cP^v_{\cD} : |A|=\ell\}.\] 
Given $\ell \in \mathbb{N}$, by \Cref{l:counting2linked} we may bound the number of polymers of size $\ell$ containing a fixed vertex $v$ by
\begin{align} \label{eq:numberpolymers}
    |\cP^v_{\cD, \ell}| \leq (ed^2)^{\ell-1} \leq \exp(3 \ell \log d).
\end{align}

We split the proof in three cases according to the size of $A$. 

\paragraph{Case 1:} $|A| < \sqrt{d}$.

\noindent Recall that in this range, by \hyperref[def:property-i]{Property I} we have $|N(A)|\geq d|A|-C_1|A|^2.$ Combining this with \eqref{eq:numberpolymers} and \Cref{l:boundingweight}, we have
\begin{equation} \label{eq:case1start}
\sum_{\ell=1}^{\sqrt{d}} \sum_{A \in \cP^v_{\cD, \ell}} \om(A) \exp(f(A) +g(A))\\
\le \sum_{\ell=1}^{\sqrt{d}} \exp(3\ell \log d) \lam^{\ell} \ta^{-(d\ell-C_1 \ell^2)} \exp(f(\ell) +g(\ell)).
\end{equation}
Plugging in the definition of $f(\ell)$ and $g(\ell)$ in this range, the right-hand side of \eqref{eq:case1start} is
\begin{align*}
    \sum_{\ell=1}^{\sqrt{d}} \exp(3\ell \log d) \lam^{\ell} \ta^{-(d\ell-C_1 \ell^2)} &\exp\left(2 \ell d^{-(C_5+1)}+ (d \ell -C_1\ell^2)\log\ta - (C_5+7) \ell \log d\right) \\
    &\leq \sum_{\ell \geq 1} \exp\left(2 \ell d^{-(C_5+1)} + \ell \log \lam - (C_5+4) \ell \log d \right) \\
    &\le \sum_{\ell \ge 1} \exp \left( -(C_5+3.5)\ell \log d \right) \\
    &\le \frac{1}{3d^{C_5+3}},
\end{align*}
where the first inequality uses that $\lambda \le \lambda_0$ is bounded.
Thus, \eqref{eq:case1start} becomes
\begin{align} \label{eq:case1result}
    \sum_{\ell=1}^{\sqrt{d}} \sum_{A \in \cP^v_{\cD, \ell}} \om(A) \exp(f(A) +g(A)) \le \frac{1}{3d^{C_5+3}}.
\end{align}

\paragraph{Case 2:} $\sqrt{d} \leq |A| \leq d^{C_3}$.

\noindent In this case, by \hyperref[def:property-i]{Property I} we have that $|N(A)| \geq \frac{d}{C_2}|A|$, and combining this with \eqref{eq:numberpolymers} and \Cref{l:boundingweight}, we have
\begin{equation} \label{eq:case2start}
\sum_{\ell=\sqrt{d}}^{d^{C_3}} \sum_{A \in \cP^v_{\cD, \ell}} \om(A) \exp(f(A) +g(A))\\
\le \sum_{\ell=\sqrt{d}}^{d^{C_3}} \exp(3\ell \log d) \lam^{\ell} \ta^{-\frac{\ell d}{C_2}} \exp(f(\ell) +g(\ell)).
\end{equation}
Plugging in the definition of $f(\ell)$ and $g(\ell)$ in this range, the right-hand side of \eqref{eq:case2start} becomes

\begin{align}
        \sum_{\ell=\sqrt{d}}^{d^{C_3}} \exp(3\ell \log d) &\lam^{\ell}\ta^{-\frac{\ell d}{C_2}} \exp\left(2d^{-(C_5+1)} \ell + \frac{d \ell}{2C_2} \log\ta\right) \nonumber\\
        &\leq  \sum_{\ell \leq d^{C_3}} \exp\left( 3\ell \log d + \ell \log \lam + 2 \ell d^{-(C_5+1)} - \frac{d\ell }{2C_2} \log\ta\right )\nonumber \\
        &\leq \sum_{\ell \leq d^{C_3}} \exp\left(- \frac{d\ell}{4C_2} \log\ta\right) \nonumber\\
        &\leq d^{C_3}\exp\left(-\frac{C_0 \log^{3/2}d}{4C_2(1+\lambda_0)} \right) \label{eq:Case2rootd},
\end{align}
where the second inequality uses that $\lam \le \lam_0$ is bounded, $\log \ta > 0$ and $\log \ta \ge \frac{\alpha}{1+\lambda} \ge \frac{C_0 \log^{3/2} d}{(1+\lambda_0)d}$ to show that the last term is dominating. The last inequality uses the latter bound on $\log \ta$ again. Since the exponential term above decays faster than any polynomial in $d$, combining this with \eqref{eq:case2start} we get that
\begin{align} \label{eq:case2result}
    \sum_{\ell=\sqrt{d}}^{d^{C_3}}\ \sum_{A \in \cP^v_{\cD, \ell}} \om(A) \exp(f(A) +g(A)) &\le \frac{1}{3 d^{C_5+3}}.
\end{align}

\paragraph{Case 3:} $d^{C_3} < |A| \leq \frac{3}{8}n$.

\noindent In this case, $f(A)=\frac{|A|}{d^{C_5+1}}$ and $g(A)=\frac{2|A|}{d^{C_5+1}}$, and we bound
\begin{equation} \label{eq:case3KP}
    \sum_{\ell=d^{C_3}}^{3n/8} \sum_{A \in \cP^v_{\cD, \ell}} \om(A) \exp(f(A) +g(A)) \leq \sum_{\ell=d^{C_3}}^{3n/8} \sum_{A \in \cP^v_{\cD, \ell}} \om(A) \exp\left(3 |A|d^{-(C_5+1)}\right).
\end{equation}
Note that because $|A| \leq |[A]|$, we have
\[
\{A \in \cP_{\cD} : |A| \geq d^{C_3}\} \quad \subseteq \quad  \{A \in \cP_{\cD} : |[A]| \geq d^{C_3}\},
\]
and thus it suffices to bound
\[
\sum_{A \in \cP_{\cD}:\ |[A]|\geq d^{C_3}} \om(A) \exp\left(3 |A|d^{-(C_5+1)}\right).
\]
Due to the upper bound of $|A|\leq \frac{3}{8} n$, by \hyperref[def:property-i]{Property I} we have $|N(A)| \geq \left(1+\frac{C_4}{d^{C_5}}\right) |A|$. Set $\eta\coloneqq \left(1+\frac{C_4}{d^{C_5}}\right)$. Since $N([A])=N(A)$, we may split the sum up as
\begin{equation} \label{eq:beforecontainer}
    \sum_{A \in \cP_{\cD}:\ |[A]|\geq d^{C_3}} \om(A) \exp(3 |A|d^{-(C_5+1)}) \leq \sum_{a \geq d^{C_3}} \exp(3a d^{-(C_5+1)}) \sum_{b \geq \eta a} \ \sum_{A \in \cG_{\cD}(a, b)} \om(A).
\end{equation}
By \Cref{lem:container_main}, there exists a constant $C>0$ such that
\[
\sum_{A \in \cG_{\cD}(a, b)} \om(A) \leq n \exp\left(-\frac{C(b-a) \alpha^2}{\log d}\right).
\]
Since $b-a \geq (\eta-1) a=C_4d^{-C_5}a$, \eqref{eq:beforecontainer} becomes
\begin{align*} 
    \sum_{A \in \cP_{\cD}:\ |A|\geq d^{C_3}} \om(A) \exp \left(3 |A|d^{-(C_5+1)} \right) &\leq n^2 \sum_{a \geq d^{C_3}} \exp\left(3a d^{-(C_5+1)}-\frac{CC_4 a \alpha^2}{ \log d} d^{-C_5}\right).
\end{align*}
Since $\alpha \geq \frac{C_0 \log^{3/2} d}{d^{1/2}}$ and $a \ge d^{C_3}$, we have that
\begin{equation} \label{eq:pluginalpha}
    3a d^{-(C_5+1)}-\frac{CC_4 a \alpha^2}{\log d}d^{-C_5} \le -C'ad^{-(C_5+1)} \le -C'd^{C_3-C_5-1},
\end{equation}
for some constant $C' > 0$, which gives us that
\begin{equation} \label{eq:case3KPresult}
    \sum_{A \in \cP_{\cD}:\ |A|\geq d^{C_3}} \om(A) \exp\left(3 |A|d^{-(C_5+1)}\right) \leq n^2\exp\left( -C'd^{C_3-C_5-1} \right).
\end{equation}
Plugging \eqref{eq:case3KPresult} into \eqref{eq:case3KP}, we get that
\begin{equation}\label{eq:case3result}
    \sum_{\ell=d^{C_3}}^{3n/8}\ \sum_{A \in \cP^v_{\cD, \ell}} \om(A) \exp(f(A) +g(A)) \leq n^2 \exp\left(-C' d^{C_3-C_5-1} \right) \leq \frac{1}{3d^{C_5+3}},
\end{equation}
where the last inequality uses the conditions $C_3>C_5 + 2$ and $\log n=O(d)$ (i.e., the size constraint in terms of the regularity), from \hyperref[def:property-i]{Property I}.

Finally, combining \eqref{eq:case1result}, \eqref{eq:case2result}, and \eqref{eq:case3result}, we get that
\[
\sum_{A \in \cP^v_{\cD}} \om(A) \exp(f(A) + g(A)) \leq \frac{1}{d^{C_5+3}},
\]
as required.
\end{proof}

\noindent It is worth mentioning that the last inequality in \eqref{eq:case3result} is the only place that requires the size constraint of \hyperref[def:property-i]{Property I}, and this will be relevant for \Cref{sec:tori}.

\section{Properties of the measure $\hat{\mu}$} \label{sec:propertiesmu}

Recall that $G$ is a $d$-regular $n$-vertex bipartite graph with bipartition $(\cO, \cE)$ satisfying \hyperref[def:property-i]{Property I}. The main goal of this section is to further describe the structure of sets generated by the measure $\hat{\mu}$, and eventually show that $\hat{\mu}$ is indeed a good approximation of the measure ${\mu}$ of the Ising model in terms of total variation distance. The bounds derived by the verification of the Koteck\'y--Preiss condition will be central in this direction. 

We start by describing some typical structural properties of the defect set.
Recall from \Cref{sec:proof_main} that we sample a set $I$ by first sampling a decorated polymer configuration $\hat{\Theta}= \{(A_1, B_1), ..., (A_s, B_s)\}$ from $\hat{\Omega}_{\cD}$, forming $D \coloneqq \bigcup_{i=1}^s (A_i \cup B_i)$. In the set $I$, we include $D$ and every vertex that is \emph{not} in $\cD \cup N(D)$ independently with probability $\frac{\lambda}{1+\lambda}$.

\begin{lemma} \label{l:Gammasmall}
Let $(I, \cD)$ be drawn according to the measure $\hat{\mu}^*$ defined in \eqref{eq:gettinghatZ}. Then
\[
\mathbb{P}_{\hat{\mu}^*}\left( |I \cap \cD| \geq \frac{n}{d^2} \right) \leq \exp\left(-\frac{n}{d^{C_5+4}}\right).
\]
\end{lemma}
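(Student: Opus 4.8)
The plan is to reduce the statement to a large-deviation bound for the polymer configuration underlying $\hat\mu^*$ and then prove that bound by a Chernoff-type argument built on a sharpened Koteck\'y--Preiss estimate.

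\emph{Reduction.} From the construction of $\hat\mu^*$: writing $\hat\Theta=\{(A_1,B_1),\dots,(A_s,B_s)\}$ for the decorated configuration chosen in step (2) and $D=\bigcup_i(A_i\cup B_i)$, every vertex of $I\setminus D$ lies in $\overline\cD\setminus N(D)$, so $I\cap\cD=D\cap\cD=\bigcup_iA_i$; as compatible decorated polymers have pairwise disjoint $A$-parts, $|I\cap\cD|=\sum_i|A_i|$. Marginalising \eqref{eq:weight_decomp} over the $B$-parts shows that, conditioned on the defect side $\cD$, the family $\Theta\coloneqq\{A_1,\dots,A_s\}$ is distributed on $\Om_\cD$ with weight $\prod_{A\in\Theta}\om(A)/\Xi_\cD$. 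Putting $\norm{\Theta}\coloneqq\sum_{A\in\Theta}|A|$, one has $\mathbb{P}_{\hat\mu^*}(|I\cap\cD|\ge n/d^2)=\sum_{\cD}\tfrac{\Xi_\cD}{\Xi_\cO+\Xi_\cE}\,\mathbb{P}_{\Theta\sim\cP_\cD}(\norm{\Theta}\ge n/d^2)$, so it suffices to show $\mathbb{P}_{\Theta\sim\cP_\cD}(\norm{\Theta}\ge n/d^2)\le\exp(-n/d^{C_5+4})$ for each $\cD\in\{\cO,\cE\}$.

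\emph{Exponential moment and the key estimate.} Fix $t\coloneqq d^{-(C_5+1)}$ and introduce the tilted polymer model on $\cP_\cD$ with weights $\om^{(t)}(A)\coloneqq\om(A)e^{t|A|}$ and partition function $\Xi_\cD^{(t)}$. Using the cluster expansion for both models (convergence verified below) and $e^{t\norm{\Gam}}-1\le e^{t\norm{\Gam}}$,
\[
\mathbb{E}_{\Theta\sim\cP_\cD}\!\big[e^{t\norm{\Theta}}\big]=\frac{\Xi_\cD^{(t)}}{\Xi_\cD}=\exp\!\Big(\sum_{\Gam}\om(\Gam)\big(e^{t\norm{\Gam}}-1\big)\Big)\le\exp\!\Big(\sum_{\Gam}\big|\om^{(t)}(\Gam)\big|\Big).
\]
The crucial point is that the pair $(f,g)$ from the proof of \Cref{l:Kotecky} is wasteful, and one should rerun the Koteck\'y--Preiss verification with much smaller functions. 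Revisiting the three cases of the proof of \Cref{c:KoteckyPreiss}, the sum $\sum_{A\in\cP^v_\cD}\om(A)e^{f(A)+g(A)}$ is dominated by its size-$1$ term $\om(\{v\})e^{f(1)+g(1)}=\lam\,\ta^{-C_1}e^{O(d^{-(C_5+1)})}d^{-(C_5+7)}$ (the terms with $|A|\ge2$ form a geometric series of tiny ratio and the terms with $|A|\ge\sqrt d$ are super-polynomially small), whence $\sum_{A\in\cP^v_\cD}\om(A)e^{f(A)+g(A)}\le 3\lam\,d^{-(C_5+7)}$ for large $d$. Therefore, setting $f'(A)\coloneqq 3\lam|A|d^{-(C_5+5)}$ and $g'(A)\coloneqq f'(A)+\tilde g(|A|)-t|A|$, one has $g'\ge0$ (since $\tilde g(\ell)\ge\ell d^{-(C_5+1)}=t\ell$ in every range), and for every polymer $A$,
\[
\sum_{A'\nsim A}\big|\om^{(t)}(A')\big|e^{f'(A')+g'(A')}=\sum_{A'\nsim A}\om(A')e^{2f'(A')+\tilde g(|A'|)}\le\sum_{A'\nsim A}\om(A')e^{f(A')+g(A')}\le|N^2(A)|\cdot 3\lam d^{-(C_5+7)}\le f'(A),
\]
using $2f'(A')\le 2f(A')$ and $|N^2(A)|\le d^2|A|$. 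Hence the Koteck\'y--Preiss condition (\Cref{lem:KP}) holds for the tilted model with $(f',g')$: its cluster expansion converges, and \eqref{eq:KP_tail} with $A=\{v\}$ (discarding $e^{g'(\Gam)}\ge1$), summed over $v\in\cD$, gives $\sum_{\Gam}|\om^{(t)}(\Gam)|\le|\cD|\,f'(\{v\})=\tfrac{3n\lam}{2}d^{-(C_5+5)}$.

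\emph{Conclusion and the main obstacle.} Combining the last two displays, $\mathbb{E}_{\Theta\sim\cP_\cD}[e^{t\norm{\Theta}}]\le\exp(\tfrac{3n\lam}{2}d^{-(C_5+5)})$, so \eqref{eq:Markov} with $a=n/d^2$ yields
\[
\mathbb{P}_{\Theta\sim\cP_\cD}\!\Big(\norm{\Theta}\ge\tfrac{n}{d^2}\Big)\le\exp\!\Big(-\tfrac{n}{d^{C_5+3}}+\tfrac{3n\lam}{2d^{C_5+5}}\Big)\le\exp\!\Big(-\tfrac{n}{2d^{C_5+3}}\Big)\le\exp\!\Big(-\tfrac{n}{d^{C_5+4}}\Big)
\]
for $d$ large, using $\lam\le\lam_0$ and then $d\ge2$; with the reduction this proves the lemma. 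The hard part is the estimate in the previous paragraph: a routine exponential-moment bound with the functions from \Cref{l:Kotecky} only yields $\mathbb{E}[e^{t\norm{\Theta}}]\le\exp(\tfrac n2 d^{-(C_5+1)})$, an exponent that swamps $tn/d^2=nd^{-(C_5+3)}$ and renders Markov useless, so one genuinely has to harvest the $\Theta(d^2)$ of slack hidden in \Cref{c:KoteckyPreiss} — that the per-vertex polymer activity is really of order $\lam d^{-(C_5+7)}$, carried almost entirely by the singleton $\{v\}$ of weight $\lam(1-\tfrac{\lam p}{1+\lam})^d$ — in order to bring the exponent down to $O(n\lam d^{-(C_5+5)})$, which is negligible against $nd^{-(C_5+3)}$ because $\lam$ is bounded.
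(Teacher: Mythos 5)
Your proof is correct, and its skeleton is the same as the paper's: exponentially tilt the polymer weights by $e^{t|A|}$ with $t=d^{-(C_5+1)}$, control the resulting cumulant generating function of $\norm{\Theta}$ via the cluster expansion and the Koteck\'y--Preiss machinery, and finish with Markov's inequality \eqref{eq:Markov}. Where you diverge is in how the tilted log-partition function is bounded, and here you have made the problem harder than it is. You assert that the ``routine'' bound coming from \Cref{l:Kotecky} is only $\exp(\tfrac{n}{2}d^{-(C_5+1)})$ and that one must therefore re-run the whole Koteck\'y--Preiss verification with the sharper per-vertex activity $f'(A)=3\lam|A|d^{-(C_5+5)}$. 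But the slack you are harvesting by hand is exactly what the function $g$ was built to provide: the paper simply observes that \Cref{c:KoteckyPreiss} survives the extra factor $e^{|A|d^{-(C_5+1)}}$ (its Case 3 already absorbs $e^{3|A|d^{-(C_5+1)}}$), so the tail bound \eqref{eq:boundrearranged} applies to the tilted weights with $k=1$ and yields $\log\Xi^{*}_{\cD}\le n d^{-(C_5+1)}e^{-g(1)}\le nd^{6}\ta^{C_1-d}$, which is $n$ times a quantity decaying faster than any polynomial in $d$ (since $\ta^{-d}\le e^{-C_0\log^{3/2}d/(1+\lam_0)}$) --- in fact a stronger bound than your $\tfrac{3n\lam}{2}d^{-(C_5+5)}$, obtained without touching the KP verification. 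Your re-derivation is nonetheless sound: the reduction to $\norm{\Theta}$, the identity $\Xi^{(t)}_\cD/\Xi_\cD=\exp(\sum_\Gam\om(\Gam)(e^{t\norm{\Gam}}-1))$, the check that $g'\ge0$, the domination of the per-vertex sum by the singleton term, and the final arithmetic all hold, so the argument goes through; it just rebuilds, at the level of polymers, an estimate the paper extracts in one line from the cluster-level tail bound it has already proved.
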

\begin{proof}
    We define a different weight function $\om^{*}$ on $\cP_\cD$, where for $A \in \cP_\cD$,
    \[
    \om^{*}(A) \coloneqq  \om(A) \exp(|A|\, d^{-(C_5+1)}).
    \]
    Analogous to the cluster expansion $\log \Xi_{\cD}$ in $\om(A)$, the cluster expansion in $\om^{*}(A)$ is given by \[\log \Xi^{*}_{\cD}= \sum_{\Gam \in \cC_{\cD}} \om^{*}(\Gam).\] The same calculation shows that \Cref{c:KoteckyPreiss} also holds for $\om^{*}(A)$, and thus the Koteck\'y--Preiss condition is satisfied for $\om^{*}(A)$, $f(A)$ and $g(A)$. Furthermore, for any polymer configuration $\Theta \in \Om_\cD$, we have
    \begin{equation} \label{eq:omegatilde}
        \prod_{A \in \Theta} \om^{*}(A) = \prod_{A \in \Theta} \om(A) \exp(|A| d^{-(C_5+1)}) = \exp(\norm{\Theta} d^{-(C_5+1)}) \prod_{A \in \Theta} \om(A),
    \end{equation}
    where $\norm{\Theta} \coloneqq \sum_{A \in \Theta} |A|$.
    For a polymer configuration $\Theta \in \Om_{\cD}$, we set
    \[
    \nu_{\cD}(\Theta) \coloneqq \frac{\om(\Theta)}{\Xi_{\cD}}.
    \]
    Hence, using \eqref{eq:omegatilde} we get that
    \begin{equation} \label{eq:beforelog}
        \begin{split}
             \frac{\Xi^{*}_\cD}{\Xi_\cD}&=\frac{\sum_{\Theta \in \Om_\cD} \exp\left(\norm{\Theta} d^{-(C_5+1)}\right) \prod_{A \in \Theta} \om(A)}{\Xi_{\cD}}\\
            &= \sum_{\Theta \in \Om_\cD}  \exp\left(\norm{\Theta} d^{-(C_5+1)}\right) \nu_\cD(\Theta)\\
            &= \mathbb{E}_{\nu_\cD} \left[ \exp\left(\norm{\Theta} d^{-(C_5+1)}\right) \right],
        \end{split}
    \end{equation}
    where the expectation is with respect to the probability distribution on $\Om_{\cD}$, with a set $\Theta$ drawn with probability $\nu_{\cD}(\Theta)$. Note that $I \cap \cD$ is indeed in $\Om_{\cD}$ and its distribution under the measure $\hat{\mu}$ is exactly the one given by $\nu_{\cD}$.
 Recall that for a random variable $X$, the $r$th-cumulant generating function of $X$ is
 \[
 K_X(r) \coloneqq \log \Ex{\exp(rX)}.
 \]
 Taking logarithms on both sides of \eqref{eq:beforelog}, since $\Xi_\cD \geq 1$, we conclude that
    \begin{equation}\label{e:tildeXilower}
      \log \Xi^{*}_\cD \geq  K_{|I \cap \cD|}(d^{-(C_5+1)}).
    \end{equation}
    On the other hand, the analogue of \eqref{eq:boundrearranged} holds for $\om^{*}(\Gam)$, and applying this with $k=1$ (noting that $\cC_{k\geq 1} = \cC$) yields

    \begin{align} \label{e:tildeXiupper}
        \log \Xi^{*}_\cD =  \sum_{\Gam \in \cC} \om^{*}(\Gam) \leq n d^{-(C_5 + 1)} \exp(-g(1)) \leq n d^{6} \ta^{C_1-d}. 
    \end{align}
    Hence, from \eqref{e:tildeXilower} and \eqref{e:tildeXiupper} we get that
    \begin{equation} \label{eq:boundcumulant}
        K_{|I \cap \cD|}\left(d^{-(C_5+1)}\right) \leq \log \Xi^{*}_\cD \leq n d^{6} \ta^{C_1-d}.
    \end{equation}
Combining \eqref{eq:boundcumulant} and Markov's inequality \eqref{eq:Markov}, we have that
    \begin{align*}
            \mathbb{P}\left(|I \cap \cD|\geq \frac{n}{d^2}\right) & \leq \exp\left(K_{|I \cap \cD|}\left(d^{-(C_5+1)}\right) - \frac{n}{d^2} d^{-(C_5+1)} \right)\\
            &\leq \exp\left( nd^{6}\ta^{C_1-d} - nd^{-(C_5+3)} \right).
    \end{align*}
Since $\ta \le 1+\lam_0$ and $\log \ta \ge \frac{\alpha}{1+\lambda} \ge \frac{C_0 \log^{3/2}d}{(1+\lambda_0)d}$, we have that
\begin{align*}
    d^{6} \ta^{C_1-d} \le d^{6} (1+\lambda_0)^{C_1} \exp\left(-\frac{C_0 \log^{3/2} d}{1+\lambda_0} \right) \le d^{-(C_5+4)}.
\end{align*}
Therefore,
    \[
    \mathbb{P}\left(|I \cap \cD|\geq \frac{n}{d^2}\right) \leq \exp\left(-\frac{n}{d^{C_5+4}}\right),
    \]
    as claimed.
\end{proof}

Recall that for a set $I \subseteq V(G)$, the minority side $\cM(I)$ is one of the sides $\cM(I) \in \{\cO, \cE\}$ such that $|\cM(I) \cap I| \leq |I\setminus \cM(I)|$.
\begin{lemma} \label{l:minoritydefect}
Let $(I, \cD)$ be drawn according to the measure $\hat{\mu}^*$ defined in \eqref{eq:gettinghatZ}  and let $\cM(I)$ be the \emph{minority} side of $I$. Then
\[
\mathbb{P}_{\hat{\mu}^*}( \cD \neq \cM(I)) =  O\left(\exp\left(-\frac{n}{d^{C_5+4}}\right)\right).
\]
\end{lemma}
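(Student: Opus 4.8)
The plan is to exploit the explicit description of the sampling procedure for $\hat{\mu}^*$ from \Cref{sec:proof_main}. Conditioned on the defect side $\cD$ and the decorated polymer configuration $\hat{\Theta}=\{(A_1,B_1),\dots,(A_s,B_s)\}$, and writing $D\coloneqq\bigcup_{i}(A_i\cup B_i)$, the set $I$ is obtained by taking $D$ together with an independent $q$-random subset $R$ of $\overline{\cD}\setminus N(D)$, where $q\coloneqq\tfrac{\lam}{1+\lam}$. Since $A_i\subseteq\cD$ and $B_i\subseteq N(A_i)\subseteq\overline{\cD}$, this gives $I\cap\cD=\bigcup_i A_i$ (which in particular depends only on $\hat{\Theta}$) and $I\cap\overline{\cD}\supseteq R$ with $R\subseteq\overline{\cD}\setminus N\!\bigl(\bigcup_i A_i\bigr)$. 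The key observation is that $\cD$ must be the minority side as soon as $|I\cap\cD|$ is small while $|I\cap\overline{\cD}|$ is not too small; more precisely, since $\cM(I)$ may be taken to be any side whose intersection with $I$ is no larger than that of the other side,
\[
\{\cD\neq\cM(I)\}\ \subseteq\ \{|I\cap\cD|>|I\cap\overline{\cD}|\}\ \subseteq\ E_1\cup E_2,
\]
where $E_1\coloneqq\{|I\cap\cD|\ge n/d^2\}$ and $E_2\coloneqq\{|I\cap\overline{\cD}|<n/d^2\}$: indeed, if $|I\cap\cD|<n/d^2$ and $|I\cap\cD|>|I\cap\overline{\cD}|$, then $|I\cap\overline{\cD}|<n/d^2$.

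For $E_1$ I would invoke \Cref{l:Gammasmall} directly, giving $\mathbb{P}_{\hat{\mu}^*}(E_1)\le\exp(-n/d^{C_5+4})$. For $E_2$ I would condition on $(\cD,\hat{\Theta})$ and restrict to the $\hat{\Theta}$-measurable event $E_1^c$. On $E_1^c$ we have $|N(\bigcup_i A_i)|\le d\,|I\cap\cD|<n/d$, hence $|\overline{\cD}\setminus N(D)|\ge n/2-n/d\ge n/4$ for $d$ large, so that $|I\cap\overline{\cD}|\ge|R|$ and $|R|\sim\Bin\!\bigl(|\overline{\cD}\setminus N(D)|,q\bigr)$ stochastically dominates $\Bin(n/4,q)$. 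A Chernoff bound (\Cref{l:Chernoff}) then yields $\mathbb{P}_{\hat{\mu}^*}(E_2\mid\cD,\hat{\Theta})\le\exp(-n/d^{C_5+4})$ whenever $(\cD,\hat{\Theta})\in E_1^c$; averaging over $(\cD,\hat{\Theta})$ and combining with the bound on $\mathbb{P}_{\hat{\mu}^*}(E_1)$ gives $\mathbb{P}_{\hat{\mu}^*}(\cD\neq\cM(I))=O\!\bigl(\exp(-n/d^{C_5+4})\bigr)$.

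The one point that requires care — and the main, though mild, obstacle — is that $\lam$, and therefore $q$, may be as small as $\tilde{\Omega}(d^{-\lambexp})$, so the Chernoff estimate for $|I\cap\overline{\cD}|$ must be carried out with this in mind. From $p\le 1$ and the hypothesis $\lam p\ge C_0\log^{3/2}d/d^{\lambexp}$ we get $q\ge\tfrac{\lam}{1+\lam_0}\ge\tfrac{\lam p}{1+\lam_0}\ge\tfrac{C_0\log^{3/2}d}{(1+\lam_0)d^{\lambexp}}$, and since $\lambexp<2$ this gives $\tfrac18 qn\ge n/d^2$ for $d$ large; hence
\[
\mathbb{P}\!\bigl(\Bin(n/4,q)\le n/d^2\bigr)\le\mathbb{P}\!\bigl(\Bin(n/4,q)\le\tfrac18 qn\bigr)\le 2\exp\!\bigl(-\tfrac18 nq^2\bigr),
\]
and $nq^2\gtrsim n\log^3 d/d^{2\lambexp}\gg n/d^{C_5+4}$ because $2\lambexp\le 1<C_5+4$. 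Everything else — the structural identities for $I$, the stochastic-domination step, and the set inclusion characterising the minority side — is routine.
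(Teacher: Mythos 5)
Your proposal is correct and follows essentially the same route as the paper: split off the event $|I\cap\cD|\ge n/d^2$ via \Cref{l:Gammasmall}, and on its complement use that $|I\cap\overline{\cD}|$ stochastically dominates a binomial $\Bin(\Theta(n),q)$ with $q=\frac{\lam}{1+\lam}$ together with a Chernoff bound. The only (immaterial) differences are that you condition explicitly on $(\cD,\hat\Theta)$ and track the lower bound $q=\tilde\Omega(d^{-\lambexp})$ more precisely, whereas the paper conditions on the event $\{|I\cap\cD|\le n/d^2\}$ and uses the cruder bound $q\ge 1/d$.
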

\begin{proof}
    By \Cref{l:Gammasmall}, we have
    \begin{equation} \label{eq:failure}
        \mathbb{P}_{\hat{\mu}^*}( \cD \neq \cM(I))\leq \mathbb{P}_{\hat{\mu}^*}\left( \cD \neq \cM(I) \;\middle|\; |I \cap \cD| \leq \frac{n}{d^2}\right) + \exp\left(-\frac{n}{d^{C_5+4}}\right).
    \end{equation}
    In the following, we estimate the probability that $\cD \neq \cM(I)$ under the conditioning that $|I \cap \cD| \leq n/d^2$.
    Since $|I \cap \cM(I)| \leq |I \cap \cD|$, it suffices to show that
    \[
    \mathbb{P}_{\hat{\mu}^*}\left( |I \cap \overline{\cD}| \leq \frac{n}{d^2} \;\middle|\; |I \cap \cD| \leq \frac{n}{d^2} \right) = O\left(\exp\left(-\frac{n}{d^{C_5+4}}\right)\right).
    \]
   Let $X\coloneqq|I \cap \overline{\cD}|$, and note that $X=|D \cap \overline{\cD}|+|I \setminus D|$, where $D \cap \overline{\cD}$ contains the vertices from the decorated polymers on the non-defect side and every vertex of $I \setminus D$ was randomly chosen from $\overline{\cD} \setminus N(D)$ independently with probability $\frac{\lam}{1+\lam}$.
   Hence, $X$ stochastically dominates the random variable $Y \sim \Bin\left(M', \frac{\lam}{1+\lam}\right)$ where
    $M' \coloneqq\frac{n}{2}-|N(D) \cap \overline{\cD}|$.
    Since $\frac{\lam}{1+\lam}\geq \frac{1}{d}$, $M' \geq \frac{n}{2}-d|D \cap \cD|$ and $|D \cap \cD| =|I \cap \cD| \leq \frac{n}{d^2}$, we get that
    \[
    \mathbb{E}\left[Y\right] \geq \frac{\lam}{1+\lam} \cdot \frac{n}{2} \left(1-\frac{2}{d}\right) = \Om\left(\frac{n}{d}\right).
    \]
    Applying a Chernoff bound (\Cref{l:Chernoff}) to $Y$, we obtain that
    \begin{align}
 \mathbb{P}_{\hat{\mu}^*} \left( X \leq \frac{n}{d^2} \;\middle|\; |I \cap \cD| \leq \frac{n}{d^2}\right) &\leq \Pr*{|Y - \mathbb{E}[Y]| \geq \frac{n}{d}} \nonumber\\
        &\leq  2 \exp\left(-\frac{2 n^2}{d^2M'} \right) =  \exp\left(-\Om\left(\frac{n}{d^2}\right)\right). \label{eq:Chernoffapplication}
    \end{align}
    From \eqref{eq:failure} and \eqref{eq:Chernoffapplication} it follows that
    \begin{align*}
    \mathbb{P}_{\hat{\mu}^*}\left( \cD \neq \cM(I) \right)
    &= O\left(\exp\left(-\frac{n}{d^2}\right)\right) + \exp\left(-\frac{n}{d^{C_5+4}}\right)\\
    &= O\left(\exp\left(-\frac{n}{d^{C_5+4}}\right)\right),
    \end{align*}
    as required.
\end{proof}

Now we prove \Cref{lem:approximateZ}, which tells us that $Z_G$ and $\hat{Z}_G \coloneqq (1+ \lam)^{n/2} (\Xi_\cO + \Xi_\cE)$ are close to each other on a logarithmic scale. In particular, this implies that the total variation distance between the two measures $\mu$ and $\hat{\mu}$ is small. We will prove that for any fixed $\lambda_0>0$ there exists a constant $C_0>0$ such that if $\lambda \leq \lambda_0$ and $\lambda p \geq \frac{C_0 \log^{3/2} d}{d^{\lambexp}}$, then
\begin{equation} \label{eq:differencehat}
    \left| \log Z_G(\lambda, \beta)- \log \left( (1+\lambda)^{n/2} (\Xi_{\cO} + \Xi_{\cE})\right)\right| = O\left( \exp\left( - \frac{n}{d^{C_5+4}}\right) \right),
\end{equation}
and
\begin{equation} \label{eq:totalvariation}
    ||\hat{\mu}-\mu||_{TV} = O\left( \exp\left( -\frac{n}{d^{C_5+4}}\right)\right).
\end{equation}

\begin{proof}[Proof of \Cref{lem:approximateZ}]
    We recall the crucial components of the measures $\mu$ and $\hat{\mu}$.
    For each subset $I \subseteq V(G)$, we have
    \[
    \tilde{\om}(I) = \lam^{|I|} e^{-\beta |E(I)|} \quad \text{ and } \quad \hat{\om}(I)= \hat{\om}^*(I, \cO) + \hat{\om}^*(I, \cE)\]
    where $\hat{\om}^*(I, \cD)= \mathbbm{1}_{I \cap \cD \in \Om_{\cD}} \om(I)$ as in \eqref{eq:hatweight}.
    Recall that $Z_G$ and $\hat{Z}_G$ are the partition functions of the corresponding probability measures $\mu$ and $\hat{\mu}$ given by 
    \[
Z_G \coloneqq \sum_{I \subseteq V(G)} \tilde{\om}(I) \quad \text{ and } \quad \hat{Z}_G \coloneqq \sum_{I \subseteq V(G)} \hat{\om}(I)
\]
and that for every subset $I \subseteq V(G)$,
\[
\mathbb{P}_{\mu}(\mathbf{I}=I) = \frac{\tilde{\om}(I)}{Z_G} \quad \text{ and } \quad \mathbb{P}_{\hat{\mu}}(\mathbf{I}=I) = \frac{\hat{\om}(I)}{\hat{Z}_G}.
\]
However, if we compare the definitions of the weights $\tilde{\om}$ and $\hat{\om}$, we see that $\tilde{\om}$ and $\hat{\om}$ will agree for any $I \subseteq V(G)$ such that precisely one of $I \cap \cO \in \Om_{\cO}$ or $I \cap \cE \in \Om_{\cE}$ holds. 
We will say that $I \subseteq V(G)$  is captured by the odd polymer model if $I \cap \cO \in \Om_{\cO}$, and similarly captured by the even polymer model if $I \cap \cE \in \Om_{\cE}$.
The sets which are captured by none of the even or odd  polymer models are exactly the sets in $\cJ$ as in \eqref{eq:nonpolymer}. By \Cref{lem:nonpolymer}, we have
\begin{equation} \label{eq:nonpolymerbound}
    \sum_{I \in \cJ} \tilde{\om}(I) \leq Z_G \cdot \exp(-\Om(n/d)).
\end{equation}
Let $\cB\subseteq 2^{V(G)}$ consist of all sets that are captured by both the even and odd  polymer models. Letting $\cM(I)$ and $ \overline{\cM}(I)$ be the minority and majority side of $I$ (so we have $\{\cM(I), \overline{\cM}(I)\} = \{\cO, \cE\}$), we note that  
\begin{equation*} 
\begin{split}
    \mathbb{P}_{\hat{\mu}^*}(\cD \neq \cM(I)) &= \sum_{I \subseteq V(G)} \hat{\mu}^*(\overline{\cM}(I), I)\\
    &= \sum_{I \in \cB} \frac{\mathbbm{1}_{\{I \cap {\overline{\cM}(I)} \in \Om_{\overline{\cM}(I)}\}} \lam^{|I|} e^{-\beta |E(I)|}}{\hat{Z}_G}\\
    &= \sum_{I \in \cB} \frac{\lam^{|I|} e^{-\beta |E(I)|}}{\hat{Z}_G} = \frac{1}{\hat{Z}_G} \sum_{I \in \cB} \tilde{\om}(I).
\end{split}
\end{equation*}
Using \Cref{l:minoritydefect}, we obtain that
\begin{equation}\label{eq:doublecount}
\sum_{I \in \cB} \tilde{\om}(I) = \hat{Z}_G \cdot \mathbb{P}_{\hat{\mu}^*}(\cD \neq \cM(I)) \leq \hat{Z}_G \cdot O\left(\exp\left(-\frac{n}{d^{C_5+4}}\right)\right).
\end{equation}
However, by definition of the weight $\hat{\om}$, for sets $ I \in \cB$ it holds that $\hat{\om}(I) - \tilde{\om}(I) = \lam^{|I|} e^{-\beta |E(I)|} = \tilde{\om}(I)$.
From the definition of $\cB$ and $\cJ$, we may bound
\[
\hat{Z}_G- \sum_{I \in \cB} \tilde{\om}(I) \leq Z_G \leq \hat{Z}_G + \sum_{I \in \cJ} \tilde{\om}(I).
\]
Plugging in \eqref{eq:nonpolymerbound} and \eqref{eq:doublecount}, we obtain that
\[
\hat{Z}_G - \hat{Z}_G \cdot O\left(\exp\left(-\frac{n}{d^{C_5+4}}\right)\right) \leq Z_G \leq \hat{Z}_G + Z_G \cdot \exp\left(-\Om\left(\frac{n}{d}\right)\right).
\]
In particular,
\begin{equation} \label{e:Z_Zprime}
1-O\left(\exp\left(-\frac{n}{d^{C_5+4}}\right)\right) \leq \frac{Z_G}{\hat{Z}_G} \leq 1+ O\left(\exp\left(-\frac{n}{d}\right)\right)
\end{equation}
and taking logarithms,
\[
\log \left( 1-O\left(\exp\left(-\frac{n}{d^{C_5+4}}\right)\right) \right) \leq \log Z_G - \log \hat{Z}_G \leq \log \left( 1 + O\left(\exp\left(-\frac{n}{d}\right)\right) \right).
\]
This expansion implies \eqref{eq:differencehat}.

Now we also bound the total variation distance between the measures $\hat{\mu}$ and $\mu$.
Since $I \notin \cB$ implies $\hat{\om}(I) = \tilde{\om}(I)$, we have that
\begin{equation}\label{e:TV_bound_prelim}
    \norm{\hat{\mu} - \mu}_{TV} =  \sum_{\substack{I \subseteq V(G) \\ \hat{\mu}(I) > \mu(I)}} (\hat{\mu}(I) - \mu(I)) \leq \mathbb{P}_{\hat{\mu}}(I \in \cB) + \sum_{I \not\in \cB} \left|\frac{\tilde{\om}(I)}{\hat{Z}_G} - \frac{\tilde{\om}(I)}{Z_G}\right|.
\end{equation}
For the right-hand side of \eqref{e:TV_bound_prelim}, we observe that 
\[ \mathbb{P}_{\hat{\mu}}(I \in \cB) = \sum_{I \in \cB} \frac{2\lam^{|I|} \exp(-\beta |E(I)|)}{\hat{Z}_G} = 2 \cdot \mathbb{P}_{\hat{\mu}}(\cD \neq \cM(I))= O\left(\exp\left(-\frac{n}{d^{C_5+4}}\right)\right) \]
and applying \eqref{e:Z_Zprime} we get that
    \[\norm{\hat{\mu} - \mu}_{TV} \leq  \mathbb{P}_{\hat{\mu}}(I \in \cB) + \sum_{I \not\in \cB} \frac{\om(I)}{Z_G} \cdot O\left(\exp\left(-\frac{n}{d^{C_5+4}}\right)\right) = O\left(\exp\left(-\frac{n}{d^{C_5+4}}\right)\right), \]
as desired for \eqref{eq:totalvariation}.
\end{proof}

\section{Concrete examples for graphs with Property I}\label{sec:examples}

\subsection{Cartesian product graphs} \label{sec:product}

Graphs that are commonly studied to determine their number of independent sets include the hypercube, grids, discrete tori, and Hamming graphs. These examples are instances of Cartesian product graphs, namely they are the Cartesian product of edges, paths, cycles, or complete graphs, respectively.
Given a sequence $(H_i)_{i \in [t]}$ of connected graphs, their \emph{Cartesian product} is the graph $G= \square_{i=1}^t H_i$ with vertex set
 \[
  V(G) \coloneqq \prod_{i=1}^t V(H_i) = \{(x_1,\dots, x_t) : x_i \in V(H_i) \text{ for all } i \in [t]\}
\]
and edge set
\[
  E(G) \coloneqq \left\{ \bigl\{ x,y \bigr\} \colon \text{ there exists } i \in [t] \text{ such that } \{x_i,y_i\} \in E(H_i) \text{ and } x_j=y_j \text{ for all }j \neq i \right\}.
\]
The graphs $H_i$ are called the \emph{base graphs} of the Cartesian product graph $G= \square_{i=1}^t H_i$.
Note that if the base graphs $H_i$ are bipartite and $d_i$-regular for all $i \in [t]$, then their Cartesian product is a $d$-regular bipartite graph with $d\coloneqq\sum_{i=1}^t d_i$.

In \cite{CEGK25}, the authors derive asymptotics for the number of independent sets in these graphs. To that end, they prove the following isoperimetric properties for Cartesian product graphs.

\begin{theorem}\label{l:isoperimetry}
Fix integers $s,t \ge 1$ and positive real $k > 0, q \in (0,1]$. Let $(H_i)_{i \in [t]}$ be a sequence of connected, regular, bipartite graphs with $|V(H_i)| \leq s$ for all $i \in [t]$.
  Let $G = \square_{i=1}^t H_i$ with bipartition $(\mathcal{O},\mathcal{E})$.
  Then for any $X \subseteq \mathcal{O}$ or $X \subseteq \mathcal{E}$, the following hold.
  \begin{enumerate}
  \item The maximum codegree of $G$ is at most $s$.
  \item If $|X| \leq t^k$, then $|N(X)| \geq t|X|/c$, for some constant $c$ depending only on $k$.
  \item If $|X| = q|V(G)|/2$, then
    \begin{equation*}
      |N(X)| \geq |X| \left( 1 + \frac{2 \sqrt{2}(1-q)}{s \sqrt{t}}\right).
    \end{equation*}
  \end{enumerate}
\end{theorem}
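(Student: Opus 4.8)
\emph{Part (1).} Let $x \neq y$ be vertices of $G$ differing in exactly $\delta$ coordinates. If $\delta \ge 3$, a common neighbour would differ from each of $x,y$ in one coordinate, forcing $\delta \le 2$; hence $\operatorname{codeg}(x,y)=0$. If $\delta = 2$, say in coordinates $i$ and $j$, then a common neighbour equals $x$ except in one of the coordinates $i,j$, where it must take the value $y_i$ resp. $y_j$, so there are at most two such vertices, and $2 \le s$. If $\delta = 1$, say in coordinate $i$, then the common neighbours of $x,y$ are exactly the vertices agreeing with both outside coordinate $i$ and taking a value in $N_{H_i}(x_i)\cap N_{H_i}(y_i)$ there, so $\operatorname{codeg}_G(x,y)=\operatorname{codeg}_{H_i}(x_i,y_i) \le d_i \le \tfrac12|V(H_i)| \le s$, using that $H_i$ is $d_i$-regular bipartite. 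In every case $\operatorname{codeg}(x,y)\le s$.

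\emph{Part (2).} By symmetry assume $X \subseteq \cO$. I would first prove the bound for sub-boxes $X = \prod_{i=1}^t S_i$ with $S_i \subseteq V(H_i)$. Classifying each $y \in N(X)$ by the unique coordinate $i$ with $y_i \notin S_i$ gives $N(X) = \bigsqcup_i \{y : y_i \in N_{H_i}(S_i)\setminus S_i,\ y_j \in S_j\ (j\neq i)\}$, hence
\[
|N(X)| = |X|\sum_{i:\,S_i \neq V(H_i)} \frac{|N_{H_i}(S_i)\setminus S_i|}{|S_i|} \;\ge\; \frac{|X|}{s}\,\bigl|\{i : S_i \neq V(H_i)\}\bigr|,
\]
using connectivity of $H_i$ (so the numerator is $\ge 1$) and $|S_i|\le s$. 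Since $|X| \ge \prod_{i:\,S_i=V(H_i)}|V(H_i)| \ge 2^{|\{i:\,S_i=V(H_i)\}|}$, at most $\log_2|X| \le k\log_2 t$ coordinates are ``full'', so $|N(X)| \ge \tfrac{|X|}{s}(t-k\log_2 t) \ge \tfrac{|X|t}{2s}$ once $t$ is large in terms of $k$; for the finitely many smaller $t$ the graph has bounded order and the claim holds after enlarging $c=c(s,k)$. The general case I would reduce to this one by a compression (down-shift) argument: fix for each $H_i$ a linear order on $V(H_i)$ whose initial segments are vertex-isoperimetrically extremal and respect the bipartition, let $C_i(X)$ replace $X$ on each $H_i$-fibre by the initial segment of the same size, prove $|N(C_iX)| \le |N(X)|$, and iterate until $X$ is compressed in all directions, at which point it is contained in a sub-box of comparable size. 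The main obstacle is precisely the compression lemma $|N(C_iX)| \le |N(X)|$ for arbitrary base graphs $H_i$: unlike the hypercube there is no canonical order, so one must argue carefully that a fixed isoperimetric ordering interacts well with the Cartesian product.

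\emph{Part (3).} This is a global vertex-isoperimetric inequality displaying the $1/\sqrt{t}$ gain typical of product spaces (as in Harper's theorem for the hypercube). I would again compress: reduce, as in Part (2), to a canonical family of sets built from the isoperimetric orderings of the $H_i$ — playing the role of ``parity-restricted Hamming balls'' — which minimise $|N(X)|$ among all sets of given size. For such a canonical set $X$ of density $q/2$ on one side, one estimates $|N(X)|/|X|$ directly via a one-dimensional local limit computation on the ``level'' of $X$; the ratio has the form $1+\Theta\!\bigl(\tfrac{1}{s\sqrt t}\bigr)$, and a short calculation checks it is at least $1+\tfrac{2\sqrt2(1-q)}{s\sqrt t}$ for every $q\in(0,1]$, the factor $(1-q)$ recording that a set nearly filling its side has little room left to expand. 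An alternative is a concentration-of-measure route: apply McDiarmid's bounded-differences inequality to $x\mapsto \operatorname{dist}_G(x,X)$, whose oscillation in coordinate $i$ is at most $\operatorname{diam}(H_i)\le s$, to control $|\{\operatorname{dist}\le r\}|$; but extracting a lower bound on the single shell $N(X)$ (rather than on a thick blow-up) still requires the local behaviour near the relevant level, so this does not by itself avoid the compression step. The main obstacle is obtaining the sharp constant $2\sqrt2$ rather than merely $\Omega(1/(s\sqrt t))$: this is sensitive to the exact choice of canonical family and to the optimisation of the deviation parameter, and is where most of the work lies.
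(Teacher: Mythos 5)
This statement is quoted from \cite{CEGK25}; the present paper gives no proof of it, so there is nothing internal to compare your argument against, and your proposal has to stand on its own. Your Part (1) does: the case analysis on the number $\delta$ of differing coordinates is correct and complete (for $\delta=1$ the codegree is $\operatorname{codeg}_{H_i}(x_i,y_i)\le d_i\le s/2$, for $\delta=2$ it is at most $2\le s$, and otherwise it is $0$).

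Parts (2) and (3), however, are plans rather than proofs, and the gaps you flag yourself are genuine and, as proposed, not repairable. For (2), the sub-box computation is clean, but a product set $\prod_i S_i$ is not contained in one bipartition class, whereas the theorem concerns $X\subseteq\cO$ (or $\cE$) with $N(X)$ lying entirely in the opposite class; these are different isoperimetric problems, and the identity $N(X)=\bigsqcup_i\{\dots\}$ you derive is for the wrong one. The reduction you propose to bridge this --- a compression lemma $|N(C_iX)|\le|N(X)|$ based on a vertex-isoperimetric ordering of each $H_i$ --- is exactly the step that is unavailable: for arbitrary connected regular bipartite base graphs there is in general no total order whose initial segments are nested optimal sets and which is stable under taking Cartesian products (this ``compressibility'' is a special feature of the hypercube and a handful of other graphs), and even granting it, a fully compressed set is a downset rather than a box, so the final appeal to the box case does not close the argument. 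For (3), the sharp constant $2\sqrt{2}(1-q)/(s\sqrt{t})$ is asserted to emerge from an unperformed local-limit computation over an unspecified canonical family, again resting on the same unavailable compression step; the McDiarmid alternative you mention controls thick neighbourhoods, not the single shell $N(X)$, as you note. So apart from Part (1), the proposal identifies the right difficulties but does not overcome them; a complete proof would have to proceed by a different route (as is done in \cite{CEGK25}).
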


In particular, our main results apply in this setting and allow us to study the number of independent sets in \textit{percolated} Cartesian product graphs. Letting $G=\square_{i=1}^t H_i$ be as in \Cref{l:isoperimetry} we may then apply \Cref{thm:expansion} if $s$ is a fixed constant by setting
\begin{align*}
C_1=s,\; C_2=\frac{s \cdot c(k)}{2}, \;C_3=k,\; C_4=\frac{1}{s\sqrt{2}}, \text{ and }\; C_5=1/2,
\end{align*}
where $k>0$ is any positive real number and $c(k)$ is the constant given by the second statement in \Cref{l:isoperimetry} depending on $k$.
In particular, the value of $C_5=1/2$ resembles many commonly studied cases showing similar vertex-expansion properties, including Cartesian product graphs with constant-size base graphs, such as the hypercube. Other graphs like the middle layer graph (where $C_5=1$) \cite{Katona_2009, Kruskal_1963} exhibit similar vertex-expansion.

In what follows, we consider more concrete examples, namely the Cartesian product of complete bipartite graphs $K_{s, s}$, the middle layer graph, and even tori.

\subsection{Cartesian products of complete bipartite graphs}

Given $s, t \in \mathbb{N}$ let $G=\square_{i=1}^t K_{s,s}$ be the Cartesian product of complete bipartite graphs and note that $n \coloneqq |V(G)| =(2s)^t$ and $G$ is $d$-regular with $d \coloneqq st$.
By \Cref{thm:expansion}, taking $\lambda=1$ and $k=2$ we have that as $t\to \infty$  (and thus as $d\to \infty$)
\begin{align}
 \Ex{i(G_p)}= 2 \cdot 2^{(2s)^t/2} \exp\left( \frac{(2s)^t}{2^{st+1}} (2-p)^{st}+ L_2 + O\left((2s)^t(st)^4 \left(1-\frac{p}{2} \right)^{3st}\right)\right), \label{Kss}  
\end{align}
where $L_2 =L_{\cD,2} \coloneqq \sum_{\Gam \in \cC_{\cD,2}} \om(\Gam)$. Due to symmetry we may assume $\cD=\cE$ and a direct calculation will give us an explicit expression for $L_2$. 

\smallskip
\noindent\textit{Case 1:} $\Gam=(\{u\},\{u\})$. The calculation is identical to even tori (given in \Cref{sec:tori}) and gives a contribution of
\begin{equation}\label{eqn:compl_cluster2_1}
    -\frac{1}{2}\cdot\frac{(2s)^t}{2}\left(\frac{2-p}{2}\right)^{2st}.
\end{equation}

\noindent\textit{Case 2:} $\Gam=(\{u\},\{v\})$ with $\{u,v\}$ $2$-linked and $u\neq v$. The ways to create such a cluster is to choose some $u$ and then $v\in N^2(u)$. It is easy to see that $|N^2(u)|=(s-1)t+s^2\binom{t}{2}$, by choosing one or two coordinates of $u$ that will be changed, each of which gives $s-1$ or $s^2$ choices respectively. Hence the total contribution of these clusters is
\begin{equation}\label{eqn:compl_cluster2_2}
   -\frac{1}{2}\cdot\frac{(2s)^t}{2}\left((s-1)t+s^2\binom{t}{2}\right)\left(\frac{2-p}{2}\right)^{2st}.
\end{equation}

\noindent\textit{Case 3:} $\Gam=(\{u,v\})$ with $\{u,v\}$ $2$-linked ($u\neq v$) and $v$ has one different coordinate than $u$. Then $\operatorname{codeg}(u,v)=s$ and $|N(\{u,v\})|=2d-s$. Now we will have $2d-2s$ vertices contributing a factor of $(1+e^{-\beta})$ and $s$ vertices contributing a $(1+e^{-2\beta})$ factor. Since for each choice of $u$ we have $st$ choices for $v$ (choosing the coordinate and then moving to another vertex on the same side of that bipartite subgraph), we get a total contribution of
\begin{equation}\label{eqn:compl_cluster2_3}
    \frac{1}{2}\cdot\frac{(2s)^t}{2}(s-1)t\left(\frac{2-p}{2}\right)^{2st-2s}\left(\frac{1+(1-p)^2}{2}\right)^s.
\end{equation}

\noindent\textit{Case 4:} $\Gam=(\{u,v\})$ with $\{u,v\}$ $2$-linked ($u\neq v$) and $v$ has two different coordinates than $u$. For each $u$, we have $s^2\binom{t}{2}$ options for $v$ and $\operatorname{codeg}(u,v)=2$. Thus, as in the respective case for even tori, we get a total contribution of
\begin{equation}\label{eqn:compl_cluster2_4}
    \frac{1}{2}\cdot\frac{(2s)^t}{2}s^2\binom{t}{2}\left(\frac{2-p}{2}\right)^{2st-4}\left(\frac{1+(1-p)^2}{2}\right)^2.
\end{equation}

Now summing up \eqref{eqn:compl_cluster2_1}, \eqref{eqn:compl_cluster2_2}, \eqref{eqn:compl_cluster2_3}, \eqref{eqn:compl_cluster2_4} we get that
\begin{align}
    L_2= &\frac{1}{2}\cdot\frac{(2s)^t}{2}s^2\binom{t}{2}\left(\frac{2-p}{2}\right)^{2st-4}\left(\frac{1+(1-p)^2}{2}\right)^2\nonumber\\
    +&\frac{1}{2}\cdot\frac{(2s)^t}{2}(s-1)t\left(\frac{2-p}{2}\right)^{2st-2s}\left(\frac{1+(1-p)^2}{2}\right)^s\nonumber\\
    -&\frac{1}{2}\cdot\frac{(2s)^t}{2}\left(\frac{2-p}{2}\right)^{2st} \left(1+ (s-1)t+s^2\binom{t}{2}\right).\label{KssL2}
\end{align}
Taking $s=1$ in \eqref{KssL2} and plugging it into  \eqref{Kss} recovers the asymptotic estimate: as $t\to \infty$,
\begin{align*}
\Ex{i(Q^t_p)}= 2 \cdot 2^{2^{t-1}} \exp\left( \frac{1}{2}(2-p)^t+ 2^t\left(\frac{2-p}{2}\right)^{2t} \left(a(p) \binom{t}{2}-\frac{1}{4} \right) + O\left(2^t t^4 \left(1-\frac{p}{2} \right)^{3t}\right)\right),
\end{align*}
with $a(p)=\frac{(1+(1-p)^2)^2}{(2-p)^4}-\frac{1}{4}$, which was 
obtained by Kronenberg and Spinka (\cite[Theorem 1.1]{kronenberg2022independent}).

Note that $\log_2 n = t \log_2 (2s)$.
For any fixed $s$, by \Cref{rem:sharp}, we obtain a sharp asymptotic result with only these two terms for any $p> 2-2(2s)^{1/(3s)}$.

\subsection{The middle layer graph}
Let $G$ be the middle layer graph, that is, the subgraph of the hypercube $Q^{2d-1}$ consisting of all binary vectors of length $2d-1$ containing either $d-1$ or $d$ entries with a one. In particular, two such vectors are connected if they differ in exactly one coordinate. Note that  $n\coloneqq |V(G)| =2 \cdot \binom{2d-1}{d-1}$ and $G$ is $d$-regular. Furthermore, any two vertices that form a $2$-linked set have codegree $1$. Let $\cL_d,\cL_{d-1}$ denote the bipartition classes of $G$. By \Cref{thm:expansion}, taking $\lambda=1$ and $k=2$ we have that as $d\to \infty$,
\[
\Ex{i(G_p)}= 2 \cdot 2^{\binom{2d-1}{d-1}} \exp\left( \frac{\binom    {2d-1}{d-1}}{2^{d}} (2-p)^{d} + L_2 + O\left(\binom{2d-1}{d-1}d^4\left(1-\frac{p}{2}\right)^{3d}\right)\right),
\]
where $L_2 =L_{\cD,2} \coloneqq \sum_{\Gam \in \cC_{\cD,2}} \om(\Gam)$. Due to symmetry, we may assume $\cD=\cL_{d-1}$.
We will explicitly calculate $L_2$. 

\smallskip
\noindent\textit{Case 1:} $\Gam=(\{u\},\{u\})$. The weight of a single vertex $u$ is $\om(\{u\})=\left(\frac{1+e^{-\beta}}{2}\right)^{d}=\left(\frac{2-p}{2}\right)^{d}$. Thus, the contribution is
\begin{equation}\label{eqn:mid_cluster2_1}
    -\frac{1}{2}\binom{2d-1}{d-1}\left(\frac{2-p}{2}\right)^{2d}.
\end{equation}
\noindent\textit{Case 2:} $\Gam=(\{u\},\{v\})$ with $\{u,v\}$ $2$-linked and $u\neq v$. As before, this boils down to computing the size of $N^2(u)$. We have $d$ choices to turn a $0$ into a $1$, and $d-1$ choices to turn a $1$ into a $0$ (as we may not choose the same coordinate), i.e., $|N^2(u)|=(d-1)d$. Hence the total contribution of these clusters is
\begin{equation}\label{eqn:mid_cluster2_2}
    -\frac{1}{2}\sum_{u\in\cE}\sum_{v\in N^2(u)}\om(\{u\})\om(\{v\})=-\frac{1}{2}\binom{2d-1}{d-1}(d-1)d\left(\frac{2-p}{2}\right)^{2d}.
\end{equation}
\noindent\textit{Case 3:} $\Gam=(\{u,v\})$ with $\{u,v\}$ $2$-linked ($u\neq v$). Then $\operatorname{codeg}(u,v)=1$, $|N(\{u,v\})|=2d-1$, and
$$\om(\{u,v\})=2^{-(2d-1)}(1+e^{-\beta})^{2(d-1)}(1+e^{-2\beta}).$$
Hence, the total contribution is
\begin{equation}\label{eqn:mid_cluster2_3}
    \frac{1}{2}\binom{2d-1}{d-1}(d-1)d\left(\frac{2-p}{2}\right)^{2(d-1)}\frac{1+(1-p)^2}{2}.
\end{equation}

Combining \eqref{eqn:mid_cluster2_1}, \eqref{eqn:mid_cluster2_2} and \eqref{eqn:mid_cluster2_3} we get that
$$L_2=\frac{1}{8}\binom{2d-1}{d-1}\left(\frac{2-p}{2}\right)^{2(d-1)}\left((d-1)dp^2-(2-p)^2\right).$$
Note that $\log_2 |V(G)| \leq 2d$. Hence, by \Cref{rem:sharp}, for any $p> 2-2^{1/3} \approx 0.74$ these two terms give a sharp result.

\section{Growing Even Tori} \label{sec:tori}

Let $m\geq 6$ be an even integer and let $G=\square_{i=1}^t C_{m}=\mathbb{Z}_{m}^t$ be the even torus. Note that $G$ is a $d$-regular $n$-vertex graph  with $n \coloneqq m^t$ and $d \coloneqq 2t$ and its maximum codegree is $2$. For a constant $m$, it is easy to verify by \Cref{l:isoperimetry} that $G$ satisfies \hyperref[def:property-i]{Property I} with $C_5=1/2$. Recall that \Cref{thm:expansion} can be applied to any graph with $0<C_5<2$, which translates to $m\sqrt{t}=O(d^{C_5})$ for $0<C_5<2$, i.e., $m=O(t^{c})$ for any constant $0<c<3/2$. However, the size constraint in terms of the regularity  $\log n= O(d)$ in \hyperref[def:property-i]{Property I} translates to $\log m = O(1)$, which prevents $m$ from growing with $t$. Carefully tracking the part of the Koteck\'y--Preiss condition which uses $\log n=O(d)$, one may formulate an appropriate modification of the vertex-isoperimetric conditions which is free of an upper bound on the size of the graph. A similar modification for the hard-core model (i.e., for {\em unpercolated} graphs) can be found in \cite[Theorem 1.1]{CEGK25}.

\begin{definition}[Property II] \label{def:property-ii}
\label{def:propii}
Fix positive real constants $C_1, C_4, C_5$ with $C_5 < 2$. 
Let $G$ be a $d$-regular $n$-vertex bipartite graph  with bipartition $(\cO, \cE)$. We say $G$ satisfies {\bf Property II} if the following hold.
    \begin{itemize}
        \item[\upshape{\textbf{IIa.}}] For any $X \subseteq \cO$ or $X \subseteq \cE$, we have
        \begin{enumerate}
          \item[\upshape{(1)}] $|N(X)| \geq \sqrt{d} |X|$ if $|X| \leq d^3 \log n$,
          \item[\upshape{(2)}] $|N(X)| \geq (1+\frac{C_4}{d^{C_5}})|X|$ if $|X| \leq \frac{3}{8}n$.
      \end{enumerate}
      \item[\upshape{\textbf{IIb.}}] The maximum codegree of $G$ is at most $C_1$.
      \item[\upshape{\textbf{IIc.}}] The number of vertices satisfies $n=\om(d^{6})$.
    \end{itemize}  
\end{definition}

\begin{theorem}\label{thm:numberindsets2}
    \Cref{thm:expansion} holds also  for any $d$-regular $n$-vertex bipartite graph $G$ that satisfies \hyperref[def:property-ii]{Property II}.
\end{theorem}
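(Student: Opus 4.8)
The plan is to re-run the proof of \Cref{thm:expansion} essentially verbatim, so the first task is to isolate exactly where \hyperref[def:property-i]{Property I} is used and to reestablish those inputs under \hyperref[def:property-ii]{Property II}. Inspecting \Cref{sec:proof_main}, the proof of \Cref{thm:expansion} invokes \hyperref[def:property-i]{Property I} only through \Cref{l:Kotecky} and \Cref{lem:approximateZ}, plus the mild bound $|N(A)| \ge |A|(d - C_1|A|)$ for small polymers $A$ (used to estimate $|L_{\cD,k}|$) and the size condition (used for the error term $\varepsilon_{\cD,k}$). Under \hyperref[def:property-ii]{Property II}, the bound $|N(A)| \ge |A|(d - C_1|A|)$ follows from \textbf{IIb}, since a maximum codegree at most $C_1$ gives $|N(A)| \ge d|A| - \binom{|A|}{2}C_1 \ge |A|(d - C_1|A|)$, and the size condition follows from \textbf{IIc}. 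Among the lemmas feeding \Cref{lem:approximateZ}: \Cref{lem:nonpolymer} is already stated for an arbitrary $d$-regular bipartite $G$; \Cref{l:minoritydefect} uses only regularity, bipartiteness, and $\tfrac{\lambda}{1+\lambda} \ge \tfrac1d$ (still valid since $\lambda \ge \alpha \ge C_0 d^{-1/2}$); and \Cref{l:Gammasmall} uses only \Cref{c:KoteckyPreiss} and the structural form of $f,g$. Thus everything reduces to reproving \Cref{c:KoteckyPreiss} — equivalently, \Cref{l:Kotecky} — under \hyperref[def:property-ii]{Property II}.

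For that I would keep $f(A) = d^{-(C_5+1)}|A|$ and $g(A) = f(A) + \tilde g(|A|)$, but redefine the test function $\tilde g$ so that its intermediate regime uses the $\sqrt d$-expansion of \textbf{IIa(1)} up to the enlarged threshold $d^3\log n$:
\[
\tilde g(\ell) \coloneqq
\begin{cases}
(d\ell - C_1\ell^2)\log\ta - (C_5+7)\ell \log d, & \ell \le \sqrt d,\\[1mm]
\tfrac{\sqrt d\,\ell}{2}\log\ta, & \sqrt d \le \ell \le d^3\log n,\\[1mm]
\ell\, d^{-(C_5+1)}, & \ell > d^3 \log n.
\end{cases}
\]
One checks, as for the original $\tilde g$, that $\ell \mapsto \tilde g(\ell)/\ell$ is non-increasing, which is all \eqref{eq:gandgamma} requires. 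The three-case proof of \Cref{c:KoteckyPreiss} then runs as before. \textbf{Case 1} ($|A|\le\sqrt d$) is unchanged, using $|N(A)| \ge |A|(d-C_1|A|)$ from \textbf{IIb}. \textbf{Case 2} ($\sqrt d \le |A| \le d^3\log n$) uses $|N(A)| \ge \sqrt d|A|$ from \textbf{IIa(1)}, and the resulting geometric sum has summand $\exp\!\big(\ell(3\log d + O(1) - \tfrac{\sqrt d}{2}\log\ta)\big)$; since $C_5 < 2$ forces $\lambexp \le \tfrac12$, we get $\tfrac{\sqrt d}{2}\log\ta \ge \tfrac{\sqrt d}{2(1+\lambda_0)}\cdot\tfrac{C_0\log^{3/2}d}{d^{\lambexp}} \ge \tfrac{C_0\log^{3/2}d}{2(1+\lambda_0)}$, which dominates $3\log d$ for large $d$, and the case contributes at most $\tfrac{1}{3d^{C_5+3}}$. \textbf{Case 3} ($d^3\log n \le |A| \le \tfrac38 n$) uses $|N(A)| \ge (1+C_4 d^{-C_5})|A|$ from \textbf{IIa(2)} together with the container lemma (\Cref{lem:container_main}); following \eqref{eq:beforecontainer}--\eqref{eq:case3KPresult} with $b-a \ge C_4 d^{-C_5}a \ge C_4 d^{3-C_5}\log n$, one arrives at
\[
\sum_{A \in \cP_\cD :\ |A| \ge d^3\log n} \om(A)\,\exp\!\big(3|A|d^{-(C_5+1)}\big) \;\le\; n^2 \exp\!\big(-C' d^{2-C_5}\log n\big)
\]
for some constant $C'>0$. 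This is the crucial improvement over \eqref{eq:case3result}: since $C_5 < 2$ the factor $d^{2-C_5}$ tends to infinity, so $C'd^{2-C_5}\log n - 2\log n = (\log n)(C'd^{2-C_5}-2)$ is eventually at least $3C'd^{2-C_5}\log d$ (using $n = \omega(d^6)$, hence $\log n \ge 6\log d$), which exceeds $(C_5+3)\log d$ for large $d$; hence the bound is at most $\tfrac{1}{3d^{C_5+3}}$ for \emph{every} admissible $n$, with no relation between $\log n$ and $d$ required. Summing the three cases yields \Cref{c:KoteckyPreiss}, and the remainder of the argument (\Cref{l:Kotecky}, \Cref{l:Gammasmall}, \Cref{l:minoritydefect}, \Cref{lem:approximateZ}, then \Cref{thm:expansion}) goes through unchanged.

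The hard part, and the reason the reformulation is designed this way, is precisely Case 3: one needs the container estimate's exponent to scale with $\log n$ in order to absorb the factor $n^2$ arising from the two nested unions (over starting vertices and over closure sizes $a$), and this is exactly what replacing the fixed threshold $d^{C_3}$ by $d^3\log n$ in \textbf{IIa(1)} achieves. A secondary delicate point is Case 2, where the weaker $\sqrt d$-expansion of \textbf{IIa(1)} (versus the $d/C_2$-expansion of \textbf{Ia(2)}) is only barely enough: it works solely because the fugacity hypothesis carries a $\log^{3/2}d$ rather than a $\log d$, so that $\sqrt d\cdot\alpha \gtrsim \log^{3/2}d \gg \log d$. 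Finally, one must confirm that the local neighborhood-expansion hypothesis of \Cref{lem:container_main} (that $|N(X)| \ge \tfrac d{C_2}|X|$ for $X \subseteq N(y)$ with $|X| > d/2$) is still available for the graphs of interest; for even tori this can be read off from their Cartesian-product structure.
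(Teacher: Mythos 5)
Your strategy coincides with the paper's: the only substantive work is re-verifying the Koteck\'y--Preiss condition (\Cref{c:KoteckyPreiss}) with the regimes of $\tilde g$ re-cut to match Property II --- the codegree bound of \textbf{IIb} supplying $|N(A)|\ge (d-C_1|A|)|A|$ for small sets, the $\sqrt d$-expansion of \textbf{IIa(1)} in the middle range, and the enlarged threshold $d^3\log n$ in the last range so that the container exponent $-C'a d^{-(C_5+1)}\le -C'd^{2-C_5}\log n$ absorbs the $n^2$ prefactor with no relation between $\log n$ and $d$. Your Case~2 and Case~3 computations match the paper's (the paper places the first regime boundary at $d/\log\log d$ rather than $\sqrt d$; either choice works, and your observation that $\sqrt d\,\log\ta\gtrsim\log^{3/2}d\gg\log d$ is exactly why the weaker middle-range expansion suffices).

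The one gap is your final sentence. \Cref{thm:numberindsets2} is asserted for \emph{every} graph satisfying Property II, so the hypothesis of \Cref{lem:container_main} that $|N(X)|\ge\frac{d}{C_2}|X|$ for all $X\subseteq N(y)$ with $|X|>d/2$ must be deduced from Property II itself; it cannot be left as something to ``confirm for the graphs of interest'' such as even tori, or the theorem as stated is not proved. Fortunately it follows from \textbf{IIb} by the same codegree argument you already invoke for small polymers: if $X\subseteq N(y)$ and $v\in N(X)$ with $v\ne y$, then $|N(v)\cap X|\le\operatorname{codeg}(v,y)\le C_1$, so counting the $d|X|$ edges between $X$ and $N(X)$ yields $|N(X)|\ge d|X|/C_1$. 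This is precisely the short lemma the paper inserts at this point; with it added, your argument is complete and is essentially the paper's proof.
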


Indeed, the proof of \Cref{thm:expansion} uses the vertex-isoperimetric inequalities in \hyperref[def:property-i]{Property I} mostly for the verification of the Koteck\'y--Preiss condition. Here we will mention only the main changes necessary for ease of presentation.
 In the proof of \Cref{c:KoteckyPreiss}, one has to adjust the regimes of the cases accordingly. This goes hand in hand with changing the regime for the functions $g, \tilde{g}$. In particular, it suffices to take the first regime to cover any $|A| \leq d/\log \log d$. Note that the codegree condition in \hyperref[def:property-ii]{Property II} implies that any set $A \subseteq \cO$ or $A \subseteq \cE$ automatically satisfies $|N(A)| \geq (d-C_1|A|)|A|$. Thus the proof remains essentially unchanged for small sets.
 For the regime where $d/\log \log d < \ell \leq d^3 \log n$, set $g(\ell)= \frac{\sqrt{d} \ell}{2} \log \ta$ (this corresponds to $C_2=\sqrt{d}$). Repeating the calculation in the second case of the proof of \Cref{c:KoteckyPreiss} for this regime carefully, one finds that expansion of $\sqrt{d}$ is sufficient to prove \eqref{eq:Case2rootd}.
 In the last regime the calculation remains basically unchanged, since the expansion property remained unchanged in this regime. Note that the only one place that used the size constraint in terms of the regularity $\log n=O(d)$ from \hyperref[def:property-i]{Property I} is the last inequality in \eqref{eq:case3result}. Considering sets $A$ of size $a\coloneqq |A| \geq d^3 \log n$ yields the same bound without this size restriction, keeping the logarithmic term when using the bound on $a$ in \Cref{eq:pluginalpha}.

 The only place that uses the vertex-expansion properties in \hyperref[def:property-i]{Property I} is our container lemmas, where two properties are required. The first one requires that every set $X \subseteq N(y)$ for some $y \in V(G)$ with $|X| > d/2$ expands by a factor of $d$, whereas the second one requires for expansion of large sets by a factor of $\left(1+ \frac{C_4}{d^{C_5}} \right)$.
 While the latter requirement remained unchanged, below we prove that the former is indeed implied by the codegree condition in \hyperref[def:property-ii]{Property II}.

 \begin{lemma}
     Let $G$ be a $d$-regular bipartite graph, and let $X \subseteq N(y)$ for some $y \in V(G)$ with $|X|>d/2$. If the maximum codegree of $G$ is at most $C_1$, then $|N(X)| \geq \frac{d}{C_1}|X|$.
 \end{lemma}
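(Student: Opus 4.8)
The plan is a single double counting of the edges between $X$ and its neighborhood. Since $X \subseteq N(y)$ lies in one class of the bipartition of $G$, it is an independent set, so all $d|X|$ edges incident to $X$ have their other endpoint in $N(X)$; hence
$$d|X| = \sum_{v \in N(X)} |N(v) \cap X|.$$
The feature I would exploit is that $y$ itself belongs to $N(X)$ — it is adjacent to every vertex of $X$ and, having no loop, is not in $X$ — and that $y$ is essentially the \emph{only} vertex of $N(X)$ that can meet $X$ in many places. Indeed, for any $v \in N(X)$ with $v \ne y$, the containment $X \subseteq N(y)$ gives $N(v) \cap X \subseteq N(v) \cap N(y)$, so $|N(v) \cap X| \le \operatorname{codeg}(v,y) \le C_1$.

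Splitting off the contribution of $y$, which is exactly $|N(y) \cap X| = |X|$, and bounding each remaining term by $C_1$ then yields
$$d|X| \le |X| + C_1(|N(X)| - 1),$$
and rearranging gives $|N(X)| \ge 1 + \frac{(d-1)|X|}{C_1}$. For $d \ge 2$ this is at least $\frac{d}{2C_1}|X|$, so $X$ expands by a factor of order $d$; this is precisely the hypothesis fed into \Cref{lem:galvin_approx_family} (taking the constant there to be a suitable multiple of $C_1$), and one checks that the bound $1 + \frac{(d-1)|X|}{C_1}$ is essentially tight by taking $G$ to be the incidence graph of a projective plane. The assumption $|X| > d/2$ is not used in this computation; it is carried along only because the container lemma needs the expansion property solely for large $X$.

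I do not anticipate a genuine obstacle: the entire point is to separate $y$ from the sum before invoking the codegree bound, since estimating $|N(v) \cap X| \le C_1$ for \emph{all} $v \in N(X)$ fails precisely at $v = y$, where $|N(y) \cap X| = |X|$ can be as large as $d$. Everything else is the routine edge count above.
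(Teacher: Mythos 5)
Your proof is the same double count of $E(X,N(X))$ via the codegree bound that the paper uses, but your treatment of the vertex $v=y$ is a genuine (and necessary) refinement of it. The paper's own proof asserts $|N(v)\cap X|\le C_1$ for \emph{every} $v\in N(X)$ "since otherwise the codegree of $v$ and $y$ would exceed $C_1$" — overlooking exactly the point you isolate: $y$ itself lies in $N(X)$ and meets $X$ in $|X|$ vertices, and the codegree argument only applies to $v\ne y$. Consequently the paper's stated conclusion $|N(X)|\ge \frac{d}{C_1}|X|$ is not quite correct as written: your bound $|N(X)|\ge 1+\frac{(d-1)|X|}{C_1}$ is what the argument actually gives, and your projective-plane example (take $X=N(y)$ in the incidence graph of a plane of order $q$, so $|N(X)|=q^2+q+1$ while $\frac{d}{C_1}|X|=(q+1)^2$) shows this is tight and that the paper's constant is off by a lower-order term. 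As you observe, this is harmless for the application: the hypothesis of \Cref{lem:container_main} and \Cref{lem:galvin_approx_family} only requires expansion $|N(X)|\ge\frac{d}{C_2}|X|$ for \emph{some} constant $C_2$, so replacing $C_1$ by $2C_1$ (or, more sharply, noting $1+\frac{(d-1)|X|}{C_1}\ge\frac{d-1}{C_1}|X|=(1-o(1))\frac{d}{C_1}|X|$) costs nothing. Your proof is correct and, modulo the weakened constant, strictly more careful than the paper's.
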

 \begin{proof}
     Let $v \in N(X)$ and note that $|N(v) \cap X| \leq C_1$, since otherwise the codegree of $v$ and $y$ would exceed $C_1$. In other words, each vertex $v \in N(X)$ sends at most $C_1$ edges to $X$. We have $|E(X, N(X))| = d |X|$ as $G$ is $d$-regular, and thus $|N(X)|\geq \frac{d}{C_1}|X|$.
 \end{proof}

 Now we prove that the even torus $\mathbb{Z}_m^t$ satisfies \hyperref[def:property-ii]{Property II} when $m=O(t^c)$ for $c<3/2$.

\begin{lemma}
Let $G=\mathbb{Z}_m^t$ be the even torus with $m=O(t^c)$ for some $0<c<3/2$. Then $G$ satisfies \hyperref[def:property-ii]{Property II}.    
\end{lemma}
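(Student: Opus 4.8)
The plan is to verify the four requirements of \hyperref[def:property-ii]{Property II} in turn for $G=\mathbb{Z}_m^t$, where $d=2t$, $n=m^t$, and hence $\log n = t\log m = O(t\log t)$ and $d^3\log n = O(t^4\log t)$. Conditions IIb and IIc are immediate: the maximum codegree of $\mathbb{Z}_m^t$ is $2$ (as recorded above), so IIb holds with $C_1=2$; and $n=m^t\ge 6^t = \omega(t^6)=\omega(d^6)$, giving IIc. It remains to check the two isoperimetric statements IIa(1) and IIa(2), which I would obtain from \Cref{l:isoperimetry} applied to the base graphs $H_i=C_m$ (connected, $2$-regular, bipartite since $m$ is even), with $s\coloneqq m$; note that the conclusions of \Cref{l:isoperimetry}(2) and (3) are explicit inequalities in $s$ and $t$, so they remain usable even though $s=m$ grows with $t$.

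For IIa(2), fix $X\subseteq\cO$ or $X\subseteq\cE$ with $|X|\le\frac38 n$ and set $q\coloneqq 2|X|/|V(G)|\in(0,\tfrac34]$, so $|X|=q|V(G)|/2$ and $1-q\ge\tfrac14$. By \Cref{l:isoperimetry}(3),
\[
|N(X)|\ \ge\ |X|\Bigl(1+\tfrac{2\sqrt2(1-q)}{m\sqrt t}\Bigr)\ \ge\ |X|\Bigl(1+\tfrac{1}{\sqrt2\,m\sqrt t}\Bigr)\ =\ |X|\Bigl(1+\tfrac{1}{m\sqrt d}\Bigr),
\]
using $\sqrt t=\sqrt{d/2}$, hence $\sqrt2\,\sqrt t=\sqrt d$, in the last step. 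Writing $m\le K t^{c}= K\,2^{-c}d^{c}$ for a constant $K$, we get $\tfrac1{m\sqrt d}\ge \tfrac{2^{c}}{K}\,d^{-(c+1/2)}$, so IIa(2) holds with $C_5\coloneqq c+\tfrac12$ and $C_4\coloneqq 2^{c}/K$. This is the only place the hypothesis $c<\tfrac32$ enters: it is exactly what guarantees $C_5<2$.

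The crux is IIa(1): for every $X\subseteq\cO$ or $X\subseteq\cE$ with $|X|\le d^3\log n$ one needs $|N(X)|\ge\sqrt d\,|X|$. For $|X|\le (d-\sqrt d)/C_1$ this already follows from the second-order Bonferroni bound $|N(X)|\ge |X|d-\binom{|X|}{2}C_1=|X|(d-C_1|X|)\ge\sqrt d\,|X|$, valid because the maximum codegree is $C_1=2$ (this is the ``small sets'' remark of \Cref{sec:tori}). For the remaining range $(d-\sqrt d)/C_1<|X|\le d^3\log n=O(t^4\log t)$, I would use that a set of such (polynomial) size in a torus is essentially low-dimensional and so expands by $(1-o(1))d\gg\sqrt d$. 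Concretely, the edge-isoperimetric inequality for $\mathbb{Z}_m^t$ (products of intervals being extremal) gives $|E(X)|\le |X|\log_2|X|=O(\log t)\,|X|$, so the edge-boundary satisfies $|\partial_e X|=d|X|-2|E(X)|\ge(2t-O(\log t))|X|$; then the Cauchy--Schwarz bound $|N(X)|\ge |\partial_e X|^2\big/\sum_{v\in N(X)}|N(v)\cap X|^2$, combined with $\sum_{v\in N(X)}|N(v)\cap X|^2\le |\partial_e X|+2C_1 D_2$ (common neighbors occur only for pairs at distance $2$ and have multiplicity at most $C_1$), where $D_2$ is the number of pairs of $X$ at distance $2$, reduces the claim to the estimate $D_2=O(t\,|X|)$, which yields $|N(X)|=\Omega(t|X|)=\Omega(d|X|)$. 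Alternatively one may invoke directly the small-set vertex-isoperimetric inequality for even tori from \cite{CEGK25} (or \cite{jenssen2023homomorphisms}) together with \Cref{l:isoperimetry}(2). I expect the distance-$2$ count (equivalently, near-optimal small-set vertex isoperimetry of $\mathbb{Z}_m^t$) to be the main obstacle: the trivial bound $D_2\le\tfrac12|X|\min(|X|,2t^2)$ is a factor $\sqrt t$ too weak, and one must exploit that a small set in $\mathbb{Z}_m^t$ cannot simultaneously be dense at the distance-$2$ scale in many coordinate directions — which is also why IIa(1) is stated with the weaker expansion $\sqrt d$ (rather than $d/C_2$) over the larger range $d^3\log n$.
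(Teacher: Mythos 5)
Your handling of IIb, IIc, and IIa(2) coincides with the paper's: the codegree bound, the count $n=m^t\ge 6^t=\om(d^6)$, and the application of \Cref{l:isoperimetry}(3) with $s=m$ to get $C_5=c+\tfrac12<2$ are all exactly the paper's steps (your explicit $C_4=2^c/K$ is a harmless refinement). The divergence, and the problem, is IIa(1). The paper dispatches the entire range $|X|\le d^3\log n=O(t^4\log t)$ in one line by applying \Cref{l:isoperimetry}(2): a set of size polynomial in $t$ expands by a factor $\Omega(d)\ge\sqrt d$. You instead attempt a self-contained argument and, for the range $(d-\sqrt d)/2<|X|\le d^3\log n$, reduce the claim via Cauchy--Schwarz and edge-isoperimetry to the estimate $D_2=O(t\,|X|)$ on the number of distance-$2$ pairs inside $X$ --- which you explicitly leave unproven and yourself identify as ``the main obstacle''. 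This is a genuine gap: that estimate is essentially equivalent to the near-optimal small-set vertex-isoperimetric inequality for $\mathbb{Z}_m^t$, i.e., it is not easier than the statement you are trying to prove, and your proposed counterexample-style accounting (the trivial bound being a factor $\sqrt t$ too weak) shows you cannot avoid proving it. So your primary argument for IIa(1) does not close.

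Two further points. First, your preliminary assertion that the conclusion of \Cref{l:isoperimetry}(2) ``is an explicit inequality in $s$ and $t$'' is inaccurate: that conclusion reads $|N(X)|\ge t|X|/c$ with $c$ an unspecified constant depending on $s$ and $k$, and since here $s=m$ grows with $t$ one must know how $c(s,k)$ depends on $s$ before applying it. (The paper's own one-line application is terse on exactly this point and leans on \cite{CEGK25}; the weakened demand of only $\sqrt d$-expansion in IIa(1) is what makes this tolerable.) Second, the fallback you mention --- invoking \Cref{l:isoperimetry}(2), respectively the small-set isoperimetry of \cite{CEGK25}, directly --- is precisely the paper's proof; had you committed to that route and tracked the $s$-dependence of the constant, you would have been done. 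As written, however, the argument you actually present leaves the key isoperimetric input unestablished.
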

\begin{proof}
    Recall that $G$ has $n=m^t$ vertices and degree $d=2t$, in particular $n=\om(d^{6})$. Furthermore, note the maximum codegree of $G$ is $2$. 
    
    Let $\{\cO, \cE\}$ denote the bipartition classes of $G$. 
    We may assume by symmetry that $X \subseteq \cE$. First consider $|X| \leq d^3 \log n$ and note that since $d^3 \log n = (2t)^3 t \log m = O(t^4 \log m)$ the set $X$ has polynomial size in $t$. Therefore, the second statement of \Cref{l:isoperimetry} implies that $X$ expands by a factor of $\Omega(d)$, so in particular it expands by at least $\sqrt{d}$.
    
    Now consider $|X| \leq \frac{3}{8}n$ and note that $m \sqrt{t}= O(t^{c+1/2})$. Thus the third statement of \Cref{l:isoperimetry} implies that there are constants $C_4>0$ and $C_5 =c+1/2$ such that $|N(X)| \geq (1+\frac{C_4}{d^{C_5}})|X|$ if $|X| \leq \frac{3}{8}n$. By assumption $c<3/2$, so that $C_5 < 2$, finishing the proof of the claim.
 \end{proof}

Now we proceed to computing the number of independent sets in the percolated even torus. By \Cref{thm:numberindsets2}, taking $\lambda=1$ and $k=2$ in  \Cref{thm:expansion} we have that as $t\to \infty$,
\begin{align}
\Ex{i(G_p)}= 2 \cdot 2^{m^t/2} \exp\left( \frac{m^t}{2^{2t+1}} (2-p)^{2t} + L_2 + O\left(m^t t^4\left(1-\frac{p}{2}\right)^{6t}\right)\right),\label{Zmt}    
\end{align}
where $L_2 =L_{\cD,2} \coloneqq \sum_{\Gam \in \cC_{\cD,2}} \om(\Gam)$. By symmetry, we may assume that $\cD=\cE$.

We explicitly calculate $L_2$.   There are four cases for clusters of size $2$.

\smallskip
\noindent\textit{Case 1:} $\Gam=(\{u\},\{u\})$. The incompatibility graph is a single edge and its Ursell function is $-1/2$, so the contribution of these clusters is
\begin{equation*}
    -\frac{1}{2}\sum_{u\in\cE} \om(\{u\})^2.
\end{equation*}
It is straightforward to compute that for a single vertex $u$, $\om(\{u\})=\left(\frac{1+e^{-\beta}}{2}\right)^d=\left(\frac{2-p}{2}\right)^{2t}$. Thus, the contribution is
\begin{equation}\label{eqn:tori_cluster2_1}
    -\frac{1}{2}\cdot\frac{m^t}{2}\left(\frac{2-p}{2}\right)^{4t}.
\end{equation}
\noindent\textit{Case 2:} $\Gam=(\{u\},\{v\})$ with $\{u,v\}$ $2$-linked and $u\neq v$. The incompatibility graph is a single edge and its Ursell function is $-1/2$. The ways to create such a cluster is to choose some $u$ and then $v\in N^2(u)$. By the coordinate structure of the product graph, it is easy to see that $|N^2(u)|=2t+4\binom{t}{2}=2t^2$, by choosing one or two coordinates of $u$ that will be changed, each of which gives two options. Hence the total contribution of these clusters is
\begin{equation}\label{eqn:tori_cluster2_2}
    -\frac{1}{2}\sum_{u\in\cE}\sum_{v\in N^2(u)}\om(\{u\})\om(\{v\})=-\frac{1}{2}\cdot\frac{m^t}{2}2t^2\left(\frac{2-p}{2}\right)^{4t}.
\end{equation}
\noindent\textit{Case 3:} $\Gam=(\{u,v\})$ with $\{u,v\}$ $2$-linked ($u\neq v$) and $v$ has one different coordinate than $u$. Then $\operatorname{codeg}(u,v)=1$ and
$$\om(\{u,v\})=2^{-(4t-1)}\sum_{B\subseteq N({u,v})}e^{-\beta|E(\{u,v\},B)|}=2^{-(4t-1)}(1+e^{-\beta})^{4t-2}(1+e^{-2\beta}).$$
For the last equality we used that the common neighbor contributes none or two edges, for a factor of $(1+e^{-2\beta})$, while the remaining $4t-2$ neighbors contribute none or one edge, for a factor $(1+e^{-\beta})$. Since the Ursell function is $1$ and for each choice of $u$ we have $2t$ choices for $v$, the total contribution is
\begin{equation}\label{eqn:tori_cluster2_3}
    \frac{1}{2}\cdot\frac{m^t}{2}2t\left(\frac{2-p}{2}\right)^{4t-2}\frac{1+(1-p)^2}{2},
\end{equation}
where the factor $1/2$ in the front is because $\{u,v\}$ is unordered. 

\noindent\textit{Case 4:} $\Gam=(\{u,v\})$ with $\{u,v\}$ $2$-linked ($u\neq v$) and $v$ has two different coordinates than $u$. Then $\operatorname{codeg}(u,v)=2$ and
$$\om(\{u,v\})=2^{-(4t-2)}\sum_{B\subseteq N({u,v})}e^{-\beta|E(\{u,v\},B)|}=2^{-(4t-2)}(1+e^{-\beta})^{4t-4}(1+e^{-2\beta})^2,$$
where the last equality is again implied by the factors contributed by the two common neighbors and the remaining $4t-4$ neighbors. Since the Ursell function is $1$ and for each choice of $u$ we have $4\binom{t}{2}$ choices for $v$, the total contribution is
\begin{equation}\label{eqn:tori_cluster2_4}
    \frac{1}{2}\cdot\frac{m^t}{2}2t(t-1)\left(\frac{2-p}{2}\right)^{4t-4}\left(\frac{1+(1-p)^2}{2}\right)^2,
\end{equation}
where the factor $1/2$ in front is because $\{u,v\}$ is unordered.

Combining \eqref{eqn:tori_cluster2_1}, \eqref{eqn:tori_cluster2_2}, \eqref{eqn:tori_cluster2_3}, and \eqref{eqn:tori_cluster2_4}, we get
\begin{align}
    L_2=\frac{m^t}{4}\left(\frac{2-p}{2}\right)^{4t-4}\Bigg[-(2t^2+1)\left(\frac{2-p}{2}\right)^4&+2t\left(\frac{2-p}{2}\right)^2\frac{1+(1-p)^2}{2}\nonumber\\  
    &+2t(t-1)\left(\frac{1+(1-p)^2}{2}\right)^2\Bigg].\label{ZmtL2}
\end{align}
(We note that there is a discrepancy between our computation for $L_2$ when $p=1$ compared to the corresponding result in \cite{jenssen2023homomorphisms}, and one of the authors agrees with our computation \cite{jenssen2025personal}.)

We conclude this section with some remarks. For the unpercolated case, taking $p=1$ in  \eqref{Zmt} and \eqref{ZmtL2} we have that as $t\to \infty$,
\begin{equation}\label{eqn:tori}
i(\mathbb{Z}_m^t)=2\cdot 2^{m^t/2}\exp\left(\frac{m^t}{2^{2t+1}}+\frac{m^t}{2^{4t+2}}(6t^2-4t-1)+O\left(\frac{m^t t^4}{2^{6n}}\right)\right). 
\end{equation} 
From \Cref{rem:sharp}, two terms suffice for a sharp result as long as $m \le 62$. On the other hand, for $m$ growing with $t$ as $t \rightarrow \infty$, we do not get a sharp $(1+o(1))$ bound on the number of independent sets in the sense of \Cref{rem:sharp}, but we still obtain the estimate
\begin{equation}\label{conj:tori_simple}
   i(\mathbb{Z}_m^t)=2\cdot2^{m^t/2}\exp\left(\frac{m^t}{2^{2t+1}}(1+o_t(1))\right)
\end{equation}
as long as $m=O(t^c)$ for some $0<c<3/2$, as $t\to \infty$. In this form (but for $t$ fixed and $m \rightarrow \infty$), a much stronger result was conjectured by Jenssen and Keevash \cite{jenssen2023homomorphisms} and recently confirmed by Peled and Spinka \cite{peled2020long}. 
In fact, Jenssen and Keevash \cite{jenssen2023homomorphisms} studied homomorphisms from $\mathbb{Z}_m^t$ to any fixed graph $H$ (for fixed $m$ and $t\rightarrow\infty$) equipped with a \textit{fixed} vector of activities on the vertices. This is also done via the cluster expansion method. In the same paper, they conjectured \cite[Section 19]{jenssen2023homomorphisms} that if $t$ is fixed and $m\rightarrow\infty$, then the first term of the, possibly divergent, cluster expansion divided by $m^t$ should give the correct term in the exponential up to a $(1+o_t(1))$ factor. Peled and Spinka \cite{peled2020long} proved this conjecture, with the $o_t(1)$ term being $e^{-\Omega(t)}$, for any target graph with fixed vertex and edge weighting under certain symmetry conditions. On the other hand, our results show that this phenomenon persists, even when $m$ is growing with $t$, for the wider range of parameters satisfying $\lam(1-e^{-\beta})=\tilde{\Om}(t^{-1/2})$ (with $\beta=\infty$ being the hard-core model). Furthermore, our method proves it for any constant number of terms of the cluster expansion.  As there is no clear reason why such a phenomenon should only manifest only for ranges of parameters $m,t,\lambda, \beta$ covered so far, we are led to the following conjecture.

%On the other hand, our results, for either the hard-core ($\beta=\infty$) or the Ising model (any $\beta>0$), show that this phenomenon persists for $\lam(1-e^{-\beta})=\tilde{\Om}(t^{-1/2})$, even when $m$ is growing with $t$. The case $\beta = \infty$ means considering the hard-core model.

\begin{conjecture}
    For any fixed $k\in\mathbb{N}$ there exist functions $f_k(t), g_k(t)$ with $f_k(t)=\tilde{\Omega}(t^{-1/2})$ and $g_k(t)=o_t(1)$ such that the following holds. For any $\lam>0$, $\beta\in (0,\infty]$ satisfying $\lam(1-e^{-\beta}) \geq f_k(t)$ and for all even $m$,
    $$Z_{\mathbb{Z}_m^t}(\lam,\beta)=2(1+\lam)^{m^t/2}\exp\left(\frac{m^t}{2}\lam\left(\frac{1+\lam e^{-\beta}}{1+\lam}\right)^{2t}+L_2+\dots+L_k(1+g_k(t))\right),$$
    where $L_j\coloneqq\sum_{\Gam \in \cC_{\cE,j}} \om(\Gam).$
\end{conjecture}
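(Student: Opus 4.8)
The plan is to extend the polymer-model and cluster-expansion framework of \Cref{thm:expansion} to the even torus $\mathbb{Z}_m^t$ with \emph{no} upper bound on the (even) side-length $m$, and then extract the first $k$ cluster-expansion terms exactly as in the proof of \Cref{thm:expansion}. As already isolated in \Cref{sec:tori}, the only obstruction inside the current machinery is the large-set regime: \Cref{l:isoperimetry}(3) gives an expansion factor of only $1+\Theta\big(\tfrac{1}{m\sqrt t}\big)$ for sets of size $\Theta(m^t)$, so the effective isoperimetric exponent is $C_5=c+\tfrac12$ when $m\asymp t^{c}$, and both the container lemma (\Cref{lem:container_main}) and the final inequality in the Koteck\'y--Preiss verification \eqref{eq:case3result} break once $C_5\ge 2$, i.e.\ once $m$ grows faster than $t^{3/2-o(1)}$. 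Everything else is already uniform in $m$: the maximum codegree of $\mathbb{Z}_m^t$ is $2$ regardless of $m$, so small polymers are controlled exactly as in \Cref{c:KoteckyPreiss}; and for $|A|$ up to any fixed power of $t$ the Loomis--Whitney/Bollob\'as--Leader-type bound \Cref{l:isoperimetry}(2) still gives $|N(A)|\ge\Omega(t)\,|A|$ independently of $m$, which handles the intermediate regime.

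So the core task is to produce, uniformly in $m$, an upper bound of the shape $\sum_{A}\om(A)\exp\!\big(O(|A|/\mathrm{poly}(t))\big)\le \exp\!\big(-\Omega(n/\mathrm{poly}(t))\big)$ for the contribution of all polymers $A\in\cP_\cE$ with $|A|$ larger than a suitable power of $t$; equivalently, a bound on the total weight of configurations whose minority side is macroscopic. My proposal is to replace the container step in this regime by the entropy/transfer-operator machinery of Peled and Spinka \cite{peled2020long}, which was built precisely for product graphs $\mathbb{Z}_m^t$ (and $\mathbb{Z}^t$) and carries no dependence on $m$: their long-range-order estimates already yield, for $\lam(1-e^{-\beta})=\tilde{\Omega}(t^{-1/2})$, that the weight of a macroscopic-defect event is at most $\exp(-\Omega(t))\,m^t$ times $Z_{\mathbb{Z}_m^t}(\lam,\beta)$, up to polynomial factors. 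This is far more than enough, since by \Cref{thm:expansion} the $k$-th cluster term $L_k$ is of order $\lam^k m^t t^{2(k-1)}\big(\tfrac{1+\lam}{1+\lam e^{-\beta}}\big)^{-2kt}$, hence of size $e^{-\tilde{\Omega}(t^{1/2})}m^t$, which dwarfs the Peled--Spinka error. Feeding such a bound into the verification of the Koteck\'y--Preiss condition in place of \eqref{eq:case3result} should give convergence of the cluster expansion with the tail bound of \Cref{l:Kotecky} holding with $d=2t$ and the \emph{medium}-regime exponent (morally $C_5=\tfrac12$), uniformly in $m$; one then repeats the extraction in the proof of \Cref{thm:expansion} to recover $L_2,\dots,L_k$ as in \eqref{ZmtL2}, the truncation error being $L_{\ge k+1}=o_t(1)\cdot L_k$ because the ratio of consecutive terms is $O\!\big(\lam t^{2}\big(\tfrac{1+\lam}{1+\lam e^{-\beta}}\big)^{-2t}\big)=t^2 e^{-\tilde{\Omega}(t^{1/2})}=o_t(1)$ in the stated range. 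Finally, the passage from $\Xi_\cE$ (and $\Xi_\cO=\Xi_\cE$ here, by the translation automorphism swapping the bipartition classes of $\mathbb{Z}_m^t$) to $Z_{\mathbb{Z}_m^t}(\lam,\beta)$ via \Cref{lem:nonpolymer} and \Cref{lem:approximateZ} goes through unchanged, as those arguments already only use the codegree bound and the medium-set expansion.

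The hard part will be making the Peled--Spinka estimates interface cleanly with the polymer model. Their bounds are phrased directly in terms of the Ising measure and its marginals on cross-sections, whereas the cluster expansion needs control of $\sum_A\om(A)e^{f(A)+g(A)}$ with $\om(A)$ obtained \emph{after} summing over decorations $B\subseteq N(A)$; one therefore has to push the extra $e^{|A|/\mathrm{poly}(t)}$ factor and the conditioning on $\{\,|[A]|\text{ large}\,\}$ through their argument and re-do their entropy optimization with these weights. A second, more conceptual, difficulty is that when $m$ grows with $t$ there is genuinely no uniform ``finitely many terms are $(1+o(1))$-sharp'' statement in the sense of \Cref{rem:sharp}: the slack $(1+g_k(t))$ on $L_k$ is forced, and it is only $o_t(1)$ (not $e^{-\Omega(t)}$ as in the fixed-fugacity unpercolated case, where $\tfrac{1+\lam}{1+\lam e^{-\beta}}$ is bounded away from $1$) precisely because $\lam(1-e^{-\beta})$ is allowed to decay at rate $\tilde{\Omega}(t^{-1/2})$. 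One must consequently track carefully that every error produced by the entropy method is $o_t(1)$ \emph{relative to $L_k$}, not merely relative to the dominant term $\tfrac{m^t}{2}\lam\big(\tfrac{1+\lam e^{-\beta}}{1+\lam}\big)^{2t}$; this is where the polylogarithmic threshold $f_k(t)=\tilde{\Omega}(t^{-1/2})$ enters, and nailing it down for unbounded $\lam$ (where $\log\tfrac{1+\lam}{1+\lam e^{-\beta}}$ can be much smaller than $\lam(1-e^{-\beta})$) may force a slightly larger power of $\log t$ than the $\log^{3/2}$ appearing in \Cref{thm:expansion}.
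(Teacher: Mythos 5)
This statement is stated in the paper as a \emph{conjecture}; the paper offers no proof of it, and indeed formulates it precisely because the methods of the paper stop at $m=O(t^{c})$ with $c<3/2$. So there is no proof in the paper to compare against, and the real question is whether your proposal closes the gap. It does not.

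Your diagnosis of where the existing machinery fails is accurate: everything except the macroscopic regime (small polymers via the codegree bound, intermediate polymers via \Cref{l:isoperimetry}(2), the non-polymer and total-variation steps) is already uniform in $m$, and the sole obstruction is that the container argument for sets of size $\Theta(m^t)$ needs expansion $1+C_4/d^{C_5}$ with $C_5<2$, which for $\mathbb{Z}_m^t$ forces $m=O(t^{3/2-o(1)})$. The gap is in how you propose to fill that hole. You assert that the Peled--Spinka long-range-order estimates ``already yield, for $\lam(1-e^{-\beta})=\tilde{\Omega}(t^{-1/2})$,'' an $\exp(-\Omega(t))$ bound on the relative weight of macroscopic defects, uniformly in $m$. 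But their results, as used and cited in this paper, hold for \emph{fixed} vertex and edge weightings (fixed $\lam,\beta$) under symmetry conditions; the entire content of the conjecture is whether such control survives when $\lam(1-e^{-\beta})$ is allowed to decay polynomially in $t$ while $m$ is unbounded. No derivation is given for this extension, and it is not a routine corollary: their entropy optimization is tuned to parameters bounded away from the uniqueness regime, and re-running it with decaying $\overline{\alpha}=\log\frac{1+\lam}{1+\lam e^{-\beta}}$ is precisely the kind of quantitative work that Sections 5--7 of this paper carry out in the bounded-$C_5$ setting. You acknowledge this yourself (``one therefore has to push the extra $e^{|A|/\mathrm{poly}(t)}$ factor and the conditioning \dots through their argument and re-do their entropy optimization''), which concedes that the central step is a plan rather than an argument. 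There is also a secondary circularity to watch: your bound $L_{\ge k+1}=o_t(1)\cdot L_k$ is obtained from the Koteck\'y--Preiss tail estimate, whose verification in the macroscopic regime is exactly the step being deferred. As written, the proposal is a plausible research programme for attacking the conjecture, not a proof of it.
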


\section{Discussion}

In this paper, we extended the work of Kronenberg and Spinka \cite{kronenberg2022independent} on the expected number of independent sets in the percolated hypercube to any percolated graph from a wide class of regular and bipartite graphs with some vertex-expansion properties.
Continuing along these lines, it would be interesting to determine what minimal assumptions are required to guarantee the convergence of the cluster expansion and obtain a result analogous to \Cref{thm:expansion}.

The proof methods, especially the cluster expansion method, rely substantially on the assumption that the graph is bipartite and exhibits some vertex-expansion. In terms of the isoperimetry required, we extended the regime allowing the vertex-expansion of order $(1+1/d^{C_5})$ for any $0<C_5 <2$, in comparison to previous work that requires $0<C_5 \le 1$. 
In particular, this extension of the range of $C_5$ covers examples such as even tori of growing side-length, which exhibit worse expansion than, for example, the hypercube. It would be interesting to find more examples of graphs that satisfy \hyperref[def:property-i]{Property I} with $1<C_5<2$.

All the examples considered in this paper are {\em symmetric}, in the sense that there exists an automorphism mapping the even side $\cE$ to the odd side $\cO$, and vice versa. Hence, the sum over both sides can effectively be replaced by a factor of two in those examples in Sections \ref{sec:examples} and \ref{sec:tori}. On the other hand, our results take into account the possible asymmetry between the bipartition sides.   
It would be interesting to find a natural example of a regular bipartite graph $G$ satisfying \hyperref[def:property-i]{Property I} with {\em asymmetric bipartition sides}, where the clusters on the odd side actually give a different contribution than those on the even side.

The range of $\lam p$ for which our results hold is of special interest. For $0<C_5 \leq 1$, \Cref{thm:expansion} covers any $\lam p \geq \frac{C_0\log^{3/2} d}{d^{1/2}}$, whereas for worse expansion we need to pay a price in the exponent of $d$, more precisely, it allows any $\lam p \geq \frac{C_0\log^{3/2} d}{d^{1-C_5/2}}$ if $1\leq C_5< 2$. 
It is known that for $\lam = O(1/d)$ the typical independent set drawn via the hard-core model in $Q^d$ (i.e., when $p=1$) is quite unstructured \cite{galvin2011threshold, weitz2006counting}. On the contrary, for larger $\lam$, independent sets lie mostly on one side of the bipartition, and thus Galvin \cite{galvin2011threshold} conjectured that the structure of independent sets exhibits a phase transition around $\lambda= \tilde{\Omega}(1/d)$. From a counting point of view (i.e., when $\lam=1$), this becomes a question about $p$ in the percolated graph $G_p$. In recent work on the percolated hypercube $Q^d_p$, Chowdhury,  Ganguly, and Winstein \cite{chowdhury2024gaussian} showed  that for $p>2/3$ their uniform sampler, which is precisely the hard-core model for $\lam=1$, is with high probability very close to the polymer model. In light of Galvin's conjecture \cite{galvin2011threshold} and our main result (\Cref{thm:expansion} for $0<C_5 \leq 1$), one might expect a  phase transition in the structure of independent sets in $G_p$ for some $p$ between $O(1/d)$ and $\Omega\left(\frac{\log^{3/2} d}{d^{1/2}}\right)$, leading to the following question.
\begin{question}
    Let $G$ be a $d$-regular bipartite graph satisfying \hyperref[def:property-i]{Property I} with $0<C_5 \leq 1$.
    Does there exist a phase transition for the typical structure of independent sets in $G_p$? If so, what is the critical probability $p_*$ for it? Is it of the form  $p_*=\tilde{\Theta}(1/d^c)$ for some constant $c>0$?
\end{question}

\section*{Acknowledgments}
We would like to thank the referees for their helpful comments and suggestions. This research was supported in part by the Austrian Science Fund (FWF) [10.55776/F1002].
For open access purposes, the author has applied a CC BY public copyright license to any author accepted manuscript version arising from this submission.

\printbibliography

\newpage

\appendix

\section{Weights of vertex sets via partition functions and entropy} \label{sec:appendix}

In this appendix, we detail some of the main ingredients that go into bounding weights of large polymers, as well as the non-configurations. These are entropy bounds due to Peled and Spinka \cite{peled2020long} who used them to study discrete spin systems on regular bipartite graphs, generalizing previous entropy bounds by Kahn, Galvin, and Tetali \cite{GaKa2004, galvin2004weighted, galvin2006bounding, kahn2001entropy}. We will eventually specialize to the relevant case of the antiferromagnetic Ising model, as was done by Kronenberg and Spinka \cite{kronenberg2022independent}.

First we describe the main lemma of Peled and Spinka, Lemma \ref{lemma:Peled-Spinka} below, which is used to derive Lemmas \ref{lemma:entropy-1}, \ref{lemma:entropy-2}, and \ref{lemma:entropy-3}. For the setup, we are given a $d$-regular bipartite graph $G$ with bipartition $(\cO, \cE)$, and a spin graph $\cS$ with finite vertex set $\mathbb{S}$, positive vertex weights $\lam_i$, and nonnegative edge weights $\lam_{i,j}$. For a subset $I \subseteq \mathbb{S}$, we denote $\lam_I \coloneqq \sum_{i \in I} \lam_i$. 

Fix a vertex subset $S \subseteq V(G)$. A \textit{configuration} of $S$ is any map $f : S \rightarrow \mathbb{S}$. The \textit{internal boundary} of $S$ is $\partial_{\bullet}S \coloneqq N(S^c) \setminus S^c$, i.e., the set of vertices in $S$ that are adjacent to some vertex not in $S$. The \textit{external boundary} of $S$ is $\partial_{\circ}S \coloneqq N(S) \setminus S$, i.e., the set of vertices not in $S$ that are adjacent to some vertex in $S$. We say that $S$ is \textit{boundary-even} if $\partial_{\bullet}S \subseteq \cE$, or equivalently $N(S \cap \cO) \subseteq S$. Likewise, $S$ is \textit{boundary-odd} if $\partial_{\bullet}S \subseteq \cO$, or equivalently $N(S \cap \cE) \subseteq S$. We denote the \textit{edge-boundary} as $\partial S \coloneqq E(S,S^c)$.

For a configuration $f : S \rightarrow \mathbb{S}$, we define its weight as
\begin{align*}
    \tilde{\om}(f) \coloneqq \prod_{v \in S} \lam_{f(v)} \cdot \prod_{\{v,u\} \in E(S)} \lam_{f(v),f(u)}.
\end{align*}
For a family $\cF$ of configurations, we write $\tilde{\om}(\cF) = \sum_{f \in \cF} \tilde{\om}(f)$. For a collection $\Psi$ of maps $\psi : [d] \rightarrow \mathbb{S}$ and a subset $I \subseteq \mathbb{S}$, we define a partition function as follows:
\begin{align*}
    Z(\Psi, I) \coloneqq \sum_{\psi \in \Psi} \left( \prod_{j = 1}^d \lam_{\psi(j)} \right) \left( \sum_{i \in I} \lam_i \prod_{j = 1}^d \lam_{i, \psi(j)} \right)^d.
\end{align*}
The definition of $Z(\Psi, I)$ arises from the proof of Lemma \ref{lemma:Peled-Spinka}, but the following provides some intuition for it from the perspective of graph homomorphisms. Consider the special case $\lam_i = 1$ and $\lam_{i,j} \in \{0,1\}$ for all $i,j \in \mathbb{S}$, that is, the spin graph is some unweighted graph $\cS$. Then
\begin{align*}
    Z(\mathbb{S}^{[d]}, \mathbb{S}) = \sum_{\psi \in \mathbb{S}^{[d]}} \left( \sum_{i \in \mathbb{S}} \mathbbm{1}_{i \sim \psi(j)\, \forall j \in [d]} \right)^d.
\end{align*}
In the inner sum, we assign spin $\psi(j)$ to each vertex $j \in [d]$ on the right side of $K_{1,d}$, and for the single vertex on the left side of $K_{1,d}$, we count the number of ways to assign a spin $i \in \mathbb{S}$ to it such that $i$ and each $\psi(j)$ are adjacent in $\cS$. That is, the inner sum counts the number of homomorphisms from $K_{1,d}$ to $\cS$ whose restriction to the right side is $\psi$. Raising to the power $d$ counts the number of such homomorphisms from $K_{d,d}$ to $\cS$, and summing up over all $\psi \in \mathbb{S}^{[d]}$ means that $Z(\mathbb{S}^{[d]}, \mathbb{S})$ counts the number of homomorphisms from $K_{d,d}$ to $\cS$. In summary, the function $Z(\Psi, I)$ acts as a partial partition function on $K_{d,d}$, where $\Psi$ and $I$ provide information about the two parts of $K_{d,d}$.

The following is the promised entropy lemma of Peled and Spinka \cite{peled2020long}. For a configuration $f : S \rightarrow \mathbb{S}$ and a subset $X \subseteq V(G)$, let $f_X$ denote $f$ restricted to $X \cap S$. Note that if $S$ is boundary-even and $v \in S \cap \cO$, then $N(v) \subseteq S$. 

\begin{lemma}[Peled--Spinka, Lemma 7.3 \cite{peled2020long}] \label{lemma:Peled-Spinka}
Let $S \subseteq V(G)$ be finite and boundary-even, and let $(\mathbb{S}_u : u \in \partial_{\bullet}S)$ be a collection of subsets of $\mathbb{S}$. Let $\cF$ be a family of configurations $f : S \rightarrow \mathbb{S}$ such that $f(u) \in \mathbb{S}_u$ for all $u \in \partial_{\bullet}S$, and consider $f \in \cF$ chosen with probability proportional to its weight $\tilde{\om}(f)$. To each vertex $v \in S \cap \cO$, we assign a random variable $X_v$ on $\cF$ that is measurable with respect to the random variable $f_{N(v)}$. Then
\begin{align*}
    \tilde{\om}(\cF) \le \prod_{v \in S \cap \cO} \prod_{x} \left[ \frac{Z(\Psi_{v,x}, I_{v,x})}{\mathbb{P}(X_v = x)} \right]^{\frac{1}{d} \mathbb{P}(X_v = x)} \cdot \prod_{u \in \partial_{\bullet}S} (\lam_{\mathbb{S}_u})^{\frac{1}{d}|\partial u \cap \partial S|},
\end{align*}
where the second product is over the support of $X_v$, $\Psi_{v,x} = \{ \psi \in \mathbb{S}^{N(v)} : \mathbb{P}(f_{N(v)} = \psi \mid X_v = x) > 0 \}$, and $I_{v,x} = \{ i \in \mathbb{S} : \mathbb{P}(f(v) = i \mid X_v = x) > 0 \}$.
\end{lemma}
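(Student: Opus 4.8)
The plan is to prove Lemma~\ref{lemma:Peled-Spinka} by a weighted entropy argument of Kahn--Galvin--Tetali type, combining Shearer's entropy inequality with Gibbs' inequality (nonnegativity of relative entropy) and standard entropy conventions. If $\tilde{\om}(\cF)=0$ the claim is trivial, so assume $\tilde{\om}(\cF)>0$ and let $\mathbf f$ be a random element of $\{f\in\cF:\tilde{\om}(f)>0\}$ with $\mathbb P(\mathbf f=f)=\tilde{\om}(f)/\tilde{\om}(\cF)$; then the weighted-entropy identity
\[
\log\tilde{\om}(\cF)=H(\mathbf f)+\E\bigl[\log\tilde{\om}(\mathbf f)\bigr]
\]
holds, with $H$ the Shannon entropy in the natural logarithm. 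Since $S$ is boundary-even, every edge of $G[S]$ has its $\cO$-endpoint in $S\cap\cO$, and all $d$ edges at such a vertex lie in $G[S]$, so $E(S)$ is the disjoint union of the stars centred at $S\cap\cO$ and $\log\tilde{\om}(f)=\sum_{u\in S\cap\cE}\log\lambda_{f(u)}+\sum_{v\in S\cap\cO}\bigl(\log\lambda_{f(v)}+\sum_{u\in N(v)}\log\lambda_{f(v),f(u)}\bigr)$. The strategy is to upper bound $H(\mathbf f)$ and then redistribute $\E[\log\tilde{\om}(\mathbf f)]$ among the vertices of $S\cap\cO$ and the internal boundary $\partial_\bullet S\subseteq\cE$.

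To bound $H(\mathbf f)$, split $H(\mathbf f)=H(\mathbf f_{S\cap\cE})+H(\mathbf f_{S\cap\cO}\mid\mathbf f_{S\cap\cE})$. For the first term I would apply Shearer's inequality to the cover of $S\cap\cE$ by the sets $N(v)$, $v\in S\cap\cO$ (each $u\in S\cap\cE$ is covered $d-|\partial u\cap\partial S|$ times), padded with $|\partial u\cap\partial S|$ copies of $\{u\}$ for $u\in\partial_\bullet S$ so that every vertex has multiplicity exactly $d$, giving $H(\mathbf f_{S\cap\cE})\le\frac1d\sum_{v}H(\mathbf f_{N(v)})+\frac1d\sum_{u\in\partial_\bullet S}|\partial u\cap\partial S|\,H(\mathbf f_{\{u\}})$. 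For the second term, subadditivity of (conditional) entropy together with $N(v)\subseteq S\cap\cE$ gives $H(\mathbf f_{S\cap\cO}\mid\mathbf f_{S\cap\cE})\le\sum_{v}H(\mathbf f(v)\mid\mathbf f_{N(v)})$. Then I would introduce the auxiliary variables via the chain rule $H(\mathbf f_{N(v)})=H(X_v)+H(\mathbf f_{N(v)}\mid X_v)$, valid since $X_v$ is a function of $\mathbf f_{N(v)}$; the resulting factors $\exp(\frac1d H(X_v))=\prod_x\mathbb P(X_v=x)^{-\frac1d\mathbb P(X_v=x)}$ are exactly the denominators in the stated bound, and the conditioning on $\{X_v=x\}$ produces the exponents $\frac1d\mathbb P(X_v=x)$.

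Next I would redistribute $\E[\log\tilde{\om}(\mathbf f)]$: keep $\log\lambda_{f(v)}$ and each $\log\lambda_{f(v),f(u)}$ with $v$, split each $\log\lambda_{f(u)}$ ($u\in S\cap\cE$) into $d$ equal pieces, one for each neighbour of $u$ in $S\cap\cO$, leaving the residual $\frac1d|\partial u\cap\partial S|\log\lambda_{f(u)}$ at $u$; combined with the Shearer residual $\frac1d|\partial u\cap\partial S|\,H(\mathbf f_{\{u\}})$, Gibbs' inequality and $\lambda_{f(u)}\le\lambda_{\mathbb S_u}$ yield the factor $\prod_{u\in\partial_\bullet S}(\lambda_{\mathbb S_u})^{\frac1d|\partial u\cap\partial S|}$. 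After the $\frac1d H(X_v)$ terms cancel between the two sides, the lemma reduces to the local inequality that for each $v\in S\cap\cO$ and each $x$ in the support of $X_v$,
\begin{align*}
&\tfrac1d H(\mathbf f_{N(v)}\mid X_v=x)+H(\mathbf f(v)\mid\mathbf f_{N(v)},X_v=x)+\E[\log\lambda_{\mathbf f(v)}\mid X_v=x]\\
&\qquad+\sum_{u\in N(v)}\E[\log\lambda_{\mathbf f(v),\mathbf f(u)}\mid X_v=x]+\tfrac1d\sum_{u\in N(v)}\E[\log\lambda_{\mathbf f(u)}\mid X_v=x]\le\tfrac1d\log Z(\Psi_{v,x},I_{v,x}).
\end{align*}
I would prove this by two nested applications of Gibbs' inequality after clearing the $\frac1d$'s: for each value $\psi\in\Psi_{v,x}$ of $\mathbf f_{N(v)}$, Gibbs applied to the conditional law of $\mathbf f(v)$ given $\mathbf f_{N(v)}=\psi$ bounds the corresponding contribution by $\log\bigl(\sum_{i\in I_{v,x}}\lambda_i\prod_{u\in N(v)}\lambda_{i,\psi(u)}\bigr)\eqqcolon\log m(\psi)$, and since the $\mathbf f(v)$-entropy is counted $d$ times this produces $m(\psi)^d$; then Gibbs applied with weights $q(\psi)=\mathbb P(\mathbf f_{N(v)}=\psi\mid X_v=x)$ bounds everything by $\log\sum_{\psi}\bigl(\prod_{u\in N(v)}\lambda_{\psi(u)}\bigr)m(\psi)^d=\log Z(\Psi_{v,x},I_{v,x})$, upon identifying $N(v)$ with $[d]$.

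The step I expect to be most delicate is the combinatorial bookkeeping in the middle: choosing the Shearer cover so that each vertex of $S\cap\cE$ has multiplicity exactly $d$ (which forces the padding with $d-|N(u)\cap S|=|\partial u\cap\partial S|$ singletons), and splitting the vertex and edge weights so that every summand of $\log\tilde{\om}(f)$ is used exactly once and the residuals at $\partial_\bullet S$ reassemble precisely into the stated boundary factor. The conceptual core is the local inequality, and in particular the observation that the definition of $Z(\Psi,I)$ — with a single product $\prod_j\lambda_{\psi(j)}$ and a $d$-th power $\bigl(\sum_i\lambda_i\prod_j\lambda_{i,\psi(j)}\bigr)^d$ — is exactly what the two applications of Gibbs' inequality produce once the entropy of $\mathbf f_{N(v)}$ is counted once and that of $\mathbf f(v)$ is counted $d$ times after clearing denominators.
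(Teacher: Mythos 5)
Your proposal is correct. Note that the paper itself does not prove this lemma — it is imported verbatim as Lemma 7.3 of \cite{peled2020long} — so there is no in-paper proof to compare against; your argument (weighted entropy identity, star decomposition of $E(S)$ using boundary-evenness, Shearer on $S \cap \cE$ padded with singletons at $\partial_{\bullet}S$, the chain rule $H(\mathbf f_{N(v)}) = H(X_v) + H(\mathbf f_{N(v)} \mid X_v)$, and the two nested Gibbs applications producing $Z(\Psi_{v,x}, I_{v,x})$) is the standard Kahn--Galvin--Tetali-style entropy proof and matches the approach of the cited source, with the bookkeeping (multiplicity exactly $d$, residual boundary factors $(\lam_{\mathbb{S}_u})^{|\partial u \cap \partial S|/d}$) carried out correctly.
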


Now we specialize Lemma \ref{lemma:Peled-Spinka} to our setting of the antiferromagnetic Ising model. We take the spin set to be $\mathbb{S} = \{0,1\}$, vertex activity parameters to be $\lam_1 = \lam$ and $\lam_0 = 1$, and edge activity parameters to be $\lam_{0,0} = \lam_{0,1} = \lam_{1,0} = 1$ and $\lam_{1,1} = e^{-\beta}$, where $\beta > 0$. 

In each of the special cases below, we start with a \textit{boundary-odd} vertex set $T \subseteq V(G)$, and then we take $S = T \cup N(T)$ and $\mathbb{S}_u = \{0\}$ for all $u \in \partial_{\bullet}S$. Note that $\partial_{\bullet}S \subseteq N(T) \setminus T \subseteq \cE$, so in particular $S$ is boundary-even, and also note that $S \cap \cO = T \cap \cO$. Configurations $f : S \rightarrow \mathbb{S}$ which satisfy $f(u) \in \mathbb{S}_u$ (i.e., $f(u) = 0$) for all $u \in \partial_{\bullet}S$ can naturally be identified with subsets of $T$, namely their supports $F = \{v \in T : f(v) = 1\}$. Thus, we can view our family $\cF$, consisting of configurations on $S$ satisfying the appropriate boundary condition, as some collection of subsets of $T$. Likewise, we can view our family $\Psi$ of maps from $[d]$ to $\mathbb{S}$ as a collection of subsets $\psi$ of $[d]$. With these identifications, for $F \subseteq T$ we write
\begin{align*}
    \tilde{\om}(F) \coloneqq \lam^{|F|} e^{-\beta |E(F)|},
\end{align*}
and for $\Psi$ a collection of subsets of $[d]$ we write
\begin{align*}
    Z(\Psi) \coloneqq Z(\Psi, \{0,1\}) = \sum_{\psi \in \Psi} \lam^{|\psi|} \left( 1 + \lam e^{-\beta |\psi|} \right)^d.
\end{align*}
We will implicitly use the fact that $Z(\Psi, \{0\}), Z(\Psi, \{1\})$ are both at most $Z(\Psi, \{0,1\})$ in the proofs below. Finally, note that $\lam_{\mathbb{S}_u} = \lam_0 = 1$ for all $u \in \partial_{\bullet}S$, so the rightmost product in Lemma \ref{lemma:Peled-Spinka} is always $1$ in this setting.

\begin{lemma}[Kronenberg--Spinka, Lemma 4.11] \label{lemma:entropy-1-restatement}
Let $T \subseteq V(G)$ be boundary-odd, and let $\cF$ be a family of subsets of $T$. Then
\begin{align*}
    \tilde{\om}(\cF) \le \prod_{v \in T \cap \cO} Z(\Psi_v)^{\frac{1}{d}},
\end{align*}
where $\Psi_v = \{F \cap N(v) : F \in \cF\}$.
\end{lemma}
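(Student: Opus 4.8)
The plan is to deduce \Cref{lemma:entropy-1-restatement} directly from the general Peled--Spinka lemma (\Cref{lemma:Peled-Spinka}) by making the simplest possible choice of auxiliary random variables, namely the trivial (constant) ones. First I would set up the specialization exactly as described in the paragraph preceding the statement: given the boundary-odd set $T$, put $S = T \cup N(T)$ and $\mathbb{S}_u = \{0\}$ for every $u \in \partial_{\bullet}S = N(T)\setminus T$. One checks that $S$ is boundary-even (since $N(T\cap\cO)\subseteq S$ by construction and $\partial_\bullet S\subseteq\cE$), that $S\cap\cO = T\cap\cO$, and that configurations $f:S\to\{0,1\}$ respecting the boundary condition $f|_{\partial_\bullet S}\equiv 0$ are in bijection with subsets $F\subseteq T$ via $F = f^{-1}(1)$, under which $\tilde\om(f) = \lam^{|F|}e^{-\beta|E(F)|} = \tilde\om(F)$. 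So the family $\cF$ of subsets of $T$ is literally a family of such configurations, and $\tilde\om(\cF)$ agrees in both senses.

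Next I would apply \Cref{lemma:Peled-Spinka} with $X_v$ chosen to be a constant random variable for each $v\in S\cap\cO = T\cap\cO$ (measurable with respect to anything, in particular with respect to $f_{N(v)}$, trivially). Then the support of $X_v$ is a single point $x$, so $\mathbb{P}(X_v = x) = 1$; the inner product over $x$ collapses to a single factor, and $[Z(\Psi_{v,x},I_{v,x})/\mathbb{P}(X_v=x)]^{\frac1d\mathbb{P}(X_v=x)} = Z(\Psi_{v,x},I_{v,x})^{1/d}$. Because conditioning on $X_v$ is vacuous, $\Psi_{v,x} = \{\psi \in \{0,1\}^{N(v)} : \mathbb{P}(f_{N(v)} = \psi) > 0\}$, which — identifying subsets of $N(v)$ with $\{0,1\}^{N(v)}$ — is exactly $\{F\cap N(v) : F\in\cF\} = \Psi_v$, and $I_{v,x}\subseteq\{0,1\}$. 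The rightmost product $\prod_{u\in\partial_\bullet S}(\lam_{\mathbb{S}_u})^{\frac1d|\partial u\cap\partial S|}$ equals $1$ since $\lam_{\mathbb{S}_u} = \lam_0 = 1$. Finally, using the monotonicity $Z(\Psi_v, I_{v,x}) \le Z(\Psi_v, \{0,1\}) = Z(\Psi_v)$ bounds each factor by $Z(\Psi_v)^{1/d}$, and multiplying over $v\in T\cap\cO$ gives
\[
\tilde\om(\cF) \le \prod_{v\in T\cap\cO} Z(\Psi_v)^{1/d},
\]
which is the claim.

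Honestly, there is no serious obstacle here: this is a bookkeeping deduction, and the only things to be careful about are (i) confirming that the chosen $S$ really is boundary-even so that \Cref{lemma:Peled-Spinka} applies, (ii) checking the bijection between boundary-respecting configurations and subsets of $T$ is weight-preserving, and (iii) noting that $Z(\Psi,I)$ is monotone in $I$ (which follows since each summand $\big(\sum_{i\in I}\lam_i\prod_j\lam_{i,\psi(j)}\big)^d$ has nonnegative terms), so that restricting to $I_{v,x}$ instead of $\{0,1\}$ only decreases the bound — hence we may freely replace $I_{v,x}$ by the full spin set. The other two lemmas (\Cref{lemma:entropy-2}, \Cref{lemma:entropy-3}) will presumably follow the same template but with genuinely nontrivial choices of $X_v$ — an indicator of whether $v$ has an occupied neighbor in the first case, and a three-valued refinement in the second — which is where the extra factors $(1/p_v)^{p_v/d}$ etc.\ come from via $\mathbb{P}(X_v=x)$ in the denominator; but for the present lemma the trivial $X_v$ suffices.
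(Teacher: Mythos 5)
Your proof is correct and is essentially identical to the paper's: the paper also applies Lemma \ref{lemma:Peled-Spinka} with $S = T \cup N(T)$, $\mathbb{S}_u = \{0\}$, and a constant $X_v$, noting that $\Psi_{v,0} = \Psi_v$, $\mathbb{P}(X_v = 0) = 1$, and $Z(\Psi_v, I_{v,0}) \le Z(\Psi_v, \{0,1\}) = Z(\Psi_v)$. The extra checks you flag (boundary-evenness of $S$, the weight-preserving bijection with subsets of $T$, monotonicity of $Z(\Psi,I)$ in $I$) are exactly the points the paper handles in the setup paragraph preceding the lemma.
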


\begin{proof}
We apply Lemma \ref{lemma:Peled-Spinka} by taking $S = T \cup N(T)$, $\mathbb{S}_u = \{0\}$ for all $u \in \partial_{\bullet}S$, and the random variable $X_v$ to be some constant, say $0$, for all $v \in S \cap \cO = T \cap \cO$. Note that $\Psi_v = \Psi_{v,0} = \{ F \cap N(v) : F \in \cF\} = \Psi_{v,0}$, that $I_{v,0} \subseteq \{0,1\}$, and that $\mathbb{P}(X_v = 0) = 1$. From Lemma \ref{lemma:Peled-Spinka}, we deduce the stated inequality.
\end{proof}

For the next two lemmas, similar to entropy conventions, we use the convention that $(1/p)^p$ equals $1$ whenever $p = 0$. This is just an alternative to restricting products to the support of the associated random variables, as was done in Lemma \ref{lemma:Peled-Spinka}.

\begin{lemma}[Kronenberg--Spinka, Lemma 4.12] \label{lemma:entropy-2-restatement}
Let $T \subseteq V(G)$ be boundary-odd, and let $\cF$ be a family of subsets of $T$, such that every $F \in \cF$ contains no isolated vertex in $F \cap \cO$. Then
\begin{align*}
    \tilde{\om}(\cF) \le \prod_{v \in T \cap \cO} Z(\Psi_v)^{\frac{p_v}{d}} \left( \frac{1}{p_v} \right)^{\frac{p_v}{d}} \left( \frac{1}{1 - p_v} \right)^{\frac{1-p_v}{d}},
\end{align*}
where $\Psi_v = \{F \cap N(v) : F \in \cF, |F \cap N(v)| > 0\}$ and $p_v = \mathbb{P}(|F \cap N(v)| > 0)$ when $F$ is randomly chosen from $\cF$ according to $\tilde{\om}$.
\end{lemma}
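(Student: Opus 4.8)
The plan is to obtain Lemma~\ref{lemma:entropy-2-restatement} as a special case of the Peled--Spinka bound (Lemma~\ref{lemma:Peled-Spinka}), following exactly the specialization set up just before Lemma~\ref{lemma:entropy-1-restatement}. Take the spin set $\mathbb{S}=\{0,1\}$ with $\lam_1=\lam$, $\lam_0=1$, $\lam_{1,1}=e^{-\beta}$ and all remaining edge activities equal to $1$; set $S = T\cup N(T)$ and $\mathbb{S}_u=\{0\}$ for every $u\in\partial_{\bullet}S = N(T)\setminus T\subseteq\cE$, so that $S$ is boundary-even and $S\cap\cO = T\cap\cO$. Under these choices, configurations $f:S\to\mathbb{S}$ respecting the boundary condition $f(u)=0$ on $\partial_{\bullet}S$ are identified with the subsets $F\subseteq T$ in $\cF$ via their supports, the weight $\tilde{\om}(f)$ becomes $\lam^{|F|}e^{-\beta|E(F)|}$, and the rightmost product $\prod_{u\in\partial_{\bullet}S}(\lam_{\mathbb{S}_u})^{\frac1d|\partial u\cap\partial S|}$ in Lemma~\ref{lemma:Peled-Spinka} equals $1$ because $\lam_{\mathbb{S}_u}=\lam_0=1$.

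The only substantive choice is the family of auxiliary random variables. For each $v\in T\cap\cO$ I would take $X_v \coloneqq \mathbbm{1}_{\{|F\cap N(v)|>0\}}$. Since $v\in T$ implies $N(v)\subseteq N(T)\subseteq S$, this is manifestly measurable with respect to $f_{N(v)}$, and $\mathbb{P}(X_v=1)=p_v$, $\mathbb{P}(X_v=0)=1-p_v$. On $\{X_v=1\}$ one reads off $\Psi_{v,1}=\{F\cap N(v): F\in\cF,\ |F\cap N(v)|>0\}=\Psi_v$ and $I_{v,1}\subseteq\{0,1\}$, so $Z(\Psi_{v,1},I_{v,1})\le Z(\Psi_v,\{0,1\})=Z(\Psi_v)$.

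The key step, and where the hypothesis that no $F\in\cF$ has an isolated vertex in $F\cap\cO$ is used, is the analysis of the event $\{X_v=0\}$: here $F\cap N(v)=\emptyset$, and the no-isolated-vertex condition then forces $v\notin F$, i.e.\ $f(v)=0$, whence $I_{v,0}=\{0\}$, while $\Psi_{v,0}$ is the singleton consisting of the all-zero map on $N(v)$. A one-line evaluation of the partition function gives $Z(\Psi_{v,0},I_{v,0})=1$. Feeding all of this into Lemma~\ref{lemma:Peled-Spinka} (and using the convention $(1/p)^p=1$ at $p=0$ to absorb vertices with $p_v\in\{0,1\}$) yields
\[
\tilde{\om}(\cF)\ \le\ \prod_{v\in T\cap\cO}\left[\frac{Z(\Psi_{v,0},I_{v,0})}{1-p_v}\right]^{\frac{1-p_v}{d}}\left[\frac{Z(\Psi_{v,1},I_{v,1})}{p_v}\right]^{\frac{p_v}{d}}\ \le\ \prod_{v\in T\cap\cO}\left[\frac{Z(\Psi_v)}{p_v}\right]^{\frac{p_v}{d}}\left[\frac{1}{1-p_v}\right]^{\frac{1-p_v}{d}},
\]
which is exactly the claimed inequality.

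I do not expect a genuine obstacle here: all the analytic content is already contained in Lemma~\ref{lemma:Peled-Spinka}, and the proof is bookkeeping — carefully setting up the identification between boundary-respecting configurations on $S$ and subsets of $T$, checking measurability of $X_v$, and verifying the identity $Z(\Psi_{v,0},I_{v,0})=1$, which is the precise point at which the no-isolated-vertex assumption pays off (without it one would only get $I_{v,0}\subseteq\{0,1\}$ and hence an extra factor of $(1+\lam)^{(1-p_v)}$, exactly the discrepancy with Lemma~\ref{lemma:entropy-1-restatement}).
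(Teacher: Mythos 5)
Your proposal is correct and follows essentially the same route as the paper: specialize Lemma~\ref{lemma:Peled-Spinka} with $S=T\cup N(T)$, $\mathbb{S}_u=\{0\}$ on $\partial_\bullet S$, and $X_v$ the indicator of whether $F\cap N(v)$ is empty (the paper uses the complementary labeling, which is immaterial), then use the no-isolated-vertex hypothesis to force $I_{v}=\{0\}$ on the empty event so that the corresponding partition function equals $1$. Your closing remark about the extra $(1+\lam)^{1-p_v}$ factor one would incur without that hypothesis is also exactly the right diagnosis.
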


\begin{proof}
Like in the previous proof, we apply \ref{lemma:Peled-Spinka} with $S = T \cup N(T)$ and $\mathbb{S}_u = \{0\}$ for all $u \in \partial_{\bullet}S$, but now we take the random variable $X_v$ to be $\mathbbm{1}_{|F \cap N(v)| = 0}$ for all $v \in T \cap \cO$. We have
\begin{align*}
    \Psi_v = \Psi_{v,0} = \{ F \cap N(v) : F \in \cF, |F \cap N(v)| > 0 \}.
\end{align*}
On the other hand, assuming that the event $X_v = 1$ occurs with positive probability, we have 
\begin{align*}
    \Psi_{v,1} &= \{ F \cap N(v) : F \in \cF, |F \cap N(v)| = 0 \} = \{\emptyset\}, \\
    I_{v,1} &= \{|F \cap \{v\}| : F \in \cF, |F \cap N(v)| = 0\} = \{0\},
\end{align*}
where the last equality is because $|F \cap N(v)| = 0$ and $v$ not being isolated in $F$ implies that $v \notin F$. Thus, $Z(\Psi_{v,1}, I_{v,1}) = Z(\{\emptyset\}, \{0\}) = 1$ whenever $\mathbb{P}(X_v = 1) > 0$. Writing $p_v = \mathbb{P}(X_v = 0) = \mathbb{P}(|F \cap N(v)| > 0)$, Lemma \ref{lemma:Peled-Spinka} implies the stated inequality.
\end{proof}

\begin{lemma}[Kronenberg--Spinka, Lemma 4.13] \label{lemma:entropy-3-restatement}
Let $T \subseteq V(G)$ be boundary-odd, let $\cF$ be a family of subsets of $T$, and let $s > 0$. Then
\begin{align*}
    \tilde{\om}(\cF) \le \prod_{v \in T \cap \cO} Z(\Psi_v)^{\frac{p_v}{d}} Z(\Psi_v')^{\frac{p_v'}{d}} (1 + \lam)^{1 - p_v - p_v'} \left( \frac{1}{p_v} \right)^{\frac{p_v}{d}} \left( \frac{1}{p_v'} \right)^{\frac{p_v'}{d}} \left( \frac{1}{1 - p_v - p_v'} \right)^{\frac{1-p_v-p_v'}{d}},
\end{align*}
where $\Psi_v = \{F \cap N(v) : F \in \cF, 1 \le |F \cap N(v)| \le s\}$, $\Psi_v' = \{F \cap N(v) : F \in \cF, |F \cap N(v)| > s\}$, $p_v = \mathbb{P}(1 \le |F \cap N(v)| \le s)$, and $p_v' = \mathbb{P}(|F \cap N(v)| > s)$, when $F$ is randomly chosen from $\cF$ according to $\tilde{\om}$.
\end{lemma}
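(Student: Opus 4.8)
The proof will follow the same three-step template as the proofs of \Cref{lemma:entropy-1-restatement} and \Cref{lemma:entropy-2-restatement}: fix the auxiliary vertex set, apply the Peled--Spinka bound (\Cref{lemma:Peled-Spinka}) with a suitable conditioning random variable on each odd vertex, and read off the estimate. Concretely, I would take $S = T \cup N(T)$ and $\mathbb{S}_u = \{0\}$ for all $u \in \partial_{\bullet}S$. Since $T$ is boundary-odd, $S$ is boundary-even with $S \cap \cO = T \cap \cO$, configurations on $S$ respecting the boundary condition are identified with subsets $F \subseteq T$ via their supports (so $\tilde{\om}(F) = \lam^{|F|}e^{-\beta|E(F)|}$), and the trailing product over $\partial_{\bullet}S$ in \Cref{lemma:Peled-Spinka} equals $1$ because $\lam_{\mathbb{S}_u} = \lam_0 = 1$.

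The only new ingredient relative to \Cref{lemma:entropy-2-restatement} is that the conditioning variable is now three-valued: for each $v \in T \cap \cO$ I would set $X_v = 1$ when $1 \le |F \cap N(v)| \le s$, $X_v = 2$ when $|F \cap N(v)| > s$, and $X_v = 0$ when $F \cap N(v) = \emptyset$. This is measurable with respect to $f_{N(v)}$, and $\mathbb{P}(X_v = 1) = p_v$, $\mathbb{P}(X_v = 2) = p_v'$, $\mathbb{P}(X_v = 0) = 1 - p_v - p_v'$. On the branches $X_v = 1$ and $X_v = 2$ one unwinds the definitions to get $\Psi_{v,1} = \Psi_v$, $\Psi_{v,2} = \Psi_v'$ and $I_{v,1}, I_{v,2} \subseteq \{0,1\}$, so these contribute at most $Z(\Psi_v)^{p_v/d}(1/p_v)^{p_v/d}$ and $Z(\Psi_v')^{p_v'/d}(1/p_v')^{p_v'/d}$, exactly as in the proof of \Cref{lemma:entropy-2-restatement}. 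The one point where care is needed is the branch $X_v = 0$: here $\Psi_{v,0} = \{\emptyset\}$, but since we do \emph{not} assume elements of $\cF$ have no isolated vertex in $\cO$, the vertex $v$ itself may still lie in $F$, so $I_{v,0}$ can be all of $\{0,1\}$; using the Ising weights $\lam_0 = 1$, $\lam_1 = \lam$, $\lam_{1,1} = e^{-\beta}$, and all other edge weights $1$ then gives $Z(\Psi_{v,0}, I_{v,0}) \le Z(\{\emptyset\}, \{0,1\}) = (1+\lam)^d$.

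Feeding these into \Cref{lemma:Peled-Spinka}, the factor attached to $v$ is at most
\[
\left[\frac{Z(\Psi_v)}{p_v}\right]^{p_v/d}\left[\frac{Z(\Psi_v')}{p_v'}\right]^{p_v'/d}\left[\frac{(1+\lam)^d}{1-p_v-p_v'}\right]^{(1-p_v-p_v')/d},
\]
and distributing the exponents over numerators and denominators (using the convention $(1/p)^p = 1$ at $p = 0$, which just re-expresses the restriction to the support of $X_v$ in \Cref{lemma:Peled-Spinka}) turns the product over $v \in T \cap \cO$ into exactly the claimed bound. I do not expect any real obstacle: this is a routine adaptation of the preceding proof, and the only subtlety is evaluating $Z(\{\emptyset\}, \{0,1\}) = (1+\lam)^d$ correctly on the empty-intersection branch — this is precisely what produces the extra $(1+\lam)^{1-p_v-p_v'}$ factor that the ``no isolated vertex'' hypothesis eliminated in \Cref{lemma:entropy-2-restatement}.
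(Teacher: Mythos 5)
Your proposal is correct and follows essentially the same route as the paper: the paper's proof also applies \Cref{lemma:Peled-Spinka} with $S = T \cup N(T)$, $\mathbb{S}_u = \{0\}$, and a three-way conditioning on $|F \cap N(v)|$ (encoded there as the pair $(\mathbbm{1}_{|F \cap N(v)| = 0}, \mathbbm{1}_{|F \cap N(v)| \le s})$ rather than your $\{0,1,2\}$-valued variable, which is an immaterial difference), bounding the empty-intersection branch by $Z(\{\emptyset\},\{0,1\}) = (1+\lam)^d$ exactly as you do. Your identification of this branch as the source of the $(1+\lam)^{1-p_v-p_v'}$ factor matches the paper.
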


\begin{proof}
Like in the previous proofs, we apply \ref{lemma:Peled-Spinka} with $S = T \cup N(T)$ and $\mathbb{S}_u = \{0\}$ for all $u \in \partial_{\bullet}S$, but we take the random variable $X_v$ to be $(\mathbbm{1}_{|F \cap N(v)| = 0}, \mathbbm{1}_{|F \cap N(v)| \le s})$ for all $v \in T \cap \cO$. We have
\begin{align*}
    \Psi_v &= \Psi_{v,(0,1)} = \{ F \cap N(v) : F \in \cF, 1 \le |F \cap N(v)| \le s \}, \\
    \Psi_v' &= \Psi_{v,(0,0)} = \{ F \cap N(v) : F \in \cF, |F \cap N(v)| > s \}.
\end{align*}
On the other hand, assuming that the event $X_v = (1,1)$ occurs with positive probability, we have
\begin{align*}
    \Psi_{v,(1,1)} &= \{ F \cap N(v) : F \in \cF, |F \cap N(v)| = 0 \} = \{\emptyset\},
\end{align*}
and $I_{v,(1,1)} \subseteq \{0,1\}$, so that $Z(\Psi_{v,(1,1)}, I_{v,(1,1)}) \le Z(\{\emptyset\}, \{0,1\}) = (1 + \lam)^d$. Writing $p_v = \mathbb{P}(X_v = (0,1)) = \mathbb{P}(1 \le |F \cap N(v)| \le s)$ and $p_v' = \mathbb{P}(X_v = (0,0)) = \mathbb{P}(|F \cap N(v)| > s)$, Lemma \ref{lemma:Peled-Spinka} implies the stated inequality.
\end{proof}

\section{Container lemma for the hard-core model} \label{sec:container-hard-core}

In this appendix, we outline how to modify the proofs of Corollaries \ref{cor:hard-core-corollary-1} and \ref{cor:hard-core-corollary-2} in the hard-core model, i.e., $\beta = \infty$, so that we do not need the assumption that $\lambda$ is allowed to grow arbitrarily as $d \rightarrow \infty$. Recall that these two corollaries were special cases of our main container lemma (\Cref{lem:container_main}). The following are the precise statements we want to show true.

\begin{lemma} \label{lem:hard-core-corollary-1-modified}
For any fixed constants $C_2, C_4, C_5 > 0$ with $C_5 \le 1$, there exist constants $C_0, C > 0$ such that the following holds. Suppose that $\lam \ge \frac{C_0 \log^{3/2} d}{d^{1/2}}$, that $|N(X)| \ge \frac{d^2}{2C_2}$ for all $X \subseteq N(y)$ with $y \in \overline{\cD}$ and $|X| > d/2$, and that $a, b \in \mathbb{N}$ with $b \ge \left( 1 + \frac{C_4}{d^{C_5}} \right)a$. Then
\begin{align*}
    \sum_{A \in \cG_\cD(a,b)} \lam^{|A|} \le |\cD|(1+\lam)^b \exp \left( -\frac{C(b - a)\log^2d}{d} \right).
\end{align*}
\end{lemma}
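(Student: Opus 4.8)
The plan is to derive Lemma~\ref{lem:hard-core-corollary-1-modified} from \Cref{lem:container_main} (equivalently, from the combination of Lemmas~\ref{lem:galvin_approx_family} and~\ref{lemma:total-weight-given-approximation}) by re-examining the proof with $\beta = \infty$ from the start, so that the boundedness hypothesis $\lambda \le \lambda_0$ can be dropped. When $\beta = \infty$ we have $\alpha = \lambda(1-e^{-\beta}) = \lambda$, and the weight $\omega(A)$ of a set $A \in \cG_\cD(a,b)$ reduces to $\sum_{B\subseteq N(A)}\lambda^{|A|+|B|}\mathbbm{1}_{E(A,B)=\emptyset}$, i.e.\ $\omega(A) = \lambda^{|A|}$ after we divide through by $(1+\lambda)^b$ and instead bound $\sum_{A\in\cG_\cD(a,b)}\lambda^{|A|} \le (1+\lambda)^b \sum_A \omega(A)$. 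So the target inequality is exactly \eqref{eq:container} with $\alpha$ replaced by $\lambda$ and an extra harmless factor $(1+\lambda)^b$; it suffices to prove $\sum_{A\in\cG_\cD(a,b)}\omega(A) \le |\cD|\exp(-C(b-a)\lambda^2/\log d)$ without assuming $\lambda$ bounded.

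First I would isolate where $\lambda \le \lambda_0$ is actually used. It enters (i) in \Cref{lem:galvin_approx_family}, which is purely combinatorial and has no fugacity dependence at all, so nothing changes there; (ii) in the entropy estimate \Cref{lemma:bound-on-Z} and its parent \Cref{lemma:more-general-bound-on-Z}, through the quantities $\overline{\alpha}$, the inequalities \eqref{convenient-inequalities}, and the relation $\tfrac{1}{1+\lambda_0}\alpha \le \overline\alpha \le \alpha$; and (iii) in \Cref{lemma:b-f-small} and \Cref{lemma:b-f-large}, where $\overline\alpha$ and the $\psi$-approximation bookkeeping appear, and again in the final assembly (proof of \Cref{lemma:total-weight-given-approximation}) via ``$\overline\alpha$ is bounded.'' For the hard-core case, the key simplification is that $\overline\alpha = -\log\bigl(1 - \tfrac{\lambda}{1+\lambda}\bigr) = \log(1+\lambda)$ exactly, so $\overline\alpha$ need not be compared to $\alpha = \lambda$ via a $\lambda_0$-dependent constant; instead one uses the elementary bounds $\tfrac{\lambda}{1+\lambda} \le \log(1+\lambda) \le \lambda$ and, crucially, $\log(1+\lambda) \ge c\min\{\lambda,1\} \ge c\,\lambda/(1+\lambda)$, valid for all $\lambda > 0$. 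The hypothesis becomes $\lambda \ge C_0\log^{3/2}d/d^{1/2}$, which still forces $\lambda \to 0$ only in the relevant small-$\lambda$ regime but now is allowed to be large; in the large-$\lambda$ regime $\overline\alpha = \log(1+\lambda)$ is large and every estimate only improves, so one can simply split into $\lambda \le 1$ and $\lambda > 1$ and handle the latter trivially (expansion $b - a \ge C_4 d^{-C_5}a$ together with $\overline\alpha \ge \log 2$ already gives an exponentially small bound). The substantive work is the $\lambda \le 1$ case, where one re-runs the proofs of Lemmas~\ref{lemma:more-general-bound-on-Z}, \ref{lemma:b-f-small}, \ref{lemma:b-f-large} verbatim: the inequalities \eqref{convenient-inequalities} hold with $\beta = \infty$ (the $\beta$-denominators disappear, so the weaker condition $\lambda(1-e^{-\beta})^2 \ge C_0\log d/d$ becomes just $\lambda \ge C_0\log d/d$, which is implied), and ``$\overline\alpha$ bounded'' is replaced by ``$\overline\alpha = \log(1+\lambda) \le \log 2$'' in the $\lambda \le 1$ range.

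Concretely, the steps I would carry out are: (1) state that for $\beta = \infty$, $\alpha = \lambda$ and $\overline\alpha = \log(1+\lambda)$, and record $\tfrac{\lambda}{2} \le \overline\alpha \le \lambda$ for $\lambda \le 1$; (2) dispose of the case $\lambda > 1$ by a direct computation using \Cref{lem:galvin_approx_family} with $\psi = d/2$ and the crude bound $\sum_{A\approx(F,H)}\omega(A) \le \text{(number of closures)}\cdot\max_A\omega(A)$, noting $\omega(A) \le (1+\lambda)^{|N(A)|}/(1+\lambda)^{|N(A)|}\cdot\lambda^{|A|}$... actually just invoke that $\overline\alpha \ge \log 2$ makes \Cref{lemma:total-weight-given-approximation}'s proof go through with all $\lambda_0$-constants replaced by absolute constants; (3) for $\lambda \le 1$, check line-by-line that the proofs of Lemmas~\ref{lemma:more-general-bound-on-Z}, \ref{lemma:b-f-small}, \ref{lemma:b-f-large}, and \ref{lemma:total-weight-given-approximation} use $\lambda \le \lambda_0$ only to bound $\overline\alpha$ and $\lambda$ by constants and to ensure $\tfrac{1}{1+\lambda_0}\alpha \le \overline\alpha$, all of which are now supplied by $\lambda \le 1$ and the exact identity for $\overline\alpha$; (4) conclude \eqref{eq:container} with $\alpha = \lambda$, multiply by $(1+\lambda)^b$, and read off the stated bound with $\lambexp = 1/2$ since $C_5 \le 1$. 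The main obstacle I anticipate is step (3): one must verify that no hidden use of $\lambda \le \lambda_0$ lurks in, e.g., the bound $\binom{n}{m} \le (en/m)^m$ manipulations or the entropy-term estimates $(1/p)^p \le e$ — these are in fact $\lambda$-free, but the assertion ``$\tfrac{1}{8}bd\overline\alpha\exp(-\tfrac{1}{16}\overline\alpha d) \le b/d^{C+1}$'' in the proof of \Cref{lemma:b-f-large} uses $\overline\alpha d \ge \tfrac{C_0}{1+\lambda_0}\log d$, which we must replace by $\overline\alpha d \ge \tfrac{\lambda}{2}d \ge \tfrac{C_0}{2}\log^{3/2}d \ge C_0' \log d$ using the new hypothesis directly, and similarly the places reading ``$\overline\alpha \ge \tfrac{C_0}{(1+\lambda_0)d}$'' become ``$\overline\alpha \ge \tfrac{\lambda}{2} \ge \tfrac{C_0\log^{3/2}d}{2d^{1/2}} \gg \tfrac{1}{d}$.'' Once these substitutions are tracked, the corollary follows; the analogous modification with $1 \le C_5 < 2$ gives Corollary~\ref{cor:hard-core-corollary-2} in the same way, using $\lambexp = 1 - C_5/2$.
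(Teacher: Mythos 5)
Your proposal is correct and follows essentially the same route as the paper's Appendix~B: both re-run the container argument with $\beta=\infty$, exploit that $\overline{\alpha}=\log(1+\lambda)$ exactly and that the entropy bound $Z(\Psi)\le(1+\lambda)^{d-\ell_{\Psi}}$ holds with no assumption on $\lambda$, and then track the few places where $\lambda\le\lambda_0$ was used in Lemmas \ref{lemma:b-f-small}, \ref{lemma:b-f-large}, and \ref{lemma:total-weight-given-approximation}. The only real difference is organizational: where you split into $\lambda\le 1$ and $\lambda>1$ (your intermediate claim that a bound of the form $\exp(-C(b-a)\lambda^2/\log d)$ suffices \emph{uniformly} in $\lambda$ is not right for large $\lambda$, but your case split repairs this), the paper handles all $\lambda$ at once by retaining both a linear term $-C''(b-a)\overline{\alpha}$ and a quadratic term $-C''(b-a)\overline{\alpha}^2/\log d$ in the modified weight lemma, the linear term doing the work when $\lambda$ is large.
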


\begin{lemma} \label{lem:hard-core-corollary-2-modified}
For any fixed constants $C_2, C_4, C_5 > 0$ with $1 \le C_5 < 2$, there exist constants $C_0, C > 0$ such that the following holds. Suppose that $\lam \ge \frac{C_0 \log^{3/2} d}{d^{1-C_5/2}}$, that $|N(X)| \ge \frac{d^2}{2C_2}$ for all $X \subseteq N(y)$ with $y \in \overline{\cD}$ and $|X| > d/2$, and that $a,b \in \mathbb{N}$ with $b \ge \left( 1 + \frac{C_4}{d^{C_5}} \right)a$. Then
\begin{align*}
    \sum_{A \in \cG_\cD(a,b)} \lam^{|A|} \le |\cD|(1+\lam)^b \exp \left( -\frac{C(b - a)\log^2d}{d^{2-C_5}} \right).
\end{align*}
\end{lemma}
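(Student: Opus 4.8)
The plan is to follow essentially the same argument as in the proof of \Cref{lem:container_main}, except that we work directly in the hard-core model ($\beta=\infty$, so $p=1$ and $\alpha=\lambda$) from the start, so that the boundedness hypothesis $\lambda\le\lambda_0$ is never invoked. The structure is: (i) re-examine the entropy-based estimates of \Cref{sec:entropy_tools}, namely Lemmas \ref{lemma:bound-on-Z} and \ref{lemma:more-general-bound-on-Z}, in the hard-core regime; (ii) re-derive the two ``given approximation'' bounds, Lemmas \ref{lemma:b-f-small} and \ref{lemma:b-f-large}, again without assuming $\lambda$ bounded; (iii) combine them as in the proof of \Cref{lemma:total-weight-given-approximation} to get a clean bound on $\sum_{A\in\cG_\cD(a,b):A\approx(F,H)}\lambda^{|A|}/(1+\lambda)^b$; and (iv) multiply by Galvin's approximation family (\Cref{lem:galvin_approx_family}), which already holds for unbounded $\lambda$ since it is purely combinatorial, and choose $\psi=d/2$ exactly as in the proof of \Cref{lem:container_main}.

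For step (i): in the hard-core model, with $\beta\to\infty$, one has $\overline\alpha=\log\bigl(\tfrac{1+\lambda}{1}\bigr)=\log(1+\lambda)$ and $\alpha=\lambda$, so the relation $\tfrac{1}{1+\lambda_0}\alpha\le\overline\alpha\le\alpha$ is replaced by the elementary two-sided bound $\tfrac{\lambda}{1+\lambda}\le\log(1+\lambda)=\overline\alpha\le\lambda$, valid for \emph{all} $\lambda>0$. The function $Z(\Psi)$ simplifies to $\sum_{\psi\in\Psi}\lambda^{|\psi|}\cdot\mathbbm{1}[\psi=\emptyset\text{ contributes }(1+\lambda)^d;\ \psi\ne\emptyset\text{ contributes }1^d]$; concretely $Z(\Psi)=\sum_{\psi\in\Psi}\lambda^{|\psi|}$ when $\emptyset\notin\Psi$. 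The inequalities (\ref{convenient-inequalities}) become (after letting $\beta\to\infty$, so the $1/\beta$ terms vanish) the single requirement $\tfrac{\lambda}{1+\lambda}\ge\tfrac{64C\log d}{d}$, i.e.\ $\lambda\gtrsim\tfrac{\log d}{d}$, which is amply implied by the hypotheses $\lambda\ge\tfrac{C_0\log^{3/2}d}{d^{1/2}}$ (resp.\ $\tfrac{C_0\log^{3/2}d}{d^{1-C_5/2}}$) since $C_5<2$. The proof of \Cref{lemma:more-general-bound-on-Z} then goes through verbatim with $s'=0$ (the case $1\le|\psi|\le s'$ is empty), giving $Z(\Psi^{\le s})\le(1+\lambda)^d\exp(-\overline\alpha\ell-C\log d)$ and $Z(\Psi^{>s})\le(1+\lambda)^d\exp(-\overline\alpha\ell+d^{-C})$, with $\overline\alpha=\log(1+\lambda)$, no boundedness assumed.

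For steps (ii)--(iii): the proofs of \Cref{lemma:b-f-small} and \Cref{lemma:b-f-large} use $\lambda\le\lambda_0$ only through bounds like ``$\overline\alpha\ge\tfrac{C_0}{(1+\lambda_0)d}$'' and ``$\ta^{C_1 k^2}=O(1)$''; in the hard-core setting these are replaced by $\overline\alpha=\log(1+\lambda)\ge\tfrac{\lambda}{1+\lambda}\ge\tfrac{C_0\log^{3/2}d}{2d}$ (using $\lambda\ge C_0 d^{-1/2}\log^{3/2}d$, which also gives $\lambda/(1+\lambda)\gg 1/d$ times any polylog), and the error term $b/d^C$ is absorbed exactly as before. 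Combining the two cases, with $c=\tfrac{1}{3(C_5+1)}$, reproduces \Cref{lemma:total-weight-given-approximation} in the form $\sum_{A\in\cG_\cD(a,b):A\approx(F,H)}\lambda^{|A|}\le(1+\lambda)^b\exp\bigl(-\tfrac{C''(b-a)\overline\alpha^2}{\log d}\bigr)$ with $\overline\alpha=\log(1+\lambda)$. Finally, as in the proof of \Cref{lem:container_main}, combining with \Cref{lem:galvin_approx_family} at $\psi=d/2$ yields
\begin{align*}
\sum_{A\in\cG_\cD(a,b)}\lambda^{|A|}\le|\cD|(1+\lambda)^b\exp\left(\frac{C'b\log^2d}{d^2}+\frac{C'(b-a)\log^2d}{d}-\frac{C''(b-a)\overline\alpha^2}{\log d}\right),
\end{align*}
and the first two terms are dominated by the third exactly when $\overline\alpha\ge\tfrac{c\log^{3/2}d}{d^{1/2}}$ (making the $\tfrac{\log^2 d}{d}$ term negligible) and $\overline\alpha\ge\tfrac{c\log^{3/2}d}{d^{1-C_5/2}}$ (making the $\tfrac{b}{d^2}$ term negligible via $\tfrac{b}{b-a}\le 1+\tfrac{1}{C_4}d^{C_5}$); for $C_5\le 1$ the first condition dominates and gives the bound $\exp(-C(b-a)\log^2 d/d)$ of \Cref{lem:hard-core-corollary-1-modified}, while for $1\le C_5<2$ the second dominates and gives $\exp(-C(b-a)\log^2 d/d^{2-C_5})$ of \Cref{lem:hard-core-corollary-2-modified}. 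One must check that $\overline\alpha=\log(1+\lambda)\ge\tfrac{c\log^{3/2}d}{d^{1/2}}$ follows from $\lambda\ge\tfrac{C_0\log^{3/2}d}{d^{1/2}}$: when $\lambda\le 1$ this is immediate from $\log(1+\lambda)\ge\lambda/2$, and when $\lambda>1$ it is trivial since $\log(1+\lambda)>\log 2$; similarly in the $C_5>1$ case.

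The main obstacle I anticipate is bookkeeping in step (i): one has to verify that every place in the proofs of \Cref{lemma:more-general-bound-on-Z}, \Cref{lemma:b-f-small}, and \Cref{lemma:b-f-large} that silently used $\lambda\le\lambda_0$ (e.g.\ to bound $\tfrac{1}{d}H(p_v)$ against $\overline\alpha$, or to claim $(1+\lambda)^{-b}\cdot\tilde\om(\cF)$ collapses a $(1+\lambda)^b$ factor cleanly) survives when $\lambda$ may be as large as, say, $d^{10}$; the key realizations are that $\overline\alpha=\log(1+\lambda)$ is \emph{increasing} in $\lambda$ so all lower bounds on $\overline\alpha$ only improve, and that the entropy terms $H(p_v)\le\log 2$ are bounded independently of $\lambda$, so the competition between $-\tfrac14\overline\alpha p_v$ and $\tfrac1d H(p_v)$ in \Cref{lemma:b-f-large} is won by the same margin. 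Once that is checked the rest is identical to the bounded case, so this appendix is genuinely just a verification rather than a new argument.
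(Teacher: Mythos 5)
Your overall route is the same as the paper's: rerun the container machinery directly in the hard-core model, replace the entropy bound on $Z(\Psi)$ by the trivial $Z(\Psi)\le(1+\lam)^{d-\ell_\Psi}$ (your ``Lemma \ref{lemma:more-general-bound-on-Z} with $s'=0$'' amounts to the same thing), note that Galvin's approximation family is $\lam$-free, and combine with $\psi=d/2$. However, there is a genuine gap in step (iii). You assert that combining the two cases reproduces \Cref{lemma:total-weight-given-approximation} in the single-term form $\exp\bigl(-\tfrac{C''(b-a)\overline\alpha^2}{\log d}\bigr)$ with $\overline\alpha=\log(1+\lam)$ unbounded. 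This does not follow, and is the precise point the appendix of the paper is built around. In the case $b-|F|\le\tfrac{c(b-a)\overline\alpha}{\log d}$, the small-$(b-|F|)$ lemma together with the binomial-coefficient estimate yields only $\exp\bigl(-c'(b-a)\overline\alpha\bigr)$, which is at most $\exp\bigl(-\tfrac{C''(b-a)\overline\alpha^2}{\log d}\bigr)$ precisely when $\overline\alpha\le\tfrac{c'}{C''}\log d$. For bounded $\lam$ (the setting of \Cref{lemma:total-weight-given-approximation}) this is automatic, but once $\lam$ may be polynomially or super-polynomially large in $d$, $\overline\alpha=\log(1+\lam)$ exceeds any constant multiple of $\log d$ and the implication fails: the first-order bound $(b-a)\overline\alpha$ is then \emph{weaker} than the second-order quantity $(b-a)\overline\alpha^2/\log d$ you claim. (Note also that for such $\lam$ the large-$(b-|F|)$ case is essentially vacuous, since $b-|F|\le 2(b-a)$ by \eqref{approximation-facts} with $\psi=d/2$, so you cannot rescue the claim from the other case.)

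The paper's fix is exactly to restate the intermediate bound with \emph{both} terms, $\exp\bigl(-C''(b-a)\overline\alpha-\tfrac{C''(b-a)\overline\alpha^2}{\log d}\bigr)$ (\Cref{lemma:total-weight-given-approximation-modified}), and then to observe in the final combination that whichever of the two negative terms is available dominates the container cost $\tfrac{C'b\log^2 d}{d^2}+\tfrac{C'(b-a)\log^2 d}{d}$ under the hypothesis $\overline\alpha\ge\tfrac{C_0'\log^{3/2}d}{d^{1-C_5/2}}$. Your final conclusion is still reachable --- for large $\lam$ the first-order term $(b-a)\overline\alpha$ alone already crushes the container cost, since the target decay $\tfrac{(b-a)\log^2 d}{d^{2-C_5}}$ is far smaller than $(b-a)\overline\alpha$ --- but as written your step (iii) claims a bound that your own case analysis does not deliver in the large-$\lam$ regime, which is the only regime that distinguishes this appendix from \Cref{lem:container_main}. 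You should either carry both terms through to step (iv), or split the final domination argument according to whether $\overline\alpha\le\log d$ or $\overline\alpha>\log d$.
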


The proofs are mostly the same as the proof of \Cref{lem:container_main}, and we will only highlight where modifications are needed. Recall that, after combining with \Cref{lem:galvin_approx_family}, the proof of \Cref{lem:container_main} reduced to proving \Cref{lemma:total-weight-given-approximation}, which bounds the total weight of sets in $\cG_\cD(a,b)$ with a given approximation. In similar fashion, the proofs of Lemmas \ref{lem:hard-core-corollary-1-modified} and \ref{lem:hard-core-corollary-2-modified} will reduce to the following version of \Cref{lemma:total-weight-given-approximation} in the hard-core model. Recall that in the hard-core model, the weight of a set $A \in \cG_\cD(a,b)$ is given by $\om(A) \coloneqq (1+\lambda)^{-b} \lambda^{|A|}$.

\begin{lemma} \label{lemma:total-weight-given-approximation-modified}
For any fixed constants $C_4, C_5 > 0$, there exist constants $C_0, C'' > 0$ such that the following holds. Suppose that $\lam \ge \frac{C_0 \log d}{d}$, that $a,b \in \mathbb{N}$ with $b \ge \left(1 + \frac{C_4}{d^{C_5}}\right)a$, and that $1 \le \psi \le d/2$. Then for any $\psi$-approximation $(F,H)$, we have
\begin{align*}
    \sum_{A \in \cG_\cD(a,b) :\ A \approx (F,H)} \om(A) \le \exp\left( -C''(b-a)\min\left\{\overline{\alpha},\frac{\overline{\alpha}^2}{\log d}\right\} \right),
\end{align*}
\end{lemma}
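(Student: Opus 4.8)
The plan is to follow the proof of \Cref{lemma:total-weight-given-approximation} essentially verbatim, specialized to the hard-core limit $\beta=\infty$ (so that $\overline{\alpha}=\log(1+\lam)$ and $\om(A)=(1+\lam)^{-b}\lam^{|A|}$ for $A\in\cG_\cD(a,b)$), while removing the hypothesis $\lam\le\lam_0$. The entropy estimates \Cref{lemma:entropy-1} and \Cref{lemma:entropy-2} are stated for arbitrary $\beta\in(0,\infty]$, so they carry over unchanged; the only ingredient that must be replaced is \Cref{lemma:bound-on-Z}. In the hard-core model the corresponding bound on $Z(\Psi)$ is entirely elementary and carries no additive error: for any family $\Psi$ of subsets of $[d]$ with $\emptyset\notin\Psi$,
\[
  Z(\Psi)=\sum_{\psi\in\Psi}\lam^{|\psi|}\le\sum_{\emptyset\neq\psi\subseteq\bigcup\Psi}\lam^{|\psi|}\le(1+\lam)^{|\bigcup\Psi|}=(1+\lam)^{d-\ell_\Psi}=(1+\lam)^d e^{-\overline{\alpha}\,\ell_\Psi},
\]
valid for every $\lam>0$; this is exactly the feature that lets us dispense with $\lam\le\lam_0$ (and the Chernoff/splitting steps inside \Cref{lemma:more-general-bound-on-Z} become unnecessary).

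With this in hand I would first record the hard-core analogues of \Cref{lemma:b-f-small} and \Cref{lemma:b-f-large}. Repeating those proofs with the displayed bound on $Z(\Psi)$ in place of \Cref{lemma:bound-on-Z}, one obtains, for every $\psi$-approximation $(F,H)$ with $1\le\psi\le d/2$ and every $a\le b$,
\[
  \sum_{A\in\cG_\cD(a,b):\ A\approx(F,H)}\om(A)\le\binom{2bd}{b-|F|}e^{-(b-a)\overline{\alpha}},
\]
and, for any fixed $C>0$ once $C_0$ is large enough,
\[
  \sum_{A\in\cG_\cD(a,b):\ A\approx(F,H)}\om(A)\le\exp\!\left(-\tfrac14(b-|F|)\overline{\alpha}+b\,d^{-C}\right).
\]
The hypothesis $\lam\ge\frac{C_0\log d}{d}$ is used only in the second bound, exactly as in the proof of \Cref{lemma:b-f-large}: it yields $\overline{\alpha}d\ge\tfrac12 C_0\log d$, which absorbs the entropy factors $(1/p_v)^{p_v/d}(1/(1-p_v))^{(1-p_v)/d}$ up to the harmless term $b\,d^{-C}$.

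Next I would combine these two estimates as in the proof of \Cref{lemma:total-weight-given-approximation}. Put $c\coloneqq\frac{1}{3(C_5+1)}$. If $b-|F|\le\frac{c(b-a)\overline{\alpha}}{\log d}$, use the first bound and estimate $\binom{2bd}{b-|F|}$ via $\binom{n}{m}\le(en/m)^m$; using $\frac{b}{b-a}\le1+\frac{1}{C_4}d^{C_5}$ together with $\overline{\alpha}=\log(1+\lam)\ge\frac{C_0\log d}{2d}$ (which follows from $\lam\ge\frac{C_0\log d}{d}$), one gets $\log\binom{2bd}{b-|F|}\le c(C_5+3)(b-a)\overline{\alpha}$ for $d$ large, hence $\sum_{A\approx(F,H)}\om(A)\le\exp(-2c_1(b-a)\overline{\alpha})$ with $c_1\coloneqq\frac{C_5}{3(C_5+1)}>0$. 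If instead $b-|F|>\frac{c(b-a)\overline{\alpha}}{\log d}$, use the second bound; the term $b\,d^{-C}$ is negligible against $\frac{(b-a)\overline{\alpha}^2}{\log d}$ once $C>C_5+2$ (since $\frac{b}{b-a}=O(d^{C_5})$ and $\overline{\alpha}^2\ge(\tfrac{C_0\log d}{2d})^2$), and one gets $\sum_{A\approx(F,H)}\om(A)\le\exp\!\left(-\tfrac{c(b-a)\overline{\alpha}^2}{8\log d}\right)$. The first case produces the $-C''(b-a)\overline{\alpha}$ term of the target and the second the $-\tfrac{C''(b-a)\overline{\alpha}^2}{\log d}$ term; since $\overline{\alpha}^2/\log d\le\overline{\alpha}$ whenever $\overline{\alpha}\le\log d$, each of the two case-bounds in fact dominates the whole expression $\exp\!\big(-C''(b-a)\overline{\alpha}-\tfrac{C''(b-a)\overline{\alpha}^2}{\log d}\big)$ for a suitable small $C''>0$, which is the claim.

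The step I expect to be the main obstacle is this last assembly --- producing \emph{both} exponential terms at once, and uniformly in $\lam$. The $\overline{\alpha}^2/\log d$ term is robust and falls out of exactly the optimization used for \Cref{lemma:total-weight-given-approximation}; the stronger $\overline{\alpha}$ term has to be squeezed out of the ``$b-|F|$ small'' case, where one must check that the binomial factor $\binom{2bd}{b-|F|}$ is genuinely beaten by $e^{-(b-a)\overline{\alpha}}$ --- this forces $c$ to be small and leans on the lower bound $\overline{\alpha}\ge\frac{C_0\log d}{2d}$. One must also take some care with the intermediate regime in which $b-|F|$ sits just above the split threshold, and with the fact that in the hard-core model $\overline{\alpha}=\log(1+\lam)$ is no longer bounded; for $\overline{\alpha}=O(\log d)$ --- the range relevant to \Cref{cor:hard-core-corollary-1} and \Cref{cor:hard-core-corollary-2}, which in any event only need the $\overline{\alpha}^2/\log d$ term --- the two terms recombine cleanly. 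Everything else is a routine transcription of the Ising argument with the error-free bound on $Z(\Psi)$ replacing \Cref{lemma:bound-on-Z}, which is precisely what eliminates the hypothesis $\lam\le\lam_0$.
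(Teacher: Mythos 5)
Your proposal is correct and follows the paper's own proof (given in its appendix on the hard-core container lemma) essentially step for step: the elementary bound $Z(\Psi)\le(1+\lambda)^{d-\ell_\Psi}$ replaces \Cref{lemma:bound-on-Z}, the two sub-lemmas you state are exactly the paper's \Cref{lemma:b-f-small-modified} and \Cref{lemma:b-f-large-modified} (up to your constant $\tfrac14$ versus the paper's $\tfrac12$), and the final assembly is the same two-case split as in the proof of \Cref{lemma:total-weight-given-approximation} with both exponential terms retained. The caveat you raise about recombining the two terms when $\overline{\alpha}\gg\log d$ is genuine, but the paper's write-up does not address it either (it simply asserts the assembly is ``the same, just modified so that both upper bounds are taken into account''), so your treatment is, if anything, more explicit than the original.
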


In the hard-core model, $\overline{\alpha} \coloneqq \log(1+\lambda)$, which can be viewed as taking $\beta \rightarrow \infty$ in the Ising model definition of $\overline{\alpha}$. Originally in \Cref{lemma:total-weight-given-approximation}, we only included the $\overline{a}^2$ term in the exponential of \Cref{lemma:total-weight-given-approximation-modified} because it is always the minimum when $\lambda$ is bounded. In fact, when $\lambda$ grows at most polynomially fast in $d$. Since in the setting of \Cref{lemma:total-weight-given-approximation-modified} we make no assumptions on the growth of $\lambda$, we have to include both terms. 

Combining Lemmas \ref{lem:galvin_approx_family} and \ref{lemma:total-weight-given-approximation-modified}, we have that
\begin{align*}
    \sum_{A \in \cG_{\cD}(a,b)} \om(A) \le |\cD| \exp \left( \frac{C'b\log^2 d}{d^2} + \frac{C'(b - a)\log^2 d}{d} -C''(b-a)\min\left\{\overline{\alpha},\frac{\overline{\alpha}^2}{\log d}\right\} \right).
\end{align*}

When $\overline{\alpha}\leq \log d$, then the second term of the minimum is used. It is immediate to see that it dominates the first two summands of the exponential when $\overline{\alpha} \ge \frac{C_0' \log^{3/2} d}{d^{1/2}}$ (if $0 < C_5 \le 1$) or $\overline{\alpha} \ge \frac{C_0' \log^{3/2} d}{d^{1-C_5/2}}$ (if $1 \le C_5 < 2$). These lower bounds on $\overline{\alpha}$ hold because, when $d$ is sufficiently large, $\lambda \ge \frac{C_0 \log^{3/2} d}{d^{1/2}}$ implies that $\overline{\alpha} = \log(1 + \lambda) \ge \frac{C_0 \log^{3/2} d}{2d^{1/2}}$, and likewise $\lambda \ge \frac{C_0 \log^{3/2} d}{d^{1-C_5/2}}$ implies that $\overline{\alpha} = \log(1 + \lambda) \ge \frac{C_0 \log^{3/2} d}{2d^{1-C_5/2}}$. Plugging in the lower bounds on $\overline{\alpha}$, we deduce the statements of Lemmas \ref{lem:hard-core-corollary-1-modified} and \ref{lem:hard-core-corollary-2-modified}. When $\overline{\alpha} > \log d$, then $(b-a)\overline{\alpha}\geq (b-a)\log d$ and the lemmas follow trivially.

Recall that we bounded weights of polymers with a given approximation using entropy bounds from \Cref{sec:entropy_tools}, namely Lemmas \ref{lemma:entropy-1} and \ref{lemma:entropy-2}. These lemmas give upper bounds on the weights $\tilde{\omega}(I)$ of vertex sets $I \subseteq V(G)$ in terms of products of partition functions $Z(\Psi)$ on certain collections $\Psi$ of subsets of $[d]$. In the hard-core case, the function $Z(\Psi)$ is simply given by 
\begin{align*}
    Z(\Psi) = \sum_{\psi \in \Psi} \lambda^{|\psi|}.
\end{align*}
To apply  Lemmas \ref{lemma:entropy-1} and \ref{lemma:entropy-2}, we also required an effective general upper bound on $Z(\Psi)$ given by \Cref{lemma:bound-on-Z}, and this depended on the parameters $\overline{\alpha}$ and $\ell_{\Psi} = |\{ i \in [d] : i \notin \psi \text{ for all } \psi \in \Psi \}|$. We needed the bounds $\lambda \le \lambda_0$ and $\lambda(1 - e^{-\beta})^2 \ge \frac{C_0 \log d}{d}$ to be able to apply \Cref{lemma:bound-on-Z}. However, in the hard-core setting, the corresponding upper bound on $Z(\Psi)$ is simply
\begin{align} \label{upper-bound-Z-hard-core}
    Z(\Psi) \le (1+\lambda)^{d - \ell_{\psi}} = (1+\lambda)^d \exp(-\overline{\alpha} \ell_{\Psi}),
\end{align}
and this is straightforward from definitions and does not require any assumptions on $\lambda$ other than positivity.

Now, using (\ref{upper-bound-Z-hard-core}) in place of \Cref{lemma:bound-on-Z} and mostly following the same proofs, one may derive the following versions of Lemmas \ref{lemma:b-f-small} and \ref{lemma:b-f-large} for the hard-core setting.

\begin{lemma} \label{lemma:b-f-small-modified}
Suppose that $a, b \in \mathbb{N}$ with $b \ge a$, and that $1 \le \psi \le d/2$. Then for any $\psi$-approximation $(F,H)$,
\begin{align*}
    \sum_{A \in \cG_\cD(a,b) :\ A \approx (F,H)} \om(A) \le \binom{2bd}{b - |F|} \exp{\left( -(b - a) \overline{\alpha} \right)}.
\end{align*}
\end{lemma}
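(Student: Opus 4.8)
\textbf{Proof plan for Lemma \ref{lemma:b-f-small-modified}.} The statement is the hard-core ($\beta = \infty$) analogue of Lemma \ref{lemma:b-f-small}, so the plan is to follow that proof verbatim with two simplifications: the weight $e^{-\beta|E(A,B)|}$ collapses (only $B = \emptyset$ contributes in the hard-core model, or rather $\om(A) = (1+\lambda)^{-b}\lambda^{|A|}$ directly), and the effective bound on $Z(\Psi)$ becomes the trivial bound \eqref{upper-bound-Z-hard-core} rather than the more delicate Lemma \ref{lemma:bound-on-Z}. Concretely, assume without loss of generality that $\cD = \cE$. Fix a $2$-linked set $A' \subseteq \cE$ with $|A'| = a$ and $|N(A')| = b$, set $T = A' \cup N(A')$, and let $\cF \coloneqq \{A : [A] = A'\}$, viewed as a family of subsets of $T$ (in the hard-core model there is no decoration $B$, so the family is just the preimages of the closure map). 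Then $\tilde{\om}(\cF) = \sum_{A : [A] = A'} \lambda^{|A|} = (1+\lambda)^b \sum_{A \in \cG_\cE(a,b):\ [A]=A'} \om(A)$.

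Next, apply Lemma \ref{lemma:entropy-1} with this $T$ and $\cF$ to get $\tilde{\om}(\cF) \le \prod_{v \in N(A')} Z(\Psi_v)^{1/d}$ where $\Psi_v = \{A \cap N(v) : [A] = A'\}$, and here crucially $\emptyset \notin \Psi_v$ for every $v \in N(A')$ since any $A$ with $[A] = A'$ meets $N(v)$. Substitute the hard-core bound \eqref{upper-bound-Z-hard-core}, namely $Z(\Psi_v) \le (1+\lambda)^d \exp(-\overline{\alpha}\,\ell_{\Psi_v})$ with $\ell_{\Psi_v} = |N(v)\setminus A'|$. The same counting identity as in the proof of Lemma \ref{lemma:b-f-small} gives $\sum_{v \in N(A')} \ell_{\Psi_v} = \sum_{v\in N(A')}(d - |N(v)\cap A'|) = d\cdot b - d\cdot a = d(b-a)$. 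Hence $\tilde{\om}(\cF) \le (1+\lambda)^b \exp(-(b-a)\overline{\alpha})$, which rearranges to $\sum_{A \in \cG_\cE(a,b):\ [A]=A'} \om(A) \le \exp(-(b-a)\overline{\alpha})$. Finally, exactly as in the original proof, the number of distinct closures $[A]$ arising from sets $A \in \cG_\cE(a,b)$ with $A \approx (F,H)$ is at most $\binom{|N(H)|}{b-|F|} \le \binom{2bd}{b-|F|}$, using $F \subseteq N(A) \subseteq N(H)$, the fact that $N(A)$ (hence $N(A)\setminus F$) determines $[A]$, and the bound $|N(H)| \le 2bd$ from \eqref{NH-upper-bound}. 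Multiplying the two bounds yields the claim.

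\textbf{Main obstacle.} There is essentially no obstacle here — the lemma is a clean specialization, and the only thing to check with care is that the trivial inequality \eqref{upper-bound-Z-hard-core} really does replace Lemma \ref{lemma:bound-on-Z} without loss: one drops the $\frac{1}{2}$ in front of $\overline{\alpha}\ell_\Psi$ (since $Z(\Psi) \le (1+\lambda)^{d-\ell_\Psi}$ exactly, no factor of $2$ in the exponent is needed) and drops the additive $d^{-C}$ error term entirely (there is no Chernoff/truncation step). This is why the exponent in Lemma \ref{lemma:b-f-small-modified} improves from $-\frac{1}{2}(b-a)\overline{\alpha} + b/d^C$ to the cleaner $-(b-a)\overline{\alpha}$. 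The analogous modification of Lemma \ref{lemma:b-f-large} (using Lemma \ref{lemma:entropy-2} and the entropy-of-a-two-valued-variable bookkeeping, but again with \eqref{upper-bound-Z-hard-core} in place of Lemma \ref{lemma:bound-on-Z}) goes through in the same way, and combining the two as in the proof of Lemma \ref{lemma:total-weight-given-approximation} gives Lemma \ref{lemma:total-weight-given-approximation-modified} with both terms in the exponential retained.
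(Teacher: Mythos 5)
Your proof is correct and follows exactly the route the paper indicates for this lemma: run the argument of Lemma \ref{lemma:b-f-small} with the trivial hard-core bound \eqref{upper-bound-Z-hard-core} in place of Lemma \ref{lemma:bound-on-Z}, which removes both the factor $\tfrac12$ and the $b/d^C$ error term, and then multiply by the $\binom{2bd}{b-|F|}$ count of possible closures. The paper additionally records an even shorter elementary alternative (bounding $\sum_{A\subseteq A'}\lambda^{|A|}=(1+\lambda)^a$ directly and counting closures), but your entropy-based derivation is complete and matches the paper's primary approach.
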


\begin{lemma} \label{lemma:b-f-large-modified}
For any fixed $C > 0$, there exists $C_0 > 0$ such that the following holds. Suppose that $\lam \ge \frac{C_0 \log d}{d}$, that $a, b \in \mathbb{N}$ with $b \ge a$, and that $1 \le \psi \le d/2$. Then for any $\psi$-approximation $(F,H)$,
\begin{align*}
    \sum_{A \in \cG_\cD(a,b) :\ A \approx (F,H)} \om(A) \le \exp{\left(-\frac{1}{2} (b - |F|) \overline{\alpha} + \frac{b}{d^C} \right)}.
\end{align*}
\end{lemma}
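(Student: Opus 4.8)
The plan is to follow the proof of \Cref{lemma:b-f-large} line by line, making one substitution: replace the Ising estimate of \Cref{lemma:bound-on-Z} by the elementary hard-core bound \eqref{upper-bound-Z-hard-core}. That is the only place where \Cref{lemma:bound-on-Z} (and hence the hypotheses $\lam \le \lam_0$ and $\lam(1-e^{-\beta})^2 \ge \frac{C_0\log d}{d}$) was invoked, so the boundedness assumption on $\lambda$ disappears automatically; moreover \eqref{upper-bound-Z-hard-core} carries the coefficient $1$ rather than $\frac12$ in front of $\overline{\alpha}\ell_\Psi$ and has no additive error term, which is what upgrades the exponent from $-\frac14(b-|F|)\overline{\alpha}$ to $-\frac12(b-|F|)\overline{\alpha}$. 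Concretely, assume $\cD=\cE$, put $T \coloneqq H \cup N(H)$ (boundary-odd, since $T \cap \cE = H$ and $N(H) \subseteq T$), and set
\[
\cF \coloneqq \left\{ A \cup B :\ A \in \cG_\cE(a,b),\ B \subseteq N(A),\ A \approx (F,H) \right\},
\]
so that $\tilde{\om}(\cF) = (1+\lam)^b \sum_{A \in \cG_\cE(a,b):\, A \approx (F,H)} \om(A)$ and no vertex of any $B$ is isolated in $A \cup B$. Then \Cref{lemma:entropy-2} gives
\[
\tilde{\om}(\cF) \le \prod_{v \in N(H)} Z(\Psi_v)^{\frac{p_v}{d}} \left(\frac{1}{p_v}\right)^{\frac{p_v}{d}} \left(\frac{1}{1-p_v}\right)^{\frac{1-p_v}{d}},
\]
with $\Psi_v = \{A \cap N(v) : A \in \cG_\cE(a,b),\ A \approx (F,H),\ |A\cap N(v)| > 0\}$ and $p_v = \mathbb{P}(v \in N(A))$ as there.

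Next, insert $Z(\Psi_v)^{1/d} \le (1+\lam)\exp(-\overline{\alpha}\ell_{\Psi_v}/d)$ from \eqref{upper-bound-Z-hard-core}, where $\ell_{\Psi_v} = |N(v)\setminus\bigcup\Psi_v|$, and group the resulting product exactly as in \Cref{lemma:b-f-large}: a main part $(1+\lam)^{p_v}\exp(-c_1\overline{\alpha}p_v\ell_{\Psi_v}/d)$ per vertex, and a remainder $\exp(-c_2\overline{\alpha}p_v\ell_{\Psi_v}/d)(1/p_v)^{p_v/d}(1/(1-p_v))^{(1-p_v)/d}$, with $c_1+c_2=1$. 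For the main part, $\sum_{v\in N(H)} p_v = \mathbb{E}|N(A)| = b$, while the inclusion $\bigcup\Psi_v \subseteq H$ together with the bounds in \eqref{approximation-facts} yields
\[
\sum_{v\in N(H)} p_v\ell_{\Psi_v} \;=\; \mathbb{E}\sum_{v\in N(A)}\ell_{\Psi_v} \;\ge\; \mathbb{E}\,|E(N(A),\cE\setminus H)| \;\ge\; (b-|F|)(d-\psi) \;\ge\; \tfrac12(b-|F|)d,
\]
so the main part contributes $(1+\lam)^b\exp(-\tfrac12(b-|F|)\overline{\alpha})$ (after optimizing $c_1$, which can be taken arbitrarily close to $1$ thanks to the losslessness of \eqref{upper-bound-Z-hard-core}).

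For the remainder, proceed as in \Cref{lemma:b-f-large}: whenever $p_v < 1$ one has $v \notin F$, so at most $\psi$ neighbours of $v$ lie in $H$ and hence $\ell_{\Psi_v} \ge d-\psi \ge d/2$; the per-vertex quantity $\frac1d\big(H(p_v) - c_2\overline{\alpha}p_v\ell_{\Psi_v}\big)$ (with $H$ the binary entropy) is estimated by splitting on $p_v \le e^{-\Omega(\overline{\alpha}d)}$, $e^{-\Omega(\overline{\alpha}d)} \le p_v \le \frac12$, and $\frac12 \le p_v < 1$, using $H(p_v) \le 2p_v\log(1/p_v)$ and monotonicity of $x\log(1/x)$, and then summing over the at most $2bd$ relevant vertices via \eqref{NH-upper-bound}. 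Here the hypothesis $\lam \ge \frac{C_0\log d}{d}$ is used precisely to make $\overline{\alpha}d = d\log(1+\lam)$ of order at least $\Omega(C_0\log d)$, which forces $\overline{\alpha}d^2\, e^{-\Omega(\overline{\alpha}d)} \le d^{-C}$ once $C_0$ is large enough in terms of $C$; this bounds the remainder by $\exp(b/d^C)$. Dividing by $(1+\lam)^b$ gives the claimed inequality.

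The one genuinely new point — and the main thing to check carefully — is that this last per-vertex entropy estimate still closes \emph{without} an upper bound on $\lambda$, and in particular without an upper bound on $\overline{\alpha}$. It does, because $\overline{\alpha}$ enters the estimate only through its lower bound $\Omega(\log d/d)$ and through the identity $\overline{\alpha} = \log(1+\lam)$ (which replaces the inequality $\overline{\alpha} \le \log(1+\lam)$ used in the Ising case), while the quantity $\overline{\alpha}d\cdot e^{-\Omega(\overline{\alpha}d)}$ controlling the error is automatically small \emph{because} $\overline{\alpha}d$ is large. No step invokes boundedness of $\lambda$, so the argument of \Cref{lemma:b-f-large} transfers intact.
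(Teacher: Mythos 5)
Your proposal is correct and takes essentially the same route as the paper, whose own justification of \Cref{lemma:b-f-large-modified} is exactly this: rerun the proof of \Cref{lemma:b-f-large} with the lossless hard-core bound \eqref{upper-bound-Z-hard-core} in place of \Cref{lemma:bound-on-Z}, and replace the use of $\lam \le \lam_0$ by the direct observation that $\lam \ge \frac{C_0\log d}{d}$ gives $\overline{\alpha} = \log(1+\lam) \ge \frac{C_0 \log d}{2d}$. The one imprecision is your claim that the main term attains the constant $\tfrac12$ "after optimizing $c_1$ arbitrarily close to $1$": any admissible split $c_1 + c_2 = 1$ with $c_2 > 0$ only yields $\exp\bigl(-\tfrac{c_1}{2}(b-|F|)\overline{\alpha}\bigr)$ with $c_1 < 1$, so the argument really proves the bound with some constant strictly below $\tfrac12$ --- but this is the same harmless factor-of-two slack already present between the statement and the written proof of \Cref{lemma:b-f-large} (whose displayed computation gives $\tfrac18$ where $\tfrac14$ is asserted), and the application in \Cref{lemma:total-weight-given-approximation-modified} only requires some positive constant.
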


We note that \Cref{lemma:b-f-small-modified} can also be proved by elementary means, by repeating part of the argument in the proof of \Cref{lemma:b-f-small}. Indeed, the inequality in the above statement can be rearranged to
\begin{align*}
    \sum_{A \in \cG_\cD(a,b) :\ A \approx (F,H)} \lambda^{|A|} \le \binom{2bd}{b - |F|} (1+\lambda)^a.
\end{align*}
Suppose that $A \in \cG_\cD(a,b)$ and $A \approx (F,H)$. We view $A$ as some subset of $[A]$ (where $|[A]| = a$), and we bound the number of possible choices of $[A]$. Since $F \subseteq N(A) \subseteq N(H)$ and $N(A)$ determines $[A]$, the number of possible choices of $[A]$ is at most the number of subsets of $N(H)$ containing $F$. Using that  $|N(H)| \le 2bd$ as observed in \eqref{NH-upper-bound}, the latter number is at most $\binom{2bd}{b - |F|}$, and the above inequality follows.

For proving \Cref{lemma:b-f-large-modified}, a minor modification is needed in the proof of Lemma \ref{lemma:b-f-large}. Namely, we implicitly used that $\lambda \le \lambda_0$ together with $\lambda(1 - e^{-\beta}) \ge \frac{C_0 \log d}{d}$ in order to obtain and apply the inequality $\overline{\alpha} \ge \frac{C_0 \log d}{(1+\lambda_0)d}$ in that proof. However, in the hard-core setting, we may alternatively use that $\lambda \ge \frac{C_0 \log d}{d}$ implies that $\overline{\alpha} = \log (1 + \lambda) \ge \frac{C_0 \log d}{2d}$ when $d$ is sufficiently large. 

From here, the proof of \Cref{lemma:total-weight-given-approximation-modified} from Lemmas \ref{lemma:b-f-small-modified} and \ref{lemma:b-f-large-modified} is the same as the proof of \Cref{lemma:total-weight-given-approximation} given at the end of \Cref{sec:bound_given_approx}, just modified so that both of the upper bounds (\ref{eq:unbounded-case-1}) and (\ref{eq:unbounded-case-2}) are taken into account.

\end{document}